\DeclareMathAlphabet{\mathpzc}{OT1}{pzc}{m}{it}
\numberwithin{equation}{section}
\DeclareMathOperator*{\bplus}{\scalerel*{+}{\sum}}
\newtheorem{theorem}{Theorem}[section]
\newtheorem{lemma}[theorem]{Lemma}
\newtheorem{conjecture}[theorem]{Conjecture}
\newtheorem{corollary}[theorem]{Corollary}
\newtheorem{proposition}[theorem]{Proposition}
\newtheorem{notation}[theorem]{Notation}
\theoremstyle{definition}
\newtheorem{definition}[theorem]{Definition}
\newtheorem{remark}[theorem]{Remark}
\newcommand{\R}{{\mathbb R}}
\DeclareFontFamily{U}{mathx}{\hyphenchar\font45}
\DeclareFontShape{U}{mathx}{m}{n}{
      <5> <6> <7> <8> <9> <10>
      <10.95> <12> <14.4> <17.28> <20.74> <24.88>
      mathx10
      }{}
\DeclareSymbolFont{mathx}{U}{mathx}{m}{n}
\DeclareMathSymbol{\bigtimes}{1}{mathx}{"91}
\date{\vspace{-9ex}}
\title{Continuum models of directed polymers on disordered diamond fractals in the critical case}
\date{  }
  \author{ \textbf{Jeremy Thane Clark}\footnote{ {\tt
jeremy@olemiss.edu}} \vspace{.1cm}  \\  University of Mississippi, Department of Mathematics   }
\begin{document}
\maketitle

\begin{abstract}
We construct and study a family  random continuum  polymer measures $\mathbf{M}_{r}$ corresponding to  limiting partition function laws recently derived in a weak-coupling  regime of polymer models on hierarchical graphs with marginally relevant disorder.  The continuum polymers, which we refer to as \textit{directed paths}, are identified with isometric embeddings of the unit interval $[0,1]$ into a compact diamond fractal with Hausdorff dimension two, and there is a natural `uniform' probability  measure, $\mu$, over the space of directed paths,  $\Gamma$.  Realizations of the random path measures $\mathbf{M}_{r}$ exhibit strong localization properties in comparison to their subcritical counterparts when the diamond fractal has dimension less than two.  Whereas two  paths $p,q\in \Gamma$ sampled independently using the pure measure $\mu$ have only finitely many intersections with probability one, a realization of the disordered product measure $ \mathbf{M}_{r}\times \mathbf{M}_{r}$ a.s.\ assigns positive weight to the set of  pairs of paths $(p,q)$ whose intersection sets are uncountable but of Hausdorff dimension zero.  We give a more refined characterization of the size of these dimension-zero sets using generalized (logarithmic) Hausdorff measures.    The law of the random measure $\mathbf{M}_{r}$  cannot be constructed as a subcritical Gaussian multiplicative   chaos because the coupling strength to the Gaussian field would, in a formal sense,  have to be infinite.

\end{abstract}

\section{Introduction}

A statistical mechanical model is said to be \textit{disorder irrelevant} if introducing a sufficiently small but fixed level of disorder to the system will have a vanishing influence on the  behavior of the model  in a large-scale limit~\cite{Giacomin}.  In other terms, the presence of a weak enough disorder is overpowered by the   entropy as the system grows.  Alternatively, if the perturbative effect of any fixed disorder strength increases   as the system is scaled up,  the model is classified as~\textit{disorder relevant}.  Disorder relevance opens up  the possibility that the system can be driven towards a nontrivial limit  through an appropriate weak-disorder/coarse-graining transformation for which the limit is an attractive fixed point within some space of models~\cite{CSZ2}.  Borderline cases of disorder relevant models are referred to as  \textit{marginally relevant}, and their  renormalization procedures tend to require scalings with slowly-varying functions rather than power laws and to exhibit nonlinear behaviors that are precluded by a more robust form of disorder.

In this article, we will construct and  analyze a one-parameter ($r\in \R$) family of   random continuum polymer measures (RCPMs), $\mathbf{M}_r$, whose laws are derived through a weak-disorder limiting  regime introduced in~\cite{Clark1,Clark3} for  models of random polymers on   hierarchical graphs with marginally relevant disorder.  This scaling limit is similar to the critical regime    for (2+1)-dimensional polymers  studied by Caravenna, Sun, and Zygouras in the article~\cite{CSZ4}, which extends their previous work~\cite{CSZ0,CSZ1,CSZ2,CSZ3} and is related to the recent result~\cite{GQT} by Gu, Quastel, and Tsai on the two-dimensional stochastic heat equation at criticality.\footnote{Caravenna, Sun, and Zygouras have posted a more recent  result~\cite{CSZ5} on the arXiv that determines the uniqueness of the distributional limits in~\cite{CSZ4} and~\cite{GQT}.}  The  weak-disorder regime for (2+1)-dimensional polymers  poses fundamental new challenges from the disorder relevant (1+1)-dimensional polymer model studied by Alberts, Khanin, and Quastel~\cite{alberts,alberts2}, where the convergence of the partition functions can be handled through a term-by-term analysis of polynomial chaos expansions that limit to corresponding  Wiener chaos expansions. The exact renormalization symmetry that is baked into our  hierarchical model allows us to proceed further in developing a theory for the RCPMs $\{\mathbf{M}_r\}_{r\in \R}$ in this setting than has currently been achieved for the marginally relevant (2+1)-dimensional model at criticality, and  the results here suggest some ideas for what to expect in general for similar critical continuum  models.

The random measures $\mathbf{M}_r$ act on a space of directed paths $\Gamma$ crossing a compact diamond fractal $D$ having Hausdorff dimension two.  Each path $p\in \Gamma$ is an isometric embedding of  the unit interval $[0,1]$ into $D$ that bridges nodes $A$ and $B$ on opposite ends of the fractal. An analogous theory for the subcritical case in which the Hausdorff dimension of the diamond fractal is less than two was developed in~\cite{Clark2} for a family  $M_{\beta}$ of RCPMs indexed by $\beta\geq 0$  whose laws arise from weak-disorder  scaling limits of disorder relevant  polymer models on   hierarchical graphs~\cite{US} (with either vertex or edge disorder).  The motivation for~\cite{Clark2} was to define a counterpart to the continuum (1+1)-dimensional polymer~\cite{alberts2} in the setting of diamond hierachical graphs. The random measure $M_{\beta}$ has expectation $\mu$, where $\mu$ is a canonical `uniform' probability measure on the space of paths $\Gamma$, and $M_{\beta}$  can be constructed as a subcritical Gaussian multiplicative chaos (GMC) formally given by 
\begin{align}\label{GMC}
M_{\beta}(dp)\,=\,e^{\beta \mathbf{W}(p)-\frac{\beta^2}{2}\mathbb{E}[\mathbf{W}^2(p)]    }\mu(dp)\hspace{.5cm}\text{for the Gaussian field}\hspace{.5cm}\mathbf{W}(p)\,:=\,\int_0^1W\big(p(t)   \big)dt 
\end{align}   
over $p\in\Gamma$,  where $W\equiv \{W(x)\}_{x\in D}$ is a Gaussian white noise on $D$.   The two-point correlations of $M_{\beta}$ can be expressed as
\begin{align*}
 \mathbb{E}\big[M_{\beta}(dp)M_{\beta}(dq)  \big]\,=\,e^{\beta^2 T(p,q)    }\mu(dp)\mu(dq)  \hspace{.5cm}\text{for}\hspace{.3cm}p,q\in \Gamma  \,,
 \end{align*}
where $T(p,q)$ is the \textit{intersection time}, i.e., a quantity measuring the fractal set of intersection times $I^{p,q}:=\{t\in [0,1]\,|\,p(t)=q(t)     \}  $ for two paths $p$ and $q$.  When the diamond fractal $D$ has Hausdorff dimension $d\in (1,2)$, the set $I^{p,q}$ either has Hausdorff dimension $2-d$ or is finite   for $\mu\times \mu$-a.e.\ pair of paths $(p,q)$.  The above subcritical GMC construction from the pure measure $\mu$ and the Gaussian white noise $W$  breaks down for the critical RCPMs, $\mathbf{M}_r$.  One reason for this constructive limitation is that  the pure product $\mu\times \mu$ is supported on the set of pairs of paths $(p,q)$ having trivial intersection sets (i.e., $I^{p,q}$ is finite) when $D$ has Hausdorff dimension two.

The construction of the  RCPMs, $\mathbf{M}_r$, in this article is a straightforward task using the limiting partition function laws derived in~\cite{Clark3}.  Beyond outlining some of the basic features of  $\mathbf{M}_r$, such as that $ \mathbf{M}_r$ is a.s.\  non-atomic and mutually singular to $\mathbb{E}[\mathbf{M}_r]=\mu$, our main focus is on characterizing the typical size of intersection-times sets $I^{p,q}$ for $(p,q)\in \Gamma\times \Gamma$  chosen according to  a realization of the product measure $\mathbf{M}_r\times \mathbf{M}_r$.  When not finite, the sets $I^{p,q}$ are $\mathbf{M}_r\times \mathbf{M}_r$-a.e.\ uncountably infinite but of Hausdorff dimension zero. A more refined understanding of the size of these sparse sets can be gained using generalized Hausdorff measures~\cite{Hausdorff}  defined in terms of logarithmic dimension functions $h_{\epsilon}(a)=\frac{1}{|\log(1/a)|^\epsilon}$ for exponent $\epsilon>0$  in place of  standard power functions $h_\epsilon(a)=a^\epsilon$; see Definition~\ref{DefLogHaus}.  Generalized Hausdorff measures of this form have been considered, for instance,  in the theory of  Furstenberg-type sets~\cite{Rela,Rela2}.   
The trivial-to-nontrivial difference in the typical behavior of $I^{p,q}$ between the pure product measure $\mu\times \mu$ and realizations of the disordered product measures $\mathbf{M}_r\times \mathbf{M}_r$ is a  localization property that suggests  $\mathbf{M}_r$ is supported on a set of paths restricted to a `small' subset of  $D$.  This effective constriction of the space available to paths  drives them into having richer intersection sets when sampled independently using a fixed realization of $\mathbf{M}_r$.

As $R\searrow -\infty$ the law of the random measure $\mathbf{M}_R$ converges to the deterministic pure measure $\mu$ on paths.  In heuristic terms, a second reason that the RCPM $\mathbf{M}_r$ does not fit into the mold of a subcritical GMC on $\mu$ is that it would require an infinite coupling strength $\beta=\infty$ to a field.  There is, however, a conditional GMC  construction of $\mathbf{M}_r$ from  $\mathbf{M}_R$ for any $R\in (-\infty, r)$, which is explored in~\cite{Clark4}.  To summarize the construction in formal terms,  we  write
\begin{align}\label{ReGMC}
\mathbf{M}_{r}(dp)\,\stackrel{\mathcal{L}}{=}\,e^{\sqrt{r-R}\mathbf{W}_{ \mathsmaller{\mathbf{M}_{R}} }(p)-\frac{r-R}{2}\mathbb{E}[ \mathbf{W}^2_{\mathsmaller{\mathbf{M}_{R}} }(p)  ]    }\mathbf{M}_{R}(dp)\,,
\end{align}
where $\{\mathbf{W}_{ \mathbf{M}_{R} }(p)\}_{p\in \Gamma}$ is a Gaussian field on $( \Gamma, \mathbf{M}_{R} )$ when conditioned on $\mathbf{M}_{R}$ that has  correlation kernel
$$  \mathbb{E}\big[\mathbf{W}_{ \mathbf{M}_{R} }(p)\mathbf{W}_{ \mathbf{M}_{R} }(q)\,\big|\,\mathbf{M}_{R} \big]\,=\,T(p,q)\,,  $$
for an  intersection time  $T(p,q)$ that measures the size of the Hausdorff dimension zero sets $I^{p,q}$.  There is an analogous construction of the subcritical GMC $M_{\beta}$ in~(\ref{GMC}) from $M_{\beta'}$ for $0\leq \beta' <\beta$, however, an obvious difference for the  critical model is that the parameter $R$ is not bounded from below. In particular, the coupling  strength $\beta=\sqrt{r-R}$ in~(\ref{ReGMC}) tends to infinity in the limit $R\searrow -\infty$ wherein the law of $\mathbf{M}_{R}$ approaches $\mu$.  Although the  conditional GMC structure is of mathematical interest in itself,  it also enables an easy proof that the continuum polymer model transitions to strong disorder as $r\nearrow \infty$ in the sense that the total mass, $  \mathbf{M}_{r}(\Gamma)$, converges in probability to $0$.

\subsection{Notation and article organization}\label{SubSecNO}

$\textbf{Notation:}$ We define $\mathbb{N}_0:=\mathbb{N}\cup\{0\}$ and $\overline{\mathbb{N}}:=\mathbb{N}\cup\{\infty\}$, where $\mathbb{N}$ is the set of positive integers.  The set of positive real numbers, $(0,\infty)$, will be denoted by $\R_+$. \vspace{.1cm}

For a metric space $X$, we use $C(X)$ and $\mathcal{M}_X$ to denote, respectively, the set of real-valued continuous functions on $X$ and the set of  finite Borel measures on $X$.  We endow $\mathcal{M}_X$ with the weak topology and the associated Borel $\sigma$-algebra, $\mathcal{B}_{\mathcal{M}_X}$. \vspace{.1cm}
  
   We use $\sqcup_{i\in I}S$ to denote a disjoint union of copies of a set $S$ indexed by a set  $I$, which we interpret as the Cartesian product $I\times S$.  If $\{\eta_{i}\}_{i\in I}$ is a family of measures on a measurable space $(S,\mathcal{S})$, then $\bplus_{i\in I}\eta_i$ denotes the measure on $\sqcup_{i\in I}S $  that assigns sets of the form $\sqcup_{i\in I} A_i   $ for $A_i\in \mathcal{S}  $ the value $\sum_{i\in  I}\eta_i(A_i) $.  Thus $( \sqcup_{i\in I}S , \bplus_{i\in I}\eta_i)$ is a union of disjoint measure spaces equipped with the usual $\sigma$-algebra. \vspace{.3cm}

\noindent $\textbf{Organization:}$ We summarize the content of the remaining sections as follows:
\begin{itemize}
\item Section~\ref{SecMainResults} builds up to the statement of a functional limit theorem (Theorem~\ref{ThmUniversality}) for a family of disordered Gibbsian measures on directed paths crossing diamond graphs in a critical scaling regime.  

\item Section~\ref{SecLimitLawChar} includes the statements of  some  propositions  formulating the elementary properties of the limiting RCPM laws from  Section~\ref{SecMainResults}.

\item  Section~\ref{SecTwoPaths} contains the statements of various results relating to the  typical behavior of the set of intersection times between two paths sampled independently using  a realization of the  limiting RCPM.

\item  Section~\ref{SectionDHLConst} presents a precise formulation for structures defined over the diamond fractal, which are only discussed in general terms in earlier sections. 

\item  Sections~\ref{SecCorrMeas}\,-\,\ref{SectionLast} contain or prepare for the proofs of propositions stated in Sections~\ref{SecMainResults}\,-\,\ref{SecTwoPaths}. 
\end{itemize}

\section{RCPMs  as critical scaling limits of discrete polymer models}\label{SecMainResults}

In the  next two subsections, we will discuss the construction of   hierarchical  diamond graphs, define   a family of Gibbsian random measures on the set of directed paths crossing the diamond graphs, and review a distributional limit theorem from~\cite{Clark3} for the total masses of the random path measures. In Section~\ref{SecLimitThmForMeasures}, we extend the sense of distributional convergence from the total masses of the random measures to the full measures, which will require some terminology related to the space of continuum directed paths on the  diamond fractal, to be summarized in Section~\ref{SubSecDHL}.

\subsection{Construction of the hierarchical diamond  graphs}\label{SecHDG}
With a branching number $b\in \{2,3,\ldots\}$ and a segmenting number $s\in \{2,3,\ldots\}$, we  define the hierarchical diamond  graphs $(D_{n}^{b,s})_{n\in \mathbb{N}}$ inductively as follows:
\begin{itemize}
\item  The first diamond graph $D_{1}^{b,s}$ is defined by $b$ parallel branches connecting two  nodes, $A$ and $B$, wherein each branch is formed by $s$  edges running in series.

\item   The  graph  $D_{n+1}^{b,s}$ is defined from $D_{n}^{b,s}$  by replacing each edge on $D_{1}^{b,s}$ by a nested copy of $D_{n}^{b,s}$.

\end{itemize}
We can extend the definition of $D_n^{b,s}$ consistently to the  $n=0$ case  by defining $D_{0}^{b,s}$ as having a single edge that connects $A$ and $B$. The illustration below depicts the first few diamond graphs  with  $(b,s)=(2,3)$. 
\begin{center}
\includegraphics[scale=.8]{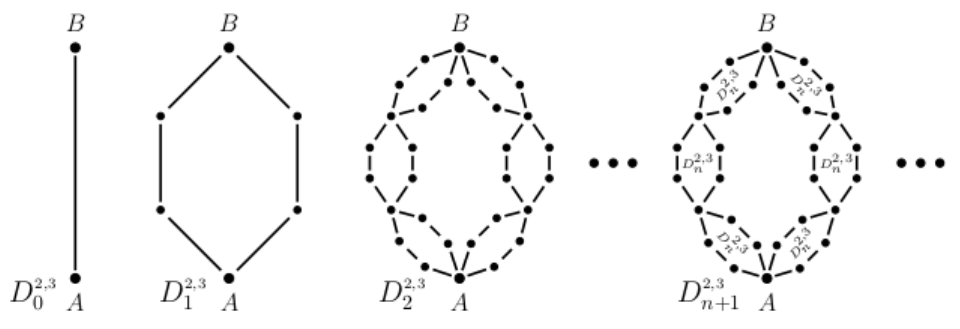}
\end{center}
A \textit{directed path} on $D_{n}^{b,s}$ is defined as a function $p:\{1,\ldots, s^n\}\rightarrow E^{b,s}_n$ for which $p(1)$ is incident to $A$, $p(s^n)$ is incident to $B$, and successive edges $p(k)$ and $p(k+1)$ share a  vertex for $1\leq k<s^n$. Thus the path starts at $A$ and moves progressively up to $B$. The set of directed paths on $D_{n}^{b,s}$  is  denoted by $\Gamma_{n}^{b,s}$.  

 The hierarchical diamond graphs are canonically embedded  on a  compact fractal with Hausdorff dimension $(\log s+\log b )/\log s$, which we refer to as the \textit{diamond hierarchical  lattice} (DHL).    For the remainder of the article, we will focus on the  case when the branching  and segmenting  parameters are equal ($b=s$) and treat $b\in \{2,3,\ldots\}$ as a fixed underlying parameter that will not be appended to notations for objects depending on it, e.g.,  $D^{b,b}_n\equiv D_n$.   For easy reference, we list the following notations relating to the diamond graph $D_n$:
\begin{align*}
\vspace{.3cm}&V_n  \hspace{1.3cm}  \text{Set of vertices on $D_n$}& \\
&E_{n}  \hspace{1.25cm}  \text{Set of edges on  $D_{n}$}&     \\
&\Gamma_n  \hspace{1.3cm}  \text{Set of directed paths on $D_{n}$}&\\  
&[\textbf{p}]_N  \hspace{1cm}  \text{Path in $\Gamma_N$ that is the generation-$N$ coarse-graining of $\textbf{p}\in \Gamma_n $, where $n>N$} &
\end{align*}
It is easily seen that $|E_n|=b^{2n}$.     The following are consequences of the recursive construction of the diamond graphs: when $n>N$,  
\begin{itemize}
\item  $V_N$ is canonically identifiable with a subset of  $V_n$,

\item $E_N$ determines a canonical equivalence relation on $E_n$, and

\item  $\Gamma_N$ determines a canonical equivalence relation on $\Gamma_n$.  

\end{itemize}
Elements of $V_n\backslash V_{n-1}$ are referred to as the \textit{generation}-$n$ vertices.

\subsection{A limit theorem for the partition function}\label{SubSecPartFun}

 Let $\{\omega_{h}\}_{h\in E_n} $ be a family of independent copies of a random variable $\omega$ having mean zero, variance one, and finite exponential moments $\mathbb{E}\big[\exp\{\beta \omega\}\big]$ for $\beta\geq 0$. Given an inverse temperature value $\beta\in [0,\infty)$, we define a random path measure $\mathbf{M}^{\omega}_{\beta, n}$ on the set of generation-$n$ directed paths such that   $\mathbf{p}\in \Gamma_n$ is assigned weight
\begin{align}\label{DefEM}
\mathbf{M}^{\omega}_{\beta, n}(\mathbf{p}) \,  : = \,\frac{1}{|\Gamma_n|} \frac{  e^{\beta  H_{n}^{\omega}(\mathbf{p}) }    }{ \mathbb{E}\big[ e^{\beta  H_{n}^{\omega}(\mathbf{p}) }     \big]   } \hspace{.5cm} \text{for path energy} \hspace{.5cm} H_{n}^{\omega}(\mathbf{p}) \, :=  \, \sum_{h\in \mathbf{p}} \omega_{h}   \, ,  
\end{align}
where  $h\in \mathbf{p}$ means that the edge $h\in E_n$ lies along the path $\mathbf{p}$, i.e., $h\in \textup{Range}(\mathbf{p})$.   We are interested in a joint scaling limit regime for the path measures $\mathbf{M}^{\omega}_{\beta, n}$ in which the number, $n$, of hierarchical layers of the diamond graphs  grows while the inverse temperature $\beta\equiv \beta_n$ vanishes with an appropriate dependence on $n$ determined by~(\ref{VarAsymp}) below; see also Remark~\ref{RemarkBeta}.  Since it is  more technical to define the sense in which the sequence of random path measures $(\mathbf{M}^{\omega}_{\beta, n})_{n\in \mathbb{N}}$ converges in law to a random path measure on the DHL, we will first state the corresponding limit theorem for the \textit{partition function} $W_n^{\omega}(\beta):=\mathbf{M}^{\omega}_{\beta, n} (\Gamma_n)$, i.e., the total mass of the measure $\mathbf{M}^{\omega}_{\beta, n}$.

 The hierarchical symmetry of the model implies that the sequence of partition functions $\big( W^{\omega}_n(\beta) \big )_{n\in \mathbb{N}}$ satisfies the recursive equality in distribution
\begin{align}\label{PartSymm}
W_{n+1}^{\omega}(\beta)\,\stackrel{\textup{d}}{=}\,\frac{1}{b}\sum_{i=1}^b \prod_{j=1}^b W_{n}^{(i,j)}(\beta)\,,
\end{align}
where the random variables $W_{n}^{(i,j)}(\beta)$ are independent copies of $W_n(\beta)$.  The partition functions have mean one, and it follows from (\ref{PartSymm}) that the variances $\varrho_n(\beta):=\textup{Var}\big( W_n(\beta) \big)  $ satisfy the recursive relation $M\big( \varrho_n(\beta) \big)=\varrho_{n+1}(\beta)$ for the polynomial function $M:[0,\infty)\rightarrow [0,\infty)$ given by
$$  M(x)\,:=\,\frac{1}{b}\left[(1+x)^b-1\right]\,.$$
If for fixed $r\in \R$ the sequence $(\beta_{r,n})_{n\in \mathbb{N}}$ is tuned to vanish with large $n$ in such a way that
\begin{align}\label{VarAsymp}
\varrho_0(\beta_{r,n})\,:=\,\textup{Var}\left( \frac{ e^{\beta_{r,n}\omega  } }{\mathbb{E}\big[e^{\beta_{r,n}\omega }\big]   } \right)    \,=\,\kappa^2 \left(\frac{ 1}{ n }\,+\,\frac{\eta \log n   }{ n^2 } \,+\,\frac{  r }{ n^2 }\right) \,+\,\mathit{o}\Big(\frac{1}{n^2}\Big) 
\end{align}
for $\kappa:=\big( \frac{2}{b-1} \big)^{1/2}   $ and $\eta:=\frac{b+1}{3(b-1)}   $, then the sequence of variances $\big(\varrho_n(\beta_{r,n})\big)_{n\in \mathbb{N}}$ converges to a limit $R(r)$ characterized in the following lemma from~\cite[Lemma 1.1]{Clark1}.
\begin{lemma}\label{LemVar}
For $b\in \{2,3,\ldots\}$, there is a unique continuously differentiable increasing function $R:\R\rightarrow \R_+$  satisfying the properties (I)-(III) below.
\begin{enumerate}[(I)]
\item For all $r\in \R$, we have $ M\big(R(r)\big)\,=\, R(r+1) $.

\item As $r\rightarrow \infty$, $R(r)$ diverges to $\infty$.  As $r\rightarrow -\infty$, $R(r)$ has the vanishing asymptotics $  R(r)=-\frac{ \kappa^2 }{ r }+ \frac{ \kappa^2\eta\log(-r) }{ r^2 }+\mathit{O}\Big( \frac{\log^2(-r)}{|r|^3} \Big) $.

\item The derivative $R'(r)$ admits the limiting form $ R'(r)\,=\,\lim_{n\rightarrow \infty}\frac{\kappa^2}{n^2}\prod_{k=1}^n\big(1+R(r-k)\big)^{b-1}      $.

\end{enumerate}
Moreover,   if  a sequence $(x_{r,n})_{n\in \mathbb{N}} $ has the large $n$ asymptotics $x_{r,n}  =  \kappa^2 \big(\frac{ 1 }{n}  +\frac{ \eta\log n}{n^2}+\frac{r}{n^2}\big)+\mathit{o}\big(\frac{1}{n^2}  \big)$, then $ M^{n}(x_{r,n}   )$ converges as $n\rightarrow \infty$ to $R(r) $, where $ M^{n}$ denotes the $n$-fold composition of $M$.
\end{lemma}

\begin{remark}\label{RemarkDerR} Property (III)  is equivalent to stating that $R'(r-n)=\frac{\kappa^2}{n^2}+\mathit{o}\big( \frac{1}{n^2} \big)$ for  $n\gg 1$ since applying the chain rule to the identity (I)  yields that $ R'(r)=R'(r-n) \prod_{k=1}^n\big( 1+R(r-k)\big)^{b-1}      $.  
\end{remark}

In light of Lemma~\ref{LemVar}, it is reasonable to conjecture that the sequence of variables $\big(W_n^{\omega}(\beta_{ r,n})\big)_{n\in \mathbb{N}}$ converges in law, which is the main result of the following theorem from~\cite[Theorem 2.7]{Clark3}. 
\begin{theorem}\label{ThmPrev} Fix $b\in \{2,3,\ldots\}$ and $r\in \R$. The sequence in $n\in \mathbb{N}$ of random variables $W_n^{\omega}(\beta_{ r,n})$ converges in law to a limit distribution $L_r^{(b)}$.  Moreover, the positive integer moments of  $W_n^{\omega}(\beta_{r,n})$ converge with large $n$ to the positive integer moments of $L_r^{(b)}$.
\end{theorem}

\begin{remark}\label{RemarkBeta} The variance asymptotic~(\ref{VarAsymp}) is obtained by  taking the sequence $(\beta_{r,n})_{n\in \mathbb{N}}$ to have the large $n$ asymptotics
\begin{align}\label{BetaForm}
\beta_{ r,n}\, :=\, \frac{\kappa}{\sqrt{n}}\,-\,\frac{ \kappa^2 \tau}{2n}\,+\,\frac{\kappa\eta\log n}{2n^{\frac{3}{2}}}\,+\,\frac{\kappa r+\kappa^3(\frac{5}{4} \tau^2-\frac{7}{12}\tau'- \frac{1}{2} ) }{2n^{\frac{3}{2}}}\, +\,\mathit{o}\left( \frac{1}{n^{\frac{3}{2}}} \right) \,,
\end{align}
where $\tau:=\mathbb{E}[\omega^3]$ and $\tau':=\mathbb{E}[\omega^4]-3$, i.e., the $3^{rd}$ and $4^{th}$ cumulants of the disorder variables. The scaling $\beta_{ r,n}$ occurs in a vanishing window around the critical point $\epsilon=\kappa$ for coarser scalings of the form $\beta_n^{(\epsilon)}=\epsilon/ \sqrt{n}+\mathit{o}(1/\sqrt{n}) $ for a parameter $\epsilon\in[0,\infty)$.  Discussion of the  weak-disorder scaling~(\ref{BetaForm}), which is similar to the critical window scaling for (2+1)-dimensional polymers in~\cite{CSZ4}, can be found in~\cite[Section 2.3]{Clark3}.
\end{remark}

\subsection{The diamond hierarchical lattice and its path space}\label{SubSecDHL}

In the next subsection, we will extend the statement of the limit result in Theorem~\ref{ThmPrev}  for the partition functions $W_n^{\omega}(\beta_{r,n }):=\mathbf{M}^{\omega}_{\beta_{r, n}, n}(\Gamma)$ to a limit theorem for the full random measures $\mathbf{M}^{\omega}_{\beta_{r,n}, n}$. First, we will summarize the terminology and notation related to the DHL and make some elementary observations relating to hierarchical symmetry.  The DHL, denoted by $D$, is a compact metric space on which each directed path $p\in \Gamma$ is an isometric embedding $p:[0,1]\rightarrow D$ with $p(0)=A$ and $p(1)=B$, where $A$ and $B$ are points on opposite ends of $D$.  Thus $D$ is a network of interweaving copies of $[0,1]$, and we can measure the distance between  points on $D$   using a  travel metric.  We make these definitions more precise in Section~\ref{SectionDHLConst}, and, for now, we extend our notations as follows:
\begin{align*}
&V  \hspace{1.73cm}  \text{Set of vertex points on $D$}& \\
&E  \hspace{1.7cm}  \text{Complement of $V$ in   $D $  }& \\
&\Gamma  \hspace{1.8cm}  \text{Set of directed paths on $D$}& \\
&\nu     \hspace{1.81cm} \text{Uniform probability measure on $D$}
\\
&\mu   \hspace{1.81cm} \text{Uniform probability measure on $\Gamma$}&\\
&\mathcal{B}_{\Gamma}   \hspace{1.61cm} \text{Borel $\sigma$-algbra on $\Gamma$}&
\\  
&[x]_n  \hspace{1.46cm}  \text{Edge in $ E_n $ that is the generation-$n$ `coarse-graining' of $x\in E$}&
\\  
&[p]_n  \hspace{1.5cm}  \text{Path in $ \Gamma_n $ that is the generation-$n$ `coarse-graining' of $p\in \Gamma$}&
\end{align*}
The following are some canonical identifications between the  diamond graph structures and subsets of the DHL and its path space: 
\begin{itemize}
\item $V$ is a countable, dense subset of $D$ that   is  identifiable with $\cup_{n=1}^\infty V_n$. 

\item The edge set $E_n$ defines an equivalence relation on $E$ in which each element  $\mathbf{e}\in E_n$ corresponds to a cylinder subset $\mathbf{\overline{e}}$ of $E$.

\item The path set $\Gamma_n$ defines an equivalence relation on $\Gamma$ in which each element $\mathbf{p}\in \Gamma_n$ corresponds to a cylinder subset $\mathbf{\overline{p}}$ of $\Gamma$.

\end{itemize}
  The uniform measures $\nu$ on $D$ and $\mu$ on $\Gamma$ assign weights $ \nu(\mathbf{\overline{e}})=1/|E_n|$ and $\mu(\mathbf{\overline{p}})=1/|\Gamma_n|$ to the cylinder sets corresponding to
  $\mathbf{e}\in E_n$ and  $\mathbf{p}\in \Gamma_n$, respectively. 
  
 Recall from Section~\ref{SubSecNO} that $\sqcup_{i\in I}S$  is alternative notation for the Cartesian product $I\times S$. We make the following series of remarks regarding  hierarchical symmetry.

\begin{remark}\label{RemarkCylinderE} For  $n\in \mathbb{N}$, there is a canonical bijection from $E$ to $E_n\times E$ in which $x \in E $ maps to the pair $\big([x]_n, \langle x\rangle_n\big)$, where
\begin{itemize}
\item $[x]_n\in E_n$ is the $n^{th}$ generation coarse-graining of the point $x$, and

\item $ \langle x\rangle_n \in E$ is the dilated position of $x$ within the embedded subcopy of $D$ corresponding to $[x]_n$.

\end{itemize}
The edge $\mathbf{e}\in E_n$ determines  the cylinder set $\mathbf{\overline{e}}:=\big\{x\in E\,\big|\,[x]_n=\mathbf{e}  \big\}$.
\end{remark}

\begin{remark}\label{RemarkDCopies} The canonical bijection in Remark~\ref{RemarkCylinderE} between $E$ and $\bigsqcup_{e\in E_n} E$ does not quite extend to a bijection between $D$ and $\bigsqcup_{e\in E_n} D$ because the root nodes of the $n^{th}$ generation embedded copies of $D$ can intersect at points in the finite set $V_n\subset D$.  In an abuse of terminology, we will nevertheless speak of $D$ as being canonically identifiable with $\bigsqcup_{e\in E_n} D$  because all measures on $D$ that we consider assign the countable set of vertices $V=D\backslash E$ measure zero.  The uniform measure $\nu$ on $D$ satisfies the obvious decomposition 
\begin{align*}
 \nu\,=\,\,\frac{1}{b^{2n}}\bplus_{e\in E_n} \nu \hspace{.5cm} \text{under the identification}\hspace{.5cm} D\,\equiv\,\bigsqcup_{e\in E_n}  D  \,.
\end{align*}
\end{remark}
  
\begin{remark}\label{RemarkCylinder}
 For $n\in \mathbb{N}$, there is  a canonical bijection  from  $\Gamma$ to $\Gamma_n\times \bigtimes_{k=1}^{b^n}\Gamma    $ in which $p\in \Gamma$  maps to the $(b^n+1)$-tuple $\big([p]_n;\, \langle p\rangle_{n}^1 ,\ldots , \langle p\rangle ^{b^n}_{n}\big)$, where 
\begin{itemize}
\item $[p]_n\in \Gamma_n$ is the $n^{th}$ generation coarse-graining of the path $p$ referred to above, and

\item  $\langle p\rangle_n^{j}\in \Gamma$ is a dilation of the part of the path $p$ crossing the shrunken, embedded copy of $D$ corresponding to the edge $[p]_n(j)\in E_n$.  

\end{itemize} 
 The path $\mathbf{p}\in \Gamma_n$ determines  the cylinder set $\mathbf{\overline{p}}:=\big\{p\in \Gamma\,\big|\,[p]_n=\mathbf{p}  \big\}$.

\end{remark}

\begin{remark}\label{Remarkn=1}  In the case $n=1$ of Remark~\ref{RemarkCylinder},   we prefer to use that $|\Gamma_1|=b$ to identify the canonical bijection  as from $\Gamma$ to $\bigsqcup_{i=1}^b \bigtimes_{j=1}^b  \Gamma$.  Thus each $p\in \Gamma$ corresponds to a tuple $(i;p_1,\ldots,p_b)$ for some $1\leq i\leq b$ and $p_1,\ldots, p_b\in \Gamma$.
\end{remark}

\begin{remark}\label{RemarkConcat}
The uniform measure $\mu$ on the path space $\Gamma$ satisfies the hierarchical symmetry
\begin{align*}
 \mu\,=\,\,\frac{1}{b}\bplus_{i=1}^b \prod_{j=1}^b \mu \hspace{.5cm}\text{under the identification}\hspace{.5cm} \Gamma\,\equiv\,\bigsqcup_{i=1}^b \bigtimes_{j=1}^b  \Gamma  \,.
\end{align*}
More precisely, if $\varphi: \bigsqcup_{i=1}^b \bigtimes_{j=1}^b  \Gamma  \rightarrow \Gamma$ is the canonical bijection of Remark~\ref{Remarkn=1}, then $\mu$ is equal to the pushfoward  of $\frac{1}{b}\bplus_{i=1}^b \prod_{j=1}^b \mu $ under $\varphi$.
\end{remark}

\subsection{A weak-disorder limit theorem for the random Gibbsian path measures}\label{SecLimitThmForMeasures}

The set of  directed paths, $\Gamma$, on the DHL is a compact metric space when equipped with the uniform metric $d_{\Gamma}:\Gamma\times \Gamma \rightarrow [0,1]$ defined by $d_{\Gamma}(p,q)=\sup_{t\in [0,1]}d_D\big(p(t),q(t)\big)$ for $p,q\in \Gamma$, where $d_D$ is a travel metric on $D$ defined in Section~\ref{SectionDHLConst}. We denote the set of finite Borel measures on $\Gamma$ by $\mathcal{M}_{\Gamma}$, and  we equip $\mathcal{M}_{\Gamma}$ with the  topology of weak convergence, i.e., for measures  $ \eta_{n}\in \mathcal{M}_{\Gamma}$ indexed by $n\in \overline{\mathbb{N}}$  we have $\eta_n\stackrel{\text{w}}{\rightarrow} \eta_{\infty}$ with large $n$ if and only if
$$ \int_{\Gamma}g(p)\eta_n(dp) \hspace{.5cm} \longrightarrow   \hspace{.5cm} \int_{\Gamma}g(p)d\eta_{\infty}(dp) $$
for every continuous function $g:\Gamma\rightarrow \R$.\footnote{Since the space $\Gamma$ is compact, we do not need to explicitly stipulate that the continuous function $g$ is  bounded.} The space $\mathcal{M}_{\Gamma}$ is Polish  under the weak topology, which is induced by the Prohorov metric.\footnote{See~\cite[Lemma 4.5]{Kallenberg}.} We  endow $\mathcal{M}_{\Gamma}$ with its Borel $\sigma$-algebra, $\mathcal{B}_{\mathcal{M}_{\Gamma}}$. A  \textit{random measure} on $\Gamma$ defined over a probability space $(\Omega, \mathscr{F}, \mathbb{P})$ is a kernel $\eta:\Omega\times \mathcal{B}_{\Gamma}\rightarrow [0,1]$ satisfying (i)-(ii) below:
\begin{enumerate}[(i)]
\item  $\eta(\omega,\cdot)$ defines a Borel measure on $\Gamma$ for every $\omega\in \Omega$.

\item   $\eta(\cdot ,A)$ is an $\mathscr{F}$-measurable function for every $A\in \mathcal{B}_{\Gamma}$.
\end{enumerate}
Conditions (i)-(ii)  are equivalent to requiring that $\omega\mapsto \eta(\omega,\cdot)$ is a measurable map from $\Omega$ to $\mathcal{M}_{\Gamma}$. In this subsection, we will attach $\omega$ as a superscript to random measures, i.e., identify $\eta^{\omega}(A)\equiv \eta(\omega, A)  $. 
 If $\{\eta_n^{\omega}\}_{n\in \overline{\mathbb{N}} }$ is a family of $\mathcal{M}_{\Gamma}$-valued random elements---not necessarily defined on the same probability space---, then the sequence $(\eta_n^{\omega})_{n\in \mathbb{N}}$ is said to \textit{converge in distribution} to $\eta_{\infty}^{\omega}$  provided that for any bounded, continuous function $f:\mathcal{M}_{\Gamma}\rightarrow \R$ we have
$$  \mathbb{E}\big[ f\big(\eta_{n}^{\omega}\big)  \big] \hspace{.5cm} \longrightarrow \hspace{.5cm} \mathbb{E}\big[ f\big(\eta_{\infty}^{\omega}\big)  \big] \,,  $$
where $\mathbb{E}$ denotes the expectation with respect to $\mathbb{P}$.  By standard theory~\cite[Theorem 4.11]{Kallenberg}, it suffices to show that for any continuous function $g:\Gamma \rightarrow \R   $ there is convergence in law of  random variables 
\begin{align}\label{ConvEquiv}
 \eta_{n}^{\omega}(g)\,:=\, \int_{\Gamma}g(p) \eta_{n}^{\omega}(dp) \hspace{.5cm} \underset{n\rightarrow \infty}{\overset{\text{d}}{\Longrightarrow}}  \hspace{.5cm} \eta_{\infty}^{\omega}(g)\,:=\,\int_{\Gamma}g(p) \eta_{\infty}^{\omega}(dp)\,.  
 \end{align}

 To formulate a distributional limit theorem for the sequence of random measures $\big(\mathbf{M}^{\omega}_{\beta_{r,n}, n}\big)_{n\in \mathbb{N}}$---where, recall,  $\mathbf{M}^{\omega}_{\beta_{r,n}, n}$ acts on the discrete path space $\Gamma_n$---we need  a simple operation for embedding finite measures on $\Gamma_n$ into the space of finite Borel measures on $\Gamma$.  
\begin{definition}\label{DefLocAvg}
Let $\varrho_n$ be a finite measure on  $\Gamma_n$.   We define  $\overline{\varrho}_n$  as the Borel measure on $\Gamma$ with
\begin{enumerate}[(i)]

\item $\varrho_n(\mathbf{p})= \overline{\varrho}_n(\mathbf{\overline{p}}) $ for every  $\mathbf{p}\in \Gamma_n$, and

\item  $\overline{\varrho}_n$ is uniform on each cylinder set $\mathbf{\overline{p}}\subset \Gamma$, i.e., the restriction of $\overline{\varrho}_n$ to $\mathbf{\overline{p}}$ is a multiple,  $\varrho_n(\mathbf{p})/\mu(\mathbf{\overline{p}})$, of the  restriction of  $\mu$ to  $\mathbf{\overline{p}}$.

\end{enumerate}

\end{definition}
 
\begin{remark} The convention in (ii) of defining $\overline{\varrho}_n$ as a locally uniform measure is unimportant, and we can also define the restriction of $\overline{\varrho}_n$ to $\mathbf{\overline{p}}\subset \Gamma$ to assign the full weight $\varrho_n(\mathbf{p})$ to a single path $p\in \mathbf{\overline{p}}$.
\end{remark}

The proof of the following limit theorem is in Section~\ref{SecProofLimitThmForMeasures}.

\begin{theorem}\label{ThmUniversality} Fix $r\in \R$ and $b\in \{2,3,\ldots\}$.  For each $n\in \mathbb{N}$, let the random measure $ \mathbf{M}^{\omega}_{\beta, n}$ on the discrete path space $\Gamma_n$  be defined as in~(\ref{DefEM}) and the sequence  $(\beta_{r,n})_{n\in \mathbb{N}}$ have the asmptotics in~(\ref{BetaForm}).  If the  measure $  \mathbf{M}^{\omega}_{r, n}\,:=\, \mathbf{\overline{M}}^{\omega}_{\beta_{r,n} , n}  $ on the  path space $\Gamma$ is constructed as in Definition~\ref{DefLocAvg}, then  the sequence of random measures  $\big(\mathbf{M}^{\omega}_{r, n}\big)_{n\in \mathbb{N}}$ converges in distribution to a limiting $\mathcal{M}_{\Gamma}$-valued random element  $\mathbf{M}_{r}^{\omega}$.  
\end{theorem}

\begin{remark}\label{RemarkSuff}  As mentioned above, proving Theorem~\ref{ThmUniversality} boils down to showing that for any continuous  $g:\Gamma\rightarrow\R$ there is convergence in distribution 
\begin{align*}
\mathbf{M}^{\omega}_{r, n}(g)\,:=\,  \int_{\Gamma}g(p)\mathbf{M}^{\omega}_{r, n}(dp) \hspace{.5cm} \underset{n\rightarrow \infty}{\overset{\text{d}}{\Longrightarrow}}  \hspace{.5cm} \mathbf{M}_{r}^{\omega}(g)\,:=\,  \int_{\Gamma}g(p)\mathbf{M}_{r}^{\omega}(dp) \,.
\end{align*}
The simple topology of the path space $\Gamma$ eases this task. 
\end{remark}

\section{The limiting RCPM  laws} \label{SecLimitLawChar}

We will now turn to a study of the   RCPM laws $\{ \mathbf{M}_r^{\omega}  \}_{r\in \R}$ arising as limits in Theorem~\ref{ThmUniversality}.  In Section~\ref{SubSecLimitLawChar},   we formulate a set of properties (Theorem~\ref{ThmExistence}) that uniquely determines the family of random path measure laws $\{ \mathbf{M}_r^{\omega}  \}_{r\in \R}$, and Section~\ref{SubSecLimitLawProp} contains a proposition listing some additional properties of the random measures.  Proposition~\ref{CorProp4} in Section~\ref{SubSecLimitLawInfDecomp} identifies a consequence of hierarchical symmetry for realizations of $\mathbf{M}_r^{\omega} $  that we refer to as \textit{infinite decomposability}.  In the sequel, we will omit the superscript $\omega\in \Omega$ from  $\mathbf{M}_{r}^{\omega} \equiv  \mathbf{M}_{r} $  that signifies  the random measure's dependence on an underlying probability space $(\Omega, \mathscr{F}, \mathbb{P})$.  As before, $\mathbb{E} $ denotes the expectation with respect to $\mathbb{P}$.

\subsection{A characterization of the limiting RCPM laws}\label{SubSecLimitLawChar}

To state Theorem~\ref{ThmExistence} below, we will need the measure $\upsilon_{r}$ on $\Gamma\times \Gamma$ defined in the next lemma, which we will show to be the correlation measure for $\mathbf{M}_{r}$. In other terms,  $\upsilon_{r}$ formally satisfies $ \upsilon_r(dp,dq)\,=\,\mathbb{E}\big[ \mathbf{M}_{r}(dp) \mathbf{M}_{r}(dq)  \big]$. As mentioned in the introduction,  the correlation measure $\upsilon_r$ would be absolutely continuous with respect to $\mathbb{E}[ \mathbf{M}_{r}]\times \mathbb{E}[ \mathbf{M}_{r}  ] =\mu\times \mu$ for any subcritical model. However,  the lemma below, which we prove in Section~\ref{SecProofLemCorrelate}, states that $\upsilon_r$ has a nontrivial Lebesgue decomposition with respect to $\mu\times \mu$. This Lebesgue decomposition will become more important in Section~\ref{SecTwoPaths}, where we focus on topics related to path intersections.

\begin{lemma}\label{LemCorrelate} Let $R:\R\rightarrow \R_+$ be defined as in Lemma~\ref{LemVar}. The  statements below hold for any $r\in \R$.
\begin{enumerate}[(i)]
\item   There is a unique measure $\upsilon_{r}$ on $\big(\Gamma\times \Gamma, \mathcal{B}_{\Gamma\times \Gamma}\big)$ such that for any $n\in \mathbb{N}_0$ and  $\mathbf{p},\mathbf{q}\in \Gamma_{n}$  
\begin{align}\label{UpsilonCond}
 \upsilon_{r}(\mathbf{\overline{p}}\times \mathbf{\overline{q}}   )\,=\, \frac{1}{|\Gamma_n|^2} \big(1+R(r-n)\big)^{\xi_n(\mathbf{p},\mathbf{q}) }  \,,
\end{align}
where $\xi_n(\mathbf{p},\mathbf{q}) $ is the number of edges  shared by the discrete paths $\mathbf{p}$ and $\mathbf{q}$, i.e., the cardinality of $\textup{Range}(\mathbf{p}) \cap \textup{Range}(\mathbf{q})$. The marginals of  $\upsilon_r$ are both equal to $\big(1+R(r)\big) \mu$.

\item  The Lebesgue decomposition of $\upsilon_{r}$ with respect to the product measure $\mu\times \mu$ has components $\mu\times \mu$  and $\upsilon_{r}-\mu\times \mu=R(r)\rho_r$, where  $\rho_r$ is a probability measure on $\Gamma\times \Gamma$  that is supported on the set of pairs $(p,q)$ such that $\xi_n(p,q ):=\xi_n\big([p]_n, [q]_n \big)>0$ for all $n$.  The marginals of $ \rho_r$ are both equal to $\mu$.

\end{enumerate}

\end{lemma}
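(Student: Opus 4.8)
The plan is to produce both $\upsilon_r$ and $\rho_r$ by the Kolmogorov/projective-limit extension theorem, realizing $\Gamma$ as the projective limit $\varprojlim_n\Gamma_n$ along the coarse-graining maps $p\mapsto[p]_n$, so that $\Gamma\times\Gamma=\varprojlim_n(\Gamma_n\times\Gamma_n)$ and the cylinder rectangles $\mathbf{p}\times\mathbf{q}$ with $\mathbf{p},\mathbf{q}\in\Gamma_n$ form a $\pi$-system generating $\mathcal{B}_\Gamma\otimes\mathcal{B}_\Gamma$ (this identification and the compatibility of cylinders with $\mathcal{B}_\Gamma$ come from the construction in Section~\ref{SectionDHLConst}). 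For part (i) it then suffices to show that the finite measures $\upsilon^{(n)}_r$ on $\Gamma_n\times\Gamma_n$ assigning mass $\frac{1}{|\Gamma_n|^2}(1+R(r-n))^{\xi_n(\mathbf{p},\mathbf{q})}$ to $(\mathbf{p},\mathbf{q})$ form a \emph{consistent} family, i.e.\ that summing $\upsilon^{(n+1)}_r$ over the level-$(n+1)$ refinements of a fixed rectangle $\mathbf{p}\times\mathbf{q}$ returns $\upsilon^{(n)}_r(\mathbf{p}\times\mathbf{q})$; uniqueness of the extension is then automatic from the $\pi$-$\lambda$ theorem.

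For the consistency check I would use that $D_{n+1}$ is $D_n$ with each edge replaced by a copy of $D_1$, so that a level-$(n+1)$ path refining $\mathbf{p}\in\Gamma_n$ is exactly a choice of $\gamma_j\in\Gamma_1$ at each of the $b^n$ edges of $\mathbf{p}$, and two such refinements of $\mathbf{p}$ and $\mathbf{q}$ share precisely $\sum_{j:\,\mathbf{p}(j)=\mathbf{q}(j)}\xi_1(\gamma_j,\delta_j)$ edges (a shared edge at scale $n+1$ must sit inside a shared edge at scale $n$). Hence the sum over refinements factorizes over the $b^n$ edges, the matching edges contributing the factor $\sum_{\gamma,\delta\in\Gamma_1}(1+x)^{\xi_1(\gamma,\delta)}=b(1+x)^b+b(b-1)=b^2\big(1+M_b(x)\big)$ (two directed paths in $\Gamma_1$ either coincide, sharing all $b$ edges, or lie on different branches, sharing none) and the non-matching ones contributing $|\Gamma_1|^2=b^2$. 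Taking $x=R(r-n-1)$ and invoking $M_b\big(R(r-n-1)\big)=R(r-n)$ from Lemma~\ref{LemVar}(I), the refinement sum equals $b^{2b^n}(1+R(r-n))^{\xi_n(\mathbf{p},\mathbf{q})}$, and dividing by $|\Gamma_{n+1}|^2=|\Gamma_n|^2b^{2b^n}$ gives exactly $\upsilon^{(n)}_r(\mathbf{p}\times\mathbf{q})$. The marginal statement is a one-sided version of the same computation: an induction on $n$ (base case $\Gamma_0=\{\ast\}$, $\xi_0=1$) shows $\sum_{\mathbf{q}\in\Gamma_n}(1+R(r-n))^{\xi_n(\mathbf{p},\mathbf{q})}=|\Gamma_n|\,(1+R(r))$ for all $\mathbf{p}$, the inductive step using the factorization together with $\sum_{\delta\in\Gamma_1}(1+x)^{\xi_1(\gamma,\delta)}=(1+x)^b+(b-1)=b\big(1+M_b(x)\big)$; so $\upsilon_r(\mathbf{p}\times\Gamma)=\frac{1+R(r)}{|\Gamma_n|}=(1+R(r))\mu(\mathbf{p})$ on cylinders, hence everywhere, and symmetrically for the other marginal.

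For part (ii) I would construct $\rho_r$ directly, again by extension, from the finite measures $\rho^{(n)}_r(\mathbf{p}\times\mathbf{q}):=\frac{1}{R(r)|\Gamma_n|^2}\big[(1+R(r-n))^{\xi_n(\mathbf{p},\mathbf{q})}-1\big]$. These are nonnegative since $R(r-n)\ge0$ and $\xi_n\ge0$ (and $R(r)>0$ for every $r$: if $R(r_0)=0$ then $R(r_0+n)=M_b^{\circ n}(0)=0$ for all $n$, contradicting $R\to\infty$ in Lemma~\ref{LemVar}(II)), and they are consistent because $R(r)\rho^{(n)}_r=\upsilon^{(n)}_r-(\mu\times\mu)^{(n)}$ is a linear combination of two families already known to be consistent. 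The resulting measure satisfies $R(r)\rho_r+\mu\times\mu=\upsilon_r$ on cylinder rectangles, hence identically; in particular $\upsilon_r-\mu\times\mu=R(r)\rho_r\ge0$ really is a (nonnegative) measure. Evaluating on $\Gamma\times\Gamma$ gives $R(r)\rho_r(\Gamma\times\Gamma)=\upsilon_r(\Gamma\times\Gamma)-1=R(r)$, so $\rho_r$ is a probability measure, and subtracting the marginals from (i) gives $\rho_r(\,\cdot\times\Gamma)=\frac{1}{R(r)}\big((1+R(r))\mu-\mu\big)=\mu$, likewise in the other coordinate. Finally, $\rho_r$ is carried by $(\mathbf{S}_\emptyset)^c=\bigcap_n\{\xi_n(p,q)>0\}$: writing $\mathbf{S}_\emptyset$ as the increasing union of the cylinder sets $\{\xi_N=0\}$, each such set is a union of rectangles $\mathbf{p}\times\mathbf{q}$ on which $\rho^{(N)}_r$ vanishes identically (the bracket $(1+R(r-N))^0-1$ equals $0$), so $\rho_r(\{\xi_N=0\})=0$ for all $N$ and thus $\rho_r(\mathbf{S}_\emptyset)=0$; combined with $(\mu\times\mu)(\mathbf{S}_\emptyset)=1$ from Corollary~\ref{CorrMuMu}, this shows $R(r)\rho_r\perp\mu\times\mu$, so $\upsilon_r=\mu\times\mu+R(r)\rho_r$ is the Lebesgue decomposition.

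The main obstacle is the consistency computation: everything rests on the combinatorial fact that shared edges at scale $n+1$ lie inside shared edges at scale $n$ (making the refinement sum factorize) together with the algebraic collapse $\sum_{\gamma,\delta\in\Gamma_1}(1+x)^{\xi_1(\gamma,\delta)}=b^2\big(1+M_b(x)\big)$, which is precisely what makes the parameter shift $r-n\rightsquigarrow r-n-1$ compatible with the functional equation $M_b\circ R=R(\,\cdot+1)$ of Lemma~\ref{LemVar}. Once that identity is in hand, the extension theorem, the uniqueness, the marginals, the total mass of $\rho_r$, and the support statement are all routine.
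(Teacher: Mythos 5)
Your proposal is correct and follows essentially the same route as the paper: the key content in both is the consistency check on cylinder rectangles, which reduces to the recursion $M_b\big(R(r-n-1)\big)=R(r-n)$ from Lemma~\ref{LemVar} together with the combinatorial fact that scale-$(n+1)$ shared edges lie inside scale-$n$ shared edges, followed by extension to $\mathcal{B}_{\Gamma\times\Gamma}$ and the mutual-singularity argument via Corollary~\ref{CorrMuMu}. The only cosmetic differences are that you invoke a projective-limit/Kolmogorov extension where the paper uses its bespoke Lemma~\ref{LemAlgebra} (finite additivity on the clopen cylinder algebra is automatically a premeasure), and you build $\rho_r$ directly from the family $\rho^{(n)}_r$ rather than taking $\frac{1}{R(r)}(\upsilon_r-\mu\times\mu)$; both choices are sound and, if anything, your explicit marginal check via the identity $\sum_{\delta\in\Gamma_1}(1+x)^{\xi_1(\gamma,\delta)}=b(1+M_b(x))$ spells out a step the paper leaves implicit.
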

\begin{remark}\label{RemarkTotalMass}
 Note that we get $\upsilon_r(\Gamma\times \Gamma  )=1+R(r)$ by applying~(\ref{UpsilonCond}) with $n=0$.  Thus $\widehat{\upsilon}_r:=\frac{1}{1+R(r)}\upsilon_r$ is a probability measure on $\Gamma\times \Gamma$.
\end{remark}

\begin{remark}\label{RemarkRHierSymm} Recall from Remark~\ref{RemarkCylinder} that $\Gamma$ is canonically bijective with $ \Gamma_n \times \bigtimes_{\ell=1}^{b^n}  \Gamma $.  If $\mathbf{p},\mathbf{q}\in \Gamma_n$, then the restriction of the correlation measure $\upsilon_r$ to the cylinder set $\mathbf{\overline{p}}\times \mathbf{\overline{q}}  $ has the product form 
 \begin{align*}
\upsilon_r\big|_{\mathbf{\overline{p}}\times \mathbf{\overline{q}} } \, = \, \frac{1}{|\Gamma_n|^2} \big(1+R(r-n)\big)^{\xi_n(\mathbf{p},\mathbf{q}) }   \prod_{\ell=1}^{b^n} \mathscr{U}^{\mathbf{p}(\ell),\mathbf{q}(\ell)}_{r-n}\hspace{.5cm}\text{for}\hspace{.5cm} \mathscr{U}^{\mathbf{e},\mathbf{f}}_t\,:=\,\begin{cases} \widehat{\upsilon}_{t}   &  \mathbf{e}=\mathbf{f} \\ \mu\times \mu & \mathbf{e}\neq \mathbf{f}  \end{cases}
\end{align*} 
under the identification $\mathbf{\overline{p}}\times \mathbf{\overline{q}}\,\equiv\, \bigtimes_{\ell=1}^{b^n}  \Gamma \times \Gamma $.  This is consistent with~(\ref{UpsilonCond}) because the product is a probability measure.  
\end{remark}

The next theorem, whose proof is split between Sections~\ref{ProofThmExistence} \&~\ref{ProofThmExistence2}, lists the essential properties of the limit laws  in Theorem~\ref{ThmUniversality}.  Statements (I) \& (II) in Theorem~\ref{ThmExistence} determine the first- \& second-order correlations of the random measure $\mathbf{M}_r$, statement (III) effectively gives some  control over the higher-order correlations in the regime $-r \gg 1$, and statement (IV) features the distributional hierarchical symmetry, which we extrapolate from in Section~\ref{SubSecLimitLawInfDecomp}.

\begin{theorem}\label{ThmExistence}
There is a unique one-parameter family of laws for random measures $\{\mathbf{M}_{r}\}_{r\in \R}$  on the path space, $\Gamma$, of $D$ satisfying properties (I)-(IV) below.  Moreover, this family of random measure laws is  the same as the family of limit laws   in Theorem~\ref{ThmUniversality}. 

\begin{enumerate}[(I)]

\item The expectation of the measure $\mathbf{M}_{r}$ with respect to the underlying probability space is the uniform measure on paths: $\mathbb{E}[\mathbf{M}_r ]=\mu  $.  More precisely,   $\mathbb{E}[ \mathbf{M}_{r}(A) ]=\mu(A)$ for any $A\in \mathcal{B}_{\Gamma}$.

\item For the measure   $ \upsilon_r $ on $\Gamma\times \Gamma  $ of Lemma~\ref{LemCorrelate}, we have that $\mathbb{E}[ \mathbf{M}_r  \times \mathbf{M}_r    ]= \upsilon_r $.   In particular, for any measurable function $g:\Gamma\times \Gamma\rightarrow [0,\infty)$
$$ \mathbb{E}\bigg[ \int_{ \Gamma \times \Gamma  } g(p,q) \mathbf{M}_r(dp) \mathbf{M}_r(dq)     \bigg]\,=\,\int_{ \Gamma\times \Gamma   }g(p,q)  \upsilon_r(dp,dq) \,. $$

\item For each $m\in \{3,4,\ldots \}$, the $m^{th}$ centered moment of the total mass of $\mathbf{M}_r$ is  equal to $R^{(m)}(r)$ for a continuous, increasing function $R^{(m)}:\R\rightarrow \R_+$ that vanishes with order $(\frac{1}{-r})^{\lceil m/2\rceil}$ as $r\rightarrow -\infty$ and diverges to $\infty$ as $r\rightarrow \infty$.\footnote{To be more explicit, we mean that $R^{(m)}(r):=\mathbb{E}\big[\big(\mathbf{M}_r(\Gamma)-1   \big)^m\big]$.}

\item If $\big\{\mathbf{M}_{r}^{(i,j)}\big\}_{1\leq i,j\leq b} $ is a family of independent copies of the random measure  $\mathbf{M}_{r} $,  then there is the following equality in distribution of  random measures:
\begin{align*}
\mathbf{M}_{r+1 }\,\stackrel{\textup{d}}{=}\, \frac{1}{b}\bplus_{i=1}^b  \prod_{j=1}^b  \mathbf{M}_{r }^{(i,j)}  \hspace{.5cm} \text{under the identification} \hspace{.5cm} \Gamma\,\equiv\, \bigsqcup_{i=1}^b  \bigtimes_{j=1}^b  \Gamma \,. 
\end{align*}
\end{enumerate}
For the  uniqueness, it suffices to replace (III) by the weaker condition (III') that the fourth centered moment of  $ \mathbf{M}_r(\Gamma)$ vanishes as $r\rightarrow -\infty$.
\end{theorem}

\begin{remark}
As a consequence of  (I) above, the expectation of  $\mathbf{M}_r(\Gamma)$ is $\mu(\Gamma)=1$. 
\end{remark}
\begin{remark} The second moment of $\mathbf{M}_r(\Gamma)$ is $1+R(r)$ since $\mathbb{E}\big[ \big(\mathbf{M}_r(\Gamma)\big)^2    \big]=\mathbb{E}\big[ \mathbf{M}_r  \times \mathbf{M}_r(\Gamma\times \Gamma)    \big]=\upsilon_r(\Gamma\times \Gamma)$ and $\upsilon_r$ has total mass $1+R(r)$.  Thus the variance of the total mass of $\mathbf{M}_{r}$ is equal to $R(r)$.
\end{remark}

\begin{remark}  Statement (IV) of Theorem~\ref{ThmExistence} should be interpreted in a similar way to Remark~\ref{RemarkConcat}, in other terms, as meaning that the random measure $\mathbf{M}_{r+1 }$ is equal in distribution to the pushforward of the random measure $\frac{1}{b}\bplus_{i=1}^b  \prod_{j=1}^b  \mathbf{M}_{r }^{(i,j)}$ by the canonical bijection  from $\bigsqcup_{i=1}^b \bigtimes_{j=1}^b  \Gamma $ to  $\Gamma$
\end{remark}

\subsection{Infinite decomposability for realizations of the limiting RCPM}\label{SubSecLimitLawInfDecomp}

The proposition below unfolds a structural consequence of property (IV) of Theorem~\ref{ThmExistence} held for a.e.\ realization of the random measure $\mathbf{M}_{r}$.  Its proof,  in Section~\ref{ProofThmExistence2}, merely relies on the Kolmogorov extension theorem. We will use  the following additional diamond graph notation:
\begin{notation}\label{NotationSub} \textup{For $e\in E_k$ and $(i,j)\in \{1,\ldots,b\}^2$, let $e\times (i,j)$ denote the element in $E_{k+1}$ corresponding to the $j^{th}$ segment along the $i^{th}$ branch on the subcopy of $D_1$  within $D_{k+1}$ associated to the edge $e\in E_k$.}   
\end{notation}

\begin{proposition}\label{CorProp4} For  $r\in \R$,  the random measure $\mathbf{M}_r$ from Theorem~\ref{ThmExistence} can be defined on the same probability space  as a family of random Borel measures  $\mathbf{M}^{e}$ on $\Gamma$ indexed by $e\in \bigcup_{k=0}^{\infty} E_k$ that a.s.\ satisfies  the properties (i)-(ii) below for every $k\in \mathbb{N}_0$, where $\mathbf{M}^{e}:=\mathbf{M}_r$ for $e\in E_0$.

\begin{enumerate}[(i)]

\item  $\big\{\mathbf{M}^{e}\big\}_{e\in E_k}$ is a family of independent  copies of the random measure $\mathbf{M}_{r-k}$.

\item  $\displaystyle \mathbf{M}^{e}= \frac{1}{b}\bplus_{i=1}^b  \prod_{j=1}^b  \mathbf{M}^{e\times(i,j)} $ under the identification $\displaystyle \Gamma\equiv \bigsqcup_{i=1}^b \bigtimes_{j=1}^b  \Gamma$ for any $e\in E_k$.

\end{enumerate}
In particular, applying (ii) recursively yields  that for any $n\in \mathbb{N}$ 
\begin{align}\label{PathProd}
\mathbf{M}_{r }\,=\, \frac{1}{|\Gamma_n|}\bplus_{\mathbf{p}\in \Gamma_n}  \prod_{\ell=1}^{b^n}  \mathbf{M}^{\mathbf{p}(\ell)} \hspace{.5cm}  \text{under the identification} \hspace{.5cm} \Gamma\,\equiv\, \bigsqcup_{ \mathbf{p}\in \Gamma_n } \bigtimes_{\ell=1}^{b^n}\Gamma\,.
\end{align}
 \end{proposition}

Let us coin a term for the property found in (ii) of Proposition~\ref{CorProp4}.

\begin{definition}\label{DefDecomp} We will refer to a measure $\eta\in \mathcal{M}_{\Gamma}$ as being \textit{infinitely decomposable} if there exists a family of   measures $ \eta^e\in \mathcal{M}_{\Gamma}$  indexed by $e\in  \bigcup_{k=0}^{\infty} E_k$ such that $\eta^e:=\eta$ when $e\in E_0$ and  the hierarchical relation $ \eta^{e}= \frac{1}{b}\bplus_{i=1}^b  \prod_{j=1}^b  \eta^{e\times(i,j)} $ holds under the identification $\Gamma\equiv \bigsqcup_{i=1}^b \bigtimes_{j=1}^b  \Gamma$ for all $e\in \bigcup_{k=0}^{\infty} E_k$. 
The set of infinitely decomposable measures in $\mathcal{M}_{\Gamma}$  will be denoted by $\mathcal{M}_{\Gamma}^{\diamond}$.
\end{definition}

\begin{remark} Obviously, the component measures $\{ \eta^e\}_{e \in  \cup_{k=0}^{\infty} E_k }$ of $\eta\in \mathcal{M}_{\Gamma}^{\diamond}$ are also in $\mathcal{M}_{\Gamma}^{\diamond}$.  In addition, $\mathcal{M}_{\Gamma}^{\diamond}$ is a closed subspace of $\mathcal{M}_{\Gamma}$ under the weak topology.
\end{remark}

\begin{remark}\label{RemarkProp44} Let $\eta$ be an infinitely decomposable measure on $\Gamma$ and   $\{\eta^e\}_{ e\in \cup_{k=0}^{\infty} E_k}  $ be its family of component measures. For any $n\in \mathbb{N}$ and  $\mathbf{p}\in \Gamma_n$, the restriction of $\eta$ to the cylinder set $\mathbf{\overline{p}}$ is the product measure $\frac{1}{|\Gamma_n|}\prod_{\ell=1}^{b^n}  \eta^{\mathbf{p}(\ell)}$ under the identification $\mathbf{\overline{p}}\equiv  \bigtimes_{\ell=1}^{b^n}  \Gamma$.  Note that this is just a different way of formulating what~(\ref{PathProd}) states for $\eta=\mathbf{M}_r$.
\end{remark}

 The following is an immediate corollary of Proposition~\ref{CorProp4}.
\begin{corollary}\label{CorInfDec} For any $r\in \R$,  the random measure $\mathbf{M}_r$ from Theorem~\ref{ThmExistence} takes values in $\mathcal{M}_{\Gamma}^{\diamond}$ with probability one.  Thus there is an $\mathcal{M}_{\Gamma}^{\diamond}$-valued version of $\mathbf{M}_r$.
\end{corollary}

\subsection{Some basic properties of the limiting RCPM laws}\label{SubSecLimitLawProp}

The next proposition addresses some elementary questions about the typical behavior of the  random path measures $\mathbf{M}_{r}$, and the proofs of parts (i)-(iv) are in Section~\ref{SecProofPropProperties}. Part (v), which  characterizes a transition to strong disorder as $r\rightarrow \infty$, is proved in~\cite[Section 5]{Clark4}. The properties listed below are not surprising because they are also present in the subcritical ($b<s$) counterpart of the RCPMs. 

\begin{proposition}\label{PropProperties} Let the family of random measures $\{\mathbf{M}_{r}\}_{r\in \R}$ be defined as in Theorem~\ref{ThmExistence}.  The statements (i)-(iii) below hold for a.e.\ realization of  $\mathbf{M}_r$.
\begin{enumerate}[(i)]

\item  $\mathbf{M}_r$ is  mutually singular to $\mu$.

\item  $\mathbf{M}_r$ is diffuse, i.e., has no atoms.

\item $\mathbf{M}_r$ assigns positive weight to every  open subset of $\Gamma$.

\item As $r\rightarrow -\infty$, the random measures $\mathbf{M}_r$ converge in distribution to the nonrandom measure  $\mu$.  In fact, the following slightly stronger statement holds: for any $g\in L^2(\Gamma  ,\mu)$, the random variables $\mathbf{M}_r(g) $ converge  to $\mu(g) $ in $L^2$ as $r\rightarrow -\infty$.\footnote{Recall from~(\ref{ConvEquiv}) that $ \mathbf{M}_r\stackrel{\textup{d} }{\rightarrow} \mu   $ if and only if  $ \mathbf{M}_r(g)\stackrel{\textup{d} }{\rightarrow} \mu(g)   $ for all $g\in C(\Gamma)$.}

\item The total mass, $\mathbf{M}_r(\Gamma)$, converges in probability to zero as $r\rightarrow \infty$.

\end{enumerate}

\end{proposition}

\begin{remark}In the language of~\cite{alberts2},
the \textit{continuum directed random polymer} (CDRP) on $D$ with parameter $r\in \R$ refers to  the random probability measure $Q_r(dp)=\mathbf{M}_r(dp)/\mathbf{M}_r(\Gamma)$.    This ratio is a.s.\  well-defined because the measure $\mathbf{M}_r$ is a.s.\ finite and $\mathbf{M}_r(\Gamma)>0$ by  (iii) of Proposition~\ref{PropProperties}.  An in-depth  study of $Q_r$ in the $r\gg 1$ regime  may yield interesting conclusions, but the current article does not provide the tools and insights needed for this pursuit.
\end{remark}

\section{The set of intersection times between two  paths}\label{SecTwoPaths}

In this section we will discuss the typical behavior of the set of intersection times 
$$I^{p,q}:=\{t\in [0,1]\,|\,p(t)=q(t)   \}$$
for two paths $p,q\in \Gamma$  chosen independently under a fixed realization of $\mathbf{M}_r$.  We introduce a kernel $T(p,q)$ in Section~\ref{SubSecLocTime} that effectively measures the size of the set $I^{p,q}$, shedding light on the Lebesgue decomposition of the correlation kernel  $\upsilon_r:=\mathbb{E}\big[ \mathbf{M}_r\times \mathbf{M}_r  ]$  in part (ii) of Lemma~\ref{LemCorrelate}.  In Section~\ref{SubSecTwoPaths}, we present a closely related Hausdorff-type dimensional analysis of  $I^{p,q}$.   The study of $T(p,q)$ is extended in Sections~\ref{SecLocalResults} \& \ref{SectionHilbert} through results related to the random linear operator, $\mathbf{T}_{\mathbf{M}_r}$, on the space $L^2(\Gamma, \mathbf{M}_r)$ having $T(p,q)$ as its integral kernel. These latter results are applied in~\cite{Clark4} to the study of the Gaussian multiplicative chaos structure of the family of random path measure laws $\{ \mathbf{M}_r \}_{r\in \R}$, which we summarized in the introduction.

\subsection{The correlation measure and the intersection time between two paths  }\label{SubSecLocTime}

Recall from Lemma~\ref{LemCorrelate} that the  Lebesgue decomposition of the correlation measure $\upsilon_r$  with respect to $\mu\times \mu$ simply has components $\mu\times \mu$ and $\upsilon_r-\mu\times \mu=R(r)\rho_r$ for a probability measure  $\rho_r$  supported on the set of pairs $(p,q)\in \Gamma\times \Gamma$ such that $\xi_n(p,q)>0$ for all $n$, where $\xi_n(p,q)$ denotes the number of graphical edges shared by the discrete paths $[p]_n,[q]_n\in \Gamma_n$. The following proposition implies that two paths $p,q\in \Gamma$ independently sampled   uniformly at random---in other words, using $\mu\times \mu$---will a.s.\ have a finite   set of intersection times, $I^{p,q}$. This contrasts with the DHL in the case $b<s$, for which there is a positive probability that the set of intersection times will have Hausdorff dimension $(\log s -\log b)/\log s$, and thus be uncountably infinite~\cite{Clark2}.  The proof is short and nontechnical, so we will present it here rather than delaying it to a later section.

\begin{proposition}\label{PropPathInter}
 If $p\in \Gamma$ is fixed and $q\in \Gamma$ is chosen uniformly at random, i.e.,  according to the  measure $\mu  $, then  the set of intersection times  $I^{p,q}$ is a.s.\ finite.  In particular,  the points of intersection  $p(r)=q(r)\in D$ are all contained in $V$.  
\end{proposition} 

\begin{proof} It suffices to show that the sequence $\big(\xi_n(p,q)\big)_{n\in \mathbb{N}}$  has only finitely many nonzero terms for $\mu$-a.e.\ $q\in \Gamma$. The sequence $\xi_n (p,q)$ can be understood as the number of members  at generation $n\in \mathbb{N}_0$ for a simple Markovian  population model  that begins with a single member ($\xi_0(p,q)=1 $), and where each member of the population  independently has either no children
with probability $\frac{b-1}{b}$ or $b$ children with probability $\frac{1}{b}$.  If $\frak{p}_{n}$ denotes the probability of extinction by generation $n$, then $\frak{p}_{0}=0$ and, by  hierarchical symmetry, the sequence $(\frak{p}_{n})_{n\in \mathbb{N}_0}$ satisfies the recursive relation $\frak{p}_{n+1}=\psi(\frak{p}_{n})$ for  $\psi:[0,1]\rightarrow [0,1]$ defined by  $\psi(x):=\frac{b-1}{b}+\frac{1}{b}x^b $.  The map $\psi$ has a unique attractive fixed point at $x=1$, and  thus $\frak{p}_{n}$ converges to $1$ with large $n$.  It follows that eventual extinction has probability $1$, i.e., $ \xi_n(p,q)=0$ for large enough $n$ for $\mu$-a.e.\ $q$.  Finally, when $ \xi_n(p,q)=0$, the intersection of the sets $\textup{Range(p)}$ and  $\textup{Range}(q)$ is a subset of $V_{n-1}\subset V$.
\end{proof}

\begin{corollary}\label{CorrMuMu}  The set, $\mathbf{S}_\emptyset  $, of pairs $(p,q)\in \Gamma\times \Gamma$ such that $ \xi_n(p,q)=0$ for large enough $n$ is a full-measure set for $\mu\times \mu$.  
\end{corollary}

\begin{remark}$\mathbf{S}_\emptyset  $ can also be described as the set of $(p,q)\in \Gamma\times \Gamma$ such that  $I^{p,q}$ is finite. 
\end{remark}

\begin{remark}\label{RemarkCond}  The measure $\rho_r$ on $\Gamma\times \Gamma$ from part (ii) of Lemma~\ref{LemCorrelate} can be defined through conditioning the  correlation measure $\upsilon_r$ on the event $\mathbf{S}_{\emptyset}^c$, in other terms,  $\rho_r(A):=\upsilon_r\big(A\cap \mathbf{S}_{\emptyset}^c\big)/\upsilon_r\big(\mathbf{S}_{\emptyset}^c\big) $ for  $A\in \mathcal{B}_{\Gamma\times\Gamma}$.  
\end{remark}

The following proposition defines the intersection-time kernel $T(p,q)$ mentioned at the beginning of the section and links $T(p,q)$ with the family of correlation measures $\{\upsilon_{r}\}_{r\in\R}$ through the Radon-Nikodym derivative formula $\frac{d\nu_t}{d\nu_r}(p,q)=  \textup{exp}\{(t-r)T(p,q)\}$ for $t,r\in \R$.  The limit in part (i) that defines $T(p,q)$ can be understood through a critical generation-inhomogeneous  Markovian population model discussed in Section~\ref{SecPopModel}.  The proof is in Section~\ref{SecPropLemCorr}.

\begin{proposition}\label{PropLemCorrelate}
Let the family of  measures $\{\upsilon_{r}\}_{r\in\R}$ on $\Gamma\times \Gamma$ be defined as in Lemma~\ref{LemCorrelate}.  The statements below hold for any $r\in \R$.

\begin{enumerate}[(i)]

\item  The sequence  $ \big(\frac{\kappa^2}{n^2} \xi_n(p,q)\big)_{n\in \mathbb{N}} $ converges $ \upsilon_r$-a.e.\   to a finite limit that we denote by $T(p,q)$.  We define $T(p,q)=\infty$ for pairs $(p,q)$ such that the sequence  $ \big(\frac{\kappa^2}{n^2} \xi_n(p,q)\big)_{n\in \mathbb{N}} $  is divergent.

\item  In particular, $\upsilon_r$ assigns full measure to the set of pairs $(p,q)\in \Gamma\times \Gamma$ such that  $ T(p,q)$ is finite.  Regarding the components $\mu\times \mu$ and $R(r)\rho_r$ from the Lebesgue decomposition of $\upsilon_r$  in Lemma~\ref{LemCorrelate}, the measure
 $\mu\times \mu$ is supported on the set of pairs  $(p,q)$ with $T(p,q)=0$ as a trivial consequence of Corollary~\ref{CorrMuMu}, and the measure $\rho_r$ is supported on the set of pairs $(p,q)$ with $0<T(p,q)<\infty$.

\item $\upsilon_t$ has  Radon-Nikodym derivative $\textup{exp}\{(t-r)T\}$   with respect to $\upsilon_r$ for any $t\in \R$.

\item The exponential moments of $T(p,q)$ under $\upsilon_r$ have the form 
$$  1+R(r+a)\,=\,\int_{\Gamma\times \Gamma } e^{aT(p,q)  }   \upsilon_{r}(dp,dq) \hspace{1cm}\text{for any}\,\, a\in \R  \,. $$

\end{enumerate}

\end{proposition}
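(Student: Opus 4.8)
The plan is to normalize $\upsilon_r$ to the probability measure $\widehat{\upsilon}_r:=\upsilon_r/(1+R(r))$, recognize that under $\widehat{\upsilon}_r$ the sequence $\xi_n:=\xi_n(p,q)$ is a time‑inhomogeneous branching process, and then combine nonnegative‑martingale convergence with the asymptotics of $R$ and $R'$ from Lemma~\ref{LemVar}.

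\textbf{Part (i).} Using the recursive description of $\Gamma$ in Remark~\ref{RemarkCylinder} together with the cylinder formula~\eqref{UpsilonCond}, one unpacks how a shared edge of $([p]_n,[q]_n)$ refines: conditionally on $\mathcal{F}_n:=\sigma([p]_n,[q]_n)$, each of the $\xi_n$ shared edges independently either produces $b$ shared edges at generation $n+1$ or none, the former with probability $p_n:=\frac{(1+R(r-n-1))^b}{b\,(1+R(r-n))}$. The algebraic point making the normalization collapse is the identity $(1+y)^b+(b-1)=b\,(1+M_b(y))$ combined with property~(I), $M_b(R(s))=R(s+1)$. Hence $\E_{\widehat{\upsilon}_r}[\xi_{n+1}\mid\mathcal{F}_n]=m_n\xi_n$ with $m_n:=bp_n=\frac{(1+R(r-n-1))^b}{1+R(r-n)}$, so $c_n:=\E_{\widehat{\upsilon}_r}[\xi_n]=\prod_{k=0}^{n-1}m_k$ telescopes, via Remark~\ref{RemarkDerR}, to $c_n=\frac{(1+R(r-n))^b}{1+R(r)}\cdot\frac{R'(r)}{R'(r-n+1)}$. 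By properties~(II)--(III) (i.e.\ $R(r-n)\to 0$ and $R'(r-n)\sim\kappa^2/n^2$, cf.\ Remark~\ref{RemarkDerR}), $\frac{\kappa^2}{n^2}c_n\to\frac{R'(r)}{1+R(r)}=:\lambda_r\in(0,\infty)$. Finally $W_n:=\xi_n/c_n$ is a nonnegative $\widehat{\upsilon}_r$‑martingale, hence converges $\upsilon_r$‑a.e.\ to a finite limit $W_\infty$; therefore $\frac{\kappa^2}{n^2}\xi_n=\big(\frac{\kappa^2}{n^2}c_n\big)W_n\to\lambda_r W_\infty<\infty$, which is~(i), and the $\upsilon_r$‑nullity of the divergence set gives the first assertion of~(ii).

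\textbf{Parts (iii) and (iv).} Set $D_n^{(t,r)}:=\big(\frac{1+R(t-n)}{1+R(r-n)}\big)^{\xi_n}$; by~\eqref{UpsilonCond} one has $\upsilon_t|_{\mathcal{F}_n}=D_n^{(t,r)}\,\upsilon_r|_{\mathcal{F}_n}$, so $D_n^{(t,r)}$ is a $\widehat{\upsilon}_r$‑martingale. Expanding $\log\frac{1+R(t-n)}{1+R(r-n)}$ and using $R(r-n)\to 0$, $R'(r-n)\sim\kappa^2/n^2$, and part~(i), one gets $\log D_n^{(t,r)}=(t-r)(1+o(1))\frac{\kappa^2}{n^2}\xi_n\to(t-r)T$, i.e.\ $D_n^{(t,r)}\to e^{(t-r)T}$ $\upsilon_r$‑a.e. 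If $t\le r$ then $D_n^{(t,r)}\le 1$ (because $R$ is increasing), so the martingale is uniformly integrable and $\upsilon_t=e^{(t-r)T}\upsilon_r$; this is~(iii) for $t\le r$. For $t>r$ I would invert: the case just proved gives $\tfrac{d\upsilon_r}{d\upsilon_t}=e^{(r-t)T}$, which is strictly positive $\upsilon_t$‑a.e.\ since $T<\infty$ $\upsilon_t$‑a.e.\ (part~(i) applied at parameter $t$), whence $\upsilon_t\ll\upsilon_r$ and $\tfrac{d\upsilon_t}{d\upsilon_r}=e^{(t-r)T}$. Then~(iv) follows by integrating: $\int e^{aT}\,d\upsilon_r=\int\tfrac{d\upsilon_{r+a}}{d\upsilon_r}\,d\upsilon_r=\upsilon_{r+a}(\Gamma\times\Gamma)=1+R(r+a)$ by Remark~\ref{RemarkTotalMass}.

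\textbf{Remaining claims of (ii) and the main difficulty.} On the full‑measure set $\mathbf{S}_\emptyset$ of Corollary~\ref{CorrMuMu} one has $\xi_n=0$ eventually, hence $T=0$, so $\mu\times\mu$ is supported on $\{T=0\}$; letting $a\to-\infty$ in~(iv) with dominated convergence gives $\upsilon_r(\{T=0\})=\lim_{a\to-\infty}(1+R(r+a))=1$, so from $\upsilon_r=\mu\times\mu+R(r)\rho_r$ (Lemma~\ref{LemCorrelate}) and $\mu\times\mu(\{T=0\})=1$ we get $\rho_r(\{T=0\})=0$; together with $\rho_r\ll\upsilon_r$ this shows $\rho_r$ is supported on $\{0<T<\infty\}$. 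The step I expect to be the main obstacle is the first one: rigorously establishing the branching description (the edge‑refinement combinatorics plus the algebraic collapse via property~(I)) and then extracting the sharp limit $\frac{\kappa^2}{n^2}c_n\to\lambda_r$, which rests entirely on the delicate behavior of $R$ near $-\infty$ supplied by Lemma~\ref{LemVar}; note that the inversion trick in~(iii) deliberately avoids proving uniform integrability of $D_n^{(t,r)}$ for $t>r$ head‑on.
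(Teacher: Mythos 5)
Your proof is correct, and while the overall martingale framework is the same as the paper's (which first establishes Proposition~\ref{PropMartingales} with the likelihood-ratio martingale $\phi_n^{(r,t)}=D_n^{(t,r)}$ and then derives the proposition as a corollary), you diverge in two noteworthy places. For part~(i) you work directly with the mean-normalized branching-process martingale $W_n=\xi_n/c_n$; after unwinding the telescope one sees $W_n$ is a constant multiple of the paper's $\mathbf{m}_n=\frac{R'(r-n)}{1+R(r-n)}\xi_n$ from Proposition~\ref{PropMartingales}(ii), so the content is equivalent but your derivation is more self-contained since it identifies the limit $\lambda_r W_\infty$ without first passing through the exponential martingale. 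The more substantial departure is in part~(iii): the paper proves uniform integrability of $\phi_n^{(r,t)}$ by a second-moment bound that dominates $(\phi_n^{(r,t)})^2$ by $\phi_\infty^{(r,2t-r+1)}$, an estimate that implicitly invokes properties of the limit before they are fully available and requires an extra computation to justify. You instead note that for $t\le r$ the ratio $D_n^{(t,r)}\le 1$ (monotonicity of $R$), so the martingale is trivially bounded and uniformly integrable, and then handle $t>r$ by inverting the Radon--Nikodym relation using the $\upsilon_t$-a.e. finiteness of $T$ (part~(i) at parameter $t$). This inversion trick is cleaner and avoids the $L^2$ estimate entirely. Parts~(ii) and~(iv) coincide with the paper's argument (the $a\to-\infty$ limit in (iv) via dominated convergence, combined with the Lebesgue decomposition and Corollary~\ref{CorrMuMu}).
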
 

\begin{remark}\label{RemarkDerRII} By differentiating the formula in (iv) of Proposition~\ref{PropLemCorrelate} with respect to $a$ and setting $a=0$, we get  $R'(r)=\int_{\Gamma\times \Gamma}T(p,q)\upsilon_{r}(dp,dq)$. 
\end{remark}

\begin{remark}\label{RemarkMart}   The rescaling factor $\frac{\kappa^2}{n^2}$ in the limit definition of $T(p,q)$ in part (i) above arises from the sequence $ \big(\frac{R'(r-n)}{1+R(r-n)} \xi_n(p,q)\big)_{n\in \mathbb{N}} $ being a martingale on the measure space $\big(\Gamma\times \Gamma, \mathcal{B}_{ \Gamma\times \Gamma }, \upsilon_r\big)$ that converges $\upsilon_r$-a.e.\ to a limit $T(p,q)$---see Proposition~\ref{PropMartingales}---and from the large $n$  asymptotics $\frac{R'(r-n)}{1+R(r-n)}\sim \frac{\kappa^2}{n^2}$, which follows from (II) of Lemma~\ref{LemVar} and Remark~\ref{RemarkDerR}.
\end{remark}

\begin{remark}\label{RemarkIntTimeKernel}If for a given $n\in \mathbb{N}$  we identify paths $p,q\in \Gamma$ with  tuples $(\mathbf{p};  p_1,\ldots,  p_{b^n})$ and $(\mathbf{q};  q_1,\ldots,  q_{b^n})$ in the Cartesian product $\Gamma_n\times \bigtimes_{\ell=1}^{b^n}\Gamma$ as in Remark~\ref{RemarkCylinder}, we can express the hierarchical symmetry of the intersection-time kernel $T(p,q)$ through the equality
\begin{align}\label{TFormula}
T(p,q)\,=\,\sum_{\substack{ 1\leq  \ell \leq b^n \\ \mathbf{p}(\ell)=\mathbf{q}(\ell) } } T(p_\ell,q_\ell) \,,  \vspace{-.5cm}
\end{align}
which holds for $ \upsilon_r$-a.e.\ $(p,q)$.  The  absence of a rescaling factor $\lambda^n$ for some $0<\lambda <1$ in front of the sum on the right-hand side is a symptom of the model's criticality. 
\end{remark}

\begin{remark}\label{RemarkRHierSymmRe} By integrating out the `microscopic' path variables $p_1,\ldots,  p_{b^n}$ and $q_1,\ldots,  q_{b^n}$ in~(\ref{TFormula}) through the product measure  $\prod_{\ell=1}^{b^n} \mathscr{U}^{\mathbf{p}(\ell),\mathbf{q}(\ell)}_{r-n}$ from the decomposition of $\upsilon_r$ in Remark~\ref{RemarkRHierSymm}, we get
 \begin{align*}
&\int_{\prod_{\ell=1}^{b^n}\Gamma\times \Gamma } T(p,q)\prod_{\ell=1}^{b^n} \mathscr{U}^{\mathbf{p}(\ell),\mathbf{q}(\ell)}_{r-n}(dp_{b^\ell},dq_{b^\ell})\\ &\hspace{.3cm}\,=\, \sum_{\substack{ 1\leq  \ell \leq b^n \\ \mathbf{p}(\ell)=\mathbf{q}(\ell) } } \frac{1}{1+R(r-n)}\int_{\Gamma\times \Gamma}T(p_\ell,q_\ell)\upsilon_{r-n}(dp_\ell,dq_\ell)\,=\,\frac{R'(r-n)}{1+R(r-n)} \xi_n(p,q)\,,
\end{align*} 
where the second equality applies Remark~\ref{RemarkDerRII} and the definition of $\xi_n(p,q)$.  The above computation shows that $\frac{R'(r-n)}{1+R(r-n)} \xi_n(p,q)$ is the  conditional expectation of $T(p,q)$ under $\upsilon_r$ given the coarse-grainings $\mathbf{p}:=[p]_n$ and $\mathbf{q}:=[q]_n$, which offers some perspective on the martingale  in Remark~\ref{RemarkMart}.
\end{remark}

\subsection{A Hausdorff analysis of the set of intersection times between two paths}\label{SubSecTwoPaths}

By Corollary~\ref{CorrMuMu}, the measure $  \mu\times \mu$ assigns full weight to the set of pairs $(p,q)\in \Gamma\times \Gamma$ having finite intersection-times sets, $I^{p,q} $.  In contrast, parts (i) \& (iii) of Theorem~\ref{ThmPathMeasure} below imply that for a.e.\ realization of the random measure $\mathbf{M}_r$ the product $\mathbf{M}_r\times \mathbf{M}_r$ assigns positive weight to the set of pairs $(p,q)$ such that $I^{p,q}$ is uncountably infinite but of Hausdorff dimension zero.  The definitions below provide us with a framework for characterizing the size of the sets $I^{p,q}$.

\begin{definition} A \textit{dimension function} is a continuous, non-decreasing function $h:[0,\infty)\rightarrow [0,\infty)     $
satisfying $h(0)=0$. Given a dimension function $h$, the \textit{generalized Hausdorff outer  measure}  of a  set $S\subset \R^d$ is defined through the limit
\begin{align*}
 H^{h}(S)\,:=\,\lim_{\delta\searrow 0}H^{h}_{\delta}(S)  \hspace{.5cm}\text{for}\hspace{.5cm}    H^{h}_{\delta}(S)\,:=\,\inf_{\substack{S\subset \cup_{k}\mathcal{I}_{k}  \\  \textup{diam}(\mathcal{I}_k)\leq  \delta  }} \sum_{k}h\big( \textup{diam}(\mathcal{I}_k)\big),  
\end{align*}
where  the infimum is over all countable coverings of $S$ by sets $\mathcal{I}\subset \R^d$ of diameter bounded by $ \delta$.  A dimension function $h$ is said to be \textit{zero-dimensional}  if  $ x^{\alpha}\ll h(x)$ as  $x\searrow 0$  for any $\alpha>0$. 

\end{definition}

\begin{remark}\label{Remark}When the dimension function has the form $h_{\alpha}(x)=x^\alpha$ for $\alpha>0$, then $H^{h_{\alpha}}$ is the standard dimension-$\alpha$ Hausdorff outer measure.  The \textit{Hausdorff dimension} of a  set $S\subset \R^d$  is defined as the supremum of the set of $\alpha\in (0,d]$ such that $H^{h_{\alpha} }(S)=\infty $.\footnote{The supremum of the empty set is interpreted as zero in this context.}  
\end{remark}

\begin{definition}\label{DefLogHaus}
Let $S\subset \R^d$ be a set of Hausdorff dimension zero.  For $\frak{h} > 0$ and  $0<\delta<1$, define  $H^{\log}_{\frak{h},\delta}(S):= H^{h}_\delta(S) $ and $H^{\log}_{\frak{h}}(S) := H^{h}(S)  $ for the dimension function $h(x)\,=\, 1/ \log^{\frak{h}}(\frac{1}{x})$.  We define the \textit{log-Hausdorff exponent} of $S$ as the supremum  of the set of $\frak{h}$ such that   $H^{\log}_{\frak{h}}(S)=\infty $.
\end{definition}

The lemma below offers additional perspective on the Lebesgue decomposition found in part (ii) of Lemma~\ref{LemCorrelate}.    The main step of the proof is in Section~\ref{SubSectionPathIntRho}.
\begin{lemma}\label{LemIntSet}  Let the measure $ \upsilon_{r}$ on $\Gamma\times \Gamma$ be defined as in Lemma~\ref{LemCorrelate}.  The normalized measure $\rho_r=\frac{1}{R(r)}(\upsilon_{r}-\mu\times \mu)    $  assigns probability one to the set of pairs $(p,q)$ such that the intersection-times set $I^{p,q} $ has log-Hausdorff exponent $\frak{h}=1$.
\end{lemma}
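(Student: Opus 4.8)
The plan is to work under the conditioned measure $\rho_r$ and control the set $I_{p,q}$ via the coarse-grained overlap counts $\xi_n(p,q)$, which under $\rho_r$ are a.s. positive for all $n$ and satisfy $\frac{\kappa^2}{n^2}\xi_n(p,q)\to T(p,q)\in(0,\infty)$ by Proposition \ref{PropLemCorrelate}. The geometric dictionary is that, at generation $n$, a shared edge $e\in E_n$ of $[p]_n$ and $[q]_n$ corresponds to a sub-interval of $[0,1]$ of length $b^{-n}$ on which $p$ and $q$ may still agree, and $I_{p,q}$ is contained in the union of the $\xi_n(p,q)$ such intervals. Hence for the dimension function $h_{\frak h}(x)=1/\log^{\frak h}(1/x)$ one gets the covering bound
\begin{align*}
H^{h_{\frak h}}_{b^{-n}}(I_{p,q})\,\leq\, \xi_n(p,q)\,h_{\frak h}\big(b^{-n}\big)\,=\,\frac{\xi_n(p,q)}{(n\log b)^{\frak h}}\,.
\end{align*}
Since $\xi_n(p,q)\sim \frac{n^2}{\kappa^2}T(p,q)$, the right-hand side behaves like $n^{2-\frak h}$ times a constant, which tends to $0$ when $\frak h>2$. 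This already shows the log-Hausdorff exponent is $\leq 2$ $\rho_r$-a.s., but we need the sharper statement that it equals exactly $1$, so the upper bound argument must be refined: rather than covering by all generation-$n$ intervals at once, I would cover greedily, using for each surviving generation-$n$ edge its first descendant generation where the local overlap count drops, exploiting that the branching population $\xi_n$ is, conditionally, a collection of independent critical-type branching processes (as in the proof of Proposition \ref{PropPathInter}, but conditioned on nonextinction). Summing $h_{\frak h}$ over an optimized stopping-generation family should bring the exponent in the upper bound down from $2$ to $1$.

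For the lower bound — that $H^{\log}_{\frak h}(I_{p,q})=\infty$ for every $\frak h<1$, $\rho_r$-a.s. — I would use the standard mass-distribution (Frostman) principle: construct a random measure $\lambda_{p,q}$ supported on $I_{p,q}$ and show it satisfies $\lambda_{p,q}(\mathcal I)\lesssim h_{\frak h}(\mathrm{diam}\,\mathcal I)$ for all small intervals $\mathcal I$. The natural candidate is the weak limit of the normalized counting measures $\lambda_n:=\frac{1}{\xi_n(p,q)}\sum_{e} b^n \mathbf 1_{J_e}\,dt$ over the generation-$n$ shared edges $e$ (here $J_e$ is the length-$b^{-n}$ interval of $e$), or equivalently the measure obtained by distributing unit mass down the overlap tree according to the branching ratios. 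A generation-$m$ interval $J$ receives $\lambda_{p,q}$-mass equal to $\xi_m^{(J)}(p,q)\big/\xi_m(p,q)$, the fraction of the generation-$m$ overlap population lying below $J$; I must show this is $O(1/\log^{\frak h}(1/\mathrm{diam}\,J))=O(1/m^{\frak h})$ uniformly. Because the overlap-tree is (conditionally) a near-critical branching process, the subtree below a fixed generation-$m$ node that survives to generation $n\gg m$ contributes on the order of a fraction $\asymp 1/m$ of $\xi_n$ (one surviving line out of $\asymp m$ surviving lines, each subtree being an i.i.d. copy started at generation $m$), which gives the bound with exponent $\frak h=1$ — matching the upper bound and pinning the log-Hausdorff exponent at $1$. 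Making this fraction estimate precise, with the right second-moment control on $\xi_n$ restricted to a subtree, is the crux.

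The main obstacle I anticipate is the second step of the lower bound: controlling, uniformly over all generation-$m$ nodes and all $n>m$, the ratio $\xi_n^{(J)}/\xi_n$ of subtree population to total population in the conditioned branching process. This requires knowing that $\xi_n/n^2$ concentrates (a law-of-large-numbers / martingale statement for the near-critical tree, presumably already implicit in Proposition \ref{PropLemCorrelate}(i)) and, simultaneously, that no single generation-$m$ subtree dominates — i.e. a lower bound on the number of generation-$m$ nodes that have surviving descendants at generation $n$. I would obtain the latter from the Yaglom-type asymptotics of the extinction map $\psi(x)=\frac{b-1}{b}+\frac1b x^b$ from the proof of Proposition \ref{PropPathInter}: since $1-\psi^{\circ n}(0)\asymp 1/n$, at generation $m$ roughly $m$ independent lineages are still "active," and standard branching-process estimates then give that $\xi_n$ splits into $\asymp m$ comparable i.i.d. blocks, no one of which carries more than an $O(1/m)$ fraction with overwhelming probability. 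Combining the Borel–Cantelli-type union bound over the countably many intervals $J$ with these block estimates yields the Frostman bound with $\frak h$ arbitrarily close to $1$, completing the proof; the upper bound with exponent $1$ comes from the refined greedy covering described above, and together they show the log-Hausdorff exponent of $I_{p,q}$ is exactly $1$ for $\rho_r$-a.e. $(p,q)$, hence $\rho_r$-a.s.
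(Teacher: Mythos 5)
Your overall framework is right (mass-distribution lower bound, covering upper bound), but both halves have gaps that the paper resolves in ways you do not identify.

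\textbf{Upper bound.} Your first covering, by all $\xi_n(p,q)$ shared generation-$n$ edges, only yields exponent $\le 2$ because $\xi_n\asymp n^2$. You correctly sense that this must be sharpened, and you mention ``surviving'' edges, but the greedy/stopping-time refinement you propose is not the missing idea. The key observation, which you do not state, is that $I_{p,q}\cap\mathcal{E}$ is \emph{already} covered by the much smaller family of generation-$n$ shared edges whose descendant overlap-population never dies out, and that this count $\widetilde{\xi}_n(p,q)$ grows only \emph{linearly}: $\frac{\kappa^2}{n}\widetilde{\xi}_n(p,q)\to T(p,q)$ $\rho_r$-a.e.\ (Lemma~\ref{LemMargPop}(ii)). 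The reason the cover shrinks to $\widetilde{\xi}_n$ intervals is simply that a non-vertex intersection time forces $p$ and $q$ to share an edge at \emph{every} later generation in that block, i.e.\ to lie in $\mathbf{S}_\emptyset^c$ there. Once you have the linear growth, the naive covering immediately gives $H^{\log}_{1,\delta}(I_{p,q}\cap\mathcal{E})\le \widetilde{\xi}_n/(n\log b)\to T(p,q)/(\kappa^2\log b)<\infty$, and no stopping-time optimization is needed. Without identifying that the relevant population is the size-biased ``surviving'' one and proving its linear growth, your upper-bound sketch does not close.

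\textbf{Lower bound.} Your candidate measure (distributing mass down the overlap tree) is essentially the paper's $\tau_{p,q}$ from Proposition~\ref{PropIntMeasure}, so you have the right object. However, you propose proving a \emph{uniform} Frostman bound $\lambda_{p,q}(J)\lesssim 1/m^{\frak h}$ over all generation-$m$ blocks $J$, which requires pointwise control of subtree-to-total population ratios in the conditioned near-critical branching process, uniformly in $m$ and in the node. That is considerably harder than necessary. The paper instead proves an \emph{energy} bound $Q_{\frak h}(\tau_{p,q})=\int\int\log^{\frak h}(1/|x-y|)\,\tau_{p,q}(dx)\tau_{p,q}(dy)<\infty$ for $\frak h<1$ (Proposition~\ref{PropFrostman}), which is an $L^2$-type/averaged statement — it only controls the \emph{second moment} of the overlap-tree hierarchy, via the martingale structure of $\tau_{p,q}^{(n)}$ and the recursion for $R$ — and then converts finite energy into $H^{\log}_{\frak h}(I_{p,q})>0$ by the standard argument (Appendix~\ref{AppendixHausdorff}): finite energy gives a positive-mass subset on which the upper $h$-density is bounded, which suffices. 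The energy route thus sidesteps precisely the uniform concentration estimate you flag as ``the crux,'' and I would recommend it over your approach. As written, the part of your proposal you identify as hardest is indeed the part that would likely fail or require substantial additional branching-process machinery (Yaglom-type conditional limit laws with explicit uniformity) that the energy argument renders unnecessary.
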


The theorem below, which is proved in Section~\ref{SecPathIntMxM}, states our main results on the behavior of $I^{p,q}$ under $\mathbf{M}_r\times\mathbf{M}_r$ for typical realizations of $\mathbf{M}_r$.  Part (iv) states that the  cross sections of the set, $\mathbf{G}$, of path pairs $(p,q)$ for which  $I^{p,q}$ has log-Hausdorff exponent one are $\mathbf{M}_r$-a.e.\ not $\mathbf{M}_r$-null. 

\begin{theorem}\label{ThmPathMeasure}
Let the family of random measure laws $\{\mathbf{M}_r\}_{r\in \R}$ be defined as in Theorem~\ref{ThmExistence}.  The statements below hold for any $r\in \R$ and a.e.\ realization of  $\mathbf{M}_r$.

\begin{enumerate}[(i)]

\item The  set   $I^{p,q}$  has Hausdorff dimension zero for $\mathbf{M}_r\times \mathbf{M}_r  $-a.e.\ pair $(p,q)$ in $\Gamma\times \Gamma$.

\item   The product measure $\mathbf{M}_r\times \mathbf{M}_r  $ is supported on the set of  $(p,q)\in \Gamma\times \Gamma$ such that $T(p,q)<\infty$, i.e., where $\frac{1}{n^2}\xi_n(p,q)$ converges to a finite limit  as $n\rightarrow \infty$.  Moreover, the exponential moments of $T(p,q)$ with respect to $\mathbf{M}_r\times \mathbf{M}_r$ have expected value
$$\text{}\hspace{.1cm} \mathbb{E}\bigg[ \int_{\Gamma\times \Gamma}e^{aT(p,q)} \mathbf{M}_r(dp)\mathbf{M}_r(dq)      \bigg]\,=\,  1+R(r+a)\hspace{1cm}\text{for any $a\in \R$}\,.   $$

\item   The product measure  $\mathbf{M}_r\times \mathbf{M}_r  $ assigns full weight to the set of $(p,q)\in \Gamma\times \Gamma$ such that one of the following statements hold:\vspace{.2cm}\\
\text{}\hspace{1.5cm} (a)  $I^{p,q}$ is finite \hspace{.5cm} or\hspace{.5cm} (b) $I^{p,q}$ has log-Hausdorff exponent $\frak{h}=1$.\vspace{.2cm} \\
Moreover, $\mathbf{M}_r\times \mathbf{M}_r  $ assigns both of these events positive  weight. The above a.s.\ dichotomy remains  true when statement (b) is replaced by  `\,$0<T(p,q)<\infty$'.

\item Given $p\in \Gamma$, let $\mathbf{G}_p$ denote the set of $q\in  \Gamma$ such that the set of intersection times $I^{p,q}$ has log-Hausdorff exponent $\frak{h}=1$.  Then $\mathbf{M}_r$  satisfies that $\mathbf{M}_r(\mathbf{G}_p)>0$ for  $\mathbf{M}_r$-a.e.\ $p\in \Gamma$.  The analogous statement holds for the sets $\mathbf{\widehat{G}}_p:=\{q\in \Gamma\,|\, T(p,q)>0\}$.

\end{enumerate}

\end{theorem}

\begin{remark}Parts (iii) and~(iv) of Theorem~\ref{ThmPathMeasure} imply a form of locality for the disordered measures $\{\mathbf{M}_{r}\}_{r\in \R}$.  Two paths chosen independently according to  $\mathbf{M}_r$ may intersect nontrivially, whereas this is impossible under the pure measure $\mu$ by Corollary~\ref{CorrMuMu}.   This supports the perspective that the disordered environment induces the possibility of independently sampled paths  intersecting nontrivially by strongly restricting the set of points that paths are allowed to pass through. 
\end{remark}

\begin{remark}\label{RemarkInd} Let $\mathbf{M}_r$ and $\mathbf{M}_r'$ be independent copies of the random  measure from Theorem~\ref{ThmExistence}.  Then the set $\mathbf{\widehat{G}}$ of pairs $(p,q)$ such that  $T(p,q)>0$ is a.s.\ a null set for  $\mathbf{M}_r\times \mathbf{M}_r'$ since 
$$ \mathbb{E}\big[ \mathbf{M}_r\times \mathbf{M}_r' \big(\mathbf{\widehat{G}}\big)\big]\,=\,\mathbb{E}[ \mathbf{M}_r]\times \mathbb{E}[ \mathbf{M}_r'] \big(\mathbf{\widehat{G}}\big)\,=\,\mu\times \mu\big(\mathbf{\widehat{G}}\big)\,=\,0 \,, $$
where the second equality holds by (I) of Theorem~\ref{ThmExistence} and the third is from (ii) of Proposition~\ref{PropPathInter}. Thus, for realizations of  $\mathbf{M}_r$ and $\mathbf{M}_r'$, a path sampled using $\mathbf{M}_r$ and a path sampled using $\mathbf{M}_r'$ will have negligible overlap. 
\end{remark}

\begin{remark} Part (iv)  of  Theorem~\ref{ThmPathMeasure} implies that $\mathbf{M}_r$ and $\mathbf{M}_r'$ are a.s.\ mutually singular because $\mathbf{M}_r$ is supported on $\big\{p\in \Gamma\,\big|\,\mathbf{M}_r\big(\mathbf{\widehat{G}}_p\big)>0\big\}$, which  is a.s.\ a null set for $\mathbf{M}_r' $ since  by Remark~\ref{RemarkInd}
$$ 0\,=\,\mathbf{M}_r\times \mathbf{M}_r' \big(\mathbf{\widehat{G}}\big)\,=\,\int_{\Gamma }\mathbf{M}_r\big(\mathbf{\widehat{G}}_p\big) \mathbf{M}_r'(dp)\,, $$
where we have evaluated the product measure of $\mathbf{\widehat{G}}$ through an integral over its cross sections. 
\end{remark}

\subsection{The spatial concentration of path intersection time} \label{SecLocalResults}
 
Our formalism for analyzing  path intersections under the random product measure $\mathbf{M}_r\times \mathbf{M}_r$ can be extended  by considering a measure $\vartheta_{\mathbf{M}_r}$ induced on  $D$ by weighing each set $A\in \mathcal{B}_D$ through the `average' intersection time that pairs of paths spend intersecting within $A$ when independently sampled using $\mathbf{M}_r$.  In the next subsection, these measures are used in the definition of a linear operator $\hat{Y}_{\mathbf{M}_r}$ from  $L^2(D,\vartheta_{\mathbf{M}_r})$ to  $L^2(\Gamma,\mathbf{M}_r)  $ satisfying $\mathbf{T}_{\mathbf{M}_r}:=\hat{Y}_{\mathbf{M}_r}\hat{Y}_{\mathbf{M}_r}^*$, where $\mathbf{T}_{\mathbf{M}_r}$  is the linear operator on $L^2(\Gamma,\mathbf{M}_r) $  having integral kernel $T(p,q)$.

   First, we revisit  the intersection-time kernel $T(p,q)$ from Proposition~\ref{PropLemCorrelate}  by defining a canonical Borel measure $\tau^{p,q}$ on $[0,1]$ that has total mass $T(p,q)$ and assigns full weight to the  set of intersection times $I^{p,q}$.  The proof of the following proposition  is in Section~\ref{SubSecPropIntMeas}.  Recall that $\mathcal{M}_{[0,1]}$ denotes the set of finite Borel measures on $[0,1]$, which we equip with the weak topology and the associated Borel $\sigma$-algebra.  We use $\mathcal{V}$ to denote the set of $t\in [0,1]$ having  the form $t=\frac{\ell}{b^n}$ for some $\ell,n\in \mathbb{N}_0$, i.e., the set of times at which  paths pass through the vertex set $ V$.

\begin{proposition}\label{PropIntMeasure} There is a  measurable function from $ \Gamma\times \Gamma$ to $ \mathcal{M}_{[0,1]}$  sending  pairs  $(p,q)$  to   measures $\tau^{p,q}$ that are supported on the set of intersection times $I^{p,q}$, have $\mathcal{V}$ as a null set,  and satisfy the following property for  $\upsilon_r$-a.e.\ $(p,q)$: for each $0\leq x\leq 1$, the measure of the interval $[0,x]$ under $ \tau^{p,q}$ is given by the limit 
\begin{align}\label{TauForm}
\tau^{p,q}([0,x])=\lim_{n\rightarrow \infty}\frac{\kappa^2}{n^2}\left|\bigg\{ 1\leq \ell \leq b^n  \,\bigg|\, [p]_n(\ell)=[q]_n(\ell)\,\,\,\text{and}\,\,\,\Big[\frac{ \ell-1 }{b^n  }, \frac{ \ell }{ b^n }   \Big]\subset [0,x]\bigg\}\right| \,.
 \end{align}
For $\upsilon_r$-a.e.\ pair $(p,q)$, the measure $\tau^{p,q}$ is diffuse and has total mass $T(p,q)$. The measurable function $(p,q)\mapsto \tau^{p,q}$  is unique up to $\upsilon_r$-null subsets of $\Gamma\times \Gamma$.

\end{proposition}

\begin{remark}
Since $\upsilon_r=\mathbb{E}[\mathbf{M}_{r}\times \mathbf{M}_r]$,
any  $\upsilon_r$-null set must a.s.\ be a $\mathbf{M}_{r}\times \mathbf{M}_r$-null set. In particular, $\mathbf{M}_{r}\times \mathbf{M}_r$ must a.s.\ be supported on the set of  pairs $(p,q)$ for which the measure $\tau^{p,q}$ satisfies~(\ref{TauForm}).
\end{remark}

\begin{remark}\label{RemarkTau} We can generalize the observation on hierarchical symmetry in Remark~\ref{RemarkIntTimeKernel} 
as follows: for $\upsilon_r$-a.e.\ $(p,q)$, we have that for all $A\in \mathcal{B}_{[0,1]} $\vspace{-.1cm} $$\tau^{p,q}(A)\,=\,\sum_{\substack{1\leq \ell \leq b^n  \\  \mathbf{p}(\ell)=\mathbf{q}(\ell)  }} \tau^{ p_{\ell}, q_{\ell}}\big(A_{\ell}^n\big) \hspace{.5cm} \text{where} \hspace{.5cm} A_{\ell}^n\,:=\,(b^n A-\ell+1)\cap [0,1] \vspace{-.1cm}   $$
under the same identifications
 $p\equiv \big(\mathbf{p};  p_1,\ldots,  p_{b^n}\big)$ and $q\equiv\big(\mathbf{q};  q_1,\ldots,  q_{b^n}\big)$ as in Remark~\ref{RemarkIntTimeKernel}. 
\end{remark}

\begin{remark} Recall that (iv) of Theorem~\ref{ThmPathMeasure} states that  for a.e.\ realization of the random path measure $\mathbf{M}_r$, the measure is supported on a set of paths  that intersect nontrivially with a $\mathbf{M}_r$-positive subset of the other paths.  Using the hierarchical symmetry of $\tau^{p,q}$ in Remark~\ref{RemarkTau}, it is not difficult to prove a slightly stronger statement: the partially `averaged' measure $\tau_{\mathbf{M}_r }^{p}:=\int_{\Gamma} \tau^{p,q}\mathbf{M}_r(dq) $  a.s.\ assigns positive weight to all open subsets of $[0,1]$ for  $\mathbf{M}_r$-a.e.\ $p$.  In other terms, a path sampled using $\mathbf{M}_r $ intersects nontrivially with a $\mathbf{M}_r $-positive portion of the path space in each region of its trajectory.
\end{remark}

The following   immediate corollary of Proposition~\ref{PropIntMeasure} introduces measures $\gamma^{p,q}$ that assign weight in the space $D$ according to where the  paths $p$ and $q$ spend their intersection time. We use $\gamma^{p,q}$ in the definition of the  measure $\vartheta_{\eta}$ on $D$ below.   

\begin{corollary}\label{CorollaryGamma} For $p,q\in \Gamma$ and $\tau^{p,q}\in \mathcal{M}_{[0,1]}$ defined as in Proposition~\ref{PropIntMeasure}, let $\gamma^{p,q}$ denote the Borel measure on $D$ given by the  pushforward of $\tau^{p,q}$ by the path $p:[0,1]\rightarrow D$,\footnote{Since $\tau^{p,q}$ is supported on the  set of intersection times $I^{p,q}$, we can also define $\gamma^{p,q}:=\tau^{p,q}\circ q^{-1}$.} i.e., 
$$\gamma^{p,q}\,:=\,\tau^{p,q}\circ p^{-1}\,.  $$  Then the map from $\Gamma\times \Gamma$ to $\mathcal{M}_{D}$ defined by $(p,q)\mapsto \gamma^{p,q}$ is measurable, and  for every $(p,q)$ the measure $\gamma^{p,q}$ is supported on $\textup{Range}(p)\cap \textup{Range}(q)$ and has $V$ as a null set.  
 Moreover, for $\upsilon_r$-a.e.\ pair $(p,q)$, the measure $\gamma^{p,q}$ is diffuse and  has total mass $T(p,q)$. 
\end{corollary}

 In the definition below, $\tilde{\mathcal{M}}_D$  denotes the set of Borel measures on $D$---including infinite ones. 

\begin{definition}\label{DefVartheta} Let $\vartheta: \mathcal{M}_{\Gamma}\rightarrow \tilde{\mathcal{M}}_D$ denote the function that sends   a finite Borel measure $\eta $ on $\Gamma$ to the Borel measure $\vartheta_{\eta}$ on $D$ given by
$$  \vartheta_{\eta}\,:=\,\int_{\Gamma\times \Gamma}\gamma^{p,q}\eta(dp)\eta(dq) \,,   $$
where the measure $\gamma^{p,q}$ on $D$ is defined as in Corollary~\ref{CorollaryGamma}.  Accordingly,  $\vartheta_{\mathbf{M}_r}$ denotes the random   measure on $D$ given by evaluating the map above with the random measure  $\mathbf{M}_r$ from Theorem~\ref{ThmExistence}.
\end{definition}

\begin{remark}\label{RemarkVarthetaNull} For any $\eta\in  \mathcal{M}_{\Gamma}$, we have $\vartheta_{\eta}(V)=0$  since $\gamma^{p,q}(V)=0$ for all $p,q\in \Gamma$.
\end{remark}

\begin{remark}\label{RemarkVarthetaMass}  Since $\gamma^{p,q}$ has total mass $T(p,q)$, the total mass of $\vartheta_{\eta}$ is  $\int_{\Gamma\times \Gamma}T(p,q)\eta(dp)\eta(dq)$.  Thus the expectation of the  total mass of $\vartheta_{\mathbf{M}_r}$  is  $R'(r)$ as a consequence of part  (ii) of Theorem~\ref{ThmPathMeasure}.
\end{remark}

\begin{remark}\label{RemarkVarthetaMu} Since $T(p,q)=0$ for $\mu\times \mu$-a.e.\ pair $(p,q)$ by (ii) of Proposition~\ref{PropLemCorrelate}, it follows from Remark~\ref{RemarkVarthetaMass} that $\vartheta_{\mu}=0$.  If our model were subcritical ($b<s$), then the measure $\vartheta_{\mu}$ would be a positive multiple of the uniform measure $\nu$ on $D$.
\end{remark}

Recall that the diamond fractal $D$ has Hausdorff dimension two. Statements (iii) and (v) of the theorem below effectively provide lower bounds---in  Hausdorff dimensional analysis terms---for how small a subset of $D$ can be and still be assigned positive weight under a typical realization of the random  measure $\vartheta_{\mathbf{M}_r}$. 
As before, let $d_D:D\times D\rightarrow [0,1]$ denote the travel metric on $D$, to be defined in Section~\ref{SectionDHLConst}. For $x, y\in D$, we define $\frak{g}_D(x,y)$ as the smallest $n\in \overline{\mathbb{N}}$ such that $x$ and $y$ do not belong to the same generation-$n$ embedded copy of $D$. Said differently,  $n$ is the first value for which there does not exist $\mathbf{e}\in E_n$ with  $x,y\in \textup{cl}( \mathbf{\overline{e}})$, where $\textup{cl}( \mathbf{\overline{e}})$ denotes the closure of the cylinder set $\mathbf{\overline{e}}\subset E$ associated to $\mathbf{e}$.  The proof of the next theorem is in Section~\ref{SubSecLast6}.

\begin{theorem}\label{ThmVartheta}
Let the random Borel measure $\vartheta_{\mathbf{M}_r}$ on $D$ be defined from  $\mathbf{M}_r$ as in Definition~\ref{DefVartheta}. The following statements hold for any $r\in \R$.
\begin{enumerate}[(i)]

\item $\mathbb{E}[\vartheta_{\mathbf{M}_r}]=R'(r)\nu$

\item $\vartheta_{\mathbf{M}_r}$ a.s.\ assigns positive weight to every open subset of $D$.

\item $\vartheta_{\mathbf{M}_r}$ a.s.\ has Hausdorff dimension two, i.e., if $A\in \mathcal{B}_{D} $ and $\vartheta_{\mathbf{M}_r}(A)>0$, then $\textup{dim}_{H}(A)=2$.

\item The correlation measure $ \mathbb{E}[ \vartheta_{\mathbf{M}_r}\times \vartheta_{\mathbf{M}_r}   ]$ of the random measure $\vartheta_{\mathbf{M}_r}$  has Radon-Nikodym derivative $C_r:D\times D\rightarrow [0,\infty)$  with respect to $\nu \times \nu$  that obeys the following  asymptotics for some   $\mathbf{c}>0$ when $\frak{g}_D(x,y)\gg 1$:
$$  C_r(x,y)\,\sim \,\mathbf{c}\big( \frak{g}_D(x,y)  \big)^8\,. $$

\item Given $\lambda>0$, define $h_\lambda:[0,1)\rightarrow [0,\infty)$ as the dimension function   $h_\lambda(a):=a^2 |\log(1/a)|^{\lambda}    $.  For a Borel measure $\varrho$ on $D$, define the $h_\lambda$-energy 
\begin{align*}
\mathcal{Q}_{\lambda}(\varrho)\,:=\, \int_{D\times D} \frac{1}{ h_\lambda\big(d_D(x,y)\big)}\varrho(dx)\varrho(dy)\, .
\end{align*}
 For a.e.\ realization of $\mathbf{M}_r$, the energy  $\mathcal{Q}_{\lambda}(\vartheta_{\mathbf{M}_r})$ is finite for all $\lambda>9$, and  the expectation of $\mathcal{Q}_{\lambda}(\vartheta_{\mathbf{M}_r})$ is infinite for all $\lambda\leq 9$.

\end{enumerate}

\end{theorem}

\begin{remark} In part (iv) above, $\frak{g}_D(x,y)$ can be  roughly identified with $\log_b\big( \frac{1}{d_D(x,y)}  \big)$, so the correlation density $C_r(x,y)$   essentially has a power 8 of logarithmic  blow-up near the diagonal $x=y$. 
\end{remark}

  For $\lambda>0$, let $H_{\lambda}$ denote the Hausdorff outer measure on $D$ induced by  the dimension function $h_{\lambda}(a):=a^2|\log(1/a)|^{\lambda}$. The following  corollary of the energy bound in (v) of Theorem~\ref{ThmVartheta}  effectively gives a lower bound for `how large' a Borel subset of $D$ must be  to avoid being a $\vartheta_{\mathbf{M}_r}$-null set.  We omit the proof because it is analogous to the proof of Corollary~\ref{CorIntSet} in Appendix~\ref{AppendixHausdorff}.  
\begin{corollary}\label{CorHD}  With probability one, the random measure   $\vartheta_{\mathbf{M}_r}$ has the following property:  every Borel set $S\subset D$ with $\vartheta_{\mathbf{M}_r}(S)>0$ satisfies that $H_{\lambda}(S)=\infty$ for all $\lambda>9$.
\end{corollary}

The above result would be more complete if the following conjecture were established, which  states  `how small' $\vartheta_{\mathbf{M}_r}$-full measure sets can be.  
\begin{conjecture}\label{Conj} With probability one, the random measure   $\vartheta_{\mathbf{M}_r}$ has the following property:  there exists a Borel set $S\subset D$ such that $\vartheta_{\mathbf{M}_r}(S^c)=0$  and  $H_{\lambda}(S)=0$ for all $\lambda<9$.
\end{conjecture}

\begin{remark}\label{RemarkConj} The statement of the above conjecture implies, in particular, that the measure $\vartheta_{\mathbf{M}_r}$ is a.s.\ mutually singular to the uniform measure $\nu$ on $D$, which is a multiple of $H_{\lambda}$ when $\lambda=0$, and thus assigns any Borel set $S$ measure zero that satisfies $H_{\lambda}(S)<\infty$ for some $\lambda>0$.  This is a form of locality for the disordered measure $\mathbf{M}_r$ because $\vartheta_{\mathbf{M}_r}$ assigns weight in the space $D$ according to where  pairs of paths $(p,q)\in \Gamma\times \Gamma$  spend their intersection time when independently sampled from $\mathbf{M}_r$.  Nontrivial path intersections do not occur outside a set $S\subset D$ that is small in the sense given in the conjecture. 
\end{remark}

\subsection{The Hilbert-Schmidt operator defined by the intersection-time kernel}\label{SectionHilbert}

 Let us fix some $\eta\in \mathcal{M}_{\Gamma}^{\diamond}$ such that the intersection time kernel $T(p,q)$ has finite second moment under $\eta\times\eta$.  We next discuss   the linear operator $\mathbf{T}_{\eta}$ on the space $L^2(\Gamma,\eta)$ that is defined through integration against the intersection-time kernel $T(p,q)$, i.e., such that for $g\in L^2(\Gamma,\eta)$
$$  \big(\mathbf{T}_{\eta}g\big)(p)\,:=\,\int_{\Gamma}T(p,q)g(q)\eta(dq) \,.$$
 The results   stated in this subsection are applied in~\cite{Clark4}, where $\mathbf{T}_{\eta}$ is the correlation operator for a Gaussian random field $\{ \mathbf{W}_{ \eta }(p) \}_{p\in \Gamma}$ on the measure space $(\Gamma,\eta)$.  This means that each $g\in L^2(\Gamma,\eta)$ is mapped  to a centered normal random variable with variance  $\langle g, \mathbf{T}_{\eta}g\rangle_{L^2(\Gamma,\eta)  } $, which we  denote by $\int_{\Gamma}g(p)\mathbf{W}_{ \eta }(p)\eta (dp)  $.  Let $\{ \hat{W}_{ \eta }(x) \}_{x\in D}$ be a  white noise field on $D$ with variance measure $\vartheta_{\eta}$, i.e., a map sending each $f\in L^2\big(D,\vartheta_{\eta}\big)$ to a centered normal random variable $\int_{D}f(x)\hat{W}_{ \eta }(x)\vartheta_{\eta} (dx)  $ with variance $\|f\|^2_{L^2(D,\vartheta_{\eta})  }$.  The Gaussian field  $\{ \mathbf{W}_{ \eta }(p) \}_{p\in \Gamma}$ can be formally constructed from $\{ \hat{W}_{ \eta }(x) \}_{x\in D}$ as 
 \begin{align}\label{Formal}
   \mathbf{W}_{ \eta }(p)  \,=\,\int_{D}y^{p}_{\eta}(x)\hat{W}_{ \eta }(x)\vartheta_{\eta} (dx)\, =\,\big\langle  y^{p}_{\eta},\hat{W}_{ \eta }\big\rangle_{L^2(D,\vartheta_{\eta})} \,,
  \end{align}
where the family of distributions  $\big\{ y^{p}_{\eta}\big\}_{p\in \Gamma}$ on $D$  can be  expressed as $y^{p}_{\eta}(x):=\frac{\int_{\Gamma}\gamma^{p,q}(dx)\eta(dq)}{\vartheta_{\eta}(dx)}$; see  the beginning of Section~\ref{SectionLast} for an alternative definition of $y^{p}_{\eta}$.  Although the measure $\int_{\Gamma}\gamma^{p,q}\eta(dq)$ on $D$ should not be expected to be  absolute continuous with respect to $\vartheta_{\eta}$,  the signed measure $\int_{\Gamma\times \Gamma}g(p)\gamma^{p,q}\eta(dp)\eta(dq) $ is absolutely continuous with respect to $\vartheta_{\eta}$ for any test function $g\in L^2(\Gamma,\eta)$.
 The linear operator $\hat{Y}_{\eta}:L^2\big(D,\vartheta_{\eta}\big)\rightarrow L^2(\Gamma, \eta)$ defined in~(\ref{YDef}) below is formally given by  $\big(\hat{Y}_{\eta}f\big)(p)=\big\langle  y^{p}_{\eta},f\big\rangle_{L^2(D,\vartheta_{\eta})} $, and thus we can alternatively write the equality~(\ref{Formal}) as $\mathbf{W}_{ \eta }(p) =\big(\hat{Y}_{\eta}\hat{W}_{ \eta }\big)(p)$.

 The proof of the theorem below is in Section~\ref{SubSecLast4}.  Appendix~\ref{AppendixHier} contains lemmas on the hierarchical symmetry of the linear operator $\hat{Y}_{\eta}$ and a natural method for  approximating $\hat{Y}_{\eta}$  by finite-rank operators.
\begin{theorem}\label{ThmOperator}  For any $\eta\in \mathcal{M}_{\Gamma}^{\diamond}$ with $\int_{\Gamma\times \Gamma}\big(T(p,q)\big)^2\eta(dp)\eta(dq)<\infty$, the Hilbert-Schmidt operator $\mathbf{T}_{\eta}$ on $L^2(\Gamma,\eta)$ with integral kernel $T(p,q)$  can be written as $\mathbf{T}_{\eta}= \hat{Y}_{\eta}\hat{Y}_{\eta}^*  $ for the compact linear operator $\hat{Y}_{\eta}$ from $L^2(D,\vartheta_{\eta})$ to $ L^2(\Gamma,\eta)  $ defined by 
\begin{align}\label{YDef}
\big(\hat{Y}_{\eta}f\big)(p)\,:=\, \int_{D\times \Gamma}f(x)\gamma^{p,q}(dx)\eta(dq) \,, 
\end{align}
 where  $p\in \Gamma$ and $f\in L^2(D,\vartheta_{\eta}) $. In particular, $\mathbf{T}_{\eta}$ is a positive operator. 
\end{theorem}

\begin{remark} Theorem~\ref{ThmOperator}   applies with $\eta:=\mathbf{M}_r$ since  the random measure $\mathbf{M}_r$ a.s.\ takes values   in $\mathcal{M}_{\Gamma}^{\diamond}$ by Corollary~\ref{CorInfDec} and satisfies $\int_{\Gamma\times \Gamma}\exp\{T(p,q)\}\mathbf{M}_r(dp)\mathbf{M}_r(dq)<\infty $ by (ii) of Theorem~\ref{ThmPathMeasure}.
\end{remark}

\begin{remark} Note that the operator $\mathbf{T}_{\eta}$ is not trace class unless $\mathbf{T}_{\eta}=0$ since $T(p,q)=\infty$ when $p=q$ as an immediate consequence of the limit  formula defining $T$  in (i) of Proposition~\ref{PropLemCorrelate}.
\end{remark}

\begin{remark} Let  $\tilde{\mathcal{M}}_{\Gamma}^{\diamond}$ denote the set of $\eta\in \mathcal{M}_{\Gamma}^{\diamond}$ such that $\int_{\Gamma\times \Gamma}\big(T(p,q)\big)^2\eta(dp)\eta(dq)<\infty$. If we wish to view $\hat{Y}$ as defining a measurable map $\eta\mapsto \hat{Y}_{\eta}$ from $\tilde{\mathcal{M}}_{\Gamma}^{\diamond}$ into some Polish space, then we can follow the basic recipe below.  Let us equip $\R^{\infty}$ with the metric $d\big( (x_n)_{n\in \mathbb{N}},(y_n)_{n\in \mathbb{N}}\big)=\sum_{n=1}^{\infty}\frac{1}{2^n}\big(|x_n-y_n|\wedge 1\big)$, which induces the topology of pointwise convergence.  Since the metric spaces $\Gamma$ and $D$ are compact, the spaces of  continuous functions $C(\Gamma)$ and  $C(D)$ are separable with respect to their uniform norms.  Let $\mathbf{C}$ be a countable, dense  subset of $C(\Gamma)$, $\mathbf{C'}$ be a countable, dense  subset of $C(D)$,  and  $\big( (g_n,f_n) \big)_{n\in \mathbb{N}}$ be an enumeration of $\mathbf{C}\times \mathbf{C'}$.  Then we can identify $\hat{Y}$ with the measurable map from $\tilde{\mathcal{M}}_{\Gamma}^{\diamond}$ to $\R^{\infty} $ that sends $\eta\in \tilde{\mathcal{M}}_{\Gamma}^{\diamond}$ to the sequence $\hat{Y}\eta\equiv \big((\hat{Y}\eta)_n  \big)_{n\in \mathbb{N}}$ with $(\hat{Y}\eta)_n :=\int_{\Gamma} g_n(p)\big(\hat{Y}_{\eta}f_n\big)(p)\eta(dp) $. \end{remark}

The  minor technical lemma below is used in~\cite[Section 5]{Clark4}  to prove (v) of Proposition~\ref{PropProperties}.
\begin{lemma}\label{PropY} Given the random measure $\mathbf{M}_r$, define $\hat{Y}_{\mathbf{M}_r}:L^2(D,\vartheta_{\mathbf{M}_r})\rightarrow L^2(\Gamma,\mathbf{M}_r)  $ as in Proposition~\ref{ThmOperator}.  When $f=1_{D}$, the function $\hat{Y}_{\mathbf{M}_r}f$ is $\mathbf{M}_r$-a.e.\ positive for a.e.\ realization of $\mathbf{M}_r$.
\end{lemma}
\begin{proof} Notice that $\big(\hat{Y}_{\mathbf{M}_r}1_D\big)(p)=\int_{\Gamma}T(p,q)\mathbf{M}_r(dq)$ since the measure $\gamma^{p,q}$ has total mass $T(p,q)$ by Corollary~\ref{CorollaryGamma}.  The result then follows from (iv) of Theorem~\ref{ThmPathMeasure}.
\end{proof}

\begin{remark}\label{RemarkWN}  Recall from Remark~\ref{RemarkVarthetaMu} that $\vartheta_{\mu}=0$.  It follows that $\mathbf{T}_{\mu}=0$ and $\hat{Y}_{\mu}=0$.  In a subcritical version of the model, there would exist $\lambda,\lambda'\in (0,\infty)$ such that  $\vartheta_{\mu}=\lambda \nu   $ and   $Y_{\mu}:L^2(D,\nu)\rightarrow L^2(\Gamma,\mu)$ acts on $f\in L^2(D, \nu)$ as  $\big(Y_{\mu}f\big)(p)=\lambda'\int_{0}^{1}f\big(p(t)\big)dt  $.  
\end{remark}

\section{DHL construction and its basic structures}\label{SectionDHLConst}

The DHL construction that we sketch here was introduced in~\cite{Clark2}, and our presentation will be specialized to the  $b=s$ case.  A closely related perspective on diamond fractals that is oriented towards a discussion of diffusion  can be found in~\cite{Ruiz,Ruiz2}.  Diffusion has also been studied on critical percolation clusters constructed in a diamond graph setting~\cite{HamblyII}.   \vspace{.2cm}

\noindent \textbf{DHL construction through sequences:}  The recursive construction of the diamond graphs implies  an obvious one-to-one correspondence between the edge set, $E_n$, of the  diamond graph $D_n$ and the set of length-$n$ sequences, $\{(b_k,s_k)\}_{k\in \{1,\ldots ,n\}  }$, of pairs $(b_k,s_k)\in \{1,\ldots, b\}^2$.
 In other terms, $E_n$ is canonically identifiable with the product set $ \big(\{1,\ldots, b\}^2\big)^{n}$.  For $\mathcal{D}:=\big(\{1,\ldots, b\}^2\big)^{\infty}$, we define  the DHL  as a metric space $(D,  d_D)$ where
$$
D\, := \, \mathcal{D} /\big(x,y\in \mathcal{D} \text{ with }  d_D(x,y )=0\big)\,
$$
for a  semi-metric $d_D:\mathcal{D}\times \mathcal{D}\longrightarrow [0,1]$ to be defined below in~(\ref{DefSemiMetric}) that, intuitively, measures the traveling distance along paths.  \vspace{.2cm}

\noindent \textbf{The metric:} Define the map $\widetilde{\pi}: \mathcal{D}\rightarrow [0,1]$ such that a sequence $x= \{(b_k^x,s_k^x)\}_{k\in \mathbb{N}}$ is assigned the number, $ \widetilde{\pi}(x)$, with base $b$  decimal expansion  having  $s_k^x-1\in\{0,\ldots, b-1\}$  as its $k^{th}$ digit for each $k\in \mathbb{N}$:
$$\widetilde{\pi}(x)\,:=\, \sum_{k=1}^{\infty} \frac{s_{k}^{x}-1}{ b^{k}}  \,.$$
Define   $A:=\{x\in \mathcal{D}\,|\, \widetilde{\pi}(x)=0 \}$ and $B:=\{x\in \mathcal{D}\,|\, \widetilde{\pi}(x)=1 \}$, i.e., the opposing nodes.
   For $x,y\in\mathcal{D}$, we write $x\updownarrow y$ if  $x$ or $y$ belongs to one of the sets $A$, $B$ or if the sequences of pairs $\{(b_k^x,s_k^x)\}_{k\in \mathbb{N}}$ and  $\{(b_k^y,s_k^y)\}_{k\in \mathbb{N}}$ respectively defining $x$ and $y$ have their first disagreement at an $s$-component value, i.e., there exists an $n\in \mathbb{N}$ such that $  b_k^x=b_k^y$ for all $ 1\leq k \leq n $ and  $ s_n^x\neq s_n^y$.
   We define the semi-metric $d_D$ in terms of $\widetilde{\pi}$    as
\begin{align}\label{DefSemiMetric}
d_D(x,y)\,:=\,\begin{cases} \quad  \quad  \big|\widetilde{\pi}(x)-\widetilde{\pi}(y)\big|   & \quad\text{if } x\updownarrow y,  \\ \,\, \displaystyle \inf_{z \in \mathcal{D},\, z\updownarrow x,\, z\updownarrow y  }\big( d_D(x,z)+d_D(z,y) \big)  & \quad  \text{otherwise.} \end{cases}    
\end{align}
 The semi-metric $d_D(x,y)$  takes values $\leq 1$ since, by definition, $z\updownarrow x $  and $z\updownarrow y $ for any $z\in A$ or $z\in B$, and thus $d_D(x,y)\leq \min\big(\widetilde{\pi}(x)+\widetilde{\pi}(y),\, 2-\widetilde{\pi}(x)-\widetilde{\pi}(y)\big)$.\vspace{.2cm}

\noindent \textbf{Self-similarity:} The fractal decomposition of $D$ into shrunken, embedded subcopies of $D$ is easy to see through the family of shift maps  $S_{i,j}:\mathcal{D}\rightarrow \mathcal{D}$ for $(i,j)\in \{1,\ldots , b\}^2$ that send a sequence $x\in \mathcal{D}$ to a shifted sequence $y=S_{i,j}(x)$ having initial term $(i,j)$.  In other terms, $\{(b_k^x,s_k^x)\}_{k\in \mathbb{N}}$ is mapped to $\{(b_k^y,s_k^y)\}_{k\in \mathbb{N}}$ for $(b_1^y,s_1^y)=(i,j)$ and $(b_k^y,s_k^y)=(b_{k-1}^x,s_{k-1}^x)$ for $k\geq 2$.  The map $S_{i,j}$ is also well-defined as a function from $D$ into $D$  with the contractive property
$$d_D\big(S_{i,j}(x),S_{i,j}(y)\big)\,=\,\frac{1}{b} d_D(x,y)  \hspace{.5cm} \text{for} \hspace{.5cm} x,y\in D \,. $$
The family of sets $\big\{S_{i,j}(D)\big\}_{1\leq i,j\leq b}$ are the first-generation shrunken, embedded copies of $D$.   Thus the maps $S_{i,j}$ are the similitudes of the fractal $D$, and the above  property implies that the space $(D,d_D )$ has Hausdorff dimension two.  \vspace{.2cm}

\noindent \textbf{The vertex set:} The sets  $A, B \subset \mathcal{D}$ form equivalence classes under the metric $d_D  $ that correspond to the root nodes of $D$. Similarly, the higher-generation vertices, $V_n\backslash V_{n-1}$ for $n\in \mathbb{N}$, of the diamond graphs are identified with uncountably infinite  equivalence classes of $\mathcal{D}$ of the form
$$  \big( S_{ b_1, s_1 } \circ \cdots \circ S_{b_{n-1}, s_{n-1} }\circ S_{b_n, s_n }(B) \big)  \cup \big(S_{b_1,s_1 } \circ \cdots \circ S_{b_{n-1},s_{n-1}}\circ S_{b_n,s_n+1 }(A) \big)  \,  $$
for a length-$n$  sequence of pairs $(b_k,s_k)\in \{1,\ldots , b\}^2$ with $s_n<b$;
 see~\cite[Appendix A.1] {Clark2} for a more explicit construction of these vertex equivalence classes. In contrast, elements of $E:=D\backslash V$ for $V:=\bigcup_n V_n$ have unique representatives in $\mathcal{D}$.  The vertex set $V$ is dense in $D$.\vspace{.2cm}

\noindent \textbf{Measure theoretic structures on the DHL:} For $E:=D\backslash V$,  we can define a generation-$n$ cylinder subset of $E$ using  any length-$n$  sequence of pairs $( b_k ,s_k)\in \{1,\ldots , b\}^2$ through
\begin{align*}
C_{ (b_1,s_1), \ldots , (b_n,s_n)}  \,:=\, S_{b_1 ,s_1 } \circ \cdots \circ  S_{ b_n,s_n }(E 
)\,.
\end{align*}
The canonical one-to-one correspondence between the set of edges $E_n$  and the product set $\big(\{1,\ldots , b\}^2\big)^n$ can also be viewed as a canonical bijection between $E_n$ and the collection of generation-$n$ cylinder sets of $E$, and we denote the cylinder set associated to $\mathbf{e}\in E_n$ by $\mathbf{\overline{e}}$. The  Borel  $\sigma$-algebra, $\mathcal{B}_D$, of $D$ is  generated by the semi-algebra  $\mathcal{S}_{D}:=\mathcal{P}(V)\cup \bigcup_{n=0}^{\infty}\{\mathbf{\overline{e}}\,|\,\mathbf{e}\in E_n\}$, where  $\mathcal{P}(V)$ denotes the power set of $V$. There is a unique probability  measure $\nu$ on the measurable space $(D,\mathcal{B}_D )$ such that $\nu(V)=0$ and 
 $\nu(\mathbf{\overline{e}})= b^{-2n}$ for all $n\in \mathbb{N}_0$ and $\mathbf{e}\in E_n$.\vspace{.2cm}

\noindent \textbf{Directed paths:} A \textit{directed path}   on $D$ is a continuous function $p:[0,1]\rightarrow D$ such that $\widetilde{\pi}\big( p(r)\big)=r $ for all $r\in[0,1]$.  
Thus a path $p\in \Gamma$ moves progressively from $A$ to $B$  at a constant speed.  Paths pass through points in the vertex set $V_n\subset D$ at the  times $t=\frac{k}{b^n}$ for $k\in \{0,1,\ldots, b^n\}$.  We can measure the distance between paths using the uniform metric:
$$    d_\Gamma\big(p,q\big) \, := \, \max_{0\leq r\leq 1}d_D\big( p(r), q(r)    \big) \hspace{.5cm} \text{for} \hspace{.5cm} p,q\in \Gamma   \,.     $$
This definition  implies that distances on $\Gamma$ always have the discrete form  $ d_\Gamma(p,q)\, = \,  b^{-(n-1)}      $ for some $n\in \mathbb{N}$, where $n$ is the lowest generation of any vertex in the range of $p$ but not $q$.  It is easy to see that the metric space $(\Gamma, d_\Gamma  )$ is compact. \vspace{.2cm}

\noindent \textbf{Measure theoretic structures on the space of directed paths:} The set of directed paths, $\Gamma_n$, on the $n^{th}$ diamond graph
 defines an equivalence relation on $\Gamma$ in which $q\equiv_n p$ if and only if the coarse-grained paths $[p]_n$ and $[q]_n$ are equal. If $\mathbf{p}\in \Gamma_n$, then we denote its associated equivalence class, $\big\{p\in \Gamma\,\big|\,[p]_n=\mathbf{p}\big\}$, by  $\mathbf{\overline{p}}$.  The Borel $\sigma$-algebra,  $\mathcal{B}_\Gamma $, on $\Gamma$ is generated by the semi-algebra $\mathcal{S}_{\Gamma}:=\{\emptyset\}\cup \bigcup_{n=0}^\infty \big\{\mathbf{\overline{p}}\,\big|\,\mathbf{p}\in \Gamma_n\big\}$, and 
 there is a unique measure $\mu$ on the measurable space $(\Gamma,\mathcal{B}_\Gamma)$ satisfying $\mu(\mathbf{\overline{p}})=|  \Gamma_n |^{-1}  $ for all $n\in \mathbb{N}_0$ and  $\mathbf{p}\in \Gamma_n$.  In later sections,  we will apply the term  \textit{cylinder set}  more broadly  to unions  $\bigcup_{\mathbf{p}\in A}\mathbf{\overline{p}}$ for $A\subset \Gamma_n$ and  refer to  $\mathbf{\overline{p}}$ for $\textbf{p}\in E_n$ as a \textit{simple} cylinder set.

\section{The construction and properties of the correlation measure}\label{SecCorrMeas}

Our goal in this section is to prove results related to the correlation measure $\upsilon_{r}:=\mathbb{E}\big[\mathbf{M}_{r}\times  \mathbf{M}_{r}\big]$.  In Sections~\ref{SecProofLemCorrelate} and~\ref{SecPropLemCorr}, we will prove Lemma~\ref{LemCorrelate} and Proposition~\ref{PropLemCorrelate}, respectively. As a preliminary, in Section~\ref{SubsecAlg}, we discuss the $\sigma$-algebras $\mathcal{B}_{\Gamma}$ and $\mathcal{B}_{\Gamma\times \Gamma}$. In Section~\ref{SubsecMart}, we define a $\upsilon_r$-martingale that has a role in several proofs.

\subsection{Set algebras on $\mathbf{\Gamma}$ and $\mathbf{\Gamma\times \Gamma}$ }\label{SubsecAlg}
Recall that $\mathcal{B}_{\Gamma}$ denotes the Borel $\sigma$-algebra on $\Gamma$ and that discrete paths $\mathbf{p}\in \Gamma_n$ correspond to simple cylinder subsets $\mathbf{\overline{p}}$ of $\Gamma$.  Lemma~\ref{LemAlgebra} below is worth stating to avoid repetition in the proofs concerned with defining Borel measures on $\Gamma$ and $\Gamma\times \Gamma$.  The lemmas below easily generalize to the measurable space  $\big(\Gamma^d, \mathcal{B}_{\Gamma^d}\big)$ for any $d\in \mathbb{N}$, but we narrow our discussion to the cases $d=1$ and $d=2$ that we need.

\begin{definition}\label{DefAlgebra}
Define the following subalgebras of $\mathcal{B}_{\Gamma}$ and $\mathcal{B}_{\Gamma\times \Gamma}=\mathcal{B}_{\Gamma}\otimes \mathcal{B}_{\Gamma}$ for  $n\in \mathbb{N}_0$. 
\begin{itemize}

\item  $\mathcal{A}_{\Gamma}^{(n)}$ denotes the finite $\sigma$-algebra of generation-$n$ cylinder subsets of $\Gamma$, i.e., the collection of preimages of the map from $\Gamma$ to $\Gamma_n$ that sends a path $p$ to its generation-$n$ coarse-graining, $[p]_n$:\vspace{-.1cm}
 $$ \mathcal{A}_{\Gamma}^{(n)}\,:=\, \Big\{ \big\{ p\in \Gamma\,\big|\,[p]_n\in B\big\}\,\Big|\,\text{$B\subset \Gamma_n$}  \Big\}\,=\,\bigg\{ \bigcup_{\mathbf{p}\in B}\mathbf{\overline{p}} \, \bigg| \, B\subset \Gamma_n   \bigg\} \,. \vspace{-.35cm} $$

\item    $\mathcal{A}_{\Gamma\times \Gamma}^{(n)}=\mathcal{A}_{\Gamma}^{(n)}\otimes\mathcal{A}_{\Gamma}^{(n)}$ denotes the finite $\sigma$-algebra of generation-$n$ cylinder subsets of $\Gamma\times \Gamma$.

\item  Define  $\mathcal{A}_{\Gamma}:= \bigcup_{n=1}^\infty \mathcal{A}_{\Gamma}^{(n)}$ and $\mathcal{A}_{\Gamma\times \Gamma}:= \bigcup_{n=1}^\infty  \mathcal{A}_{\Gamma\times \Gamma}^{(n)}$.

\end{itemize}

\end{definition}

\begin{remark} \label{RemarkSubAlg} For $n\geq N$,  we have $\mathcal{A}_{\Gamma}^{(N)} \subset \mathcal{A}_{\Gamma}^{(n)}$ and $\mathcal{A}_{\Gamma\times \Gamma}^{(N)} \subset \mathcal{A}_{\Gamma\times \Gamma}^{(n)}$.
\end{remark}

The next trivial lemma  follows from the compactness of $\Gamma$  and  the metric $d_{\Gamma}$ being discrete-valued.
\begin{lemma}\label{RemarkOpenClosed} Every set in  $\mathcal{A}_{\Gamma}$ is both open and compact in the topology of the metric space $(\Gamma, d_\Gamma  )$. The analogous statement also holds for $\mathcal{A}_{\Gamma\times \Gamma}$.
\end{lemma}
\begin{proof} For any $A\in \mathcal{A}_{\Gamma}$,  there is an $n\in \mathbb{N}$ large enough such that $A$ is a finite disjoint union of cylinder sets $\mathbf{\overline{p}}$ for  $\mathbf{p}\in \Gamma_n$, and $\mathbf{\overline{p}} \,=\,\big\{ q\in \Gamma \,\big|\, d_\Gamma(p,q)\leq \delta   \big\}    $
for any element $p\in \mathbf{\overline{p}}$ and any choice of $\frac{1}{b^n}\leq \delta<\frac{1}{b^{n-1}}$. It follows that $A$ is both open and closed in the topology of $\Gamma$.  Furthermore,  $A$ is compact because $\Gamma$ is  compact.  This argument extends to $\mathcal{A}_{\Gamma\times \Gamma}$.
\end{proof}

\begin{lemma}\label{LemAlgebra}
Let $\mathcal{A}_{\Gamma}\subset \mathcal{B}_{\Gamma}$ and $\mathcal{A}_{\Gamma\times \Gamma}\subset \mathcal{B}_{\Gamma\times \Gamma}$ be defined as in Definition~\ref{DefAlgebra}.

\begin{enumerate}[(i)]

\item $\mathcal{A}_{\Gamma}$ is an algebra that generates $\mathcal{B}_{\Gamma}$.  Moreover, any finitely additive set function $\varrho:\mathcal{A}_{\Gamma}\rightarrow [0,\infty)$ must be a premeasure and thus extend uniquely to a measure on the measurable space $(\Gamma, \mathcal{B}_{\Gamma})$ through the Carath\'eodory extension procedure.

\item The analogous statement holds for the algebra $\mathcal{A}_{\Gamma\times \Gamma}$ and the measurable space $(\Gamma\times \Gamma, \mathcal{B}_{\Gamma\times \Gamma})$.

\end{enumerate}

\end{lemma}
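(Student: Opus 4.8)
The plan is to prove Lemma~\ref{LemAlgebra} by verifying three things in sequence: that $\mathcal{A}_\Gamma$ is an algebra generating $\mathcal{B}_\Gamma$, that every finitely additive $\varrho:\mathcal{A}_\Gamma\to[0,\infty)$ is automatically countably additive on $\mathcal{A}_\Gamma$ (i.e.\ a premeasure), and then invoke Carathéodory extension. The first point is essentially bookkeeping: $\mathcal{A}_\Gamma=\bigcup_n\mathcal{P}(\Gamma_n)$ is an increasing union of finite algebras (using the observation from the excerpt that $\mathcal{P}(\Gamma_N)\subset\mathcal{P}(\Gamma_n)$ for $n>N$), hence an algebra; and it generates $\mathcal{B}_\Gamma$ because the cylinder sets $\mathbf{p}\in\Gamma_n$ are exactly the closed balls of radius between $1/b^n$ and $1/b^{n-1}$ in $(\Gamma,d_\Gamma)$ (Remark~\ref{RemarkOpenClosed}), and $\Gamma$ is separable, so every open set is a countable union of such balls.

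The crux is the premeasure claim, and the key point I would exploit is \emph{compactness}. First I would note $(\Gamma,d_\Gamma)$ is compact: it is the countable product $\bigtimes_{k}\Gamma$-type structure cut out by the ultrametric-like $d_\Gamma$ with discrete value set $\{b^{-n}\}$, so it is complete and totally bounded (each generation-$n$ coarse-graining partitions $\Gamma$ into finitely many balls of radius $b^{-n}$). Now suppose $A=\bigsqcup_{k=1}^\infty A_k$ with $A,A_k\in\mathcal{A}_\Gamma$; I must show $\varrho(A)=\sum_k\varrho(A_k)$. Finite additivity plus monotonicity gives $\varrho(A)\geq\sum_{k=1}^N\varrho(A_k)$ for all $N$, hence $\varrho(A)\geq\sum_k\varrho(A_k)$. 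For the reverse, set $B_N:=A\setminus\bigcup_{k=1}^N A_k\in\mathcal{A}_\Gamma$; then $\bigcap_N B_N=\emptyset$. By Remark~\ref{RemarkOpenClosed} each $B_N$ is closed, hence compact (closed subset of a compact space), and they are nested, so by the finite intersection property $B_{N_0}=\emptyset$ for some $N_0$; finite additivity then gives $\varrho(A)=\sum_{k=1}^{N_0}\varrho(A_k)+\varrho(B_{N_0})=\sum_{k=1}^{N_0}\varrho(A_k)\leq\sum_k\varrho(A_k)$. This shows $\varrho$ is a premeasure, and the Carathéodory extension theorem produces a measure on $\sigma(\mathcal{A}_\Gamma)=\mathcal{B}_\Gamma$ restricting to $\varrho$; uniqueness of the extension follows since $\varrho(\Gamma)<\infty$, so the extension is a finite (in fact, the relevant cases give finite) measure and $\mathcal{A}_\Gamma$ is a generating algebra.

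For part (ii) the argument is identical word-for-word with $\Gamma\times\Gamma$ in place of $\Gamma$: $\mathcal{A}_{\Gamma\times\Gamma}=\bigcup_n\mathcal{P}(\Gamma_n\times\Gamma_n)$ is an increasing union of finite algebras generating $\mathcal{B}_\Gamma\otimes\mathcal{B}_\Gamma$ (the product of two second-countable spaces, so the product $\sigma$-algebra is the Borel $\sigma$-algebra of the product metric), $\Gamma\times\Gamma$ is compact, and elements of $\mathcal{A}_{\Gamma\times\Gamma}$ are clopen, so the same nested-compact-sets argument shows finite additivity upgrades to a premeasure. I would simply remark that the proof of (i) applies verbatim.

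The main obstacle — really the only non-formal ingredient — is making sure the compactness/clopen input is cleanly in hand: one needs that $(\Gamma,d_\Gamma)$ is compact and that the generation-$n$ cylinder partition consists of clopen sets whose diameters shrink to $0$. Both are already recorded in the excerpt (the discreteness of $d_\Gamma$ with range $\{b^{-n}\}$ and Remark~\ref{RemarkOpenClosed}), so the proof reduces to citing these and running the standard ``finitely additive $+$ clopen generating algebra on a compact space $\Rightarrow$ premeasure'' argument. No delicate estimates are needed.
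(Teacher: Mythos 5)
Your proof is correct and follows essentially the same route as the paper's: both arguments rest on the observation that elements of $\mathcal{A}_{\Gamma}$ are clopen (Remark~\ref{RemarkOpenClosed}) together with a finite-intersection-property argument on the nested closed sets $A\setminus\bigcup_{k=1}^{n}A_k$, concluding that any countable disjoint decomposition in $\mathcal{A}_\Gamma$ is actually finite. You simply make explicit the compactness of $(\Gamma,d_\Gamma)$ that the paper uses implicitly.
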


\begin{proof} The only statement from (i) that is not obvious is that any  finitely additive function $\varrho:\mathcal{A}_{\Gamma}\rightarrow [0,\infty)$ must  be countably subadditive and thus be a premeasure.  The countable subadditivity of $\varrho$ holds vacuously in the following sense: if $A=\bigcup_{k=1}^\infty A_k  $ is a disjoint union with $A\in\mathcal{A}_{\Gamma}$ and $A_k\in\mathcal{A}_{\Gamma}$ for all $k\in \mathbb{N}$, then only finitely many of the sets $A_k$ are nonempty.  To see this, suppose to the contrary that  $\bigcup_{k=1}^n A_k  $ is a strict subset of $A$ for each $n$. Then the set  $\bigcap_{n=1}^\infty \big(A-  \bigcup_{k=1}^{n} A_k\big)$ must be nonempty because it is an intersection of a decreasing sequence of nonempty sets that are compact by Lemma~\ref{RemarkOpenClosed}. This contradicts our assumption that $A=\bigcup_{k=1}^\infty A_k  $, and therefore $A=\bigcup_{k=1}^{n} A_k$ for large enough $n$.  
We can apply the same reasoning   to prove statement (ii).
\end{proof}

The following lemma shows that the  algebra $\mathcal{A}_{\Gamma}$ defines a convenient collection of simple functions. 

\begin{lemma}\label{LemDense} Let $\mathbf{C}(\Gamma)$ denote the set of real-valued $\mathcal{A}_{\Gamma}$-measurable simple functions on $\Gamma$, i.e., $\psi \in  \mathbf{C}(\Gamma)$ if and only if it has the form $\psi=\sum_{j=1}^J\alpha_j1_{A_j}$ for some $\alpha_j\in \R$ and $A_j\in \mathcal{A}_{\Gamma}$. Then $\mathbf{C}(\Gamma)$ is a dense subset of $C(\Gamma)$ with respect to the uniform norm.  
\end{lemma}

\begin{proof} Functions in $\mathbf{C}(\Gamma)$ are continuous because sets in the algebra $\mathcal{A}_{\Gamma}$ are both open and closed.  Since $\Gamma $ is a compact metric space and $ \mathbf{C}(\Gamma)$ is an algebra of functions in $C(\Gamma)$ that separates points in $\Gamma$ and contains all constant functions, it follows that $ \mathbf{C}(\Gamma)$ is dense in $C(\Gamma)$ with respect to the uniform norm by the Stone-Weierstrass theorem.
\end{proof}

Corollary~\ref{CorollarySemiAlg} below rephrases Lemma~\ref{LemAlgebra} in terms of the semi-algebras of simple cylinder sets. 

\begin{definition}\label{DefAlgebraSemi} 
Let  $\mathcal{S}_{\Gamma}\subset \mathcal{B}_{\Gamma}$ 
and $\mathcal{S}_{\Gamma\times\Gamma}\subset \mathcal{B}_{\Gamma\times \Gamma}$ denote the  semi-algebras  below.
\begin{itemize}
\item $\mathcal{S}_{\Gamma}:=\{\emptyset\}\cup \bigcup_{n=0}^\infty \mathcal{S}_{\Gamma}^{(n)}$ for $ \mathcal{S}_{\Gamma}^{(n)}:= \big\{\mathbf{\overline{p}}\,\big|\,\mathbf{p}\in \Gamma_n\big\} $

\item $\mathcal{S}_{\Gamma\times \Gamma}:=\{\emptyset\}\cup \bigcup_{n=0}^\infty  \mathcal{S}_{\Gamma\times \Gamma}^{(n)} $ for $ \mathcal{S}_{\Gamma\times \Gamma}^{(n)}:= \big\{\mathbf{\overline{p}}\times \mathbf{\overline{q}}\,\big|\,\mathbf{p},\mathbf{q}\in \Gamma_n\big\}$

\end{itemize}

\end{definition}
\begin{corollary}\label{CorollarySemiAlg} The semi-algebra   $\mathcal{S}_{\Gamma}$  generates $\mathcal{B}_{\Gamma}$, and any finitely additive set function $\varrho:\mathcal{S}_{\Gamma}\rightarrow [0,\infty)$ extends uniquely to a Borel measure on $\Gamma$. The analogous statements hold for  $\mathcal{S}_{\Gamma\times \Gamma}$.
\end{corollary}
\begin{proof} The semi-algebra $\mathcal{S}_{\Gamma}$ generates the algebra $\mathcal{A}_{\Gamma}$, and so  an additive function on $\mathcal{S}_{\Gamma}$ extends uniquely to an additive function on $\mathcal{A}_{\Gamma}$.  The results then follow from Lemma~\ref{LemAlgebra}.
\end{proof}

\begin{remark}\label{RemarkSemiAlg}
 To prove that a set function $\varrho:\mathcal{S}_{\Gamma}\rightarrow [0,\infty)$ is finitely additive, by induction, it suffices merely to show that for all $n\in \mathbb{N}_0$ and $\mathbf{p}\in \Gamma_n$
$$ \varrho\big(\mathbf{\overline{p}}\big)\,=\,\sum_{\substack{ \mathbf{q}\in \Gamma_{n+1} \\ \mathbf{\overline{q}}\subset \mathbf{\overline{p}} }} \varrho\big(\mathbf{\overline{q}}\big)\,.$$
The analogous statement holds for a set function $\varrho:\mathcal{S}_{\Gamma\times \Gamma}\rightarrow [0,\infty)$.
For $\mathbf{q}\in \Gamma_{n+1}$, note that $\mathbf{\overline{q}}\subset \mathbf{\overline{p}} $ if and only if $[\mathbf{q}]_n=\mathbf{p}$, i.e., $\mathbf{p}$ is the generation-$n$ coarse-graining of   $\mathbf{q}$. The set $\{ \mathbf{q}\in \Gamma_{n+1}\,|\,\mathbf{\overline{q}}\subset \mathbf{\overline{p}} \}$ has a canonical one-to-one correspondence with $\{1,\ldots,b\}^{\{1,\ldots, b^n\}}$,  where each path $\mathbf{q}\in \Gamma_{N+1}$ with $\mathbf{\overline{q}}\subset\mathbf{\overline{p}}$ is determined by selecting one of $b$ possible branches for each embedded copy of $D_1$ within $D_{n+1}$ that is associated to an edge  $\mathbf{p}(\ell)\in E_n$ for some $1\leq \ell\leq b^n$. 
\end{remark}

\subsection{Proof of Lemma~\ref{LemCorrelate}}\label{SecProofLemCorrelate}

\begin{proof} Part (i): By Corollary~\ref{CorollarySemiAlg}, it suffices to show that the set function $\upsilon_{r}:\mathcal{S}_{\Gamma\times \Gamma}\rightarrow [0,\infty)$ defined by condition~(\ref{UpsilonCond}) is additive.  For $n\in \mathbb{N}_0$ and $\mathbf{p}, \mathbf{q}\in \Gamma_n$, we  have the first  equality below by definition of $\upsilon_{r}$ on  $\mathbf{\overline{p}}\times \mathbf{\overline{q}}$. 
\begin{align}
\upsilon_{r}\left(\mathbf{\overline{p}}\times \mathbf{\overline{q}} \right)\, &:= \,\frac{1}{|\Gamma_n|^2}  \big(1+R(r-n)\big)^{  \xi_n(\mathbf{p},\mathbf{q} ) } \nonumber \\ \nonumber
&\, = \,\frac{1}{|\Gamma_n|^2}  \prod_{\ell=1}^{b^n}\big(1+R(r-n)\big)^{ 1_{\mathbf{p}(\ell)=\mathbf{q}(\ell) } } \nonumber \\
&\,=\,\sum_{\substack{\mathbf{p_*},\mathbf{q_*}\in \Gamma_{n+1} \\ \mathbf{\overline{p}_*}\subset \mathbf{\overline{p}},\,\, \mathbf{\overline{q}_*}\subset \mathbf{\overline{q}}  }   }\frac{1}{|\Gamma_{n+1}|^2}  \prod_{l=1}^{b^{n+1}}\big(1+R(r-n-1)\big)^{ 1_{\mathbf{p_*}(l)=\mathbf{q_*}(l)}  } \nonumber  \\
  &\,=\,\sum_{\substack{\mathbf{p_*},\mathbf{q_*}\in \Gamma_{n+1} \\ \mathbf{\overline{p}_*}\subset \mathbf{\overline{p}},\,\, \mathbf{\overline{q}_*}\subset \mathbf{\overline{q}}  }   }\frac{1}{|\Gamma_{n+1}|^2}  \big(1+R(r-n-1)\big)^{\xi_{n+1}(\mathbf{p_*},\mathbf{q_*}) }\nonumber  \\
  &\,=:\,\sum_{\substack{\mathbf{p_*},\mathbf{q_*}\in \Gamma_{n+1} \\ \mathbf{\overline{p}_*}\subset \mathbf{\overline{p}},\,\, \mathbf{\overline{q}_*}\subset \mathbf{\overline{q}}  }  }\upsilon_{r}\left(\mathbf{\overline{p}_*}\times \mathbf{\overline{q}_*}\right) \label{UpsilonCalcII}
\end{align}
The second  equality uses that $\xi_n(\mathbf{p},\mathbf{q} )$ is defined as the number of $1\leq \ell \leq b^n$ such that $\mathbf{p}(\ell)=\mathbf{q}(\ell)$. For the third equality, we apply the recursive relation $1+ R(t+1)\,=\,\frac{1}{b}\big[ b-1+ \big( 1+R(t)\big)^b \big]  $ from (I) of Lemma~\ref{LemVar} with $t=r-n-1$ and   the canonical bijection between  $\big\{  \mathbf{p}_*\in \Gamma_{n+1} \,\big|\, \mathbf{\overline{p}_*}\subset \mathbf{\overline{p}}\big\}$ and $\{1,\ldots, b\}^{\{1,\ldots, b^n\}}$. Note, in particular, that this implies  the combinatorial identity $|\Gamma_{n+1}|=b^{b^n}|\Gamma_n|  $.  By Remark~\ref{RemarkSemiAlg}, the equality~(\ref{UpsilonCalcII}) implies that  $\upsilon_{r}$  is finitely additive, and therefore $\upsilon_{r}$ extends to a measure on $\mathcal{B}_{\Gamma\times \Gamma}$ by Corollary~\ref{CorollarySemiAlg}. \vspace{.3cm}

\noindent Part (ii): Let  $\mathbf{S}_\emptyset \subset\Gamma\times \Gamma $ be defined as in  Corollary~\ref{CorrMuMu}.  The set $\mathbf{S}_\emptyset$ can be written in terms of simple cylinder sets as follows:
\begin{align}\label{DefS}
  \mathbf{S}_{\emptyset}\,=\,\bigcup_{n=1}^{\infty}  \mathbf{S}^{(n)}_{\emptyset}  \hspace{.5cm}\text{for}\hspace{.5cm}    \mathbf{S}^{(n)}_{\emptyset}\,:=\,\bigcup_{\substack{\mathbf{p},\mathbf{q}\in \Gamma_n \\  \mathbf{\overline{p}}\cap \mathbf{\overline{q}}=\emptyset } } \mathbf{\overline{p}}\times\mathbf{\overline{q}} \,. 
  \end{align}
Observe that $\mathbf{S}^{(n)}_{\emptyset} $ is the set of pairs $(p,q)\in \Gamma\times \Gamma$ whose $n^{th}$ generation coarse-grainings $[p]_n$,$[q]_n\in \Gamma_n$ have no shared edges.  For  $\mathbf{p},\mathbf{q}\in \Gamma_n$ sharing no edges, the definition of $\upsilon_r$ in~(\ref{UpsilonCond}) reduces to $\upsilon_r\big(\mathbf{\overline{p}}\times\mathbf{\overline{q}} \big)=\frac{1}{|\Gamma_n|^2}=\mu\times \mu\big( \mathbf{\overline{p}}\times\mathbf{\overline{q}} \big)  $.   Hence $\mathbf{S}_{\emptyset}$ is a null set for the measure $\upsilon_r-\mu\times \mu$.   However, by Corollary~\ref{CorrMuMu}, the product measure $\mu\times \mu$ assigns full weight to $\mathbf{S}_{\emptyset}$, and therefore the measures $\mu\times \mu$ and $\upsilon_r-\mu\times \mu$ are mutually singular. Clearly, these are the components of  the Lebesgue decomposition of $\upsilon_r$ with respect to $\mu\times \mu$.  The measure $\upsilon_r-\mu\times \mu$ has total mass $R(r)$ since $\upsilon_r$ and $\mu\times \mu$ have total mass $1+R(r)$ and $1$, respectively.  Thus $\rho_r:=\frac{1}{R(r)}(\upsilon_r-\mu\times \mu)  $ is a probability measure supported on $\mathbf{S}_{\emptyset}^c$.
\end{proof}

\subsection{Some  martingales under the correlation measure}\label{SubsecMart}

For convenience, we will reduce one of our notations from Definition~\ref{DefAlgebra}:
\begin{definition}\label{DefSigmaAlgebra} Define $\mathcal{F}_n:=\mathcal{A}_{\Gamma\times \Gamma}^{(n)}$ for $n\in \mathbb{N}_0$. In other terms, $\mathcal{F}_n$ is the finite $\sigma$-algebra of subsets of $\Gamma\times \Gamma$ generated by the map from $ \Gamma\times \Gamma$ to $\Gamma_n\times \Gamma_n$ that sends $(p,q)$ to the coarse-graining  $([p]_n,[q]_n)$.

\end{definition}

The proof of Proposition~\ref{PropLemCorrelate} will follow easily once we have established the proposition below.

\begin{proposition}\label{PropMartingales}
Let the family of Borel measures $\{\upsilon_{r}\}_{r\in\R}$ on $\Gamma\times \Gamma$ be defined as in Lemma~\ref{LemCorrelate}.  For $n\in \mathbb{N}$ and $r,t\in \R$, define $\phi_n^{(r,t)}:\Gamma\times \Gamma\rightarrow (0,\infty) $ as
$$ \phi_n^{(r,t)}(p,q)\,=\,\bigg(\frac{ 1+ R(t-n)   }{ 1+ R(r-n)  }\bigg)^{\xi_n(p,q)}      \,,$$
where $0\leq \xi_n(p,q) \leq  b^n $ is the number of edges shared by the coarse-grained paths $[p]_n,[q]_n\in \Gamma_n$.  Then $\{\upsilon_{r}\}_{r\in \R}$ and $\big(\phi_n^{(r,t)}\big)_{n\in \mathbb{N}}$ satisfy the properties (i)-(iii) below for all $r,t\in \R$.

\begin{enumerate}[(i)]

\item  Under the probability measure $\widehat{\upsilon}_r :=\frac{1}{1+R(r)}\upsilon_r  $, the sequence $\big(\phi_n^{(r,t)}\big)_{n\in \mathbb{N}}$ is a nonnegative  martingale  with respect to the filtration $(\mathcal{F}_n)_{n\in \mathbb{N}} $.  The martingale $\big(\phi_n^{(r,t)}\big)_{n\in \mathbb{N}}$ converges  $\upsilon_r$-a.e.\ and in $L^2(\Gamma\times \Gamma, \upsilon_r)$  to a finite limit $\phi_{\infty}^{(r,t)}:=\textup{exp}\big\{(t-r)T(p,q)\big\}$, where $T(p,q)$ is $\upsilon_r$-a.e.\  equal to the limit of $\frac{\kappa^2}{n^2} \xi_n(p,q) $ as $n\rightarrow \infty$.

\item Similarly,  $\frac{d}{dt}\phi_n^{(r,t)}\big|_{t=r}= \frac{R'(r-n)   }{1+R(r-n)   }\xi_n(p,q)       $ is a nonnegative $\widehat{\upsilon}_r$-martingale   with respect to the filtration $(\mathcal{F}_n)_{n\in \mathbb{N}} $ and converges $\upsilon_r$-a.e.\ to $T(p,q)$ as $n\rightarrow \infty$.  Moreover, the exponential $\upsilon_r$-moments of $\frac{d}{dt}\phi_n^{(r,t)}\big|_{t=r}$ are uniformly bounded in $n\in \mathbb{N}$.

\item  $\phi_{\infty}^{(r,t)}$ is the Radon-Nikodym derivative of $\upsilon_t$ with respect to $\upsilon_r$ for any $r,t\in \R$.

\end{enumerate}

\end{proposition}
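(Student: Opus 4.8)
The plan is to verify the martingale property for $\big(\phi_n^{(r,t)}\big)_{n}$ by a direct conditional-expectation computation, then invoke a martingale convergence theorem, identify the limit, and finally pass from the limit identification to the Radon--Nikodym statement. First I would establish (i). Fix $r,t$ and work under $\widehat\upsilon_r=\frac{1}{1+R(r)}\upsilon_r$. Given a cylinder set $\mathbf p\times\mathbf q\in\Gamma_n\times\Gamma_n$, using the definition~(\ref{UpsilonCond}) of $\upsilon_r$ on it and its refinement into generation-$(n+1)$ cylinders (exactly the one-step version of the computation in the proof of Lemma~\ref{LemCorrelate}, via $1+R(r-n)=\frac1b[\,b-1+(1+R(r-n-1))^b\,]$), one gets that the $\mathcal F_{n+1}$-conditional law of the refinement of an edge of $[p]_n$ that is shared by $[q]_n$ splits into ``stays shared at all $b$ sub-edges'' with the appropriate weight and ``becomes unshared'' otherwise; an edge not shared at level $n$ stays unshared. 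The key algebraic fact is that for a single shared edge, $\mathbb E_{\widehat\upsilon_r}\!\big[(1+R(r-n-1))^{(\text{number of its $b$ children still shared})}\,\big|\,\mathcal F_n\big]$, reweighted correctly, reproduces $(1+R(r-n))^{1}$ — so that $\mathbb E_{\widehat\upsilon_r}[\phi_{n+1}^{(r,t)}\mid\mathcal F_n]=\phi_n^{(r,t)}$, since the ratio structure $\frac{1+R(t-\cdot)}{1+R(r-\cdot)}$ is exactly engineered so that the $R(r-\cdot)$ Radon--Nikodym weights cancel against the $\upsilon_r$-probabilities and leave the $R(t-\cdot)$ factors. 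This is the computation I expect to be the main obstacle: bookkeeping the conditional branching weights so that both the numerator and denominator telescope correctly across one generation.

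Once the martingale property is in hand, nonnegativity is immediate, so by the martingale convergence theorem $\phi_n^{(r,t)}$ converges $\widehat\upsilon_r$-a.e.\ (hence $\upsilon_r$-a.e.). For the $L^2$ convergence and the explicit form of the limit I would use Proposition~\ref{PropLemCorrelate}(i): $\frac{\kappa^2}{n^2}\xi_n(p,q)$ converges $\upsilon_r$-a.e.\ to a finite $T(p,q)$, and $\xi_n$ can only move by factors of $b$ per generation, so writing $\phi_n^{(r,t)}=\exp\{\xi_n(p,q)\log\frac{1+R(t-n)}{1+R(r-n)}\}$ and using the asymptotics $R(r-n)=-\kappa^2/n+O(\log n/n^2)$ from Lemma~\ref{LemVar}(II) (equivalently $R'(r-n)=\kappa^2/n^2+o(1/n^2)$ from Remark~\ref{RemarkDerR}), one gets $\log\frac{1+R(t-n)}{1+R(r-n)}=(t-r)\frac{\kappa^2}{n^2}+o(1/n^2)$, so the exponent is $(t-r)\frac{\kappa^2}{n^2}\xi_n(p,q)+o(1)\to (t-r)T(p,q)$. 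Hence $\phi_\infty^{(r,t)}=\exp\{(t-r)T(p,q)\}$. For $L^2(\Gamma\times\Gamma,\upsilon_r)$ convergence I would show $\sup_n\mathbb E_{\upsilon_r}[(\phi_n^{(r,t)})^2]<\infty$: this second moment is, up to the normalization, $\upsilon_r\big((\phi_n^{(r,t)})^2\big)$, which by~(\ref{UpsilonCond}) is a sum over cylinder pairs of $\frac{1}{|\Gamma_n|^2}(1+R(r-n))^{\xi_n}\big(\frac{1+R(t-n)}{1+R(r-n)}\big)^{2\xi_n}$; comparing this to the exponential-moment identity $1+R(r+a)=\int e^{aT}d\upsilon_r$ from Proposition~\ref{PropLemCorrelate}(iv) with a suitable finite $a$ gives a uniform bound, so the martingale is $L^2$-bounded and converges in $L^2$ as well.

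Part (ii) follows the same template with $\phi_n$ replaced by its $t$-derivative at $t=r$: differentiating $\phi_n^{(r,t)}$ in $t$ and setting $t=r$ gives $\frac{d}{dt}\phi_n^{(r,t)}\big|_{t=r}=\frac{R'(r-n)}{1+R(r-n)}\xi_n(p,q)$ (using $\frac{d}{dt}\log(1+R(t-n))\big|_{t=r}=\frac{R'(r-n)}{1+R(r-n)}$), the martingale property is inherited by differentiating the identity $\mathbb E_{\widehat\upsilon_r}[\phi_{n+1}^{(r,t)}\mid\mathcal F_n]=\phi_n^{(r,t)}$ under the integral sign (justified since everything is a finite sum of smooth functions of $t$), nonnegativity is clear, and the a.e.\ limit is $T(p,q)$ because $\frac{R'(r-n)}{1+R(r-n)}=\frac{\kappa^2}{n^2}+o(1/n^2)$ by Remark~\ref{RemarkDerR} and Lemma~\ref{LemVar}(II), so $\frac{R'(r-n)}{1+R(r-n)}\xi_n(p,q)$ has the same limit as $\frac{\kappa^2}{n^2}\xi_n(p,q)$. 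Finally, for (iii), to see $\phi_\infty^{(r,t)}=\frac{d\upsilon_t}{d\upsilon_r}$ it suffices, by uniqueness of measures agreeing on the generating algebra $\mathcal A_{\Gamma\times\Gamma}$ (Lemma~\ref{LemAlgebra}), to check that $\int_{\mathbf p\times\mathbf q}\phi_\infty^{(r,t)}\,d\upsilon_r=\upsilon_t(\mathbf p\times\mathbf q)$ for every cylinder $\mathbf p\times\mathbf q\in\Gamma_n\times\Gamma_n$: since $\phi_n^{(r,t)}$ is $\mathcal F_n$-measurable and the martingale is uniformly integrable (being $L^2$-bounded), $\int_{\mathbf p\times\mathbf q}\phi_\infty^{(r,t)}d\upsilon_r=\int_{\mathbf p\times\mathbf q}\phi_n^{(r,t)}d\upsilon_r=\frac{1}{|\Gamma_n|^2}(1+R(r-n))^{\xi_n}\cdot\big(\frac{1+R(t-n)}{1+R(r-n)}\big)^{\xi_n}=\frac{1}{|\Gamma_n|^2}(1+R(t-n))^{\xi_n}=\upsilon_t(\mathbf p\times\mathbf q)$, which is exactly~(\ref{UpsilonCond}) for $\upsilon_t$. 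This closes all three parts.
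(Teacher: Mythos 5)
Your overall architecture matches the paper's proof closely: verify the martingale property via the cylinder-set/telescoping computation built on $1+R(r-n)=\frac{1}{b}\big[b-1+(1+R(r-n-1))^b\big]$; invoke martingale convergence; use the $R$-asymptotics to identify $\phi_\infty^{(r,t)}=\exp\{(t-r)T\}$; get (ii) by differentiating in $t$; and get (iii) by testing $\phi_\infty^{(r,t)}$ against cylinder sets and invoking uniqueness on the generating algebra. The one-step conditional branching description in your first paragraph is just a probabilistic rephrasing of the paper's sum-over-refinements computation, so that part is fine.

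The genuine gap is a circularity: you invoke Proposition~\ref{PropLemCorrelate}(i) (a.e.\ convergence of $\frac{\kappa^2}{n^2}\xi_n$ to a finite $T(p,q)$) to identify the limit $\phi_\infty^{(r,t)}$, and Proposition~\ref{PropLemCorrelate}(iv) ($1+R(r+a)=\int e^{aT}\,d\upsilon_r$) to get the uniform $L^2$ bound. But in the paper both of these are \emph{consequences} of Proposition~\ref{PropMartingales}: the proof of Proposition~\ref{PropLemCorrelate} opens with ``Parts (i) and (iii) follow directly from Proposition~\ref{PropMartingales},'' and part (iv) is derived from part (iii) of the present proposition, which in turn rests on part (i). So as written you are assuming what you set out to prove. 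The logical order has to be reversed: first get a.e.\ convergence of the nonnegative martingale $\phi_n^{(r,t)}$ (for a fixed $t\neq r$, say $t>r$, in which case $\phi_n^{(r,t)}\geq 1$ for large $n$ so the limit is strictly positive) from the martingale convergence theorem; then take logs and use $\log\frac{1+R(t-n)}{1+R(r-n)}=(t-r)\frac{\kappa^2}{n^2}+o(\frac{1}{n^2})$ to \emph{deduce} that $\frac{\kappa^2}{n^2}\xi_n$ converges a.e.\ to a finite limit, which \emph{defines} $T(p,q)$. For the uniform second moment, do not appeal to the exponential-moment identity; instead note that once a.e.\ convergence of $\phi_n^{(r,s)}$ is known for every $s$, one has $(\phi_n^{(r,t)})^2\leq \phi_\infty^{(r,2t-r+1)}$ for $n$ large (pointwise, since $T\geq 0$), and $\phi_\infty^{(r,2t-r+1)}$ is $\upsilon_r$-integrable by Fatou applied to the nonnegative martingale $\phi_n^{(r,2t-r+1)}$. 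With that repair, your argument is complete and follows the paper's route.
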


\begin{proof} Part (i): Let $\mathbbmss{E}_{\widehat{\upsilon}_r}$ denote the expectation with respect to the probability measure $\widehat{\upsilon}_r$. The $\sigma$-algebra $\mathcal{F}_n$ is  generated by the product sets $\mathbf{\overline{p}}\times \mathbf{\overline{q}}\subset \Gamma\times \Gamma$ for $\mathbf{p}, \mathbf{q}\in \Gamma_n   $.  To see that  $\big(\phi_{n}^{(r,t)} \big)_{n\in \mathbb{N}}$ is a $\widehat{\upsilon}_r$-martingale with respect to the filtration $(\mathcal{F}_n)_{n\in \mathbb{N}}$, we observe  that for any $\mathbf{p},\mathbf{q}\in \Gamma_{N}$ with $N\leq  n$  the conditional expectation of $\phi_{n}^{(r,t)}$ with respect to the event $\mathbf{\overline{p}}\times \mathbf{\overline{q}}$ is
\begin{align}
\mathbbmss{E}_{\widehat{\upsilon}_r}\big[ \phi_{n}^{(r,t)} \,\big|\, \mathbf{\overline{p}}\times \mathbf{\overline{q}}\big]  \,=\,& \, \frac{1}{ \upsilon_r(\mathbf{\overline{p}}\times\mathbf{\overline{q}} )  }  \int_{\mathbf{\overline{p}}\times \mathbf{\overline{q}}}  \phi_{n}^{(r,t)} (p,q) \upsilon_r(dp,dq) \nonumber \\
 \,=\,&\,\frac{1}{\upsilon_r(\mathbf{\overline{p}}\times\mathbf{\overline{q}} )  } \sum_{\substack{\mathbf{p_* },\mathbf{q_* }\in \Gamma_{n} \\ \mathbf{\overline{p}_*}\subset \mathbf{\overline{p}},\, \mathbf{\overline{q}_*}\subset \mathbf{\overline{q}}  }   } \int_{\mathbf{\overline{p}_*}\times \mathbf{\overline{q}_*}} \phi_{n}^{(r,t)} (p,q)\upsilon_r(dp, dq ) \,, \label{CondThing}
\end{align} 
where the first equality above uses that the normalizing constant $\frac{1}{1+R(r)}$ in the definition of $\widehat{\upsilon}_r$  cancels out, and the second equality holds because $\mathbf{\overline{p}}\times \mathbf{\overline{q}}$ is a disjoint union of product sets $\mathbf{\overline{p}_*}\times\mathbf{\overline{q}_*}  $ for  $\mathbf{p_*}, \mathbf{q_*}\in \Gamma_n $ with $\mathbf{\overline{p}_*}\subset \mathbf{\overline{p}}$ and $\mathbf{\overline{q}_*}\subset \mathbf{\overline{q}}$.
 Next, using that the integrand $\phi_{n}^{(r,t)}(p,q)$ in~(\ref{CondThing}) is constant and equal to  $\big(\frac{ 1+ R(t-n)   }{ 1+ R(r-n)  }\big)^{\xi_{n}(\mathbf{p_*},\mathbf{q_*}) }$ for $(p,q)\in \mathbf{\overline{p}_*}\times \mathbf{\overline{q}_*}  $ and that  $\upsilon_r(\mathbf{\overline{p}_*}\times \mathbf{\overline{q}_*}  )$ has the form~(\ref{UpsilonCond}), we can rewrite~(\ref{CondThing}) as
\begin{align} \mathbbmss{E}_{\widehat{\upsilon}_r}\big[ &\phi_{n}^{(r,t)} \,\big|\, \mathbf{\overline{p}}\times \mathbf{\overline{q}}\big] \nonumber \\ \,=\,&\,\frac{1}{ \upsilon_r(\mathbf{\overline{p}}\times\mathbf{\overline{q}} )  } \sum_{\substack{\mathbf{p_*},\mathbf{q_*}\in \Gamma_{n}\nonumber \\ \mathbf{\overline{p}_* }\subset \mathbf{\overline{p}},\, \mathbf{\overline{q}_* }\subset \mathbf{\overline{q}}  }   }\bigg(\frac{ 1+ R(t-n)   }{ 1+ R(r-n)  }\bigg)^{\xi_{n}(\mathbf{p_* },\mathbf{q_* })}  \frac{1}{|\Gamma_{n}|^2}  \big(1+R(r-n)\big)^{\xi_{n}(\mathbf{p_* }, \mathbf{q_* }  ) }\nonumber  \\ \,=\,&\,\frac{1}{ \upsilon_r(\mathbf{\overline{p}}\times\mathbf{\overline{q}} )  } \sum_{\substack{\mathbf{p_* },\mathbf{q_* }\in \Gamma_{n} \\ \mathbf{\overline{p}_* }\subset \mathbf{\overline{p}},\, \mathbf{\overline{q}_* }\subset \mathbf{\overline{q}}  }   }\frac{1}{|\Gamma_{n}|^2}  \big(1+R(t-n)\big)^{  \xi_{n}(\mathbf{p_* },\mathbf{q_* }) }  \nonumber  \\
  \,=\,&\,\frac{1}{\upsilon_r(\mathbf{\overline{p}}\times\mathbf{\overline{q}} )  } \frac{1}{|\Gamma_N|^2}  \big(1+R(t-N)\big)^{  \xi_{N}(\mathbf{p},\mathbf{q} ) }\,. \label{FindUpsilon}
 \end{align}
 The third equality above follows from the definition~(\ref{UpsilonCond}) of the measure $\upsilon_r$ and the finite additivity of $\upsilon_r$. 
 By applying~(\ref{UpsilonCond}) again to  $\upsilon_{r}(\mathbf{\overline{p}}\times \mathbf{\overline{q}})$ in (\ref{FindUpsilon}), we get that  for any $(p,q)\in \mathbf{\overline{p}}\times\mathbf{\overline{q}}$
\begin{align*}
\mathbbmss{E}_{\widehat{\upsilon}_r}\big[ \phi_{n}^{(r,t)} \,\big|\, \mathbf{\overline{p}}\times \mathbf{\overline{q}}\big]\,=\,&\,  \bigg(\frac{ 1+ R(t-N)   }{ 1+ R(r-N)  }\bigg)^{\xi_{N}(\mathbf{p},\mathbf{q})}\\ \,=\, &\, \bigg(\frac{ 1+ R(t-N)   }{ 1+ R(r-N)  }\bigg)^{\xi_{N}(p,q)} \,=:\,\phi_{N}^{(r,t)}(p,q)\,.
\end{align*}
  Since sets in $\mathcal{F}_N$ are finite unions of cylinder product sets $\mathbf{\overline{p}}\times \mathbf{\overline{q}}$, we have shown that $\mathbbmss{E}_{\widehat{\upsilon}_r}\big[ \phi_{n}^{(r,t)} \,\big|\, \mathcal{F}_N\big] =\phi_{N}^{(r,t)}$ for all $n\geq N$. Hence   $\big(\phi_{n}^{(r,t)}\big)_{n\in \mathbb{N}}$ is a nonnegative $\widehat{\upsilon}_r$-martingale with respect to the filtration $(\mathcal{F}_n)_{n\in \mathbb{N}} $, and   it converges $\upsilon_r$-a.e.\ to a nonnegative, $\upsilon_r$-integrable limit $\phi_{\infty}^{(r,t)}$. \vspace{.2cm}

Next we show that the log of $\phi_{\infty}^{(r,t)}(p,q)$ is $\upsilon_r$-a.e.\ equal to the limit of $(t-r)\frac{\kappa^2}{n^2}\xi_n(p,q)$ as $n\rightarrow \infty$. The log of  $ \phi_{n}^{(r,t)}\equiv \phi_{n}^{(r,t)}(p,q)$   has the large $n$ form
\begin{align}\label{ThisComp}
\log \big(\phi_{n}^{(r,t)}\big) \,=\,&\,\xi_n(p,q) \Big(  \log\big( 1+ R(t-n)    \big)\,-\, \log\big( 1+ R(r-n)    \big)     \Big) \nonumber \\ 
\,=\,&\,\xi_n(p,q) \bigg( \frac{\kappa^2}{n-t}\,+\,\frac{\eta\kappa^2 \log(n-t)}{(n-t)^2}\,-\, \frac{1}{2} \frac{\kappa^4}{(n-t)^2} \,  +\, \mathit{o}\Big(  \frac{1}{n^2}  \Big)   \bigg)\nonumber \\
&-\xi_n(p,q) \bigg( \frac{\kappa^2}{n-r}\,+\,\frac{\eta\kappa^2 \log (n-r)}{(n-r)^2}\,-\, \frac{1}{2} \frac{\kappa^4}{(n-r)^2} +\mathit{o}\Big(  \frac{1}{n^2}  \Big)   \bigg) \nonumber  \\
\,=\,&\,\xi_n(p,q)\bigg(  (t-r) \frac{\kappa^2}{n^2} +\mathit{o}\Big(  \frac{1}{n^2}  \Big)   \bigg)\,,
\end{align}
where the second equality applies the small $x$ asymptotics $\log(1+x)=x-\frac{x^2}{2}+\mathit{O}(x^3)$ combined with the asymptotics $R(r)=\frac{\kappa^2}{-r}+\frac{\eta\kappa^2\log(-r)}{r^2}+\mathcal{O}\Big(\frac{\log^2(-r)}{|r|^3}   \Big)   $ for $-r\gg 1$ from  (II) of Lemma~\ref{LemVar}.  Since the sequence $\big(\phi_{n}^{(r,t)}\big)_{n\in \mathbb{N}}$ converges $\upsilon_r$-a.e.\ to a finite limit $\phi_{\infty}^{(r,t)}$ for any $r,t\in \R$ with $r<t$, the sequence $\big(\frac{\kappa^2}{n^2}\xi_n  \big)_{n\in \mathbb{N}}$ converges $\upsilon_r$-a.e.\ to a nonnegative finite limit, which we denote by $T(p,q)$. \vspace{.1cm}

Next we argue that the martingale  $\big(\phi_{n}^{(r,t)}\big)_{n\in \mathbb{N}}$ has uniformly bounded positive  moments.  Notice that for $m\in \mathbb{N}$ the sequence $\big(\phi_{n}^{(r,t)}\big)^m$ is approximately equal to $ \phi_{n}^{(mr,mt)}$ by the computation in~(\ref{ThisComp}), and thus $\big(\phi_{n}^{(r,t)}\big)^m$ converges as $n\rightarrow \infty$ to the $\upsilon_r$-integrable function $\phi_{\infty}^{(mr,mt)}:=\exp\{m(t-r)T(p,q)   \}$.  It follows, in particular, that the sequence $\big(\phi_{n}^{(r,t)}\big)_{n\in \mathbb{N}}$ converges in $L^2(\Gamma\times \Gamma, \upsilon_r)$ to $\phi_{\infty}^{(r,t)}$ by uniform integrability.\vspace{.4cm}

\noindent Part (ii):  Since $\big(\phi_{n}^{(r,t)}\big)_{n\in \mathbb{N}}$ is a $\widehat{\upsilon}_r$-martingale with respect to $(\mathcal{F}_n)_{n\in \mathbb{N}}$  for any $t\in \mathbb{R}$ by part (i), the derivative  $\frac{d}{dt}\phi_n^{(r,t)}\big|_{t=r}= \frac{R'(r-n)   }{1+R(r-n)   }\xi_n(p,q)       $ is also a martingale.  However, $R(r-n)$ vanishes with large $n$ and $R'(r-n)=\frac{\kappa^2}{n^2}+\mathit{o}\big(\frac{1}{n^2}\big)$ by Remark~\ref{RemarkDerR}.  It follows that $ \frac{R'(r-n)   }{1+R(r-n)   }\xi_n(p,q)       $ becomes close to $\frac{\kappa^2}{n^2}\xi_n(p,q)$ with large $n$, and thus converges $\upsilon_r$-a.e.\ to $T(p,q)$ by part (i). Thus for $a>0$ and $n\gg 1$, the function $\textup{exp}\Big\{a\frac{d}{dt}\phi_n^{(r,t)}\big|_{t=r}\Big\}$ is close to $\textup{exp}\Big\{ a\frac{\kappa^2}{n^2}\xi_n(p,q) \Big\}$, which furthermore becomes  close to $\phi_n^{(r,r+a)}$  by our computation in~(\ref{ThisComp}).  Hence the exponential $\upsilon_r$-moments of   $\frac{d}{dt}\phi_n^{(r,t)}\big|_{t=r}$ are uniformly bounded in $n$ because the positive $\upsilon_r$-moments of $\phi_n^{(r,t)}$ are uniformly bounded.  \vspace{.3cm}

\noindent Part (iii):  For $N\in \mathbb{N}_0$ and  $\mathbf{p},\mathbf{q}\in \Gamma_N$, notice that the expression (\ref{FindUpsilon}) is equal to $\frac{\upsilon_t(\mathbf{\overline{p}}\times \mathbf{\overline{q}})}{\upsilon_r(\mathbf{\overline{p}}\times \mathbf{\overline{q}}) }$ by the definition~(\ref{UpsilonCond}) of $\upsilon_t$ on $\mathbf{\overline{p}}\times \mathbf{\overline{q}}$.  Thus, by canceling  $\upsilon_r(\mathbf{\overline{p}}\times \mathbf{\overline{q}})$, the equality between  (\ref{FindUpsilon}) and the right side of the first line of~(\ref{CondThing})  reduces to
\begin{align}\label{UpLim}
 \upsilon_t(\mathbf{\overline{p}}\times \mathbf{\overline{q}})\,=\,\int_{\mathbf{\overline{p}}\times \mathbf{\overline{q}}} \phi_n^{(r,t)}(p,q)  \upsilon_r(dp,dq)    
 \end{align}
for any $n\geq N$.  Since the functions $\phi_n^{(r,t)}$ converge $\upsilon_r$-a.e.\ to $\phi_\infty^{(r,t)}$ as $n\rightarrow \infty$ and are uniformly bounded in $L^2(\Gamma\times \Gamma, \upsilon_r)$ norm---and hence uniformly $\upsilon_r$-integrable---, the equality~(\ref{UpLim}) holds for the limit function $\phi_\infty^{(r,t)}=\textup{exp}\big\{(t-r)T(p,q)\big\} $.   We have verified that the Borel measures $\upsilon_t$ and $\phi_\infty^{(r,t)}\upsilon_r$ agree on sets in the semi-algebra $ \mathbf{S}_{ \Gamma \times \Gamma } $, which generates $\mathcal{B}_{\Gamma\times \Gamma}$.  By the uniqueness part of Carath\'eodory's extension theorem, we can conclude that $\upsilon_t =\phi_\infty^{(r,t)}\upsilon_r$.  In other terms, $\phi_\infty^{(r,t)}$ is the Radon-Nikodym derivative of $\upsilon_t$ with respect to $\upsilon_r$.
\end{proof}

\subsection{Proof of Proposition~\ref{PropLemCorrelate}}\label{SecPropLemCorr}

\begin{proof} Parts (i) and (iii) follow directly from  Proposition~\ref{PropMartingales}. The formula in part (iv) holds because $\textup{exp}\big\{aT(p,q)\big\} $ is the Radon-Nikodym derivative of $\upsilon_{r+a}$ with respect to $\upsilon_r$ by part (iii) of Proposition~\ref{PropMartingales}, and thus
\begin{align}\label{Sh}
 \int_{\Gamma\times \Gamma}e^{aT(p,q)}\upsilon_r(dp,dq)\,=\,\upsilon_{r+a}\big(\Gamma\times \Gamma\big)\,=\,1\,+\,R(r+a)\,.      
 \end{align}

The only  statement from part (ii) that does not follow directly from Lemma~\ref{LemCorrelate} and Proposition~\ref{PropMartingales} is the claim that $\upsilon_r-\mu\times \mu$ assigns full measure to the set of  pairs $(p,q)$ such that $T(p,q)>0$.  Define $\mathbf{\widehat{S}}=\{ (p,q)\,|\,T(p,q)=0      \}$, and let $\mathbf{S}_{\emptyset}\subset \Gamma\times \Gamma$ be defined as in Corollary~\ref{CorrMuMu}. As $a\rightarrow -\infty$, the left side of~(\ref{Sh})  converges to $\upsilon_r\big(\mathbf{\widehat{S}}\big)$ by the dominated convergence theorem, and the right side converges to $1$ since $R(r)$ vanishes as $r\rightarrow -\infty$. Hence we can deduce that $\upsilon_r\big(\mathbf{\widehat{S}}\big)=1$.  However, $\mathbf{S}_{\emptyset}\subset\mathbf{\widehat{S}} $ and $\upsilon_r(\mathbf{S}_{\emptyset})=\mu\times \mu(\mathbf{S}_{\emptyset})=1$ by Lemma~\ref{LemCorrelate} and Corollary~\ref{CorrMuMu}, so we get that
$$\upsilon_r\big(\mathbf{\widehat{S}}-\mathbf{S}_{\emptyset}   \big)\,=\,\upsilon_r\big(\mathbf{\widehat{S}}\big)-\upsilon_r(\mathbf{S}_{\emptyset})\,=\,1-1 \, =\,0\,. $$ Our results imply that $\mathbf{\widehat{S}}$ is a null set for the measure $\upsilon_r-\mu\times \mu$ since $\upsilon_r\big(\mathbf{\widehat{S}}\big)=\upsilon_r(\mathbf{S}_{\emptyset}   )=\mu\times \mu(\mathbf{S}_{\emptyset}) $.  In other terms, $\upsilon_r-\mu\times \mu$  is supported on the set of pairs $(p,q)$ such that $T(p,q)>0$. 
\end{proof}

\section{Construction, uniqueness, and basic properties of the RCPM laws}\label{SecCDRP}

We will now move on to the proofs of results stated in Sections~\ref{SecLimitThmForMeasures} \& \ref{SecLimitLawChar} that directly concern the random path measure laws $\{ \mathbf{M}_r \}_{r\in \R}$, namely, Theorem~\ref{ThmExistence}, Proposition~\ref{PropProperties}, and Theorem~\ref{ThmUniversality}. The proof of Theorem~\ref{ThmExistence} is split between Sections~\ref{ProofThmExistence} \& \ref{ProofThmExistence2}---except for the connection with the  random path measure laws that arise as limits in Theorem~\ref{ThmUniversality}, which follows trivially from the proof of Theorem 2.12 in Section~\ref{SecProofLimitThmForMeasures}.

\subsection{Proof of the existence part of Theorem~\ref{ThmExistence}}\label{ProofThmExistence}

In this subsection, we will prove the existence  claim in Theorem~\ref{ThmExistence} regarding a family  of random path measure laws $\{ \mathbf{M}_r \}_{r\in \R}$ satisfying properties (I)-(IV).   Our analysis below will rely on Theorem~\ref{ThmExist}, which is from~\cite[Theorem 5.16]{Clark3}.  The following clarifies the meaning of some notation  for arrays of numbers indexed by the edge sets $E_n$.

\begin{notation}\label{DefArray}\textup{For numbers $W^{e}\in\R$ labeled by $e\in E_k$, the notation $\{ W^e \}_{e\in E_{k}}$ denotes an element in $ \R^{E_k} \equiv \R^{b^{2k}  }$, which we refer to as an \textit{array}.   Similarly, $\{ W^e \}_{e\in \cup_{k=0}^{\infty}E_{k}}$ denotes an element in the sequence space $\R^{ \cup_{k=0}^{\infty}E_{k}  }\equiv \R^{\infty}$.\footnote{Here we use that  the set $\cup_{k=0}^{\infty}E_{k}$ is countable.}}
\end{notation}

\begin{theorem}\label{ThmExist} For any  $r\in \R$, there exists a unique law on sequences in $k\in \mathbb{N}_0$ of edge-labeled arrays of nonnegative random variables, $\{ \mathbf{W}^e\}_{e\in  E_{k}  }$,  holding  properties (I)-(IV) below for each $k\in \mathbb{N}_0$.\footnote{The variables  $\mathbf{W}^e$ for $e\in E_k$ are related to the variables  $\mathbf{X}_e^{(k)}$ in~\cite[Theorem~5.16]{Clark3} through $\mathbf{W}^e=1+\mathbf{X}_e^{(k)}$.}
\begin{enumerate}[(I)]

\item  The random variables in the array $\{ \mathbf{W}^e \}_{e\in E_{k}  }$ are i.i.d.

\item  The  random variables in the array $\{ \mathbf{W}^e\}_{e\in E_{k}  }$ have mean one and  variance  $R(r-k)$.

\item  For each  $m\in \{3, 4,\ldots\}$,  the random variables in the array $\{ \mathbf{W}^e \}_{e\in E_{k}  }$ have finite $m^{th}$ centered moment equal to $R^{(m)}(r-k)$, where  $R^{(m)}:\R\rightarrow \R_+$ is a continuous, increasing function with 
$ R^{(m)}(t)\,\propto\, (\frac{1}{-t})^{\lceil m/2  \rceil}$ as $t\rightarrow -\infty$ and $ R^{(m)}(t)$ diverges to $\infty$ as $t\rightarrow \infty$

\item The random variables in the array   $\{ \mathbf{W}^e \}_{e\in E_{k}  }$ are a.s.\ equal to $\displaystyle \mathbf{W}^e = \frac{1}{b}\sum_{i=1}^{b}\prod_{j=1}^b \mathbf{W}^{e\times (i,j)}$.\footnote{Recall the meaning of $e\times (i,j)$ for $e\in E_k$ and $(i,j)\in \{1,\ldots, b\}^2$ from Notation~\ref{NotationSub}.   }

\end{enumerate}
For the uniqueness, it suffices to replace (III) by the weaker condition (III') that the fourth centered moment of the random variables in the array $\{ \mathbf{W}^e \}_{e\in E_{k}  }$ vanishes as $k\rightarrow \infty$.\footnote{This weaker requirement for uniqueness is not stated in~\cite[Theorem~5.16]{Clark3} but follows from~\cite[Theorem~5.22]{Clark3}.}
\end{theorem}

The following lemma connects the decomposition property (IV) of Theorem~\ref{ThmExist} with the construction of Borel measures on $\Gamma$.  Let $\mathbf{H}\subset [0,\infty)^{ \cup_{k=0}^{\infty} E_k  } $ denote the space of functions $W:\bigcup_{k=0}^{\infty} E_k\rightarrow  [0,\infty)$ satisfying the hierarchical relation  $W^{e}= \frac{1}{b}\sum_{i=1}^{b}\prod_{j=1}^b W^{e\times (i,j)}$  for all $e\in \cup_{k=0}^{\infty} E_k$. We  equip $\mathbf{H} $ with the topology of pointwise convergence. Recall that the semi-algebra $\mathcal{S}_{\Gamma}\subset \mathcal{B}_{\Gamma}$ is defined  in Definition~\ref{DefSigmaAlgebra}.

\begin{lemma}\label{LemmaArray}   There is a unique  function $\Phi:  \mathbf{H}\rightarrow \mathcal{M}_{\Gamma} $  mapping each $W=\{W^e\}_{e\in \cup_{k=0}^{\infty} E_k}$ in $\mathbf{H}$ to a  measure $\Phi W\in \mathcal{M}_{\Gamma} $ that acts on the semi-algebra of simple cylinder sets  $\mathcal{S}_{\Gamma}$ as follows:  for any $n\in \mathbb{N}_0$ and $\mathbf{p}\in \Gamma_n$, the measure $\Phi W$ assigns $\mathbf{\overline{p}}\subset \Gamma$ weight
\begin{align}
 (\Phi W)(\mathbf{\overline{p}})\,:=\, \frac{1}{|\Gamma_n| }\prod_{\ell=1}^{b^n}  W^{\mathbf{p}(\ell)} \,. \label{ConI} 
 \end{align}
 Moreover, the function $\Phi$ is continuous when, as before, $\mathcal{M}_{\Gamma}$ is equipped with the weak topology.

\end{lemma}

\begin{proof}  By Corollary~\ref{CorollarySemiAlg}, to prove that the set function $\Phi W :\mathcal{S}_{\Gamma}\rightarrow [0,\infty) $ defined  by~(\ref{ConI}) extends uniquely to a Borel measure on $\Gamma$ for a given $W=\{W^e\}_{e\in \cup_{k=0}^{\infty} E_k}$ in $\mathbf{H}$, it suffices for us to show that $\Phi W$ is finitely additive. As a consequence of Remark~\ref{RemarkSemiAlg}, we only need to observe that   for any $n\in \mathbb{N}_0$ and $\mathbf{p}\in \Gamma_n$
\begin{align}
 (\Phi W)(\mathbf{\overline{p}}):=\frac{1}{|\Gamma_n| }\prod_{\ell=1}^{b^n}  W^{\mathbf{p}(\ell)}   = \sum_{  \substack{\mathbf{q}\in \Gamma_{n+1}  \\ \mathbf{\overline{q}} \subset \mathbf{\overline{p}}    } } \frac{1}{|\Gamma_{n+1}| }\prod_{l=1}^{b^{n+1}}  W^{\mathbf{q}(l)}  =: \sum_{  \substack{\mathbf{q}\in \Gamma_{n+1} \\  \mathbf{\overline{q}} \subset \mathbf{\overline{p}}    } } (\Phi W)(\mathbf{\overline{q}})\,,\label{ConII}
 \end{align} 
where the second equality uses the hierarchical relation $W^{e}= \frac{1}{b}\sum_{i=1}^{b}\prod_{j=1}^b W^{e\times (i,j)}$, the combinatorial identity $|\Gamma_{n+1}|=b^{b^n}|\Gamma_{n}|$, and the one-to-one correspondence between $\{ \mathbf{q}\in \Gamma_{n+1}\,|\, \mathbf{\overline{q}} \subset  \mathbf{\overline{p}}\}$ and $\{1,\ldots,b\}^{\{1,\ldots, b^n\}}$. Thus $\Phi W$ extends uniquely to a Borel measure on $\Gamma $.
 
Under the weak topology on $\mathcal{M}_{\Gamma}$, the function $\Phi:  \mathbf{H}\rightarrow \mathcal{M}_{\Gamma} $ is continuous if and only if for each $g\in C(\Gamma  ) $ the real-valued map  $W\mapsto \int_{\Gamma}g(p)(\Phi W)(dp) $  is continuous, and it suffices to establish this continuity for a dense subset of $C(\Gamma  ) $  with respect to the uniform norm.  Recall from Lemma~\ref{LemDense} that the set  $\mathbf{C}(\Gamma)$ of $\mathcal{A}_{\Gamma} $-measurable simple functions is a dense subset of $C(\Gamma)$.  Moreover,  if $\psi\in \mathbf{C}(\Gamma)$, then there exists an $n\in \mathbb{N}$ large enough such that $\psi=\sum_{j=1}^{J}\alpha_j 1_{A_j}$ for some $\alpha_j\in \R$ and $A_j\in \mathcal{A}_{\Gamma}^{(n)} $.  Hence by~(\ref{ConI})
$$ \int_{\Gamma}\psi(p)(\Phi W)(dp) \,=\,  \sum_{j=1}^{J}\alpha_j (\Phi W)(A_j)\,=\,  \sum_{j=1}^{J}\alpha_j\sum_{\substack{ \mathbf{p}\in \Gamma_n  \\ \mathbf{\overline{p}}\subset A_j  } }  \frac{1}{|\Gamma_n| }\prod_{\ell=1}^{b^n}  W^{\mathbf{p}(\ell)} \,. $$
The above is a multilinear polynomial in $\{ W^e  \}_{e\in \cup_{k=0}^n E_k }$ and is thus a continuous function of $W$.  Therefore $\Phi $ is continuous.
\end{proof}

\vspace{.2cm}

\begin{proof}[Proof of Theorem~\ref{ThmExistence}, existence part] 
Property (IV) in Theorem~\ref{ThmExist} implies that the $ [0,\infty)^{\cup_{k=0}^{\infty} E_k}$-valued random element $\mathbf{\hat{W}}_r:=\{\mathbf{W}^e\}_{e\in \cup_{k=0}^{\infty} E_k}  $ a.s.\ takes values in the space $\mathbf{H}$, and thus we can choose a version of $\mathbf{\hat{W}}_r $  that is $\mathbf{H}$-valued.  Define the random $\mathcal{M}_{\Gamma}$-valued element  $\mathbf{M}_r :=\Phi \mathbf{\hat{W}}_r$ for the function $\Phi:\mathbf{H}\rightarrow \mathcal{M}_{\Gamma}$ from Lemma~\ref{LemmaArray}. Next we prove properties (I)-(IV) in the statement of Theorem~\ref{ThmExistence} for this construction of the random path measure $\mathbf{M}_r$.\vspace{.3cm} \\
\noindent (I)  If $\mathbf{p}\in \Gamma_n$ for some $n\in \mathbb{N}_0$, then invoking the definition $\mathbf{M}_r:=\Phi \mathbf{\hat{W}}_r$ and~(\ref{ConI}) we get that
$$ \mathbb{E}\big[\mathbf{M}_r(\mathbf{\overline{p}})\big]\,=\,\mathbb{E}\left[\big(\Phi \mathbf{\hat{W}}_r\big)(\mathbf{\overline{p}})\right]\,=\,\frac{1}{|\Gamma_n| } \mathbb{E}\Bigg[\prod_{\ell=1}^{b^n}  \mathbf{W}^{\mathbf{p}(\ell)}\Bigg]\,=\, \frac{1}{|\Gamma_n| }\,=:\,\mu(\mathbf{\overline{p}})\,, $$
where the third equality uses that  $\{ \mathbf{W}^e \}_{e\in  E_n}$ is an independent family of  mean one random variables.  Hence the measures $\mathbb{E}[\mathbf{M}_r]$ and $\mu$ agree on sets in the semi-algebra $ \mathcal{S}_{\Gamma}$, and so   $\mathbb{E}[\mathbf{M}_r]=\mu$ on $\mathcal{B}_{\Gamma}$ by Corollary~\ref{CorollarySemiAlg}.
\vspace{.3cm} 
 
 \noindent (II)  If $\mathbf{p},\mathbf{q}\in \Gamma_n$ for some $n\in \mathbb{N}_0$, then
 the expectation of $\mathbf{M}_{r}\times \mathbf{M}_{r}(\mathbf{\overline{p}}\times \mathbf{\overline{q}})$ can be written in the form
\begin{align*}
\mathbb{E}\big[\mathbf{M}_{r}\times \mathbf{M}_{r}(\mathbf{\overline{p}}\times \mathbf{\overline{q}})   \big]\,&=\, \mathbb{E}\big[\mathbf{M}_{r}(\mathbf{\overline{p}}) \mathbf{M}_{r}( \mathbf{\overline{q}})   \big]\nonumber \\
\,&=\,\frac{1}{|\Gamma_n|^2 }\mathbb{E}\Bigg[ \prod_{l=1}^{b^n} \mathbf{W}^{\mathbf{p}(l)}  \prod_{\ell=1}^{b^n}  \mathbf{W}^{\mathbf{q}(\ell)}  \Bigg] \nonumber \\
 \,&=\, \frac{1}{|\Gamma_n|^2 }\big(1+R(r-n)   \big)^{\xi_n(\mathbf{p}, \mathbf{q})}\nonumber \\ \, &=:\,\upsilon_r(\mathbf{\overline{p}}\times \mathbf{\overline{q}})\,,
\end{align*}
where, recall, $\xi_n(\mathbf{p}, \mathbf{q})$ is the number of edges shared by the paths $\mathbf{p}, \mathbf{q}\in \Gamma_n$.  The first equality above merely uses the definition of a product measure;   the second  equality  uses  that $\mathbf{M}_r:=\Phi\mathbf{\hat{W}}_r$ and~(\ref{ConI});   the third equality follows from $\{ \mathbf{W}^e \}_{e\in  E_n}$ being an independent family of random variables with mean one and variance $R(r-n)$; and the last equality invokes the definition of $\upsilon_r$.  Thus the measures $\mathbb{E}\big[\mathbf{M}_{r}\times \mathbf{M}_{r}\big]$ and $\upsilon_r$ agree on the semi-algebra $\mathcal{S}_{\Gamma\times \Gamma}$, and it follows that the measures are equal by Corollary~\ref{CorollarySemiAlg}.  \vspace{.3cm} 
 
 \noindent (III) By definition, $\mathbf{M}_{r}(\Gamma)$ is equal to the random variable $\mathbf{W}^{e}$ with  $e\in E_0$. Thus the $m^{th}$ centered moment  of  $\mathbf{M}_{r}(\Gamma)$ is equal to $R^{(m)}(r)$ by  (III) of Theorem~\ref{ThmExist}.

\vspace{.3cm} 
 
 \noindent (IV) Define the random measure $\mathbf{M}_{r+1}:=\Phi \mathbf{\hat{W}}_{r+1}$  using the family of  variables $\mathbf{\hat{W}}_{r+1}=\{\mathbf{W}^e\}_{e\in \cup_{k=0}^{\infty} E_k}$ from Theorem~\ref{ThmExist} with parameter value $r+1$.  For each $(i,j)\in \{1,\ldots, b\}^2$, define the random $\mathbf{H}$-valued element $\mathbf{\hat{W}}^{(i,j)}_r:=\{\mathbf{W}^{(i,j)\times e}\}_{e\in \cup_{k=0}^{\infty} E_k}$, where  $(i,j)\times e$ for $e\in E_k$ denotes the element in $E_{k+1}$ corresponding to the edge $e$  on the $(i,j)$-labeled  copy of $D_k$ embedded within the diamond graph $D_{k+1}$.  Note that  $\{\mathbf{\hat{W}}^{(i,j)}_r\}_{1\leq i,j\leq b}$ is a family of independent copies of $\mathbf{\hat{W}}_r$ as a consequence of 
properties (I)-(IV) in Theorem~\ref{ThmExist}. Thus, if we define $\mathbf{M}_{r}^{(i,j)}:= \Phi \mathbf{\hat{W}}^{(i,j)}_r $, then $\{\mathbf{M}_{r}^{(i,j)}\}_{1\leq i,j\leq b}$ is a family of independent copies of $\mathbf{M}_{r}$.  In particular,  for any $n\in \mathbb{N}_0$ and $\mathbf{p}\in\Gamma_n$, we have that
  \begin{align*}
 \mathbf{M}_{r}^{(i,j)}(\mathbf{\overline{p}})\,:=\,\big(\Phi \mathbf{\hat{W}}^{(i,j)}_r \big)(\mathbf{\overline{p}})\,:=\,  \frac{1}{|\Gamma_n| }\prod_{l=1}^{b^n}  \mathbf{W}^{(i,j)\times \mathbf{p}(l)} \,. 
 \end{align*}
There is a one-to-one correspondence between paths $\mathbf{p}\in \Gamma_{n+1}$ and  $(b+1)$-tuples $ (i; \mathbf{p}_1,\ldots ,\mathbf{p}_b) \in \{1,\ldots,b\}\times\Gamma_{n}^b$, where $i\in \{1,\ldots, b\}$ labels the  branch and $\mathbf{p}_{j}(l):= \mathbf{p}\big((j-1)b^{n}+l   \big)    $ for $ j\in\{1,\ldots, b\}$ and $l\in \{1,\ldots, b^{n}\}  $.  Thus we have the identity $|\Gamma_{n+1}| = b|\Gamma_{n}|^b$, and the random variable $\mathbf{M}_{r+1}(\mathbf{\overline{p}})$ can be written as 
 \begin{align}\label{ContM}
 \mathbf{M}_{r+1}(\overline{\mathbf{p}})\,:=\,(\Phi \mathbf{\hat{W}}_{r+1})(\overline{\mathbf{p}}) \,:=\,  &\,\frac{1}{|\Gamma_{n+1}| }\prod_{\ell=1}^{b^{n+1}} \mathbf{W}^{\mathbf{p}(\ell)} \nonumber \\ \,=\,&\,\frac{1}{b}\prod_{ j=1}^b \frac{1}{|\Gamma_{n}|}\prod_{l=1}^{b^{n}}  \mathbf{W}^{\mathbf{p}_j(l)} \nonumber \\ \, =\,&\,\frac{1}{b}\prod_{j=1}^b \mathbf{M}_{r}^{(i,j)}(\mathbf{\overline{p}}_j)\,.
\end{align}
Define the measure $\mathbf{\overline{M}}_{r+1}:= \bplus_{\mathbf{i}=1}^b\prod_{\mathbf{j}=1}^b    \mathbf{M}_{r}^{(\mathbf{i},\mathbf{j})}$, which acts on the product space $\{1,\ldots,b\}\times\Gamma^b$. If  $U$ denotes the canonical bijection from $\{1,\ldots,b\}\times\Gamma^b$ to  $\Gamma$ in Remark~\ref{Remarkn=1}, then we can write~(\ref{ContM}) as 
$$\mathbf{M}_{r+1}(\overline{\mathbf{p}}) \, =\,\mathbf{\overline{M}}_{r+1}\Bigg(\{i\}\times \bigtimes_{j=1}^b \mathbf{\overline{p}}_j\Bigg)  \, =\,\left(\mathbf{\overline{M}}_{r+1}\circ U^{-1}\right)(\overline{\mathbf{p}})\,.
$$
The above shows that $\mathbf{M}_{r+1}$ and the pushforward measure $\mathbf{\overline{M}}_{r+1}\circ U^{-1}$ agree on sets in the semi-algebra $\mathcal{S}_{\Gamma}$.  Hence the Borel measures $\mathbf{M}_{r+1}$ and $\mathbf{\overline{M}}_{r+1}\circ U^{-1}$ are equal by Corollary~\ref{CorollarySemiAlg}, which completes the proof. 
\end{proof}

\subsection{Proof of the uniqueness part of Theorem~\ref{ThmExistence}}\label{ProofThmExistence2}

Before moving to the uniqueness part of Theorem~\ref{ThmExistence}, we will prove   Proposition~\ref{CorProp4}.

\begin{proof}[Proof of Proposition~\ref{CorProp4}] Recall that we endow the space of finite Borel measures,  $\mathcal{M}_\Gamma$, on $\Gamma$   with the weak topology, and that $\mathcal{B}_{\mathcal{M}_\Gamma}$ denotes the Borel $\sigma$-algebra on $\mathcal{M}_\Gamma$. Since $\Gamma$ is a Polish space,  $\mathcal{M}_\Gamma$ is also a Polish space; see~\cite[Lemma 4.5]{Kallenberg}. Thus the measurable space $\big(\mathcal{M}_\Gamma,   \mathcal{B}_{\mathcal{M}_\Gamma}\big)$ is standard Borel.  In particular, the   product spaces $\big(\mathcal{M}_\Gamma^{E_k}, \bigotimes_{e\in E_k}  \mathcal{B}_{\mathcal{M}_\Gamma}\big)$---where the product is indexed by an edge set $E_k$ for some $k\in \mathbb{N}_0$---are also standard Borel, which is what we require below to apply the Kolmogorov extension theorem.

 By applying (IV)  of Theorem~\ref{ThmExistence} inductively, for any $n\in \mathbb{N}_0$ we can construct a family of $ \mathcal{M}_\Gamma $-valued random elements, $\{\mathbf{M}^e\}_{e\in \cup_{k=0}^n E_k}$, defined over the same probability space and  satisfying property  (i) for $0\leq k \leq  n$ and  property (ii) for $0\leq k  <  n$. We can view $\{\mathbf{M}^e\}_{e\in \cup_{k=0}^n E_k}$ as a random element taking values in the  product space  $\bigtimes_{k=0}^n\mathcal{M}_{\Gamma}^{E_k}$.  As a consequence of properties (i) \& (ii), the distributional measures, $\varrho_n $, of the random elements $ \{\mathbf{M}^e\}_{e\in \cup_{k=0}^n E_k}$ form a consistent sequence  of probability measures    on the measurable spaces $\big( \bigtimes_{k=0}^n\mathcal{M}_{\Gamma}^{E_k}, \bigotimes_{k=0}^n \bigotimes_{e\in E_k}\mathcal{B}_{\mathcal{M}_{\Gamma}}\big)$.  By Kolmogorov's extension theorem, there is a probability measure  on the measurable space $\big( \bigtimes_{k=0}^{\infty}\mathcal{M}_{\Gamma}^{E_k}, \bigotimes_{k=0}^{\infty} \bigotimes_{e\in E_k}\mathcal{B}_{\mathcal{M}_{\Gamma}}\big)$ having the measures $\varrho_n$ as marginals.
 \end{proof}

The proof of the uniqueness part of Theorem~\ref{ThmExistence} will depend on Proposition~\ref{CorProp4} and the uniqueness part of Theorem~\ref{ThmExist}.

\begin{proof}[Proof of Theorem~\ref{ThmExistence}, uniqueness part] Let $\{\mathbf{\widehat{M}}_r\}_{r\in \R}$ be  a family of random measure laws satisfying properties (I), (II), (III'), \& (IV) of Theorem~\ref{ThmExistence}.  For fixed $r\in \R$, let the family of measures $\mathbf{\widehat{M}}^e$ for $e\in \cup_{k=0}^{\infty}E_k$ be defined in relation to $\mathbf{\widehat{M}}_r$ as in Proposition~\ref{CorProp4}.  If we define the random variables $\mathbf{\widehat{W}}^e:=\mathbf{\widehat{M}}^e(\Gamma)$ for  $e\in \cup_{k=0}^{\infty}E_k$, then the family of random variables $\{\mathbf{\widehat{W}}^e\}_{e\in \cup_{k=0}^{\infty} E_k}$ satisfies properties (I), (II), (III'), \& (IV) of Theorem~\ref{ThmExist}.  On the other hand, we can write $\mathbf{\widehat{M}}_r$ in terms of the random $\mathbf{H}$-valued element $\{\mathbf{\widehat{W}}^e\}_{e\in \cup_{k=0}^{\infty} E_k}$ as  \begin{align}\label{M}
\mathbf{\widehat{M}}_r\,=\,\Phi\{\mathbf{\widehat{W}}^e\}_{e\in \cup_{k=0}^{\infty} E_k}
\end{align} for the  map $\Phi:\mathbf{H}\rightarrow \mathcal{M}_{\Gamma}$ from Lemma~\ref{LemmaArray}. The equality~(\ref{M}) can be easily  verified  using   Corollary~\ref{CorollarySemiAlg} by showing that the measures $\mathbf{\widehat{M}}_r$ and $\Phi\{\mathbf{\widehat{W}}^e\}_{e\in \cup_{k=0}^{\infty} E_k}$ agree on sets in  the semi-algebra $\mathcal{S}_{\Gamma}$.  

Let $\mathbf{M}_r$ be the random measure   constructed in Section~\ref{ProofThmExistence} using the family of random variables $\{\mathbf{W}^e\}_{e\in \cup_{k=0}^{\infty} E_k}$ from Theorem~\ref{ThmExist}, in other terms, $\mathbf{M}_r=\Phi\{\mathbf{W}^e\}_{e\in \cup_{k=0}^{\infty} E_k}$.  By the uniqueness part of Theorem~\ref{ThmExist}, we must have the equality in distribution $\{\mathbf{\widehat{W}}^e\}_{e\in \cup_{k=0}^{\infty} E_k}\stackrel{\text{d}}{=}\{\mathbf{W}^e\}_{e\in \cup_{k=0}^{\infty} E_k}$, and thus the random measures $\mathbf{\widehat{M}}_r$ and $\mathbf{M}_r$ are equal in distribution. Therefore, the family of random measure laws in Theorem~\ref{ThmExistence} is unique.
\end{proof}

\subsection{Proof of Theorem~\ref{ThmUniversality}}\label{SecProofLimitThmForMeasures}

The proof of Theorem~\ref{ThmUniversality} relies on Theorem~\ref{ThmLimit} below, which is from~\cite[Theorem 5.22]{Clark3}.

\begin{definition}\label{DefInduct} For some fixed $r\in \R$, let $\beta_{r,n}>0$ be defined as in~(\ref{BetaForm}). Let the random variables $\{ \omega_h \}_{h\in E_n}$ be as in~(\ref{DefEM}).  
We  inductively  define the i.i.d.\ arrays of random variables $\big\{ W^{e}_{n} \big\}_{e\in E_k}  $   for  $k\in \{0,1,\ldots, n\}$ as follows:
\begin{enumerate}[(I)]
\item For each $h\in E_n$, define the random variable $ W^{h}_{n}:= \frac{  e^{\beta_{r,n}\omega_h } }{\mathbb{E}\left[e^{\beta_{r,n}\omega_h}   \right]   }  $.
\item For each $k<n$, define the random variables in the array $\big\{ W^{e}_{n}  \big\}_{e\in E_k}$ in terms of those in the array $\big\{ W^{f}_{n}  \big\}_{f\in E_{k+1}}$ through $  W^{e}_{n} := \frac{1}{b}\sum_{i=1}^b\prod_{j=1}^b  W^{e\times (i,j)}_{n}   $.

\end{enumerate}

\end{definition}

\begin{remark}\label{RemarkMeasure} Let the random measure $ \mathbf{M}_{r,n}^{\omega}$ on $\Gamma$ be defined as in Theorem~\ref{ThmUniversality}. For any $N\in \{0,1,\ldots, n\}$ and $\mathbf{q}\in \Gamma_N$,  the random variables $\big\{ W^{e}_n  \big\}_{e\in E_N}$ relate to the measures $\mathbf{M}_{r,n}^{\omega}$ through
\begin{align*}
\mathbf{M}_{r,n}^{\omega}(\mathbf{\overline{q}})\,=\,\frac{1}{|\Gamma_n|}\sum_{\substack{\mathbf{p}\in \Gamma_n \\ \mathbf{\overline{p}} \subset \mathbf{\overline{q}}   }  }  \prod_{l=1 }^{b^n}W^{\mathbf{p}(l)}_{n}
  \,=\,\frac{1}{|\Gamma_N|}\prod_{\ell=1}^{b^N}W^{\mathbf{q}(\ell)}_{n}\,,
\end{align*}
where the first equality follows  from the definition of $\mathbf{M}_{r,n}^{\omega}$, and the second equality is an identity that follows from inductive application of (II). 
\end{remark}

\begin{theorem}\label{ThmLimit} Fix $r\in \R$. For $k,n\in \mathbb{N}_0$ with $0\leq k\leq n$, let the arrays $\big\{ W^e_{n} \big\}_{e\in  E_{k} } $ and $\{ \mathbf{W}^e \}_{e\in  E_{k}  }  $  be defined as in Definition~\ref{DefInduct} and Theorem~\ref{ThmExist}, respectively.\footnote{The random variables $ W^{e}_{n} $ and $\mathbf{W}^e$ are related to the random variables $ X_{e}^{(k,n)} $  and $\mathbf{X}_e^{(k)}$ in~\cite[Section~5]{Clark3} through $ W^{e}_{n}=1+X_{e}^{(k,n)}  $ and $\mathbf{W}^e=1+\mathbf{X}_e^{(k)}$.  } 
\begin{enumerate}[(i)]
\item For all $k\in \mathbb{N}_0$, the array $\big\{ W^e_{n} \big\}_{e\in  E_{k} } $, viewed as an  $\R^{b^{2k }}$-valued random variable, converges in law as $n\rightarrow \infty$ to $\{ \mathbf{W}^e \}_{e\in  E_{k}  }  $.\footnote{Since these arrays of random variables are i.i.d., the  distributional convergence of the arrays stated here is equivalent to the distributional convergence of a single component of the array: $W_n^e\stackrel{\textup{d}}{\Longrightarrow} \mathbf{W}^e$.}

\item  For all $m\in\{2,3,\ldots\}$, $k\in \mathbb{N}_0$,  and  $e\in E_k$, the centered moment $\mathbb{E}\big[ \big(W^e_{n}-1\big)^m  \big]$ converges to $R^{(m)}(r-k)=\mathbb{E}\big[ \big(\mathbf{W}^e-1\big)^m  \big]   $ with large $n$.
\end{enumerate}

\end{theorem}

\begin{proof}[Proof of Theorem~\ref{ThmUniversality}]  By Remark~\ref{RemarkSuff}, it suffices to show that for any continuous function $g:\Gamma\rightarrow \R $ the sequence of random variables $\big\{\mathbf{M}_{r,n}^{\omega}(g)\big\}_{n\in \mathbb{N}}$ converges in distribution  to $\mathbf{M}_{r}(g)$.   Recall from Lemma~\ref{LemDense} that the set of  $ \mathcal{A}_{\Gamma}$-measurable simple functions, $ \mathbf{C}(\Gamma)$, is a dense subset of $C(\Gamma)$ with respect to the uniform norm. Thus for any $g\in C(\Gamma)$ and $\epsilon>0$, there exists a $\psi \in \mathbf{C}(\Gamma)  $ such that $|g(p)-\psi(p)|<\epsilon $ for all $p\in \Gamma$, and hence
\begin{align}
\mathbb{E}\Big[\big|\mathbf{M}_{r,n}^{\omega}(g)\,-\,\mathbf{M}_{r,n}^{\omega}(\psi) \big|^2 \Big]\,\leq \,&\,\epsilon^2 \mathbb{E}\big[\big( \mathbf{M}_{r,n}^{\omega}(\Gamma)    \big)^2\big]\nonumber \\ \,= \,&\,\epsilon^2 \mathbb{E}\big[\big( W_e^{(0,n)}\big)^2\big]\,\,\stackrel{n\rightarrow  \infty}{\longrightarrow} \,\,\epsilon^2\big(1+R(r)\big)\,,
\end{align}
where the convergence holds by (ii) of Theorem~\ref{ThmLimit}. It follows that the $L^2$ distance between  the random variables $\mathbf{M}_{r,n}^{\omega}(g)$ and $\mathbf{M}_{r,n}^{\omega}(\psi)$ is smaller than $2\epsilon (1+R(r))^{1/2}$ for large enough $n$. Moreover, the $L^2$ distance between the random variables $\mathbf{M}_{r}(g)$ and $\mathbf{M}_{r}(\psi)$ is bounded by $\epsilon (1+R(r))^{1/2}$.  In the analysis below, we will show that $\mathbf{M}_{r,n}^{\omega}(\psi) $ converges in distribution to $\mathbf{M}_{r}(\psi) $ as $n\rightarrow \infty$ for any $\psi\in \mathbf{C}(\Gamma)$. Since $\epsilon>0$ is arbitrary, this implies that $\mathbf{M}_{r,n}^{\omega}(g)$ converges in distribution to $\mathbf{M}_{r}(g)$.

If $\psi\in \mathbf{C}(\Gamma)$, then by definition there exist $\alpha_j\in \R$ and $A_j\in \mathcal{A}_{\Gamma} $ such that    $\psi =\sum_{j=1}^J\alpha_j 1_{A_j }  $.  Since $\mathcal{A}_{\Gamma}:=\bigcup_{k=1}^{\infty}\mathcal{A}_{\Gamma}^{(k)}$,  we have that $A_j\in \mathcal{A}_{\Gamma}^{(N)}$ for all $1\leq j\leq J$ when  $N\in \mathbb{N}$ is large enough. For any $n\geq N$,   the $\mathbf{M}_{r,n}^{\omega}$-integral of $\psi$ can be written in the form
\begin{align*}
\mathbf{M}_{r,n}^{\omega}(\psi)\,=\, \sum_{j=1}^{J}\alpha_j\mathbf{M}_{r,n}^{\omega}( A_j )   \,=\,&\,
 \sum_{j=1}^{J}\alpha_j\sum_{\substack{ \mathbf{p}\in \Gamma_n  \\ \mathbf{\overline{p}}\subset A_j  } }   \mathbf{M}_{r,n}^{\omega}(\mathbf{\overline{p}}) \nonumber \\  \,=\,&\,
 \sum_{j=1}^{J}\alpha_j\sum_{\substack{ \mathbf{p}\in \Gamma_n  \\ \mathbf{\overline{p}}\subset A_j  } }   \frac{1}{|\Gamma_n|} \prod_{l=1}^{b^n}W^{ \mathbf{p}(l) }_{n}\,, 
\end{align*}
where we have used the definition of $\mathbf{M}_{r,n}^{\omega}(\mathbf{\overline{p}})$ and  that $A_j\in  \mathcal{A}_{\Gamma}^{(N)}\subset  \mathcal{A}_{\Gamma}^{(n)}$ is a disjoint union of cylinder sets  $\mathbf{\overline{p}}$ with $ \mathbf{p}\in \Gamma_n $. Next using that each $A_j\in\mathcal{A}_{\Gamma}^{(N)}$ can be written as a disjoint union of cylinder sets $\mathbf{\overline{q}}$ with $ \mathbf{q}\in \Gamma_N $, we can rewrite the above as
\begin{align}\label{WW}
\mathbf{M}_{r,n}^{\omega}(\psi) 
\,=\,\sum_{j=1}^{J}\alpha_j\sum_{\substack{ \mathbf{q}\in \Gamma_N \\ \mathbf{\overline{q}}\subset A_j  } }  \sum_{\substack{ \mathbf{p}\in \Gamma_n  \\ \mathbf{\overline{p}}\subset \mathbf{\overline{q}}  } }  \frac{1}{|\Gamma_n|} \prod_{l=1 }^{b^n}W^{  \mathbf{p}(l) }_{n}\,=\,\sum_{j=1}^{J}\alpha_j\sum_{\substack{ \mathbf{q}\in \Gamma_N \\ \mathbf{\overline{q}}\subset A_j  } } \frac{1}{|\Gamma_N|}\prod_{\ell=1}^{b^N} W_{n}^{\mathbf{q}(\ell)}   \,,
\end{align}
where the second equality holds by Remark~\ref{RemarkMeasure}.  By  Theorem~\ref{ThmLimit},  the random array  $\big\{W_{n}^{e}\big\}_{e\in E_N  } $   converges in law with large $n$ to the random array   $\{\mathbf{W}^e\}_{e\in E_N  }$.   Therefore~(\ref{WW}) converges in law as $n\rightarrow \infty$ to  
\begin{align*}
\sum_{j=1}^{J}\alpha_j\sum_{\substack{ \mathbf{q}\in \Gamma_N \\ \mathbf{\overline{q}}\subset A_j  } } \frac{1}{|\Gamma_N|}\prod_{\ell=1}^{b^N}  \mathbf{W}^{\mathbf{q}(\ell) }  \,=:\,\sum_{j=1}^{J}\alpha_j\mathbf{M}_r(A_j )\,=\,\mathbf{M}_r(\psi )\,.
\end{align*}
The first equality  uses the definition of $\mathbf{M}_r:=\Phi\mathbf{\hat{W}}_r$ on $A_j\in \mathcal{A}_{\Gamma}^{(N)}$ in~(\ref{ConI}). The above shows that $\mathbf{M}_{r,n}^{\omega}(\psi) $ converges in distribution to $\mathbf{M}_{r}(\psi) $ with large $n$ for any $\psi\in \mathbf{C}(\Gamma)$, which completes the proof.
\end{proof}

\subsection{Proof of Proposition~\ref{PropProperties}}\label{SecProofPropProperties}

The following is a trivial consequence of Proposition~\ref{CorProp4} that we will use in the proof below.  
\begin{corollary}\label{RemarkProp4} Let $\mathbf{M}_r$ be the random measure on $\Gamma$ from Theorem~\ref{ThmExistence}.
 For any $n\in \mathbb{N}$ and paths $\mathbf{p},\mathbf{q}\in \Gamma_n$ sharing no edges, the restriction measures $\mathbf{M}_{r}|_{\mathbf{\overline{p}}}$ and $\mathbf{M}_{r}|_{\mathbf{\overline{q}}}$ are independent. 
\end{corollary}

\begin{proof}[Proof of Proposition~\ref{PropProperties}] Part (i): Let us write $\mathbf{M}_r=M_r+A_r$, where $M_r$ and $A_r$ are respectively the singular and absolutely continuous components  in the Lebesgue decomposition 
of $\mathbf{M}_r$ with respect to $\mu$. We seek to show that $A_r=0$ holds a.s.  By symmetry of the path space $\Gamma$, the expectations of $M_r$ and $A_r$ must be multiples of the uniform measure, in other terms,   $\mathbb{E}[M_r]=\alpha_r \mu$  and  $\mathbb{E}[A_r]= \beta_r \mu$ for constants $\alpha_r,\beta_r\in [0,1]$ with $\alpha_r+\beta_r=1$.   By property (II) of Theorem~\ref{ThmExistence} and part (ii) of Lemma~\ref{LemCorrelate}, respectively, $\upsilon_r=\mathbb{E}\big[\mathbf{M}_r\times \mathbf{M}_r  \big]$ and $\upsilon_r =\mu\times\mu\,+\,R(r)\rho_r $. Thus we have the equality
\begin{align}
 \mu\times\mu\,+\,R(r)\rho_r  \,=\,&\,\underbrace{\mathbb{E}\big[ M_r \times M_r  \big]}_{\geq \alpha^2_r\mu\times \mu  }\,+\,\underbrace{\mathbb{E}\big[ M_r \times A_r  \big]}_{\geq \alpha_r\beta_r\mu\times \mu  }\nonumber \\ &\,+\,\underbrace{\mathbb{E}\big[  A_r\times M_r   \big]}_{ \geq\alpha_r\beta_r\mu\times \mu  }\,+\,\underbrace{\mathbb{E}\bigg[  \frac{dA_r}{d\mu}(p)\frac{dA_r}{d\mu}(q) \bigg]}_{ \leq\beta_r^2\, \,\,\mu\times \mu-\text{a.e.}  } \mu \times \mu \,.\label{Compare}
\end{align}
The inequalities for the braced terms  will be explained below.

Our first task will be to show the lower bound for $\mathbb{E}\big[ M_r \times M_r  \big]$ by $\alpha^2_r\mu \times \mu$ in~(\ref{Compare}).
Let the set $\mathbf{S}_{\emptyset} \subset \Gamma\times \Gamma$ be defined as in  Corollary~\ref{CorrMuMu}.  Furthermore, let $\mathbf{S}^{(n)}_{\emptyset}\subset \Gamma\times \Gamma$ be defined as in~(\ref{DefS}) for $n\in \mathbb{N}$ and extend the definition to the vacuous case $n=0$ with  $\mathbf{S}^{(0)}_{\emptyset}:=\emptyset$.  Notice that $\mathbf{A}_n:=\mathbf{S}^{(n)}_{\emptyset}\backslash \mathbf{S}^{(n-1)}_{\emptyset} $ is a disjoint union of product cylinder sets $\mathbf{\overline{p}}\times \mathbf{\overline{q}}$ for paths $\mathbf{p},\mathbf{q}\in \Gamma_n$ that share no edges but that have intersecting generation-$(n-1)$ coarse-gainings $[\mathbf{p}]_{n-1}$ and $[\mathbf{q}]_{n-1}$.  Thus by~(\ref{DefS})  we can write $ \mathbf{S}_{\emptyset}$ as a disjoint union of product cylinder sets as follows:
\begin{align}\label{DefSII}
  \mathbf{S}_{\emptyset}\,=\,\bigcup_{n=1}^{\infty} \mathbf{A}_n \,=\,\bigcup_{n=1}^{\infty}  \bigcup_{\substack{\mathbf{p},\mathbf{q}\in \Gamma_n \\  \mathbf{\overline{p}}\times \mathbf{\overline{q}}\subset \mathbf{A}_n} } \mathbf{\overline{p}}\times\mathbf{\overline{q}} \, .
  \end{align}
Observe that for any $N\in \mathbb{N}_0$ and $\mathbf{p_*},\mathbf{q_*}\in \Gamma_N$ we have
\begin{align}\label{Y}
\mathbb{E}\Big[ M_r \times M_r\big(\mathbf{S}_{\emptyset}\cap (\mathbf{\overline{p}_*}\times\mathbf{\overline{q}_*}) \big)  \Big]\,=\,&\,\sum_{n=1  }^{\infty}\sum_{\substack{\mathbf{p},\mathbf{q}\in \Gamma_n \\  \mathbf{\overline{p}}\times \mathbf{\overline{q}}\subset \mathbf{A}_n } }\mathbb{E}\Big[ M_r\big(\mathbf{\overline{p}}\cap \mathbf{\overline{p}_*}\big)  M_r\left(\mathbf{\overline{q}}\cap \mathbf{\overline{q}_*}\right)  \Big] \nonumber \\
\,=\,&\,\alpha^2_r\sum_{n=1  }^{\infty}\sum_{\substack{\mathbf{p},\mathbf{q}\in \Gamma_n \\  \mathbf{\overline{p}}\times \mathbf{\overline{q}}\subset \mathbf{A}_n } }\mu\big(\mathbf{\overline{p}}\cap \mathbf{\overline{p}_*}\big)  \mu\left(\mathbf{\overline{q}}\cap \mathbf{\overline{q}_*}\right)   \nonumber \\
\,=\,&\,\alpha^2_r\mu \times \mu \big(\mathbf{S}_{\emptyset}\cap (\mathbf{\overline{p}_*}\times\mathbf{\overline{q}_*})   \big)\nonumber  \\ \,=\,&\,\alpha^2_r\mu \times \mu\big(\mathbf{\overline{p}_*}\times\mathbf{\overline{q}_*}\big) \,.
\end{align}
The first and third equalities above hold by countable additivity of the measures involved because $\mathbf{S}_{\emptyset} $ can be written as the disjoint union in~(\ref{DefSII}). For the second equality, we have used that  $\mathbb{E}[M_r]=\alpha_r\mu$ and that the random  restriction measures $M_r|_{\mathbf{\overline{p}}}$ and $M_r|_{\mathbf{\overline{q}}}$ are independent for $\mathbf{p},\mathbf{q}\in \Gamma_n$ with $\mathbf{\overline{p}}\cap \mathbf{\overline{q}}  =\emptyset$ since the restrictions $\mathbf{M}_r|_{\mathbf{\overline{p}}}$ and $\mathbf{M}_r|_{\mathbf{\overline{q}}}$ are independent by  Corollary~\ref{RemarkProp4}.
 The last equality above holds because $\mu\times \mu$ takes full measure on $\mathbf{S}_{\emptyset} $   by Corollary~\ref{CorrMuMu}.  We have shown that the measures $\mathbb{E}\big[ M_r \times M_r\big(\mathbf{S}_{\emptyset}\cap (\,\cdot\, ) \big)  \big]$ and  $\alpha^2_r\mu \times \mu$ agree on the semi-algebra $\mathcal{S}_{\Gamma\times\Gamma}$, and therefore the measures are equal by Corollary~\ref{CorollarySemiAlg}. Obviously, this implies the inequality $\mathbb{E}\big[ M_r \times M_r\big]\geq \alpha^2_r\mu \times \mu$.

 The lower bounds in~(\ref{Compare}) for $\mathbb{E}\big[ M_r \times A_r  \big]$ and $\mathbb{E}\big[ A_r  \times  M_r \big]$ by $\alpha_r\beta_r\mu\times \mu$ follow from the same argument as above.   Since $\rho_r$ is mutually singular to $\mu\times \mu$ and $1=\alpha_r+\beta_r$,  the rightmost term in (\ref{Compare})  must be $\leq \beta_r^2\mu\times \mu$,  and thus the function  $F_r(p,q):=\mathbb{E}\big[  \frac{dA_r}{d\mu}(p)\frac{dA_r}{d\mu}(q) \big]$ is $\mu\times \mu$-a.e.\  $\leq \beta_r^2$.  For any continuous function $g:\Gamma\rightarrow [0,\infty)$, we have
\begin{align*}
\textup{Var}\bigg(\int_{\Gamma}& g(p) A_r(dp)    \bigg)\\ \,=\,&\,\mathbb{E}\bigg[ \bigg( \int_{\Gamma} g(p) A_r(dp)   \bigg)^2 \bigg]\,-\,\bigg(\mathbb{E}\bigg[  \int_{\Gamma} g(p) A_r(dp)   \bigg]\bigg)^2  \nonumber  \\ 
\,=\,& \,\int_{\Gamma} g(p)g(q) \mathbb{E}\bigg[ \frac{dA_r}{d\mu}(p)  \frac{dA_r}{d\mu}(q) \bigg] \mu(dp)\mu(dq)\,-\,\beta_r^2 \bigg(\int_{\Gamma} g(p) \mu(dp)  \bigg)^2    
\end{align*}
since $\mathbb{E}\big[A_r(dp)\big]=\beta_r\mu(dp)$. However, the $\mu\times \mu$-a.e.\ inequality $\mathbb{E}\big[ \frac{dA_r}{d\mu}(p)  \frac{dA_r}{d\mu}(q) \big] \leq \beta_r^2 $ implies that the variance above is equal to zero. Therefore $\int_{\Gamma} g(p) A_r(dp) $  is a nonrandom constant. Since this holds for any nonnegative $g\in C(\Gamma)$, the measure $A_r $ must be nonrandom and equal to $\beta_r\mu$. 

Since  $A_r=\beta_r\mu$, equation~(\ref{Compare}) reduces to  
 \begin{align}\label{CompareII}
  \mu\times\mu\,+\,R(r)\rho_r  \,=\,      \mathbb{E}\big[ M_r\times M_r \big]\,+\,2\alpha_r\beta_r \mu\times\mu \,+\,\beta_r^2\mu\times\mu \,,  
  \end{align}
and  $\mathbf{M}_r= M_{r} + \beta_r\mu $  for $M_{r} $ with expectation $\alpha_r\mu$.   With the equality   $\mathbf{M}_r= M_{r} + \beta_r\mu $, it follows from Remark~\ref{RemarkConcat} and property (IV) of Theorem~\ref{ThmExistence} that $\beta_r$ must satisfy the recurrence relation $\beta_{r+1}\,=\,\beta_{r}^b$ for any $r\in \R$.  In particular, if $\beta_r > 0$ for some $r$, then $\beta_{r-n }= \beta_{r}^{1/b^n}$ converges  to $1$ as $n\rightarrow \infty$ with an error, $\alpha_{r-n}=1-\beta_{r-n}  $, of order $b^{-n}$.
  The first and second moments of $M_r(\Gamma)$ are respectively $\mathbb{E}\big[ M_r(\Gamma) \big]=\alpha_r $ and  $\mathbb{E}\big[ \big(M_r(\Gamma)\big)^2 \big]=\alpha_r^2+R(r) $,  where the  form for the second moment holds by evaluating both sides of~(\ref{CompareII}) by the set $\Gamma\times \Gamma$.   The third moment of $\mathbf{M}_{r-n}(\Gamma)$ has the lower bound
$$ \mathbb{E}\Big[ \big| \mathbf{M}_{r-n}(\Gamma) \big|^3\Big] \,\geq \, \mathbb{E}\Big[ \big| M_{r-n}(\Gamma) \big|^3\Big] \,\geq \,\frac{   \mathbb{E}\Big[ \big| M_{r-n}(\Gamma)   \big|^2\Big]^2   }{ \mathbb{E}\big[  M_{r-n}(\Gamma)   \big]    } \, >\,   \frac{  \big(R(r-n)\big)^2 }{ \alpha_{r-n}   } \,,    $$
where the first inequality uses that $\mathbf{M}_{r-n}\geq  M_{r-n}$, the second inequality is Cauchy-Schwarz, and the third inequality follows from our observations above.   The lower bound above implies that the third moment of $\mathbf{M}_{r-n}(\Gamma)$ grows exponentially as $n\rightarrow \infty$ 
since  $\alpha_{r-n}=1-\beta_{r-n}$ vanishes exponentially and $R(r-n) \propto \frac{1}{n} $ by Lemma~\ref{LemVar}.  This contradicts that the moments of  $ \mathbf{M}_{r}(\Gamma)   $ converge to $1$ as $r\rightarrow -\infty$ as a consequence of part (III) of Theorem~\ref{ThmExistence}. \vspace{.3cm}

\noindent Part (ii):  The a.s.\ absence of atoms for $\mathbf{M}_r$ follows trivially from part (i) of Theorem~\ref{ThmPathMeasure},
 which we will prove in Section~\ref{SecPathIntMxM}.

\vspace{.3cm}

\noindent Part (iii): We will first show that the total mass of $\mathbf{M}_{r}$ is a.s.\ positive. Define $x_r\in [0,1]$ as the probability that $\mathbf{M}_{r}(\Gamma)=0$.  Notice that 
$$x_r\,\leq \,\mathbb{E}\big[ | \mathbf{M}_{r}(\Gamma)       \,-\,1    |^2   \big]\,=\,\textup{Var}\big(\mathbf{M}_{r}(\Gamma)\big) \,=  \,R(r)\,, $$
and thus $x_r$ must vanish as $r\rightarrow -\infty$.  By the distributional recursive relation in property (IV) of Theorem~\ref{ThmExistence},  $\{x_r\}_{r\in \R}$ must satisfy the recursive relation $x_{r+1}=\psi(x_r)$ for the map  $\psi:[0,1]\rightarrow [0,1]$ given by $\psi(x)= \big(1-(1-x)^b   \big)^b$.  
Notice that $\psi$ is contractive towards $0$ for small $x>0$ because
$$\psi(x)\, = \, x^b\Bigg(\sum_{k=0}^{b-1}(1-x)^k \Bigg)^b\,\leq \, (bx)^b  \,. $$
Since $x_r\rightarrow 0$ as $r\rightarrow -\infty$ and $x_r$ contracts  towards zero through the operation of $\psi$, it follows that $x_r=0$ for all $r\in \R$.  Therefore, for any $r\in \R$ the measure $\mathbf{M}_{r}$ is a.s.\ nonzero.

Next we leverage this result using hierarchical symmetry to show that $\mathbf{M}_r(A)>0$ holds for every open set $A\subset \Gamma$ a.s.  There exists an $N\in \mathbb{N}$ and a  $\mathbf{p}\in\Gamma_N$ such  that $\mathbf{\overline{p}}\subset A$.  Thus $\mathbf{M}_r(A)$ has the  lower bound
 $$ \mathbf{M}_r(A)\,\geq \,   \mathbf{M}_r(\mathbf{\overline{p}})\,=\,\frac{1}{|\Gamma_N|}\prod_{\ell=1}^{b^N}\mathbf{M}^{\mathbf{p}(\ell)}(\Gamma)   \,, $$
where the  family of  random measures $\{ \mathbf{M}^{e} \}_{e\in E_N}$ is defined in relation to $\mathbf{M}_r$ as in Proposition~\ref{CorProp4}.  Since each  $\mathbf{M}^{e}$ is equal in distribution to $\mathbf{M}_{r-N}$, it follows that the measures in the family $\{ \mathbf{M}^{e} \}_{e\in E_N}$ are a.s.\ nonzero by our result above, and thus $ \mathbf{M}_r(A)$ is a.s.\ nonzero.  The collection of simple cylinder sets $\mathcal{S}_{\Gamma}$ is countable, so we can conclude that a.e.\ realization of $\mathbf{M}_r$ assigns all open sets positive weight. \vspace{.3cm}

\noindent Part (iv):  Let $g\in L^2(\Gamma,\mu)$.  Since $\mathbb{E}[  \mathbf{M}_{r}]=\mu$ and $\mathbb{E}[  \mathbf{M}_{r}\times \mathbf{M}_{r}]=\upsilon_r=\mu\times \mu+R(r)\rho_r$, we get
\begin{align*}
\mathbb{E}\Bigg[ \bigg( \int_{\Gamma} g(p) \mathbf{M}_{r}(dp)       \,-\,\int_{\Gamma} g(p) \mu(dp)        \bigg)^2   \Bigg]\,=\,&\, R(r)\int_{\Gamma} g(p)g(q)\rho_r(dp,dq) \nonumber  \\
\,\leq \,&\, R(r)\int_{\Gamma}\bigg(\frac{1}{2} \big|g(p)\big|^2\,+\,\frac{1}{2} \big|g(q)\big|^2 \bigg)\rho_r(dp,dq)\nonumber \\
\,=\,&\, R(r)\int_{\Gamma}\big|g(p)\big|^2 \mu(dp)  \,,
\end{align*}
where the last equality uses that the marginals of $\rho_r$ are both equal to $\mu$.  The result then follows since $R(r)=\mathit{O}(1/|r|)$  as $r\rightarrow -\infty$.
\end{proof}

\section{Path intersections under the correlation measure  }\label{SecPathIntCorrMeas}

In this section we prove Proposition~\ref{PropIntMeasure} and Lemma~\ref{LemIntSet}. The main step in the proof of Lemma~\ref{LemIntSet} is to show that for $\rho_{r}$-a.e.\ pair $(p,q)$ the set of intersection times $I^{p,q}=\{t\in[0,1]\,|\, p(t)=q(t)\}$ has log-Hausdorff exponent $\geq 1$.  This is achieved through an energy bound in Proposition~\ref{PropFrostman}.   The next subsection introduces some ideas that we use in the proof of Proposition~\ref{PropIntMeasure}.

\subsection{A generation-inhomogeneous Markovian population model}\label{SecPopModel}

Recall from Corollary~\ref{CorrMuMu} that the product measure $\mu\times \mu$ assigns full weight to the set, $\mathbf{S}_{\emptyset}$, of pairs $(p,q)\in \Gamma\times \Gamma$ such that the number, $\xi_n(p,q)$, of edges shared by the coarse-grained paths $[p]_n ,[q]_n\in \Gamma_n$  becomes zero for all large enough $n$. This is equivalent to the a.s.\ extinction of a population beginning with a single member where each member of the generation $n\in \mathbb{N}_0$ population independently has  either $b$ children with probability $\frac{1}{b}$ or no children at all. Under the normalized measure $\widehat{\upsilon}_r=\upsilon_r/(1+R(r))   $ on the space $\Gamma\times \Gamma$, the intersection number $\xi_n(p,q)$ has a similar, but generation-inhomogeneous, population interpretation wherein a  member of  generation $n$ has $b$ children with probability $\frac{1}{b}(1+R(r-n-1))^b/(1+R(r-n))$ and otherwise goes childless.\footnote{The consistency of the interpretation comes from the identity $R(t+1)=\frac{1}{b}[(1+R(t))^{b}-1]$ for  $t\in \R$.}  The following list summarizes our previous definitions/results concerning the three probability measures $\mu\times \mu$, $\widehat{\upsilon}_r$, and $\rho_r$  in the population model language.
\begin{itemize}
\item The events $\mathbf{S}_{\emptyset}$ and $\mathbf{S}_{\emptyset}^c $ correspond to eventual extinction and perpetual survival, respectively.
\item Extinction is certain under $\mu\times \mu$  by Corollary~\ref{CorrMuMu}.

\item Perpetual survival occurs with probability $\frac{R(r)}{1+R(r)}$ under $\widehat{\upsilon}_r$ by  (ii) of Lemma~\ref{LemCorrelate}.

\item $\rho_r$ is the conditioning of $\widehat{\upsilon}_r$ on the event of perpetual survival    by Remark~\ref{RemarkCond}.

\item The population  grows quadratically with $n$ in the event of survival by  (ii) of Proposition~\ref{PropLemCorrelate}.

\end{itemize}
  Lemma~\ref{LemMargPop} below characterizes the asymptotic growth of the number,  $\widetilde{\xi}_n(p,q)$, of  members within the generation-$n$  population having progeny that will never go extinct in the event of indefinite survival of the total population. First, we require a few more definitions.  
Recall that $\mathcal{F}_n$ denotes the $\sigma$-algebra of generation-$n$ cylinder subsets of $\Gamma\times \Gamma$; see Definition~\ref{DefSigmaAlgebra}.

\begin{definition}\label{DefSurvProg} For paths $p,q\in \Gamma$ and $n\in \mathbb{N}$, let $ \big([p]_n;\, \langle p\rangle^1_n,\ldots,  \langle p\rangle^{b^n}_{n}    \big)$ and $ \big([q]_n;\, \langle q\rangle^1_{n},\ldots,  \langle q\rangle^{b^n}_{n}    \big)$ be their corresponding decompositions in $\Gamma_n\times \bigtimes_{\ell=1}^{b^n}\Gamma$  in the sense of Remark~\ref{RemarkCylinder}.  We define $\mathbf{I}_n^{p,q}$ as the set of  $\ell\in \{1,\ldots ,b^n\}$ such that $[p]_n(\ell)=[q]_n(\ell)$ and $\big(\langle p\rangle^{\ell}_{n},\langle q\rangle^{\ell}_{n}\big)\in \mathbf{S}_{\emptyset}^{c}$. We also define the  notations below.
\begin{itemize} 
 \item  $\widetilde{\xi}_{n}\equiv\widetilde{\xi}_{n}(p,q) $  denotes the number of elements in  $ \mathbf{I}_n^{p,q}$. 

\item $\mathpzc{F}_n$ denotes the $\sigma$-algebra   on $\Gamma\times \Gamma$ generated by sets in the $\sigma$-algebra $\mathcal{F}_n$ and  the map from $\Gamma\times \Gamma$ to $\mathcal{P}\big(\{1,\ldots,b^n\}  \big)$ defined by $(p,q)\,\mapsto  \mathbf{I}_n^{p,q}$.

\end{itemize}

\end{definition}

\begin{remark}\label{RemarkAncestors} Given paths $p,q\in \Gamma$, the variable $\widetilde{\xi}_{n}(p,q) \in \mathbb{N}_0$ is the number of edges $e\in E_n$ corresponding to generation-$n$ embedded subcopies of $D$ within which the paths $p$ and $q$  have nontrivial intersections---or, in the language of the population model, indefinitely surviving progeny.  The $\sigma$-algebra  $\mathpzc{F}_n$ contains information about  the coarse-grained paths $[p]_n, [q]_n\in \Gamma_n$ and also which edges $\mathbf{e}\in E_n$ shared by $[p]_n$, $[q]_n$ correspond to cylinder sets $\mathbf{\overline{e}}$ such that $\mathbf{\overline{e}}\cap \textup{Range}(p)\cap \textup{Range}(q)$ is nonempty. 
\end{remark}

\begin{lemma}\label{LemMargPop}   For $(p,q)\in \Gamma\times \Gamma$, let  $\widetilde{\xi}_{n}\equiv \widetilde{\xi}_{n}(p,q)$ be defined as in Definition~\ref{DefSurvProg}, and recall that  $\xi_n\equiv\xi_n(p,q)$ is the number of edges shared by the coarse-grained paths $[p]_n, [q]_n\in \Gamma_n$.

\begin{enumerate}[(i)]

\item Under the probability measure $\rho_r$, the sequence of random variables $(\widetilde{\xi}_{n})_{n\in \mathbb{N}_0}$ is a Markov chain starting with $\widetilde{\xi}_{0}=1$ and having transition law $ \widetilde{\xi}_{n+1}\,\stackrel{\textup{d}}{=}\,\sum_{k=1}^{\widetilde{\xi}_{n}}\mathbf{n}_n^{k} $,   where $\big\{\mathbf{n}^{k}_n\big\}_{k\in \mathbb{N}}$ is a family  of independent copies of a random variable $\mathbf{n}_n$ taking values  in $\{1,\ldots , b\}$ with probabilities
\begin{align}\label{FamilyLine}
\mathbbmss{P}_r\big[\mathbf{n}_n=\ell\big]\,:=\,\frac{1}{b}{b \choose \ell }\frac{ \big( R(r-n-1)  \big)^{\ell}   }{R(r-n)  }\,,
\end{align}
and $\big\{\mathbf{n}^{k}_n\big\}_{k\in \mathbb{N}}$ is independent of the random variable  $\widetilde{\xi}_{n}$.

\item Under the probability measure $\rho_r$, the sequence $\mathbf{\widetilde{m}}_{r,n}:= \frac{ R'(r-n)  }{R(r-n)  } \widetilde{\xi}_{n} $ is a uniformly integrable martingale with respect to the filtration $(\mathpzc{F}_n)_{n\in \mathbb{N}_0}$.  Moreover, $(\mathbf{\widetilde{m}}_{r,n})_{n\in \mathbb{N}_0}$  converges $\rho_r$-a.s.\ with large $n$ to $T(p,q)>0$. 
 
\item Under the probability measure $\widehat{\upsilon}_r$, the conditional expectation of $\mathbf{\widetilde{m}}_{r,n}$
with respect to the $\sigma$-algebra $\mathcal{F}_n$ is equal to $\mathbf{m}_{r,n}:= \frac{ R'(r-n)  }{1+R(r-n)  } \xi_{n} $, and  $\mathbf{m}_{r,n}$ converges $\widehat{\upsilon}_r$-a.s.\ with large $n$ to $T(p,q)$.

\end{enumerate}

\end{lemma}

\begin{remark}\label{Remark} Note that statements (ii) \& (iii) of Lemma~\ref{LemMargPop} have the following consequences.
\begin{itemize}
\item  (ii)  implies that $\widetilde{\xi}_{n}$ $\rho_r$-a.s.\ grows linearly with  $n$ since $\frac{ R'(r-n)  }{R(r-n)  }=\frac{1}{n}+\mathit{o}(\frac{1}{n})$. 
\item (iii) implies that $\xi_{n}$ $\rho_r$-a.s.\ grows quadratically with  $n$ since $\frac{ R'(r-n)  }{1+R(r-n)  }=\frac{\kappa^2}{n^2}+\mathit{o}(\frac{1}{n^2})$.
\end{itemize}
\end{remark}

\begin{remark}\label{RemarkMargPop}  It is interesting to compare the linear growth of the number, $\widetilde{\xi}_n$, of generation-$n$  members that have indefinitely surviving progeny with the quadratic growth of the total population, $\xi_n$.  In this critical population model, where there is neither inevitable  extinction nor the possibility of asymptotically exponential growth, a vanishing fraction of the population have unending family subtrees.  A member of the generation-$n$ population that is destined to have indefinitely surviving progeny will have $b$ direct descendants, but when $n\gg 1$ typically only one them  will have an  unending progeny subtree because
 $\mathbbmss{P}_r\big[\mathbf{n}_n=1\big]=1+\mathit{O}(\frac{1}{n})$ by~(\ref{FamilyLine}) and the asymptotics $R(t)\approx \frac{\kappa^2}{-t}$ for $-t\gg 1$.
\end{remark}

\begin{proof}[Proof of Lemma~\ref{LemMargPop}] The Markovian interpretation in (i) is made possible through the identity 
$$R(t+1)=\frac{1}{b}\big[(1+R(t))^b  -1 \big]=\sum_{\ell=1}^{b}\frac{1}{b}{b \choose \ell } \big( R(t)  \big)^{\ell}        $$ 
with $t=r-n-1$.  We address the various statements in (ii) and (iii)  below.  \vspace{.2cm}

 The martingale property  for $(\mathbf{\widetilde{m}}_{r,n})_{n\in \mathbb{N}_0} $ holds since 
$$\mathbbmss{E}_{\rho_r}\big[ \mathbf{\widetilde{m}}_{r,n+1} \,\big|\,\mathpzc{F}_n \big]\,=\, \frac{ R'(r-n-1)  }{R(r-n-1)  }\mathbbmss{E}_r[ \mathbf{n}_n ] \widetilde{\xi}_n \,=\,\frac{R'(r-n)   }{R(r-n)  }\widetilde{\xi}_n\,=:\,\mathbf{\widetilde{m}}_{r,n}\,, $$
where the second equality follows from the calculation below. Using part (i), we get that
\begin{align}\label{Expect}
 \frac{ R'(r-n-1)  }{R(r-n-1)  }\mathbbmss{E}_r[ \mathbf{n}_n ]\,=\,\,&\frac{ R'(r-n-1)  }{R(r-n-1)  }\sum_{\ell=1}^b  \frac{\ell}{b}{b \choose \ell }\frac{ \big( R(r-n-1)  \big)^{\ell}   }{R(r-n)  }\,.\nonumber  \\
 \,=\,\,&\frac{\frac{1}{b}\frac{d}{dr}\big[(1+R(r-n-1)\big)^b -1   \big]}{ R(r-n)   }\,=\,\frac{R'(r-n)   }{R(r-n)  }\,.
\end{align}
In the above, we have used the derivative formula $R'(t+1)=\big(1+R(t)\big)^{b-1}R'(t)$, which follows from the chain rule and the recursive identity $R(t+1)=\frac{1}{b}\big[(1+R(t)\big)^b -1   \big]$.

Next we shift our focus to the conditional expectation relation between $ \mathbf{m}_{r,n}   $ and $\mathbf{\widetilde{m}}_{r,n}$ in (iii). Note that we can rewrite 
$\mathbf{\widetilde{m}}_{r,n}$ in the form
\begin{align*}
\mathbf{\widetilde{m}}_{r,n}\,=\,\frac{R'(r-n)   }{R(r-n)  } \sum_{\substack{ 1\leq \ell\leq b^n  \\  [p]_n(\ell)=[q]_n(\ell)}} 1_{ \ell \in \mathbf{I}_n^{p,q} } \,.
\end{align*}
Under the measure $\hat{\upsilon}_r$, the probability that $\ell\in \mathbf{I}_n^{p,q}$ when conditioned on the event $[p]_n(\ell)=[q]_n(\ell)$  is equal to $\frac{R(r-n)}{1+R(r-n)   }$; this follows from the Markov property, hierarchical symmetry, and the third bullet point above Definition~\ref{DefSurvProg}.  Thus $ \mathbf{m}_{r,n}$ is the conditional expectation of $\mathbf{\widetilde{m}}_{r,n} $ given $\mathcal{F}_n$:
\begin{align*}
\mathbbmss{E}_{\upsilon_r}\big[ \mathbf{\widetilde{m}}_{r,n}  \,\big|\, \mathcal{F}_n \big]\,=\,\frac{R'(r-n)}{1+R(r-n)   } \xi_n \,=:\, \mathbf{m}_{r,n}\,.
\end{align*}

Since the sequence $(\mathbf{\widetilde{m}}_{r,n})_{n\in \mathbb{N}_0}$  is a nonnegative martingale, it  converges $\rho_r$-a.s.\ to a limit $\mathbf{\widetilde{m}}_{r,\infty}$ with finite expectation.  This a.s.\ convergence extends trivially to the measure $\widehat{\upsilon}_r=\frac{1}{1+R(r)}\big(\mu\times \mu+ R(r)\rho_r  \big)$ because---as a consequence of Corollary~\ref{CorrMuMu}---the measure $\mu\times \mu$ assigns full weight to the event that $\mathbf{\widetilde{m}}_{r,n}=0$ holds for large enough $n$.  By part (ii) of Proposition~\ref{PropMartingales}, the sequence $( \mathbf{m}_{r,n})_{n\in \mathbb{N}_0}$ is a $\widehat{\upsilon}_r$-martingale that converges $\widehat{\upsilon}_r$-a.e.\ to $T(p,q)$. The calculation below shows that the $L^2(\widehat{\upsilon}_r)  $ distance between  $\mathbf{\widetilde{m}}_{r,n}$ and $\mathbf{m}_{r,n}  $ vanishes with large $n$, and thus $\mathbf{\widetilde{m}}_{r,\infty}=T(p,q)$ for $\widehat{\upsilon}_r$-a.e.\ pair $(p,q)\in \Gamma\times \Gamma$:
\begin{align}\label{L2Approx}
\mathbbmss{E}_{\widehat{\upsilon}_r}\big[ (\mathbf{\widetilde{m}}_{r,n} -\mathbf{m}_{r,n}  )^2\big]\,=\,\,&\mathbbmss{E}_{\widehat{\upsilon}_r}\Big[\mathbbmss{E}_{\widehat{\upsilon}_r}\big[ (\mathbf{\widetilde{m}}_{r,n} -\mathbf{m}_{r,n}  )^2\,\big|\, \mathcal{F}_n \big]  \Big]\nonumber \\ \,=\,\,&\bigg(\frac{R'(r-n)   }{R(r-n)  } \bigg)^2\mathbbmss{E}_{\widehat{\upsilon}_r}[\xi_n] \textup{Var}_r\big( \mathbf{n}_n \big)\nonumber  \\
\,=\,\, & \big(\underbrace{1+R(r-n)}_{\approx\,1 \text{ for }n\gg 1}\big)\frac{ R'(r)   }{1+R(r)  }\underbrace{\frac{R'(r-n)   }{\big(R(r-n)\big)^2  }}_{ \approx\,\kappa^{-2} \text{ for }n\gg 1    }\underbrace{\textup{Var}_r( \mathbf{n}_n)}_{\,\mathit{O}( \frac{1}{n}) \text{ for }n\gg 1    }\,,
\end{align}
where the third equality holds because $\mathbf{m}_{r,n}=\frac{ R'(r-n)  }{1+R(r-n)   }\xi_n  $ is a $\widehat{\upsilon}_r$-martingale with expectation $\frac{ R'(r)  }{1+R(r)   }$.  Since  $R(t)\sim \frac{\kappa^2}{-t}$ and $R'(t)\sim \frac{\kappa^2}{t^2}$ with $-t\gg 1$ by Remark~\ref{RemarkDerR},  the variance of the random variable $\mathbf{n}_n$ is of order $\frac{1}{n}$ with large $n$, and thus the $L^2(\widehat{\upsilon}_r)  $ distance between  $\mathbf{\widetilde{m}}_{r,n}$ and $\mathbf{m}_{r,n}  $  vanishes with order $\frac{1}{\sqrt{n}}$.   Since $\sup_{n\in \mathbb{N}}\mathbbmss{E}_{\widehat{\upsilon}_r}\big[ \mathbf{m}_{r,n}^2\big]$ is finite as a consequence of (ii) of Proposition~\ref{PropMartingales}, it follows that  $\sup_{n\in \mathbb{N}}\mathbbmss{E}_{\widehat{\upsilon}_r}\big[ \mathbf{\widetilde{m}}_{r,n}^2\big]$ is finite, and hence the sequence of random variables $(  \mathbf{\widetilde{m}}_{r,n})_{n\in \mathbb{N}_0 }$ is uniformly integrable.
\end{proof}

\subsection{Proof of Proposition~\ref{PropIntMeasure}}\label{SubSecPropIntMeas}

We include a lemma after the proof  that makes a few  observations about the constructions involved. 

\begin{proof} We will split the proof into parts (a)\,-\,(e), where parts (a)\,-\,(d) construct the  measures $\tau^{p,q}\in \mathcal{M}_{[0,1]}$ for every pair $(p,q)\in \Gamma\times \Gamma$ and  part  (e) addresses their properties.  The uniqueness of the measures $\tau^{p,q}$ up to a $\upsilon_r$-null set of pairs $(p,q)$ follows  trivially from~(\ref{TauForm}) because $\tau^{p,q}$ is determined by its values on the intervals $[0,x]$.  It suffices to work with the probability measure $\rho_{r}=\frac{1}{R(r)}(\upsilon_r-\mu\times \mu    )$ rather than $\upsilon_r$ since $\mu\times \mu  $ assigns full measure to the set of pairs $(p,q)$ such that $T(p,q)=0$ by part (ii) of Proposition~\ref{PropLemCorrelate}, in which case we obviously define $\tau^{p,q}:=0$.\vspace{.2cm}

\noindent \textbf{(a) A convenient set algebra:} Recall that $\mathcal{V} $ denotes the set of $x\in [0,1]$ such that  $ x=\frac{k}{b^n}$   for some   $k,n\in \mathbb{N}_0$. All measures that we define below assign  $\mathcal{V}$ weight zero, so its complement $\mathcal{E}:=[0,1]\backslash\mathcal{V} $ will have a more direct role in our constructions. For $0\leq a<b\leq 1$, we will  use $(a,b)_{\mathcal{E}}$ to denote the $\mathcal{E}$-interval $(a,b)\cap \mathcal{E}  $. Define  $\mathcal{A}_\mathcal{E}:=\bigcup_{n=1}^\infty \mathcal{A}_\mathcal{E}^{(n)}$, where  $\mathcal{A}_\mathcal{E}^{(n)}$ denotes the collection of all unions of sets of the form $(\frac{\ell-1}{b^n},\frac{\ell}{b^n})_{\mathcal{E}}$  for $\ell \in \{1,\ldots, b^n\}$---including the empty union. The set collection $\mathcal{A}_\mathcal{E}$ is an algebra on $\mathcal{E}$, and we will define  an additive set function $\widehat{\tau}^{p,q}:\mathcal{A}_\mathcal{E}\rightarrow [0,\infty)$ in part (b) below  that we will extend  to our definition of $\tau^{p,q}\in \mathcal{M}_{[0,1]}$  in part (d). \vspace{.3cm}

\noindent \textbf{(b) A sequence of measures:} For $p,q\in \Gamma$ and $n\in \mathbb{N}$, define  $\mathbf{I}_{n}^{p,q}\subset \{1,\ldots,b^n\}$ as in Definition~\ref{DefSurvProg}, and let $S^{p,q}_{n}$ be the collection of $\mathcal{E}$-intervals $(\frac{\ell-1}{b^n},\frac{\ell}{b^n})_{\mathcal{E}}$ such that $\ell \in \mathbf{I}_{n}^{p,q}$.  In the language of Section~\ref{SecPopModel}, $S^{p,q}_{n}$ is the generation-$n$ subpopulation that have indefinitely surviving progeny.   We define the measure $\tau^{p,q}_{r, n}$ on $[0,1]$ to have density 
\begin{align}\label{TauN}
 \frac{d\tau^{p,q}_{r,n}}{dx}\,=\,\frac{R'(r-n)}{R(r-n)}b^{n}\sum_{e\in S^{p,q}_{n} } 1_{e} \,. 
 \end{align}
We will show that for $\rho_r$-a.e.\ pair $(p,q)$  the sequence $\big(\tau^{p,q}_{r, n}(A)\big)_{n\in \mathbb{N}}$ converges to a limit  $\widehat{\tau}^{p,q}(A)$ for every $A\in \mathcal{A}_\mathcal{E}$.   The limit defines a finitely additive set function $\widehat{\tau}^{p,q}:\mathcal{A}_\mathcal{E}\rightarrow [0,\infty)$.

Let $A\in \mathcal{A}_{\mathcal{E}}$ be arbitrary and pick $N\in \mathbb{N}$ large enough so that $A\in  \mathcal{A}_{\mathcal{E}}^{(N)}$.
The computation below is similar to~(\ref{Expect}) and shows that the sequence of random variables $\big(\tau^{p,q}_{r,n}(A)\big)_{n\geq N} $ forms a martingale on the probability space $(\Gamma\times \Gamma, \mathcal{B}_{\Gamma\times \Gamma}, \rho_r   )$ with respect to the filtration $(\mathpzc{F}_n)_{n\geq N}$.  Notice that
\begin{align}
\mathbbmss{E}_{\rho_r}\Big[ \tau^{p,q}_{r, n+1}(A)  \,\Big|\, \mathpzc{F}_n \Big]\,=\,&\,\frac{R'(r-n-1) }{R(r-n-1)   }  \mathbbmss{E}_{\rho_r}\Bigg[  \sum_{f\in S^{p,q}_{n+1}} 1_{f\subset A }\,\Bigg|\,  \mathpzc{F}_n\Bigg]\nonumber  \\  \,=\,&\, \frac{R'(r-n-1) }{R(r-n-1)   }   \mathbbmss{E}_{\rho_r}\Bigg[ \sum_{e\in S^{p,q}_{n}} 1_{e\subset A }\Big|\big\{  f\in S^{p,q}_{n+1} \,\big|\, f\subset e \big\}\Big|\,\Bigg|\,  \mathpzc{F}_n\Bigg]\nonumber \\ \,=\,&
\, \frac{ R'(r-n-1)  }{R(r-n-1)   }\sum_{ e\in S^{p,q}_{n}   }1_{e\subset A }\mathbbmss{E}_{\rho_r}\bigg[\Big|\big\{  f\in S^{p,q}_{n+1} \,\big|\, f\subset e \big\}\Big| \,\bigg|\,  e\in S^{p,q}_{n} \bigg] \,, \nonumber
\end{align}
where the third equality  uses the Markov property and that the $\sigma$-algebra $\mathpzc{F}_n$ contains information about the generation-$n$ population $ S^{p,q}_{n}$.  Since each $e\in S^{p,q}_{n}$ has $j\in \{1,\ldots, b\}$ children in $ S^{p,q}_{n+1}$ with probability $ \frac{1}{b} {b \choose j } \big(R(r-n-1)   \big)^{j} /  R(r-n) $,  the above is equal to 
 \begin{align*}
 \mathbbmss{E}_{\rho_r}\Big[ \tau^{p,q}_{r, n+1}(A)  \,\Big|\, \mathpzc{F}_n \Big]\,=\,&\,\frac{ R'(r-n-1)  }{R(r-n-1)   }\sum_{e\in S^{p,q}_{n}  }1_{ e\subset A }\sum_{j=1}^{b}\frac{j}{b} {b \choose j }  \frac{\big(R(r-n-1)   \big)^{j}  }{  R(r-n)}\nonumber \\
 \,=\,& \,\frac{\frac{d}{dr} \left[\frac{1}{b}\left(\big(1+  R(r-n-1)  \big)^b -1    \right)\right] }{R(r-n)   } \sum_{ e\in S^{p,q}_{n}   }1_{e\subset A}\nonumber
\\ \,=\,&\, \frac{R'(r-n) }{R(r-n)   }\sum_{e\in S^{p,q}_{n}    }1_{e\subset A}   \,=\, \tau^{p,q}_{r, n}(A) \,.
\end{align*}
The last two equalities respectively use the chain rule and the  identity $R(t+1)=\frac{1}{b}\big[(1+R(t))^b-1    \big]$ with $t=r-n-1$.

We have established that for every $A\in \mathcal{A}_{\mathcal{E}}$ the nonnegative sequence $\big(\tau^{p,q}_{r, n}(A)\big)_{n\in \mathbb{N}} $ is a  $\rho_r $-martingale when  $n\in \mathbb{N}$ is large enough. It follows that for $\rho_r$-a.e.\ pair $(p,q)$  the sequences $\big(\tau^{p,q}_{r, n}(A)\big)_{n\in \mathbb{N}} $ converge with large $n$ to limits $ \widehat{\tau}^{p,q}(A) $ for all $A$ in the countable collection  $\mathcal{A}_{\mathcal{E}} $.   Note that the limit $\widehat{\tau}^{p,q}$ does not depend on the parameter $r\in \R$ since $\frac{R'(r-n) }{R(r-n)   }\sim \frac{1}{n}$ for $n\gg 1$ as a consequence of Lemma~\ref{LemVar} and Remark~\ref{RemarkDerR}.  Moreover, the set  functions $\widehat{\tau}^{p,q}: \mathcal{A}_{\mathcal{E}}\rightarrow [0,\infty)$  inherit finite additivity   from the set functions $\tau^{p,q}_{r, n}: \mathcal{A}_{\mathcal{E}}\rightarrow [0,\infty)$. For the $\rho_r$-null set of pairs $(p,q)$ such that $\widehat{\tau}^{p,q}$ is not well-defined as a limit of $\tau^{p,q}_{r, n}$, we can  define $\widehat{\tau}^{p,q}:=0$.  \vspace{.3cm}

\noindent \textbf{(c) An open region around the vertex set:} Next we will show that for $\rho_r$-a.e.\ $(p,q)$ there exists an open subset, $O^{p,q}$, of $ [0,1]$ that contains  $\mathcal{V}$ as a subset  and for which $\widehat{\tau}^{p,q}$ is supported on $[0,1]\backslash O^{p,q}   $, meaning that $\widehat{\tau}_{p,q}(A)=0$ for each $A\in \mathcal{A}_{\mathcal{E}}$ with $A\subset O^{p,q}   $.  The main observation needed to construct  $O^{p,q}$ is the following:  for any $x\in \mathcal{V}$, there exist $\varepsilon_x, N_x>0$ such that the support of  $d\tau^{p,q}_{r,n}/dx$ is disjoint from $(x-\varepsilon_x,x+\varepsilon_x)$ for all $n\geq N_x$.\footnote{We are interpreting intervals $(x-\varepsilon,x+\varepsilon)$ as subsets of $[0,1]$, i.e., as $\big\{y\in [0,1]\,\big|\,|y-x|<\varepsilon\big\}.$ } To see this, note that if $n\gg 1$ and  $x\in \mathcal{V}$ is an endpoint of some $\mathcal{E}$-interval  $e=\big(\frac{k-1}{b^n},\frac{k}{b^n}   \big)_{\mathcal{E} }$ in  $S_{n}^{p,q}$, then there is roughly a $\frac{1}{b}$ probability that $x  $ is an endpoint of some offspring $f\in S^{p,q}_{n+1}$ of $e$. This holds because with probability close to one only one of the $b$ offspring of $e$ will be in $S^{p,q}_{n+1}$; see Remark~\ref{RemarkMargPop}. If $x$ is not an endpoint for one of the offspring $f \in S^{p,q}_{n+1}$ of  $e$, then $x$ will have a distance $\geq \frac{1}{b^{n+1}}$ from the set $\bigcup_{\mathsmaller{f\in S_{n+1}^{p,q}}}f$, i.e., the support of $d\tau^{p,q}_{r,n+1}/dx$.  Thus the probability that $x$ is not gapped away from the support of  $d\tau^{p,q}_{r,n}/dx$ vanishes exponentially with large $n$, and with probability one there exist $\varepsilon_x,N_x>0$ such that $\tau^{p,q}_{r,n}$ assigns the interval $(x-\varepsilon_x,x+\varepsilon_x)$ measure zero  for all $n\geq N_x$.  Since  $\mathcal{V}$ is countable, this property a.s.\  holds for all elements in $\mathcal{V}$.  The open set $O^{p,q}:=\bigcup_{x\in \mathcal{V}  }(x-\varepsilon_x,x+\varepsilon_x)$ obviously satisfies  $\mathcal{V}\subset O^{p,q}$,  and for each $A\in \mathcal{A}_{\mathcal{E}}$ with $A\subset O^{p,q}   $ we have that $\tau^{p,q}_{r,n}(A )=0$ for large enough $n$, and thus $\widehat{\tau}^{p,q}(A)=0$. \vspace{.3cm}

\noindent \textbf{(d) Extension to measures:} Let $\Xi \subset \Gamma\times \Gamma$ denote the set of pairs $(p,q)$ for which the sequence of measures $\big(\tau^{p,q}_{r,n}\big)_{n\in \mathbb{N}}$ converges set-wise on $\mathcal{A}_{\mathcal{E}}$ to a limit $\widehat{\tau}^{p,q}:\mathcal{A}_{\mathcal{E}}\rightarrow [0,\infty)$ and for which there exists an open set  $O^{p,q}\subset [0,1]$ of the type in part (c), i.e., such that $\mathcal{V}\subset O^{p,q}$ and  the premeasure $\widehat{\tau}^{p,q}$ is supported on $[0,1]\backslash O^{p,q}$.  By parts (b) and (c), the set $\Xi$ has full measure under $\rho_r$. We define $\tau^{p,q}:=0$ when $(p,q)\in \Xi^c$, and the definition of $\tau^{p,q}\in \mathcal{M}_{[0,1]}$ in the case $(p,q)\in\Xi$ is formulated below.

For $(p,q)\in  \Xi$, define $F^{p,q}:[0,1]\rightarrow [0,\infty)$ such that $F^{p,q}(1):=\widehat{\tau}^{p,q}( \mathcal{E}) $ and  for $x\in [0,1)$
\begin{align}
F^{p,q}(x)\,:=\,\inf_{a\in \mathcal{V}\cap (x,1]   }\widehat{F}^{p,q}(a)\hspace{.5cm}\text{for}\hspace{.5cm}\widehat{F}^{p,q}(a)\,:=\,\widehat{\tau}^{p,q}\big([0,a]\cap \mathcal{E}  \big)  \,.
\end{align} 
It follows immediately from the above definition that $F^{p,q}$  is nondecreasing and right-continuous.  Note that $\widehat{F}^{p,q}(a)$ is a nondecreasing function of $a\in \mathcal{V}$ because $\widehat{\tau}^{p,q}$ is finitely additive.   If $x\in [0,1)\cap \mathcal{V}$, then we can pick $\varepsilon>0$ small enough such  that $(x-\varepsilon, x+\varepsilon)\subset O^{p,q}$.  It follows that  $\widehat{\tau}^{p,q}\big([x,a]\cap \mathcal{E}  \big)=0$ for all $a\in (x,x+\varepsilon)\cap \mathcal{V}$, and   thus  we have $$\widehat{F}^{p,q}(a)\,=\, \widehat{F}^{p,q}(x)\,+\,\widehat{\tau}^{p,q}\big([x,a]\cap \mathcal{E}  \big)\,=\,  \widehat{F}^{p,q}(x)\,. $$
 Therefore, $F^{p,q}(x)= \widehat{F}^{p,q}(x)$ for all $x\in \mathcal{V}$.  In fact, $F^{p,q}(x)$ is constant and thus continuous on the interval $(x-\varepsilon, x+\varepsilon)$ by the same argument.
 
Since $F^{p,q}$ is nondecreasing and right-continuous, there is a unique Borel measure $\tau^{p,q}$ on $[0,1]$ having $F^{p,q}$ as its cumulative distribution function, i.e., $F^{p,q}(x)=\tau^{p,q}\big([0,x]  \big)$.  The measure $\tau^{p,q}$ assigns the countable set $\mathcal{V}$ weight zero since $F^{p,q}$ is continuous at points in $\mathcal{V}$.  For any $x\in \mathcal{V}$, we thus have agreement between $\tau^{p,q}$ and $\widehat{\tau}^{p,q}$ on the set $[0,x]\cap \mathcal{E}$:
 $$\tau^{p,q}\big([0,x]\cap \mathcal{E}  \big)\,=\,\tau^{p,q}([0,x])\,=:\,F^{p,q}(x)\,=\,\widehat{F}^{p,q}(x)\,:=\,\widehat{\tau}^{p,q}\big([0,x]\cap \mathcal{E}  \big)\,.$$ It follows, more generally, that $\tau^{p,q}(A )=\widehat{\tau}^{p,q}(A)$  for all $A\in \mathcal{A}_{\mathcal{E}}$.  In other terms, the measure $\tau^{p,q}:\mathcal{B}_{[0,1]}\rightarrow [0,\infty)$ is an extension of the additive set function $\widehat{\tau}^{p,q}:\mathcal{A}_{\mathcal{E}}\rightarrow [0,\infty)$.  As a consequence of this construction of $\tau^{p,q}$, if $A\in \mathcal{A}_{\mathcal{E}}$ and $B\subset \mathcal{V}$, then $\tau^{p,q}(A\cup B)$ is the limit of $\tau^{p,q}_{r,n}(A\cup B)$ since
\begin{align}\label{Tau2Tau}
\lim_{n\rightarrow \infty} \tau^{p,q}_{r,n}(A\cup B)\,=\,\lim_{n\rightarrow \infty} \tau^{p,q}_{r,n}(A)\,=:\,\widehat{\tau}^{p,q}(A) \,=\, \tau^{p,q}(A) \,=\, \tau^{p,q}(A\cup B)\,.
\end{align}

\vspace{.3cm}

\noindent \textbf{(e) Properties of the limit measure:} Next we will prove the properties of the measures $\tau^{p,q}$ stated in Proposition~\ref{PropIntMeasure}, except for the claim that $\tau^{p,q}$ is $\rho_r$-a.s.\ diffuse, which will follow as a trivial corollary of  Proposition~\ref{PropFrostman} in the next subsection. \vspace{.2cm}

$\bullet$ To see that the map from $\Gamma\times \Gamma$ to $\mathcal{M}_{[0,1]} $ sending $(p,q)$ to $\tau^{p,q}$ is measurable, first observe that for each $x\in \mathcal{V}$ the value $\tau^{p,q}\big([0,x]  \big)$ is a $\mathcal{B}_{\Gamma\times \Gamma}$-measurable function of $(p,q)$ because it is defined as zero on $\Xi^c$ and is defined as a pointwise limit    of the sequence of simple functions $\big(\tau^{p,q}_{r,n}\big([0,x]  \big)\big)_{n\in \mathbb{N}  }$ on $\Xi$, where  $\tau^{p,q}_{r,n}\big([0,x]  \big)$ is $\mathcal{A}_{\Gamma\times \Gamma}^{(n)}$-measurable and thus $\mathcal{B}_{\Gamma\times \Gamma}$-measurable for each $n$. Since the collection of intervals $\big\{[0,x]\,\big|\, x\in \mathcal{V} \big\}$ is a $\pi$-system that generates the $\sigma$-algebra $\mathcal{B}_{[0,1]}$,  a routine application of the $\pi$-$\lambda$ theorem yields that $\tau^{p,q}(A)$ is a $\mathcal{B}_{\Gamma\times \Gamma}$-measurable function for each $A\in \mathcal{B}_{[0,1]}$.   Since the metric space $[0,1]$ is Polish, it follows that the map $(p,q)\rightarrow \tau^{p,q}\in \mathcal{M}_{[0,1]} $ is $\mathcal{B}_{\Gamma\times \Gamma}$-measurable when  $\mathcal{M}_{[0,1]} $ is equipped with its Borel $\sigma$-field.\footnote{See~\cite[Lemma 4.7]{Kallenberg} for equivalent characterizations of the Borel $\sigma$-field of $\mathcal{M}_S$ for a Polish space $S$.}\vspace{.2cm}

$\bullet$ $\tau^{p,q}(\mathcal{V})=0$  for all $(p,q)$ since this holds when $(p,q)\in \Xi$ by the discussion in part (d) and   we define $\tau^{p,q}:=0$ when $(p,q)\notin \Xi$. \vspace{.2cm}

$\bullet$ Next we show that $I^{p,q}$ is a $\tau^{p,q}$-full measure set for all $(p,q)$. When $(p,q)\notin \Xi$ this holds vacuously since $\tau^{p,q}:=0$.  When $(p,q)\in \Xi$, we first notice that
the complement of $I^{p,q}$  has the form
$$ [0,1]\,\backslash \, I^{p,q}\,=\,\bigcup_{N=1}^\infty O_N   \hspace{.5cm} \text{for} \hspace{.5cm} O_N \,:=\, \bigcup_{\substack{ 1\leq \ell\leq b^N \\  [p]_N(\ell)\neq [q]_N(\ell)  }} \bigg(\frac{\ell-1}{b^N},\frac{\ell}{b^N}\bigg)    \,. $$ 
The definition of $\tau_{r, n}^{p,q}$ in~(\ref{TauN})  implies that  $\tau_{r,n}^{p,q}(O_N)=0$ for all $n\geq N$. It follows that $\tau^{p,q}(O_N)=0$ by~(\ref{Tau2Tau}) since $O_N=A\cup B$ for some $A\in \mathcal{A}_{\mathcal{E}}$ and $B\subset V$.   Therefore $\tau^{p,q}$ assigns the set $[0,1]\,\backslash \,I^{p,q}$ measure zero by countable additivity. \vspace{.2cm}

$\bullet$ Next we show the formula~(\ref{TauForm}) for the distribution function $F^{p,q}(x)=\tau^{p,q}\big([0,x]  \big) $. Since $\tau^{p,q}$ has no atoms, $F^{p,q}$ is continuous, and  it suffices to verify~(\ref{TauForm}) for $x$ in the dense set  $\mathcal{V}\subset [0,1]$.  If $x\in \mathcal{V}$, then
\begin{align}\label{DisplayDing}
 \tau^{p,q}\big([0,x]\big)\,=\,\widehat{\tau}^{p,q}\big([0,x]\cap \mathcal{E}\big)\,:=\,\lim_{n\rightarrow \infty}\frac{R'(r-n)}{R(r-n)}\big|\left\{e\in S^{p,q}_{n}\,\big|\,   e\subset [0,x] \right\}\big|\,. 
  \end{align}
Define  $\mathbf{m}_{r,n}^x:=\frac{R'(r-n)}{1+R(r-n)}\,\big|\,\big\{1\leq \ell \leq  b^n \,\big|\,[p]_n(\ell)=[q]_n(\ell) \text{ and } [\frac{\ell-1}{b^n},\frac{\ell}{b^n}]\subset [0,x]   \big\}\big| $.   
The proof of (iii) of Lemma~\ref{LemMargPop} can be superficially adjusted to show that $(\mathbf{m}_{r,n}^x)_{n\in \mathbb{N}}$ is an $\widehat{\upsilon}_r$-martingale for large enough $n$ that converges $\widehat{\upsilon}_r$-a.e.\ to~(\ref{DisplayDing}).  Since $R(t)\rightarrow 0$ and $R'(t)\sim \frac{k^2}{t^2}$ as $t\rightarrow -\infty$ by Remark~\ref{RemarkDerR},  the factor $\frac{R'(r-n)}{1+R(r-n)}$ can be replaced by $\frac{k^2}{n^2}$ in the limit as $n\rightarrow \infty$.
 \end{proof}

The  lemma below concerning the sequence of  random measures $\big(\tau^{p,q}_{r,n}\big)_{n\in \mathbb{N}}$ in the proof of Proposition~\ref{PropIntMeasure} will be useful in the next subsection.  Recall our notation $\mathcal{V}:=\big\{ \frac{k}{b^n}\in [0,1]\,\big|\,k,n\in \mathbb{N}_0  \big\}$,  $\mathcal{E}:=[0,1]\backslash   \mathcal{V}$, and  $(a,b)_{\mathcal{E} }:=(a,b)\cap \mathcal{E}$.  Part (ii), which uses that $\tau^{p,q}(\mathcal{V})=0$ for all $(p,q)$, will be unnecessarily restrictive once we establish that the random measure $\tau^{p,q}$ is $\rho_r$-a.e.\ diffuse. 
\begin{lemma}\label{LemmaCas} For $n\in \mathbb{N}_0$ and $p,q\in \Gamma$, let $\tau^{p,q}_{r,n}$ denote the Borel measure on $[0,1]$ with Lebesgue density~(\ref{TauN}) and $\tau^{p,q}$ be  defined as in Proposition~\ref{PropIntMeasure}.  Moreover, let the $\sigma$-algebra $ \mathpzc{F}_n \subset \mathcal{B}_{\Gamma\times \Gamma}$ be defined as in Definition~\ref{DefSurvProg}.
\begin{enumerate}[(i)]
\item  For $\rho_r$-a.e.\ $(p,q)\in \Gamma\times \Gamma$, the sequence of measures $\big(\tau^{p,q}_{r, n}\big)_{n\in \mathbb{N}}$ converges weakly to $\tau^{p,q}$. 

\item For $\rho_r$-a.e.\ $(p,q)\in \Gamma\times \Gamma$, the sequence  $\big(\tau^{p,q}_{r, n}(A)\big)_{n\in \mathbb{N}}$ converges to  $\tau^{p,q}(A)$ for every  set $A\subset [0,1]$ that is a union of $\mathcal{E}$-intervals of the form $(\frac{\ell-1}{b^n}, \frac{\ell}{b^n}  )_{\mathcal{E}}$ with $\ell,n\in \mathbb{N}_0$.

\item Fix $N\in \mathbb{N}$ and any set $A\subset [0,1]$  of the form $A=\bigcup_{\ell\in I}(\frac{\ell-1}{b^N}, \frac{\ell}{b^N}  )_{\mathcal{E}}$ for some $I\subset \{1,\ldots, b^N\} $.  The sequence of variables  $\big( \tau^{p,q}_{r,n}(A)\big)_{n\geq N} $ on the probability space $(\Gamma\times \Gamma,\mathcal{B}_{\Gamma\times\Gamma},\rho_r)$ is a martingale  with respect to the filtration  $( \mathpzc{F}_n)_{n\in \mathbb{N}} $ that satisfies  $\tau^{p,q}_{r,n}(A)=\mathbbmss{E}_{\rho_r}[ \tau^{p,q}(A)\, |\,\mathpzc{F}_n]$ for all $n\geq N$.

\item Fix $n\in \mathbb{N}$ and distinct $\ell,l\in \{1,\ldots, b^n\}$.  If we view $\tau^{p,q}$ as a random measure on $[0,1]$ defined  over the probability space  $(\Gamma\times \Gamma,\mathcal{B}_{\Gamma\times\Gamma},\rho_r)$, then   the restrictions $\tau^{p,q}|_{(\frac{\ell-1}{b^n}, \frac{\ell}{b^n} )_{\mathcal{E}}}$ and $\tau^{p,q}|_{(\frac{l-1}{b^n}, \frac{l}{b^n}  )_{\mathcal{E}}}  $ are independent random measures  when conditioned on $\mathpzc{F}_n$.

\end{enumerate}

\end{lemma}

\begin{proof}   We can prove (i) by showing that for all $(p,q)\in \Xi$ there is pointwise convergence of the cumulative distribution  functions $F^{p,q}_{r,n}(x)=\tau^{p,q}_{r,n}\big([0,x]\big)  $ to $F^{p,q}(x)=\tau^{p,q}\big([0,x]\big)  $ over the set $\mathcal{V}\subset [0,1]$, which is dense and contains $1$. When $x\in \mathcal{V}$, then $[0,x]=A\cup B$ for some $A\in \mathcal{A}_{\mathcal{E}}$ and $B\subset \mathcal{V}$, and thus the convergence holds by~(\ref{Tau2Tau}). \vspace{.2cm}

 From the construction of $\tau^{p,q}$ in part (d) of the proof of Proposition~\ref{PropIntMeasure}, we have that $\tau^{p,q}(\mathcal{V})=0$ for all $(p,q)$.  Thus (ii) follows from (i) because $\partial A\subset \mathcal{V}$.   \vspace{.2cm}

For statement (iii),  recall that we showed that the sequence of random variables  $\big( \tau^{p,q}_{r,n}(A)\big)_{n\geq N} $ is a $\rho_r$-martingale with respect to the filtration  $( \mathpzc{F}_n)_{n\in \mathbb{N}} $ in part (b) of the proof of Proposition~\ref{PropIntMeasure}.  Since  $\tau^{p,q}_{r,n}(A)\rightarrow \tau^{p,q}(A)$ for $\rho_r$-a.e.\ $(p,q)$ by (ii), it suffices to show that $\big( \tau^{p,q}_{r,n}(A)\big)_{n\geq N} $ is uniformly $\rho_r$-integrable.  However, for any $A\in \mathcal{B}_{[0,1]}$, we have  $\tau^{p,q}_{r,n}(A)\leq \tau^{p,q}_{r,n}\big([0,1]\big)= \mathbf{\widetilde{m}}_{r,n}$ for all $p,q\in \Gamma$. The sequence of random variables $ (\mathbf{\widetilde{m}}_{r,n})_{n\in \mathbb{N}}$  is uniformly $\rho_r$-integrable by (ii) of Lemma~\ref{LemMargPop}, so  $ \big(\tau^{p,q}_{r,n}(A)\big)_{n\in \mathbb{N}}$ is also uniformly $\rho_r$-integrable. \vspace{.2cm}

For statement (iv), recall from part (b) of the proof of Proposition~\ref{PropIntMeasure} that $S^{p,q}_{n}$ is a collection of intervals $(\frac{\ell-1}{b^n}, \frac{\ell}{b^n} )_{\mathcal{E}}$ with $\ell\in \mathbf{I}^{p,q}_{n}$, where $ \mathbf{I}^{p,q}_{n}\subset \{1,\ldots, b^n\}$ is defined as in Definition~\ref{DefSurvProg}.  Under the probability measure $\rho_r$, we can view the sequence $\big(\mathbf{I}^{p,q}_{n}\big)_{n\in \mathbb{N}_0}$ as defining a  random infinite branching graph, where generation-$n$ nodes on the graph independently have $\ell\in \{1,\ldots,b\}$ offspring with probability~(\ref{FamilyLine}), and the $\sigma$-algebra $\mathpzc{F}_n$ corresponds to information on the family lineages up to generation $n$.  Conditioned on $\mathpzc{F}_n$, two potential members, $e,f\in \{1,\ldots,b^n\}$,  of the generation-$n$ population generate progeny subtrees independently. Thus  the restrictions $\widehat{\tau}^{p,q}|_{e}$ and $\widehat{\tau}^{p,q}|_{f}$ of the set function $\widehat{\tau}^{p,q}:\mathcal{A}_{\mathcal{E}}\rightarrow [0,\infty)$ defined in part (b) of the proof of Proposition~\ref{PropIntMeasure} are independent when conditioned on  $\mathpzc{F}_n$, and this independence  is inherited by $\tau^{p,q}|_{e}$ and $\tau^{p,q}|_{f}$ through the construction of $\tau^{p,q}$ in part (d) of the proof  of Proposition~\ref{PropIntMeasure}.
\end{proof}

\subsection{A lower bound for the log-Hausdorff exponent of the set of intersection times    }\label{SubSectionPathIntRho}

The following is a corollary of Proposition~\ref{PropFrostman} below. 
Recall that $\rho_r$ is the probability measure on $\Gamma\times \Gamma$ from  (ii) of Lemma~\ref{LemCorrelate} and that $I^{p,q}$ denotes the set of intersection times between   $p,q\in \Gamma$. 

\begin{corollary}\label{CorIntSet}
The  set of intersection times $I^{p,q}$ has log-Hausdorff exponent  $\geq 1$ for  $\rho_r$-a.e.\ pair $(p,q)\in \Gamma\times \Gamma$.
\end{corollary}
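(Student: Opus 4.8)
The plan is to establish the corollary via the energy method (Frostman-type argument): to show that a compact set $S\subset[0,1]$ has log-Hausdorff exponent $\geq 1$, it suffices to exhibit, for every $\mathfrak{h}<1$, a nonzero finite Borel measure supported on $S$ whose $h$-energy $\int\int \frac{1}{h(|t-u|)}\,d\sigma(t)\,d\sigma(u)$ is finite for the dimension function $h(x)=1/\log^{\mathfrak{h}}(1/x)$; a standard covering argument (the mass distribution principle for generalized Hausdorff measures) then forces $H^{\log}_{\mathfrak{h}}(S)=\infty$. The natural candidate measure is exactly the measure $\tau_{p,q}$ constructed in Proposition~\ref{PropIntMeasure}, which is supported on $I_{p,q}$, is $\rho_r$-a.e.\ nonzero (total mass $T(p,q)>0$ on the support of $\rho_r$ by part (ii) of Lemma~\ref{LemMargPop}), and is non-atomic. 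So the content of Proposition~\ref{PropFrostman}, which I would state and prove next, should be a bound of the form
\begin{align*}
\mathbbmss{E}_{\rho_r}\bigg[\int_{[0,1]^2}\frac{1}{h_{\mathfrak{h}}\big(|t-u|\big)}\,\tau_{p,q}(dt)\,\tau_{p,q}(du)\bigg]\,<\,\infty\hspace{.5cm}\text{for every }\mathfrak{h}<1,
\end{align*}
which by Tonelli immediately gives finiteness of the energy for $\rho_r$-a.e.\ $(p,q)$, and hence the corollary.

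To prove that expected-energy bound I would dyadically discretize. Writing $\frac{1}{h_{\mathfrak h}(|t-u|)} = \log^{\mathfrak h}(1/|t-u|)$ and noting that if $t,u$ lie in a common generation-$n$ but not generation-$(n+1)$ dyadic block of $[0,1]$ then $|t-u|\asymp b^{-n}$, so $\log(1/|t-u|)\asymp n$, the energy is comparable to $\sum_{n}n^{\mathfrak h}\,\big(\text{mass of pairs }(t,u)\text{ first separated at scale }n\big)$. Using the characterization of $\tau_{p,q}$ from part (III) of Proposition~\ref{PropIntMeasure} together with the density formula~(\ref{TauN}) for $\tau_{p,q}^{(n)}$, the $\tau_{p,q}\times\tau_{p,q}$-mass of a single generation-$n$ block $e$ is governed by $\big(\frac{R'(r-n)}{R(r-n)}b^n\cdot b^{-n}\big)^2\cdot(\text{indicator that }e\text{ survives})$, so taking $\rho_r$-expectation and summing over the $b^n$ blocks, the key quantity is $\mathbbmss{E}_{\rho_r}\big[\sum_{e\in S_{p,q}^{(n)}} (\cdots)\big]$. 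The martingale computations already carried out in the proof of Lemma~\ref{LemMargPop} and Proposition~\ref{PropIntMeasure} (in particular that $\mathbf{\widetilde m}_n = \frac{R'(r-n)}{R(r-n)}\widetilde\xi_n$ is an $L^2$-bounded martingale, with $\widetilde\xi_n$ the number of surviving blocks at generation $n$) give $\mathbbmss{E}_{\rho_r}[\widetilde\xi_n]=\frac{R(r)}{R'(r)}\cdot\frac{R'(r-n)}{R(r-n)}\asymp \frac{R(r)}{R'(r)}\cdot\frac1n$ using $\frac{R'(r-n)}{R(r-n)}=\frac1n+o(\frac1n)$. Feeding this in, the $n$-th term of the energy sum contributes on the order of $n^{\mathfrak h}\cdot\frac{R'(r-n)}{R(r-n)}\cdot\mathbbmss{E}_{\rho_r}[\widetilde\xi_n]\cdot(\text{extra }\tfrac1n\text{ from the within-block pair mass})\asymp n^{\mathfrak h}\cdot\frac1n\cdot\frac1n\cdot\frac1n = n^{\mathfrak h - 3}$, which is summable for every $\mathfrak h < 1$ — indeed with a lot of room to spare. (I would be careful to track the correct power: the within-block normalization contributes the relevant decay, and even a crude accounting leaves a convergent series for $\mathfrak h<1$, and in fact the true threshold of this argument is larger, but $\mathfrak h<1$ is all that is claimed and it gives the clean statement in Lemma~\ref{LemIntSet}.)

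The main obstacle I anticipate is the bookkeeping of the off-block (non-nested) pair contributions: when $t$ and $u$ are separated at a coarse generation $n$, they lie in \emph{different} surviving generation-$n$ blocks that are siblings under a common generation-$(n-1)$ block, and the joint $\rho_r$-law of which sibling blocks survive is governed by the inhomogeneous branching law~(\ref{FamilyLine}), not by independence. One must therefore bound $\mathbbmss{E}_{\rho_r}\big[\sum_{e\neq e'\text{ siblings in }S_{p,q}^{(n)}} (\text{mass of }e)(\text{mass of }e')\big]$, which requires controlling second moments of $\widetilde\xi_n$ along a single branch — but this is exactly what the $L^2$-boundedness of the martingale $\mathbf{\widetilde m}_n$ (established in the proof of Lemma~\ref{LemMargPop}, via $\sup_n\mathbbmss{E}_{\rho_r}[\mathbf{\widetilde m}_n^2]<\infty$) provides, after conditioning on $\mathpzc{F}_{n-1}$ and using the branching structure from Corollary~\ref{CorProp4}. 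Once the second-moment control is in place, the off-block sum is dominated by the same $\sum_n n^{\mathfrak h-c}$ (with $c\geq 2$) series, and the whole energy bound goes through. The other minor technical point is to state precisely the generalized mass distribution principle (finite $h$-energy of a nonzero measure on $S$ implies $H^h(S)=\infty$) for these logarithmic dimension functions; this is standard and I would either cite it or include a one-line proof in an appendix, and invoke the non-atomicity of $\tau_{p,q}$ to guarantee the measure is genuinely supported on an uncountable set.
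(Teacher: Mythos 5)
Your high-level plan is exactly the paper's: Corollary~\ref{CorIntSet} is deduced from Proposition~\ref{PropFrostman} by the generalized mass-distribution principle (this is precisely what Appendix~\ref{AppendixHausdorff} does), and Proposition~\ref{PropFrostman} is proved by a dyadic/hierarchical decomposition of the energy controlled through the martingale $\mathbf{\widetilde m}_n$. So the route is right. However, your power-counting is off in a way that matters, because it leads you to assert the argument has ``a lot of room to spare'' and a ``true threshold larger'' than $\frak h=1$. It does not, and it cannot: Lemma~\ref{LemIntSet} asserts the log-Hausdorff exponent is \emph{exactly} $1$, with the upper bound $\leq 1$ proved later by a direct covering argument using $\widetilde\xi_n/(n\log b)$. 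An energy bound that were summable for some $\frak h>1$ would force $H^{\log}_{\frak h}(I_{p,q})=\infty$ for that $\frak h$, contradicting the $\leq 1$ direction. The threshold $\frak h=1$ is sharp, and the paper's telescoping computation in parts (c)--(e) of the proof of Proposition~\ref{PropFrostman} lands on $\sum_k k^{\frak h-2}$, convergent precisely for $\frak h<1$.

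The concrete computational error: you wrote $\mathbbmss{E}_{\rho_r}[\widetilde\xi_n]=\tfrac{R(r)}{R'(r)}\cdot\tfrac{R'(r-n)}{R(r-n)}\asymp\tfrac1n$, but the martingale $\mathbf{\widetilde m}_n=\tfrac{R'(r-n)}{R(r-n)}\widetilde\xi_n$ has \emph{constant} expectation $\tfrac{R'(r)}{R(r)}$, so $\mathbbmss{E}_{\rho_r}[\widetilde\xi_n]=\tfrac{R(r-n)}{R'(r-n)}\cdot\tfrac{R'(r)}{R(r)}\asymp n$ (this is exactly the linear growth asserted in Lemma~\ref{LemMargPop}(ii) and Remark~\ref{RemarkMargPop}). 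You have the fraction inverted, and you also carry only a single factor of $\tfrac{R'(r-n)}{R(r-n)}$ for the $\tau_{p,q}\times\tau_{p,q}$ mass of a pair, where two are needed (the per-block density squared). A corrected na\"ive count for the pairs first separated at scale $n$ is: per-pair mass $\big(\tfrac{R'(r-n)}{R(r-n)}\big)^2\asymp n^{-2}$, times the expected number of surviving sibling pairs, which is $\mathbbmss{E}_{\rho_r}[\widetilde\xi_{n-1}]\cdot\mathbbmss{E}[\mathbf n_{n-1}(\mathbf n_{n-1}-1)]\asymp n\cdot\tfrac1n=O(1)$ using the branching law~(\ref{FamilyLine}), times the weight $n^{\frak h}$ --- giving $n^{\frak h-2}$, not $n^{\frak h-3}$. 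Your $n^{\frak h-3}$ results from two compensating mistakes. Relatedly, your treatment of the ``off-block'' contribution is too cavalier: the paper's parts (c)--(e) do a careful one-step conditional-expectation recursion using the identities $R(t+1)=\tfrac1b[(1+R(t))^b-1]$ and $R'(t+1)=(1+R(t))^{b-1}R'(t)$ to extract the precise $O(n^{\frak h-1})$ increment and then telescope; second-moment boundedness of $\mathbf{\widetilde m}_n$ alone does not by itself isolate the $n^{-2}$ decay of the generation-$n$ increment, and it is this recursive cancellation that makes the argument land exactly at the sharp exponent.

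So: right strategy, same as the paper, but the arithmetic in your heuristic is wrong and the ``room to spare'' intuition should be abandoned --- this is a sharp-threshold argument, and a correct account of the generation-$n$ contribution gives $n^{\frak h-2}$, summable precisely for $\frak h<1$.
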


 The proof of Corollary~\ref{CorIntSet} is placed in Appendix~\ref{AppendixHausdorff} because it proceeds from the energy bound in Proposition~\ref{PropFrostman}  without change from the analogous method for obtaining a lower bound for the  Hausdorff dimension of a set using an energy bound.   Note that the integrand in~(\ref{Energy}) below is the reciprocal of the dimension function $h(a)=1/\log^{\frak{h}}(\frac{1}{a})$ in Definition~\ref{DefLogHaus} with $a=|x-y|$.

\begin{proposition}\label{PropFrostman} Let the map $(p,q)\mapsto \tau^{p,q}$ from $\Gamma\times \Gamma$ to $\mathcal{M}_{[0,1]}$ be defined as in Proposition~\ref{PropIntMeasure}.  For $\rho_r$-a.e.\  $(p,q)$ and any  $\frak{h}\in [0,1)$, we have
\begin{align}\label{Energy}
Q^\frak{h}\left(\tau^{p,q}\right)\,:=\,\int_{[0,1]\times [0,1]}  \log^\frak{h} \Big(\frac{1}{|x-y|}\Big)   \tau^{p,q}(dx) \tau^{p,q}(dy)\,\,<\,\,\infty \,.
\end{align}
 
\end{proposition}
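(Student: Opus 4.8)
The plan is to control the expected energy $\mathbbmss{E}_{\rho_r}\big[Q_\frak{h}(\tau_{p,q})\big]$ and show it is finite for every $\frak{h}\in[0,1)$; finiteness of the expectation immediately gives $Q_\frak{h}(\tau_{p,q})<\infty$ for $\rho_r$-a.e.\ pair $(p,q)$, and then intersecting over a countable sequence $\frak{h}_k\nearrow 1$ yields the claim for all $\frak{h}\in[0,1)$ simultaneously. Since $\tau_{p,q}$ is supported on $\mathcal{E}$ (it assigns $\mathcal{V}$ measure zero by part (d) of the proof of Proposition~\ref{PropIntMeasure}) and since $\tau_{p,q}=\lim_n \tau_{p,q}^{(n)}$ with the explicit densities~(\ref{TauN}), the natural route is to discretize: write the double integral over $[0,1]^2$ as a sum over dyadic-type generations according to the coarsest scale $b^{-m}$ at which $x$ and $y$ lie in different blocks. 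Concretely, $\log\big(1/|x-y|\big)$ is comparable to $m\log b$ when $x,y$ lie in the same generation-$m$ interval $[\tfrac{\ell-1}{b^m},\tfrac{\ell}{b^m}]$ but in different generation-$(m+1)$ subintervals, so
\begin{align*}
Q_\frak{h}(\tau_{p,q})\,\lesssim\,\sum_{m=1}^{\infty} (m\log b)^{\frak{h}} \sum_{\ell=1}^{b^m}\tau_{p,q}\Big(\big[\tfrac{\ell-1}{b^m},\tfrac{\ell}{b^m}\big]\Big)^2\,,
\end{align*}
up to the diagonal term, which is harmless because $\tau_{p,q}$ is a.s.\ nonatomic. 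So the core estimate reduces to bounding $\mathbbmss{E}_{\rho_r}\big[\sum_{\ell}\tau_{p,q}([\tfrac{\ell-1}{b^m},\tfrac{\ell}{b^m}])^2\big]$.

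The second moment of the block mass is where the population model of Section~\ref{SecPopModel} enters. By part (V) of the proof of Proposition~\ref{PropIntMeasure}, $\tau_{p,q}^{(n)}(A)=\mathbbmss{E}_{\rho_r}[\tau_{p,q}(A)\mid \mathpzc{F}_n]$ for $A\in\mathcal{A}_{\mathcal{E}}^{(N)}$, $n\geq N$; so for a generation-$m$ block $e$, $\tau_{p,q}(e)=\lim_n \tau_{p,q}^{(n)}(e)$ and, restricted to the event that $e\in S_{p,q}^{(m)}$ (the block supports indefinitely surviving intersection progeny), $\tau_{p,q}(e)$ is distributed like the total mass of an independent copy of the limiting measure attached to that sub-DHL, rescaled by $R'(r-m)/R(r-m)$. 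Using Lemma~\ref{LemMargPop}(ii), each such block contributes a factor whose conditional second moment is $\big(R'(r-m)/R(r-m)\big)^2\mathbbmss{E}[\widetilde m_\infty^2]$ with $\widetilde m_\infty$ of order $1$, and summing over the $\widetilde\xi_m$ surviving blocks and taking expectation (using $\mathbbmss{E}_{\rho_r}[\widetilde\xi_m]=R(r-m)/R'(r-m)\cdot\frac{R'(r)}{R(r)}$, which follows from the martingale property of $\mathbf{\widetilde m}_m$) one finds
\begin{align*}
\mathbbmss{E}_{\rho_r}\Big[\sum_{\ell=1}^{b^m}\tau_{p,q}\big(\big[\tfrac{\ell-1}{b^m},\tfrac{\ell}{b^m}\big]\big)^2\Big]\,\asymp\,\frac{R'(r-m)}{R(r-m)}\,\cdot\,C_r\,\asymp\,\frac{C_r'}{m}
\end{align*}
for large $m$, since $R'(r-m)/R(r-m)=\tfrac1m+o(\tfrac1m)$ by Lemma~\ref{LemMargPop}. (Here one must also control the contribution of pairs of \emph{distinct} surviving blocks within the same generation-$m$ interval, which is handled by the independence across distinct sub-DHLs built into Corollary~\ref{CorProp4} and Definition~\ref{DefSurvProg}, together with $\mathbbmss{E}[\widetilde m_\infty]$ of order one.) Plugging this in,
\begin{align*}
\mathbbmss{E}_{\rho_r}\big[Q_\frak{h}(\tau_{p,q})\big]\,\lesssim\,\sum_{m=1}^{\infty}\frac{m^{\frak{h}}}{m}\,=\,\sum_{m=1}^{\infty} m^{\frak{h}-1}\,,
\end{align*}
which converges precisely when $\frak{h}<1$. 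This is exactly the threshold in the statement, which is reassuring.

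The main obstacle I anticipate is the second-moment computation for the block masses: one needs to carefully disentangle, within a fixed generation-$m$ interval of $[0,1]$, the contributions of a surviving block paired with itself versus paired with a different surviving block, and show that the cross terms do not worsen the $1/m$ decay. This requires the branching structure of $\widetilde\xi_m$ from Lemma~\ref{LemMargPop}(i) (a member has $b$ children but, for $m\gg1$, typically only one carries the surviving line, so sibling surviving blocks are rare — of relative order $R(r-m)\sim\kappa^2/m$), combined with the conditional-independence decomposition of $\tau_{p,q}$ over sub-DHLs via part (V) of Proposition~\ref{PropIntMeasure} and Remark~\ref{RemarkDecomp}/Corollary~\ref{CorProp4}. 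Once the bookkeeping is set up so that the block second moment is expressed through $\mathbbmss{E}_{\rho_r}[\widetilde\xi_m]$ and the order-one quantities $\mathbbmss{E}[\widetilde m_\infty]$, $\mathbbmss{E}[\widetilde m_\infty^2]$ (both finite by the $L^2$-boundedness of $\mathbf{\widetilde m}_n$ noted in Lemma~\ref{LemMargPop}), the rest is the elementary comparison of $\log(1/|x-y|)$ with the generation index and the summation above. Nonatomicity of $\tau_{p,q}$, needed to discard the diagonal, also follows a posteriori from~(\ref{Energy}) itself for any $\frak{h}>0$, so there is no circularity as long as one first establishes the energy bound for a single fixed $\frak{h}\in(0,1)$ and then bootstraps.
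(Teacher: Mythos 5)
Your argument has a fatal arithmetic error in the final step: $\sum_{m\geq 1} m^{\frak{h}-1}$ converges iff $\frak{h}-1<-1$, i.e.\ iff $\frak{h}<0$; it \emph{diverges} for every $\frak{h}\in[0,1)$. So even taking all your intermediate claims for granted, the chain of inequalities ends with a divergent series and does not prove the proposition. The divergence is a symptom of a real loss in your decomposition. Writing $S_m:=\{(x,y):\exists\,\ell,\;x,y\in[\tfrac{\ell-1}{b^m},\tfrac{\ell}{b^m}]\}$ so that $\sum_\ell\tau_{p,q}(I_{m,\ell})^2=(\tau_{p,q}\times\tau_{p,q})(S_m)$, your bound $Q_\frak{h}\lesssim\sum_m m^\frak{h}(\tau_{p,q}\times\tau_{p,q})(S_m)$ counts each pair $(x,y)$ once for every generation at which they share a block. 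Since a pair with $\frak{g}(x,y)=j$ is counted at generations $0,1,\dots,j-1$, an Abel/summation-by-parts rearrangement shows you are effectively estimating $\sum_j j^{\frak{h}+1}(\tau_{p,q}\times\tau_{p,q})(\frak{g}=j)$ rather than the sharp $\sum_j j^{\frak{h}}(\tau_{p,q}\times\tau_{p,q})(\frak{g}=j)$; the extra power of $j$ is exactly what turns a convergent series into a divergent one. The paper avoids this by working directly with the shell decomposition, i.e.\ it defines $\widetilde Q_\frak{h}(\varrho)=\int\frak{g}(x,y)^\frak{h}\,\varrho(d x)\varrho(d y)$ and shows $\mathbbmss{E}_{\rho_r}[\widetilde Q^{(n+1)}_\frak{h}(\tau^{(n+1)}_{p,q})\mid\mathpzc{F}_n]-\widetilde Q^{(n)}_\frak{h}(\tau^{(n)}_{p,q})=O(n^{\frak{h}-1})\big(R'(r-n)/R(r-n)\big)^2\widetilde\xi_n$, whose expectation is $O(n^{\frak{h}-2})$, summable for $\frak{h}<1$.

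There is a second gap, independent of the arithmetic. You assert that $\log(1/|x-y|)$ is comparable to $m\log b$ for $x,y$ in the same generation-$m$ interval, and treat the excess as a harmless ``diagonal term''. But within a fixed shell $\frak{g}(x,y)=j$, the log factor is only bounded below by $(j-1)\log b$; it is \emph{not} bounded above, because $x$ and $y$ can sit in adjacent generation-$j$ blocks arbitrarily close to the shared boundary. Nonatomicity of $\tau_{p,q}$ does not control this. The paper's resolution is that this blow-up is integrated against the conditional expectation of $\tau_{p,q}\times\tau_{p,q}$ given $\mathpzc{F}_n$, which on any pair of distinct generation-$n$ blocks is a constant multiple of Lebesgue measure; the change of variables in~(\ref{prelim}) then gives $b^{2n}\int_{e_1\times e_2}\log^\frak{h}(1/|x-y|)\,dx\,dy\leq\mathbf{c}\,n^\frak{h}$ uniformly over such pairs. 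That is the step your outline is missing, and it is precisely what lets the generation index $n$ stand in for $\log(1/|x-y|)$ in the expected energy. Combining these two fixes --- replacing block second moments by the shell energy $\widetilde Q_\frak{h}$, and controlling the log singularity via the conditional-Lebesgue calculation --- is essentially the paper's proof, and it recovers the correct exponent $n^{\frak{h}-2}$ for the expected increments, whose sum converges exactly when $\frak{h}<1$.
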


\begin{proof} Our analysis will be divided into parts (a)\,-\,(e)  below, but first we recollect some notation and previous observations.  Recall  that the $\sigma$-algebra $\mathpzc{F}_n\subset \mathcal{B}_{\Gamma\times \Gamma}$ and the set $\mathbf{I}_{n}^{p,q}\subset \{1,\ldots, b^n\}$ for $p,q\in \Gamma$ are defined in Definition~\ref{DefSurvProg}.    As in the proof of Proposition~\ref{PropIntMeasure},  we define 
\begin{itemize}

\item $S_{n}^{p,q}$ as the collection of $\mathcal{E}$-intervals $ \big(\frac{\ell-1}{b^n},\frac{\ell}{b^n}  \big)_{\mathcal{E}}\subset [0,1]$ such that $\ell\in \mathbf{I}_{n}^{p,q}$ and

\item  $\tau^{p,q}_{r,n}$ as the measure on $[0,1]$ with Lebesgue density~(\ref{TauN}). 

\end{itemize}
Also, recall from Lemma~\ref{LemmaCas}  that   $\big( \tau^{p,q}_{r,n}(A)\big)_{n\geq N} $ is a uniformly integrable  $\rho_r$-martingale with respect to the filtration  $( \mathpzc{F}_n)_{n\in \mathbb{N}} $  that converges  $\rho_r$-a.e.\ to $\tau^{p,q}(A)$ for any set $A\subset [0,1]$  of the form $A=\bigcup_{\ell\in I}\big(\frac{\ell-1}{b^N}, \frac{\ell}{b^N}  \big)_{\mathcal{E}}$ for some $I\subset \{1,\ldots, b^N\} $. \vspace{.3cm}

\noindent \textbf{(a) Energy estimate:}   It suffices to prove that $\mathbbmss{E}_{\rho_r}\big[ Q^\frak{h}( \tau^{p,q})  \big]<\infty$ for any $\frak{h}\in [0,1)$.  We will define a slightly different energy function $\widetilde{Q}^\frak{h}$ below that fits conveniently with the hierarchical symmetry of our model and that can be used to bound $Q^\frak{h}$.  For $x,y\in \mathcal{E}$, define $\frak{g}(x,y)$ as the smallest value $n\in \overline{\mathbb{N}}$  such that $x$ and $y$ do not belong to the same interval $ (\frac{\ell-1}{b^n},\frac{\ell}{b^n})_{\mathcal{E}}$ for some $\ell\in \{1,\ldots, b^n\}$.  For a Borel measure $ \varrho $ on $[0,1]$, we define
\begin{align*}
\widetilde{Q}^\frak{h}( \varrho)\,:=\,\int_{\mathcal{E}\times \mathcal{E}} \big(\frak{g}(x,y) \big)^\frak{h} \varrho(dx)\varrho(dy) \,,
\end{align*}
and we define $\widetilde{Q}^\frak{h}_{n}\big( \varrho\big)$ analogously with $\frak{g}$ replaced by its  cut-off version  $\frak{g}_n:=\min(  \frak{g}, n )$ for $n\in \mathbb{N}$.  For the constant $\mathbf{c}:=\log^\frak{h} b+ \int_0^1\int_0^1 \big|\log \big(\frac{1}{s+t}\big)\big|^{\frak{h}}dsdt     $,  our analysis will be split between showing the inequalities (I) and (II) below:
\begin{align}\label{Q2Q}
\mathbbmss{E}_{\rho_r}\left[ Q^\frak{h}( \tau^{p,q})  \right]\,\underbrace{\leq}_{\textup{(I)}} \,\mathbf{c}\mathbbmss{E}_{\rho_r}\left[ \widetilde{Q}^\frak{h}( \tau^{p,q})  \right]\, \leq \,\mathbf{c}\liminf_{n\rightarrow\infty}\mathbbmss{E}_{\rho_r}\left[ \widetilde{Q}^\frak{h}_{n}\big( \tau^{p,q}_{r,n}\big)  \right] \,\underbrace{<}_{ \textup{(II)} }\,\infty \,.
\end{align}
The second inequality above holds because $\widetilde{Q}^\frak{h}( \tau^{p,q})\leq \liminf_{n\rightarrow\infty}\widetilde{Q}^\frak{h}_{n}\big( \tau^{p,q}_{r,n}\big)  $ $\rho_r$-a.s.\ by a version of  Fatou's lemma since the functions $\frak{g}_n$ converge pointwise up to $\frak{g}$  and the measures $\tau^{p,q}_{r,n}\times \tau^{p,q}_{r,n}$  converge weakly to $\tau^{p,q}\times \tau^{p,q}$ for $\rho_r$-a.e.\ pair $(p,q)$ as a consequence of (i) of Lemma~\ref{LemmaCas}. Note that the set of discontinuity points of $\frak{g}$ is a subset of the union of $\mathcal{V}\times [0,1]$ and $[0,1]\times \mathcal{V}$, which are null sets for $\tau^{p,q}\times \tau^{p,q}$. \vspace{.3cm}

\noindent \textbf{(b) Proof of (I):} Note that the integration domain $[0,1]\times [0,1]$  in~(\ref{Energy}) can be replaced by $\mathcal{E}\times \mathcal{E}$ since $\tau^{p,q}(\mathcal{V})=0$ for all pairs $(p,q)$.  We can express the expectation of $Q^\frak{h}( \tau^{p,q}) $  in  terms of  nested conditional expectations as follows:
\begin{align*}
\mathbbmss{E}_{\rho_r}\left[ Q^\frak{h}( \tau^{p,q})  \right]\,=\,&\,\sum_{n=1}^\infty \mathbbmss{E}_{\rho_r}\bigg[ \int_{n=\frak{g}(x,y)}  \log^\frak{h} \Big(\frac{1}{|x-y|}\Big)       \tau^{p,q}(dx) \tau^{p,q}(dy)\bigg]\\
\,=\,&\,\sum_{n=1}^\infty \mathbbmss{E}_{\rho_r}\Bigg[ \underbrace{\mathbbmss{E}_{\rho_r}\bigg[ \int_{n=\frak{g}(x,y)}  \log^\frak{h} \Big(\frac{1}{|x-y|}\Big)       \tau^{p,q}(dx) \tau^{p,q}(dy)\,\bigg|\, \mathpzc{F}_n\bigg]}_{\mathbf{Q}^\frak{h}_{r,n}(p,q)  }\Bigg]\,, 
\end{align*}
where `$n=\frak{g}(x,y)$' refers to the set $\big\{(x,y)\in \mathcal{E}^2\,\big|\,n=\frak{g}(x,y)  \big\} $ and  $\mathbf{Q}^\frak{h}_{r,n}(p,q) $ denotes the conditional expectation.  Similarly, we can write
\begin{align*}
  \,\mathbbmss{E}_{\rho_r}\left[ \widetilde{Q}^\frak{h}( \tau^{p,q})  \right]\,=\,\sum_{n=1}^\infty \mathbbmss{E}_{\rho_r}\Bigg[\underbrace{\mathbbmss{E}_{\rho_r}\bigg[ \int_{n=\frak{g}(x,y)}  \big(\frak{g}(x,y)\big)^\frak{h}     \tau^{p,q}(dx) \tau^{p,q}(dy)\,\bigg|\, \mathpzc{F}_n\bigg]}_{\mathbf{\widetilde{Q}}^\frak{h}_{r,n}(p,q)  }\Bigg]  \,.
\end{align*}
It suffices to show that $\mathbf{Q}^\frak{h}_{r,n}(p,q) $ is bounded from above by $\mathbf{c}\mathbf{\widetilde{Q}}^\frak{h}_{r,n}(p,q)$. The expression $\mathbf{Q}^\frak{h}_{r,n}(p,q)$ can be written as
\begin{align}
\mathbf{Q}^\frak{h}_{r,n}(p,q)  \,=\,& \int_{n=\frak{g}(x,y)}  \log^\frak{h} \Big(\frac{1}{|x-y|}\Big)       \tau^{p,q}_{r,n}(dx) \tau^{p,q}_{r,n}(dy) \,.\label{ConditionedQ}
\end{align}
To see the above equality, first recall from (iii) of Lemma~\ref{LemmaCas} that  $\tau^{p,q}_{r,n}(A)$ is the conditional $\rho_r$-expectation of $\tau^{p,q}(A)$  with respect to the $\sigma$-algebra $\mathpzc{F}_n$ for any set $A$ that is a union of intervals $(\frac{\ell-1}{b^n}, \frac{\ell}{b^n})_{\mathcal{E}}$. The set $\{(x,y)\in \mathcal{E}^2\,|\,  n=\frak{g}(x,y)\}$ is a union of products $(\frac{\ell_1 -1}{b^n}, \frac{\ell_1}{b^n})_{\mathcal{E}}\times (\frac{\ell_2 -1}{b^n}, \frac{\ell_2}{b^n})_{\mathcal{E}}$ for $1\leq \ell_1,\ell_2\leq b^n$ with $\ell_1\neq \ell_2$; however, the measure $\tau^{p,q}$ acts independently on the intervals  $ (\frac{\ell_1 -1}{b^n}, \frac{\ell_1}{b^n})_{\mathcal{E}}$  and  $ (\frac{\ell_2 -2}{b^n}, \frac{\ell_2}{b^n})_{\mathcal{E}}$ when conditioned on $\mathpzc{F}_n $ by part (iv) of Lemma~\ref{LemmaCas}.  Using the definition of $ \tau^{p,q}_{r,n}$ in~(\ref{TauN}), we can rewrite~(\ref{ConditionedQ}) as follows:
\begin{align}
\mathbf{Q}^\frak{h}_{r,n}(p,q)  \,=\,&\,\sum_{\substack{e_1, e_2 \in S_{n}^{p,q}\\ \frak{g}(e_1,e_2)=n  } } \bigg(\frac{R'(r-n)}{R(r-n)  } \bigg)^2 \underbracket{b^{2n}\int_{e_1\times e_2}      \log^\frak{h} \Big(\frac{1}{|x-y|}\Big)     dx dy}   \nonumber \\
\,\leq \,&\,\mathbf{c}\sum_{\substack{e_1, e_2 \in S_{n}^{p,q}\\ \frak{g}(e_1,e_2)=n  } } \bigg(\frac{R'(r-n)}{R(r-n)  } \bigg)^2 n^\frak{h}   \label{CritInq}\\
 =\, &\,\mathbf{c} \int_{n=\frak{g}(x,y)}  \big(\frak{g}(x,y)\big)^\frak{h}     \tau^{p,q}_{r,n}(dx) \tau^{p,q}_{r,n}(dy)\, =:\, \mathbf{c}\mathbf{\widetilde{Q}}^\frak{h}_{r,n}(p,q) \,. \nonumber 
\end{align}
The inequality~(\ref{CritInq}) follows through bounding the bracketed expression by $\mathbf{c}n^\frak{h}$ using the computation in~(\ref{prelim}) below.  The second equality invokes the definition  of $\tau^{p,q}_{r,n}$ again, and the last equality follows by the same argument as for~(\ref{ConditionedQ}).  Thus (I) follows once the inequality~(\ref{CritInq}) is justified.    

To see~(\ref{CritInq}), recall that the sets $e_1,e_2 \in S_{n}^{p,q}$ have the forms $(\frac{\ell_1-1}{b^{n}},\frac{\ell_1}{b^{n}})_{\mathcal{E}}$ and $(\frac{\ell_2-1}{b^{n}},\frac{\ell_2}{b^{n}})_{\mathcal{E}}$ for some $\ell_1 , \ell_2\in \{1,\ldots, b^n\}$ with $\ell_1 \neq \ell_2$.  Without loss of generality, we can assume $\ell_1 < \ell_2$.  The left side below  is maximized when $\ell_2=\ell_1+1$, i.e., the intervals are adjacent, so we have the inequality
\begin{align}
b^{2n}\int_{e_1\times e_2}  \log^\frak{h} \Big(\frac{1}{|x-y|}\Big) dxdy\,\leq \,&\,b^{2n}\int_{(\frac{\ell_1-1}{b^{n}},\frac{\ell_1}{b^{n}})_{\mathcal{E}}\times (\frac{\ell_1}{b^{n}},\frac{\ell_1+1}{b^{n}})_{\mathcal{E}}}  \log^\frak{h}  \Big(\frac{1}{|x-y|}\Big) dxdy  \nonumber\\
 \, = \,&\,\int_0^1\int_0^1\bigg( n\log b+ \log \Big(\frac{1}{s+t}\Big)\bigg)^\frak{h} dsdt  \,, \label{prelim} 
\end{align}
where the equality makes a change of integration variables to $s=\ell_1- b^{n}x  $ and  $t=b^{n}y-\ell_1$.  Finally, the above is less than $\mathbf{c}n^\frak{h}$.

\vspace{.3cm}

\noindent \textbf{(c) A first step towards proving (II):}  Note that $\widetilde{Q}^\frak{h}_{n}\big( \tau^{p,q}_{r,n}\big)$ can be expressed as
\begin{align*}
\widetilde{Q}^\frak{h}_{n}\big( \tau^{p,q}_{r,n}\big)\,=\,&\,\int_{\mathcal{E}\times \mathcal{E}} \big(\frak{g}_n(x,y) \big)^\frak{h} \tau^{p,q}_{r,n}(dx)\tau^{p,q}_{r,n}(dy) \\ \,=\,&\, \bigg(\frac{ R'(r-n) }{R(r-n)  }\bigg)^2  \sum_{
e_1,e_2\in S^{p,q}_{n} }  \big( \frak{g}_n(e_1,e_2)\big)^\frak{h} \,,
\end{align*}
where $ \frak{g}_n(e_1,e_2):= \frak{g}_n(x,y)$ for representatives $x\in e_1$ and $y\in e_2$.  The conditional expectation of  $\widetilde{Q}_{n+1}^\frak{h}\big( \tau^{p,q}_{r,n+1}\big)$ with respect to $ \mathpzc{F}_n $ has the form
\begin{align}
\mathbbmss{E}_{\rho_r}\Big[  \widetilde{Q}_{n+1}^\frak{h}&\big( \tau^{p,q}_{r,n+1}\big) \,\Big|\,\mathpzc{F}_n  \Big]\nonumber \\  \,=\,&\,\bigg(\frac{ R'(r-n-1) }{R(r-n-1)  }\bigg)^2\sum_{
e_1,e_2\in S^{p,q}_{n} } \mathbbmss{E}_{\rho_r}\Bigg[   \sum_{\substack{
f_1, f_2\in S^{p,q}_{n+1} \nonumber  \\ f_1 \subset e_1, f_2\subset e_2   } }  \big( \frak{g}_{n+1}(f_1,f_2) \big)^\frak{h}  \,\Bigg|\, \mathpzc{F}_n  \Bigg] \nonumber \\
\,=\,& \,\bigg(\frac{ R'(r-n) }{R(r-n)  }\bigg)^2  \sum_{\substack{
e_1 ,e_2\in S^{p,q}_{n}\\ e_1\neq e_2  }}  \big(\frak{g}_n(e_1,e_2) \big)^\frak{h} \nonumber 
\\
& \,+\,\bigg(\frac{ R'(r-n-1) }{R(r-n-1)  }\bigg)^2  \sum_{
e\in S^{p,q}_{n}} \mathbbmss{E}_{\rho_r}\Bigg[   \sum_{ \substack{
f_1, f_2\in S^{p,q}_{n+1} \\ f_1,f_2\subset e  } }  \big(\frak{g}_{n+1}(f_1,f_2) \big)^\frak{h}  \,\Bigg|\, \mathpzc{F}_n \Bigg] \,,\label{Diagonal}
\end{align}
where we have  split the sum over $e_1,e_2\in S^{p,q}_{n}$ into the  cases $e_1\neq e_2$ and $e_1 = e_2$.  We have applied the identity~(\ref{Expect}) twice to rewrite the sum over the $e_1\neq e_2$ terms.
\vspace{.3cm}

\noindent \textbf{(d) A single term in the sum~(\ref{Diagonal}):} Next we will show that terms $e\in S^{p,q}_{n}  $  from the sum~(\ref{Diagonal}) satisfy the large $n$ order equality 
\begin{align}\label{Cond}
\bigg(\frac{ R'(r-n-1) }{R(r-n-1)  }\bigg)^2 & \mathbbmss{E}_{\rho_r}\Bigg[   \sum_{ \substack{
f_1, f_2\in S^{p,q}_{n+1} \\ f_1,f_2\subset e  } }  \big(\frak{g}_{n+1}(f_1,f_2) \big)^\frak{h}  \,\Bigg|\, \mathpzc{F}_n \Bigg]\nonumber \\ &\,=\, \bigg(\frac{R'(r-n) }{ R(r-n) }\bigg)^2\Big(n^\frak{h}\,+\,\mathit{O}\big(n^{\frak{h}-1}\big)\Big)\,.
\end{align}
 Notice that the left side of~(\ref{Cond}) can be rewritten as
\begin{align*}
   \bigg(\frac{ R'(t) }{R(t)  }\bigg)^2  \Bigg(n^\frak{h}\sum_{\ell=1}^{b}\ell(\ell-1)\frac{\frac{1}{b}{ b\choose \ell} \big(R(t) \big)^{\ell}}{R(t+1)  }   \,+\,(n+1)^\frak{h}\sum_{\ell=1}^{b}\ell \frac{\frac{1}{b}{ b\choose \ell} \big(R(t) \big)^{\ell}}{R(t+1)  }   \Bigg)\Bigg|_{t=r-n-1}\,,
 \end{align*}
 where the two  terms above respectively correspond to when $f_1\neq f_2$ and $f_1=f_2$.  The factor $\ell(\ell-1)$ appears in the first term above because if $e$ has $\ell\in \{1,\ldots, b\}$ children then there are $\ell(\ell-1)$ ways to choose $ f_1, f_2\subset e$ such that $f_1\neq f_2$.  We can rearrange terms and use the chain rule to express the above as   
 \begin{align*}
& \frac{R'(t)}{R(t+1)}\Bigg(n^\frak{h}    \frac{d}{dt}\sum_{\ell=1}^{b}\frac{\ell}{b}{ b\choose \ell} \big(R(t) \big)^{\ell-1}   \,+\, (n+1)^\frak{h} \frac{\frac{d}{dt}\sum_{\ell=1}^{b}\frac{1}{b}{ b\choose \ell} \big(R(t) \big)^{\ell}  }{R(t)   }\Bigg) \Bigg|_{t=r-n-1}\,,
\end{align*}
and once more with the chain rule the above can be written as 
\begin{align*}
& \frac{R'(t)}{R(t+1)}\Bigg(n^\frak{h}  \frac{d}{dt} \frac{  \frac{d}{dt} \frac{1}{b}\big[\big(1+R(t)\big)^b-1     \big]  }{ R'(t)  } \,+\, (n+1)^\frak{h} \frac{\frac{d}{dt}\frac{1}{b}\big[\big(1+R(t)\big)^b-1     \big]}{R(t) }\Bigg)\Bigg|_{t=r-n-1}\,.
\end{align*}
From the identity $R(t+1)=\frac{1}{b}\big[\big(1+R(t)\big)^b-1     \big]$, the above is equal to
\begin{align*}
 \Bigg(n^\frak{h}& \frac{ R'(t)}{R(t+1)    }  \frac{d}{dt}\bigg[ \frac{R '(t+1)   }{ R'(t) }  \bigg] \,+\,(n+1)^\frak{h}\frac{ R'(t) }{R(t)  } \frac{ R'(t+1)}{ R(t+1)  }\Bigg) \Bigg|_{t=r-n-1} \\
 &\quad \,=\, n^\frak{h} \frac{R'(t+1) }{ R(t+1) }\Bigg[ \frac{d}{dt}\log\bigg(\frac{ R'(t+1)}{R'(t)  }  \bigg) \,+\, \Big(1+\frac{1}{n}\Big)^\frak{h}\frac{ R'(t) }{R(t)  } \Bigg]\Bigg|_{t=r-n-1} \\
&\quad  \,=\, n^\frak{h} \frac{R'(t+1) }{ R(t+1) }\Bigg[ \frac{d}{dt}\log\Big( \big(1+R(t)\big)^{b-1}  \Big) \,+\, \Big(1+\frac{1}{n}\Big)^\frak{h}\frac{ R'(t) }{R(t)  } \Bigg] \Bigg|_{t=r-n-1}\,,
\end{align*}
where the second equality uses that $ R'(t+1)= \big(1+ R(t)  \big)^{b-1}R'(t)$.  By computing the derivative with the chain rule and factoring out  $\frac{R'(t+1) }{ R(t+1) }$, we get 
\begin{align*}
& n^\frak{h} \bigg(\frac{R'(t+1) }{ R(t+1) }\bigg)^2\Bigg[ (b-1)  \frac{R(t+1) }{R'(t+1)  }\frac{ R'(t)  }{ 1+R(t)   }  \,+\,\Big(1+\frac{1}{n}\Big)^\frak{h}\frac{R(t+1)  }{ R'(t+1)  }\frac{R'(t) }{R(t) } \Bigg]\Bigg|_{t=r-n-1} \\
&\,\, \,=\, n^\frak{h} \bigg(\frac{R'(t+1) }{ R(t+1) }\bigg)^2\Bigg[\underbrace{   (b-1)\frac{ R(t+1)  }{ \big(1+R(t) \big)^b  }  \,+\,\Big(1+\frac{1}{n}\Big)^\frak{h}\frac{ R(t+1) }{R(t) \big(1+R(t) \big)^{b-1}  }}_{ 1+\frac{\frak{h}}{n}+\mathit{o}( \frac{1}{n} )   }\Bigg] \Bigg|_{t=r-n-1}\,.
\end{align*}
The equality above applies the identity  $ R'(t+1)= \big(1+ R(t)  \big)^{b-1}R'(t)$.
The asymptotic $R(t)=-\frac{\kappa^2}{t}+\frac{ \kappa^2\eta \log(-t) }{  t^2} +\mathit{O}\Big(\frac{ \log^2(-t) }{  |t|^3}\Big)$ for $-t\gg 1$ implies that the braced expression is $1+\frac{\frak{h}}{n}+\mathit{o}\big( \frac{1}{n} \big) $ with large $n$.
 \vspace{.3cm}

\noindent \textbf{(e) Returning to~(\ref{Diagonal}).} As a consequence of~(\ref{Cond}), there is a $C>0$ such that for all $n\in \mathbb{N}$
\begin{align}\label{FinExp}
\mathbbmss{E}_{\rho_r}\Big[  \widetilde{Q}^\frak{h}_{n+1}\big( \tau^{p,q}_{r,n+1}\big) \,\Big|\, \mathpzc{F}_n  \Big]\,-\,\widetilde{Q}^\frak{h}_{n}\big( \tau^{p,q}_{r,n}\big) \,\leq \,Cn^{\frak{h}-1} \bigg(\frac{R'(r-n) }{ R(r-n) }\bigg)^2\widetilde{\xi}_{n}(p,q)\,,
\end{align}
where $\widetilde{\xi}_{n}(p,q)$ is the number of elements in $S^{p,q}_{n}$.  The expectation of $\mathbbmss{E}_{\rho_r}\big[  \widetilde{Q}^\frak{h}_{n}\big( \tau^{p,q}_{r,n}\big)  \big]$ can be written in terms of a telescoping sum as 
\begin{align*}
\mathbbmss{E}_{\rho_r}\left[  \widetilde{Q}^\frak{h}_{n}\big( \tau^{p,q}_{r,n}\big)  \right]\,=\,\mathbbmss{E}_{\rho_r}\Big[ \widetilde{Q}^\frak{h}_{1}\big( \tau^{p,q}_{r,1}\big)   \Big]\,+\,  \sum_{k=1}^{n-1}\bigg( \mathbbmss{E}_{\rho_r}\Big[ \widetilde{Q}^\frak{h}_{k+1}\big( \tau^{p,q}_{r,k+1}\big)   \Big]\,-\,\mathbbmss{E}_{\rho_r}\Big[ \widetilde{Q}^\frak{h}_{k}\big( \tau^{p,q}_{r,k}\big)   \Big]\bigg)\,. 
\end{align*}
Inserting nested conditional expectations and applying~(\ref{FinExp})  yields the first two relations below.
\begin{align*}
\mathbbmss{E}_{\rho_r}\Big[  \widetilde{Q}^\frak{h}_{n}\big( \tau^{p,q}_{r,n}\big)  \Big]
\,=\,&\,\bigg(\frac{R'(r)  }{R(r)}\bigg)^2 \,+\,  \sum_{k=1}^{n-1} \mathbbmss{E}_{\rho_r}\Big[ \mathbbmss{E}_{\rho_r}\Big[  \widetilde{Q}^\frak{h}_{k+1}\big( \tau^{p,q}_{r,k+1}\big) \,\Big|\, \mathpzc{F}_k \Big]\,-\,\widetilde{Q}^\frak{h}_{k}\big( \tau^{p,q}_{r,k}\big)   \Big]\\ \,\leq \,&\,\bigg(\frac{R'(r)  }{R(r)}\bigg)^2 \,+\,  \sum_{k=1}^{n-1}  Ck^{\frak{h}-1} \bigg(\frac{R'(r-k) }{ R(r-k) }\bigg)^2\mathbbmss{E}_{\rho_r}\Big[\widetilde{\xi}_{k}(p,q)   \Big]\,\\
  \, = \,&\,\bigg(\frac{R'(r)  }{R(r)}\bigg)^2 \,+\,\frac{R'(r)}{R(r)}  \sum_{k=1}^{n-1}  Ck^{\frak{h}-1} \frac{R'(r-k) }{ R(r-k) }\\
\, \leq  \,&\,\bigg(\frac{R'(r)  }{R(r)}\bigg)^2 \,+\,\frac{R'(r)}{R(r)} \mathbf{C} \sum_{k=1}^{n-1}  k^{\frak{h}-2}  
\end{align*}
The second equality uses that $\mathbbmss{E}_{\rho_r}\big[\widetilde{\xi}_{k}(p,q)\big] = \frac{R'(r)}{R(r)} \frac{R(r-k)}{R'(r-k)} $, which follows from $\mathbf{\widetilde{m}}_{r,k}:=\frac{ R'(r-k)  }{R(r-k)  }\widetilde{\xi}_{k}(p,q) $ being a $\rho_r$-martingale---see  part (ii) of Proposition~\ref{LemMargPop}---with expectation $\frac{R'(r)}{R(r)}$. The last inquality holds for large enough $\mathbf{C}>0$ because $\frac{R'(r-k) }{ R(r-k)}=\frac{1}{k}+\mathit{o}\big(\frac{1}{k}\big)$ for $k\gg 1$ by Lemma~\ref{LemVar} and Remark~\ref{RemarkDerR}.  Since the series $\sum_{k=1}^{\infty}  k^{\frak{h}-2}$ is summable for $\frak{h}\in (0,1)$,  the limit of $\mathbb{E}\big[  \widetilde{Q}^\frak{h}_{n}\big( \tau^{p,q}_{r,n}\big)  \big]$ as $n\rightarrow \infty$ is finite. 
\end{proof}

\subsection{Proof of Lemma~\ref{LemIntSet}}

\begin{proof}
For $p,q\in \Gamma$,  the set of  intersection times $I^{p,q}=\big\{t\in [0,1]\,\big|\,p(t)=q(t)   \big\}$ can be expressed as
$$ I^{p,q}\,=\,\bigcap_{n=1}^{\infty} I^{p,q}_{n} \hspace{.5cm}\text{for}\hspace{.5cm} I^{p,q}_{n}\,=\,[0,1]\,\big\backslash\,\Bigg(\bigcup_{\substack{1\leq \ell \leq b^n \\  [p]_n(\ell)\neq [q]_n(\ell)   }}\Big(\frac{\ell-1}{b^n}, \frac{\ell}{b^n}  \Big)\Bigg)\,.   $$
By~Corollary~\ref{CorIntSet},  the log-Hausdorff exponent of $I^{p,q}$ is  $\geq 1$ for $\rho_r$-a.e.\ $(p,q)$, and  thus we only need to prove that the log-Hausdorff exponent of $I^{p,q}$ is $\leq 1$ for $\rho_r$-a.e.\  $(p,q)$.  It suffices to show that  $H^{\log}_{1}( I^{p,q} )$ is $\rho_r$-a.e.\, finite, where $H^{\log}_{1}=\lim_{\delta\searrow 0} H^{\log}_{1,\delta}$ is the outer measure from Definition~\ref{DefLogHaus}.

Recall that $\mathcal{V}$ is defined as the set of $x\in [0,1]$ of the form $x=\frac{\ell}{b^n}$ for  $\ell,n\in \mathbb{N}_0$ and that $\mathcal{E}:=[0,1]-\mathcal{V}$. Since  $\mathcal{V}$ is countable, we have  $ H^{\log}_{1}\big( \mathcal{V}\big)=0  $, and thus we can focus on showing that $H^{\log}_{1}\big(I^{p,q}\cap \mathcal{E} \big)$ is $\rho_r$-a.e.\ finite.   Fix $\delta>0$, and pick $n\in \mathbb{N}$ large enough so that $ b^{-n}\leq \delta $.  For   $\widetilde{\xi}_n(p,q)$  defined as in Definition~\ref{DefSurvProg},  the set $I^{p,q}\cap \mathcal{E}$ can be covered by $\widetilde{\xi}_n(p,q)$ intervals $\big(\frac{\ell-1}{b^n},\frac{\ell}{b^n}  \big)$ with $\ell\in \{1,\ldots, b^n\}$.  It follows that for all $(p,q)$ we have the bound
$$H^{\log}_{1,\delta}\big(I^{p,q}\cap \mathcal{E} \big) \,\leq \, \frac{\widetilde{\xi}_n(p,q)}{ \log ( b^n) }\,=\,\frac{  \widetilde{\xi}_n(p,q)   }{n \log b  }   \,.   $$ 
However, by part (ii) of Lemma~\ref{LemMargPop}, the sequence $\big(\frac{\kappa^2    }{n   }\widetilde{\xi}_n(p,q) \big)_{n\in \mathbb{N}}$ converges $\rho_r$-a.e.\ to $T(p,q)$.  Thus for $\rho_r$-a.e.\ pair $(p,q)$ we get that
\begin{align}
H^{\log}_{1}\big( I^{p,q}\cap \mathcal{E}\big)\,=\,\lim_{\delta\searrow 0} H^{\log}_{1,\delta}\big( I^{p,q}\cap \mathcal{E}\big)\,\leq \,\liminf_{n\rightarrow \infty}\frac{  \widetilde{\xi}_n(p,q)   }{n \log b  }\, =\,\frac{ T(p,q)}{ \kappa^2 \log b  }\,.
\end{align}
Since $T(p,q)$ is $\rho_r$-a.e.\ finite by part (ii) of Proposition~\ref{PropLemCorrelate}, the random variable $H^{\log}_{1}( I^{p,q}\cap \mathcal{E})$ is $\rho_r$-a.e.\ finite.  Therefore the log-Hausdorff exponent of $I^{p,q}$ is $\leq 1$ for $\rho_r$-a.e.\ pair $(p,q)$. 
\end{proof}

\section{Proof of Theorem~\ref{ThmPathMeasure}  }\label{SecPathIntMxM}

\begin{proof} Part (i)  is a corollary of (iii), which is proved below. \vspace{.3cm}

\noindent (ii) By property (II) of Theorem~\ref{ThmExistence} and part (iv) of Proposition~\ref{PropLemCorrelate},
\begin{align}\label{TComp}
\mathbb{E}\bigg[ \int_{\Gamma\times \Gamma}e^{aT(p,q)} \mathbf{M}_r(dp) \mathbf{M}_r(dq)     \bigg]\,=\,\int_{\Gamma\times\Gamma}e^{aT(p,q)}\upsilon_r(dp,dq)\,=\,1\,+\,R(r+a)\,.
\end{align}
It follows that $\mathbf{M}_{r}\times \mathbf{M}_r$ a.s.\ assigns full measure to the set of  pairs $(p,q)$ s.t.\ $ \displaystyle T(p,q)$ is finite.  \vspace{.3cm}

\noindent (iii)  Let $\mathbf{G}$ denote  the set of $(p,q)\in \Gamma\times \Gamma$ such that the intersection-times set $I^{p,q}$  has log-Hausdorff exponent one, and let $\mathbf{\widehat{G}}$
 denote the set of $(p,q)$ such that $T(p,q)>0$.   The events $ \mathbf{G}$ and $\mathbf{\widehat{G}}$ differ by sets of $\upsilon_r$-measure zero since
\begin{align}\label{UpsilonZero} 
  \upsilon_r\big( \mathbf{G}\Delta\mathbf{\widehat{G}}   \big)\,=\,\big(\mu\times \mu\,+\,R(r)\rho_r     \big)\big( \mathbf{G}\Delta\mathbf{\widehat{G}}   \big)\,=\, R(r) \rho_r \big( \mathbf{G}\Delta\mathbf{\widehat{G}}   \big)\,=\,0\,,  
  \end{align}
where $\mathbf{G}\Delta\mathbf{\widehat{G}}$ denotes the symmetric difference $\big(\mathbf{G}\backslash\mathbf{\widehat{G}}\big)\cup \big(\mathbf{\widehat{G}}\backslash\mathbf{G}\big) $.  The first equality above holds by part (ii) of Lemma~\ref{LemCorrelate}, the second equality is a consequence of Corollary~\ref{CorrMuMu},  and the third equality holds  because the  measure $\rho_r$ assigns full weight to both $ \mathbf{G}$ and $\mathbf{\widehat{G}}$ by  Lemma~\ref{LemIntSet} and Proposition~\ref{PropLemCorrelate}, respectively. Applying  (II) of Theorem~\ref{ThmExistence} with $g=1_{ \mathbf{G}\Delta\mathbf{\widehat{G}} }$ and~(\ref{UpsilonZero}) gives us the two equalities below.
 \begin{align}\label{EM}
 \mathbb{E}\big[ \mathbf{M}_r\times \mathbf{M}_r\big( \mathbf{G}\Delta\mathbf{\widehat{G}} \big)  \big]\,=\,\upsilon_r\big(\mathbf{G}\Delta\mathbf{\widehat{G}}\big) \,=\,0 
\end{align}
Thus $\mathbf{M}_r\times \mathbf{M}_r$ a.s.\ assigns the set $\mathbf{G}\Delta\mathbf{\widehat{G}}$ measure zero. Define $\mathbf{\widehat{S}}:= \mathbf{\widehat{G}}^c$, i.e., as the set of pairs such that $T(p,q)=0$, and recall that $\mathbf{S}_{\emptyset}$ denotes the set of pairs $(p,q)$ such that the intersection-times set $I^{p,q}$ is finite.  Then $\mathbf{S}_{\emptyset}\subset \mathbf{\widehat{S}}$, and
 \begin{align*} 
 \mathbb{E}\big[ \mathbf{M}_r\times \mathbf{M}_r\big( \mathbf{\widehat{S}}\backslash\mathbf{S}_{\emptyset} \big)  \big]\,=\,\upsilon_r\big(\mathbf{\widehat{S}} \backslash\mathbf{S}_{\emptyset}\big)\,=\,&\,\big(\mu\times \mu+R(r)\rho_r \big)\big(\mathbf{\widehat{S}} \backslash\mathbf{S}_{\emptyset}\big) \nonumber \\ \,=\,&\,\mu\times \mu\big(\mathbf{\widehat{S}} \backslash\mathbf{S}_{\emptyset}\big)\,=\,0\,, 
\end{align*}
where the third equality holds since $\mathbf{\widehat{S}}$ is a $\rho_r$-null set by part (ii) of Proposition~\ref{PropLemCorrelate}, and the fourth equality uses that $ \mathbf{S}_{\emptyset}$ is a full measure set for $\mu\times \mu$ by Corollary~\ref{CorrMuMu}.  It follows that $\mathbf{\widehat{S}} \backslash\mathbf{S}_{\emptyset}$ is a.s.\ a null set for $\mathbf{M}_r\times \mathbf{M}_r$.   Since $\Gamma\times \Gamma= \mathbf{\widehat{S}}\cup \mathbf{\widehat{G}} $,  the above results imply that the random measure $\mathbf{M}_r\times \mathbf{M}_r$  a.s.\ assigns full weight to the set $ \mathbf{S}_{\emptyset}\cup \mathbf{G}$, which was the desired conclusion. \vspace{.3cm}

\noindent (iv):   Given $p\in \Gamma$, recall that  $\mathbf{\widehat{G}}_{p}$  denotes the set of $q\in \Gamma$ such that $T(p,q)>0$, which can be expressed as the  cross section $\mathbf{\widehat{G}}_{p}=\big\{q\in \Gamma \,|\,(p,q)\in   \mathbf{\widehat{G}}\}$ of the set $\mathbf{\widehat{G}}$ defined in  part (iii). Define the random set $\mathbf{S}_{\mathbf{M}_r}:=\big\{p\in \Gamma\,\big|\, \mathbf{M}_r( \mathbf{\widehat{G}}_{p})=0  \big\}$, and let $1_{\mathbf{S}_{\mathbf{M}_r}}  $ denote its indicator function.   By definition, we must show that the random measure $\mathbf{M}_r$ a.s.\ satisfies  $\mathbf{M}_r(\mathbf{S}_{\mathbf{M}_r})=0$.  We can write $\mathbf{M}_r=\mathbf{A}_r+\mathbf{B}_r$, where the random measures $\mathbf{A}_r$ and $\mathbf{B}_r$  are defined to  have Radon-Nikodym derivatives with respect to $\mathbf{M}_r$ given by
$$ \frac{d\mathbf{A}_r}{d\mathbf{M}_r  }\,=\,  1_{\mathbf{S}_{\mathbf{M}_r}}\hspace{1cm}\text{and} \hspace{1cm} \frac{d\mathbf{B}_r}{d\mathbf{M}_r  }\,=\, 1- 1_{\mathbf{S}_{\mathbf{M}_r}} \,.   $$
The following gives us a lower bound for the second moment of the total mass of  $ \mathbf{B}_r$:
\begin{align}\label{LowerRBound}
R(r)\,=\,\upsilon_r\big(\mathbf{\widehat{G}}  \big)  \,=\,&\,\mathbb{E}\big[\mathbf{M}_r\times \mathbf{M}_r\big( \mathbf{\widehat{G}}  \big)   \big] \nonumber \\
\,=\,&\,\mathbb{E}\big[\mathbf{B}_r\times \mathbf{B}_r\big( \mathbf{\widehat{G}}  \big)   \big]\,+\,\mathbb{E}\big[\mathbf{A}_r\times \mathbf{M}_r\big( \mathbf{\widehat{G}}  \big)   \big]\,+\,\mathbb{E}\big[\mathbf{B}_r\times \mathbf{A}_r\big( \mathbf{\widehat{G}}  \big)   \big] \nonumber \\
\,=\,&\,\mathbb{E}\big[\mathbf{B}_r\times \mathbf{B}_r\big( \mathbf{\widehat{G}}  \big)   \big]\nonumber \\ \,\leq \,&\,\mathbb{E}\big[\mathbf{B}_r\times \mathbf{B}_r\big(\Gamma\times\Gamma\big)  \big] \,=\,\mathbb{E}\big[ |\mathbf{B}_r(\Gamma)|^2  \big]\,.
\end{align}
The first equality holds because $\upsilon_r=\mu\times \mu+R(r)\rho_r$, the probability measure $\rho_r$ assigns full weight to $\mathbf{\widehat{G}}$, and  $\mu\times \mu(\mathbf{\widehat{G}} )=0$. The second equality is by (II) of Theorem~\ref{ThmExistence}, and the third simply uses that $\mathbf{M}_{r}=\mathbf{A}_r+\mathbf{B}_r$.  The fourth equality above follows closely from the  definition of $\mathbf{A}_r$ because
\begin{align*}
\mathbf{A}_r\times \mathbf{M}_r\big( \mathbf{\widehat{G}}  \big)\,=\,&\,\int_{\Gamma} \mathbf{M}_r(\mathbf{\widehat{G}}_{p}  )     \mathbf{A}_r(dp)\\
 \,=\,&\,\int_{\Gamma}1_{\mathbf{S}_{\mathbf{M}_r}}(p)\underbrace{\mathbf{M}_r(\mathbf{\widehat{G}}_{p}   )}_{=\,0 \text{ for }  p\in\mathbf{S}_{\mathbf{M}_r}  }     \mathbf{M}_r(dp)\,=\,0 \,, 
\end{align*}
where the first equality holds by Fubini's theorem since the sets $\mathbf{\widehat{G}}_{p} $ are the cross sections of $\mathbf{\widehat{G}}$.  The same reasoning yields that $\mathbf{M}_r\times \mathbf{A}_r\big( \mathbf{\widehat{G}}  \big)=0$, and thus $\mathbf{B}_r\times \mathbf{A}_r\big( \mathbf{\widehat{G}}  \big)=0$. 

Since $\mathbb{E}[\mathbf{M}_r]=\mu$ and $ \mu$ is a probability measure, 
 the numbers $\alpha_r:=\mathbb{E}[ \mathbf{A}_r(\Gamma) ]$ and $\beta_r:=\mathbb{E}[ \mathbf{B}_r(\Gamma) ]$ sum to $1$. The distributional recursive relation in (IV) of Theorem~\ref{ThmExistence} implies that $\alpha_r$ satisfies $\alpha_{r+1}=\alpha_r^b$ for all $r\in \R$ because two paths would need to have trivial intersections in all $b$ components of the concatenation decomposition to avoid having nontrivial intersections.    Suppose to reach a contradiction that $\alpha_r >0$ for some $r\in \R$.  Then $\alpha_{r-N}=\alpha_r^{b^{-N}}$ converges to 1 exponentially quickly as $N\rightarrow \infty$.  Moreover, the third moment of $\mathbf{M}_{r-N}(\Gamma)$ has the lower bound
\begin{align}
\mathbb{E}\Big[ \big|\mathbf{M}_{r-N}(\Gamma)\big|^3 \Big]\,\geq \,\mathbb{E}\Big[ \big|\mathbf{B}_{r-N}(\Gamma) \big|^3 \Big]\,\geq\,\frac{ \mathbb{E}\Big[ \big|\mathbf{B}_{r-N}(\Gamma)\big|^2 \Big]^2   }{  \mathbb{E}\big[ \mathbf{B}_{r-N}(\Gamma) \big]   }\,\geq \,\frac{   \big(R(r-N)\big)^2    }{  \beta_{r-N}   }\,,
\end{align}
where the last inequality uses that  $\beta_{r-N}= \mathbb{E}\big[ \mathbf{B}_{r-N}(\Gamma) \big]$ and  the lower bound~(\ref{LowerRBound}).
Since $R(r-N) \approx \frac{\kappa^2}{N} $ as $N\rightarrow \infty$ by Lemma~\ref{LemVar} and $ \beta_{r-N}=1-\alpha_{r-N}$ vanishes exponentially quickly by the observation above,  the third moment of   $\mathbf{M}_{r-N}(\Gamma)$ must diverge to $\infty$ as $N\rightarrow \infty$, which brings us to a contradiction with (III) of Theorem~\ref{ThmExistence}.  Therefore $\alpha_r=\mathbb{E}[ \mathbf{A}_{r}(\Gamma)  ]=\mathbb{E}\big[ \mathbf{M}_r\big( \mathbf{S}_{ \mathbf{M}_r }  \big) \big]$ is zero for all $r\in \R$, and so the set $\mathcal{S}_{\mathbf{M}_r}\subset \Gamma$ must a.s.\   have $\mathbf{M}_r$-measure zero.

The same argument applies with   $\mathbf{G}_{p}$ in place of $\mathbf{\widehat{G}}_{p}$.
\end{proof}

\section{Proofs of results from Sections~\ref{SecLocalResults} \& \ref{SectionHilbert}}\label{SectionLast}

Our main goals in this section are to prove Theorems~\ref{ThmOperator} \&~\ref{ThmVartheta}, which we do in Sections~\ref{SubSecLast4} \& \ref{SubSecLast6}, respectively.   In the next subsection, we introduce a family of probability measures $\big\{\Theta^{\updownarrow x}_{\eta}\big\}_{x\in D}$  on the path space $\Gamma$ that, in intuitive terms, result from conditioning $\eta  \in \mathcal{M}_{\Gamma}$ to the event that the path passes through a point $x\in D$.  The measures $\Theta^{ \updownarrow x}_{\eta}$ appear in the following helpful equality involving $\eta\in \mathcal{M}_{\Gamma}^{\diamond} $ and $\vartheta_{\eta}\in \mathcal{M}_{D}$ between two measures on the space $D\times \Gamma\times \Gamma$:
\begin{align}\label{Ident}
\gamma^{p,q}(dx)\eta(dp)\eta(dq)\,=\,\Theta^{ \updownarrow x}_{\eta}(dp)\Theta^{ \updownarrow x}_{\eta}(dq)\vartheta_{\eta}(dx) \,,\hspace{.75cm}x\in D\,,\, p,q\in \Gamma\,,
\end{align}
which we prove in Section~\ref{SubSecLast3}. 

We can use~(\ref{Ident}) to derive an alternative expression for the distributions $y^p_{\eta}(x):=\frac{ \int_{\Gamma}\gamma^{p,q}(dx)\eta(dq)  }{\vartheta(dx) }  $  through which the linear operator $Y_{\eta}$ in Theorem~\ref{ThmOperator} is formally given by $\left(Y_{\eta}f\right)(p)=\big\langle y^p_{\eta} , f\big\rangle_{L^2(D,\vartheta_{\eta})}$.
Since $\Theta^{ \updownarrow x}_{\eta}$ is a probability measure, integrating out $q\in \Gamma$ in~(\ref{Ident}) and rearranging yields the equation $y^p_{\eta}(x)=\frac{ \Theta^{ \updownarrow x}_{\eta}(dp) }{\eta(dp)}  $.  By dividing both sides of~(\ref{Ident}) by $\eta(dp)\eta(dq)$, we get the identity $\gamma^{p,q}(dx)=y^p_{\eta}(x)y^q_{\eta}(x)\vartheta_{\eta}(dx)   $. Since the measure $\gamma^{p,q}$ has total mass $T(p, q)$, integrating out $x$ yields
$$ T(p,q)\,=\,\big\langle  y^p_{\eta}, y^q_{\eta} \big\rangle_{L^2(D,\vartheta_{\eta})  }\,.  $$ 
The proof  that  $\mathbf{T}_{\eta}=Y_{\eta}Y_{\eta}^*$ in Section~\ref{SubSecLast4} proceeds through rigorous versions of these observations.

\subsection{Conditioning paths  to pass through a point  }\label{SubSecLast1}

 For $\eta\in \mathcal{M}_{\Gamma}$, recall from Definition~\ref{DefVartheta} that the measure $\vartheta_{\eta}$ weights a set $S\subset D$ through the `average' intersection time that two paths $p,q\in \Gamma$ spend intersecting within $S$ when  sampled independently using the measure $\eta$. Since the behavior of  $\vartheta_{\eta}(dx)$ formally  involves the set of paths passing through a point $x\in D$, we introduce the following specialized notations:
\begin{align*}
&\Gamma^{\updownarrow x}  \hspace{1.23cm}  \text{Set of paths $p\in \Gamma$ with $x\in\textup{Range}(p)$, i.e., passing through $x$}  & \\
&\Theta^{ \updownarrow x}_{\eta}     \hspace{1.15cm}  \text{Probability measure on $\Gamma$ induced by conditioning $\eta$ on the event $\Gamma^{\updownarrow x}$}& \\
&E^{\updownarrow x}   \hspace{1.17cm}  \text{Set of points in $E$ that share a path with $x\in D$, i.e., the \textit{path horizon} of $x$}& 
\end{align*}
More precisely, $E^{\updownarrow x} $ denotes the set of $y\in E$ such that there exists a $p\in \Gamma$ with $x,y\in \textup{Range}(p)$.  
The set $\Gamma^{\updownarrow x}  $  will typically have measure zero under  $\eta\in \mathcal{M}_{\Gamma}$, and hence a precise definition of  $\Theta^{ \updownarrow x}_{\eta} $  requires special consideration.  For $\mathbf{e}\in E_n$, let $\Gamma^{\updownarrow \mathbf{e}}$ denote the set of $p\in \Gamma$ that pass through the cylinder set $ \mathbf{\overline{e}}$, i.e., such that $\textup{Range}(p)\cap \mathbf{\overline{e}}\neq \emptyset $.  We begin by  defining   coarse-grained versions of $\Theta^{ \updownarrow x}_{\eta} $, which we  take a  weak limit of in Definition~\ref{DefThetaII}.
\begin{definition}\label{DefThetaPre}   For $\mathbf{e}\in E_n$ and $\eta\in\mathcal{M}_{\Gamma}$ with $\eta\big(\Gamma^{\updownarrow \mathbf{e}}  \big)\neq 0$, we define $\Theta^{ \updownarrow \mathbf{e}}_{\eta}$ as the Borel probability measure on $\Gamma$ given by conditioning $\eta$ to the event $\Gamma^{\updownarrow \mathbf{e}}$, in other terms, as $\Theta^{ \updownarrow \mathbf{e}}_{\eta}(A):=\frac{1}{\eta(\Gamma^{\updownarrow \mathbf{e}})}\eta\big(A\cap  \Gamma^{\updownarrow \mathbf{e}}\big)  $ for $A\in \mathcal{B}_{\Gamma}$. Moreover, we define $\Theta^{ \updownarrow \mathbf{e}}_{\eta} :=\Theta^{ \updownarrow \mathbf{e}}_{\mu}$ in the degenerate case $\eta\big(\Gamma^{\updownarrow \mathbf{e}}  \big)= 0$.
\end{definition}

\begin{definition}\label{DefThetaII}    For $x\in E$ and $\eta\in\mathcal{M}_{\Gamma}$,  we define $\Theta^{ \updownarrow x}_{\eta} $ as the weak limit of the sequence of measures $\big(  \Theta^{ \updownarrow [x]_n}_{\eta}\big)_{n\in \mathbb{N}}$ provided that the limit exists.  If $x\in V$,  we define $\Theta^{ \updownarrow x}_{\eta}:=\mu$.\footnote{For our puposes, the convention for defining $\Theta^{ \updownarrow x}_{\eta}$ in the case $x\in V$ is unimportant.}
\end{definition}
\begin{remark} For $x\in E$,  the measure $\Theta^{ \updownarrow x}_{\eta} $ is supported on $\Gamma^{\updownarrow x}$ when it exists  since  $\Theta^{ \updownarrow [x]_n}_{\eta}  \stackrel{\textup{w}}{\rightarrow } \Theta^{ \updownarrow x}_{\eta}$  as $n\rightarrow \infty$ and the measures $\Theta^{ \updownarrow [x]_n}_{\eta} $ are supported on the sets $\Gamma^{\updownarrow [x]_n}$, which converge down to $\Gamma^{\updownarrow x}$.
\end{remark}

Lemma~\ref{LemmaTheta} below states a decomposition for $\Theta^{ \updownarrow x}_{\eta} $ in the case that $\eta\in \mathcal{M}^{\diamond}_{\Gamma}$.  As a preliminary, the next four remarks build up some additional notation and observations. 

\begin{remark} The path horizon  of $x\in E$
can be decomposed into a countable union of disjoint cylinder sets as $E^{\updownarrow x}\,=\,\bigcup_{k=1}^\infty  \bigcup_{e\in E_k^{\updownarrow x}  } \overline{e}$, where  $E_k^{\updownarrow x}$ is a  $(b-1)$-element subset of $E_k$ satisfying: \vspace{-.2cm}
\begin{itemize}
\item   $x\notin \overline{e}$
\item  $x$ \& $ e$ have the same generation-$(k-1)$ coarse-graining, i.e., $\{x\}, \overline{e}\subset\mathbf{\overline{e}}$ for some $\mathbf{e}\in E_{k-1}$.
\item $[x]_k\updownarrow e$, in other terms, there is a path $\mathbf{p}\in \Gamma_k$ such that $  [x]_k, e \in\textup{Range}(\mathbf{p}) $.
\end{itemize}
The set  $E^{\updownarrow x}$ has $\nu$-measure $\sum_{k=1}^{\infty}\frac{b-1}{b^{2k}}=\frac{1}{b+1}  $. 
\end{remark}

\begin{remark}\label{RemarkDecomp} Given $x\in E$, each path $p\in \Gamma^{\updownarrow x}$  can be uniquely decomposed into a sequence of $ p_e \in \Gamma$ indexed by the set $  \cup_{k=1}^\infty E_k^{\updownarrow x} $, where $   p_e $ is a dilation of the part of the path $p$ passing through the embedded subcopy of $D$ corresponding to $e\in E_k^{\updownarrow x} $. In other terms,  there is a canonical bijection 
\begin{align}\label{Timbuk}
\Gamma^{\updownarrow x} \,\,\,\longrightarrow \,\,\,\bigtimes_{k=1}^\infty \bigtimes_{ e\in E_k^{\updownarrow x} } \Gamma \hspace{.4cm}  \text{defined by sending} \hspace{.4cm} p\,\,\,\mapsto\,\,\, \left\{   p_e \right\}_{ e\in \cup_{k=1}^\infty E_k^{\updownarrow x}  }   \,.
   \end{align}
\end{remark}

\begin{remark}\label{RemarkDecompRe} It will be helpful to have a coarse-grained analog of~(\ref{Timbuk}). Fix $n\in \mathbb{N}$ and $\mathbf{e}\in E_n$.   When  $1\leq k\leq n$, the set $E_k^{\updownarrow x}$ is independent  of $x\in \mathbf{\overline{e}} $, and thus we can define $E_k^{\updownarrow \mathbf{e}}:=E_k^{\updownarrow x}$ for any representative $x\in \mathbf{\overline{e}} $. There is a canonical bijection 
\begin{align}\label{TimbukII}
\Gamma^{\updownarrow \mathbf{e}}\,\,\,\longrightarrow \,\,\,\Bigg(\bigtimes_{k=1}^n \bigtimes_{ e\in E_k^{\updownarrow \mathbf{e}} } \Gamma\Bigg)\times \Gamma  \hspace{.4cm} \text{defined by sending}\hspace{.4cm} p \,\,\, \mapsto \,\,\, \Big(\left\{   p_e \right\}_{ e\in \cup_{k=1}^n E_k^{\updownarrow \mathbf{e}}  } ,\, p_{\mathbf{e}}\Big) \,.
   \end{align}
\end{remark}   
 \begin{remark}\label{RemarkDecompReII} Suppose that $\eta\in \mathcal{M}_{\Gamma}^{\diamond}$ has  component measures    $\{\eta^{e}\}_{ e\in \cup_{k=0}^{\infty}E_k }$ that are  nonzero.  Under the identification of $\Gamma^{\updownarrow \mathbf{e}} $ with the product space in~(\ref{TimbukII}), the restriction of $\eta$ to $\Gamma^{\updownarrow \mathbf{e}}$ has the  form
$  \frac{1}{b^n}\big(\prod_{k=1}^n \prod_{e\in E_k^{\updownarrow \mathbf{e}} }\eta^{e}\big)\times \eta^{\mathbf{e}}  $, which follows from an $n$-fold application of the decomposition property $\eta^{f}  =\frac{1}{b} \bplus_{i=1}^b \prod_{j=1 }^b \eta^{f\times (i,j)}$. Thus the restriction of the measure $\Theta^{ \updownarrow \mathbf{e}}_{\eta}$ to $\Gamma^{\updownarrow \mathbf{e}}$ can be decomposed as
\begin{align}\label{ThetaToN}
 \Theta^{ \updownarrow \mathbf{e}}_{\eta}\big|_{\Gamma^{\updownarrow \mathbf{e}}}=\Bigg(\prod_{k=1}^n \prod_{e\in E_k^{\updownarrow \mathbf{e}} }\frac{1}{ \eta^{e}(\Gamma)  }\eta^{e}\Bigg)\times \bigg(\frac{1}{ \eta^{\mathbf{e}}(\Gamma)  }\eta^{\mathbf{e}}\bigg) \hspace{.25cm} \text{by identifying}\hspace{.25cm} \Gamma^{\updownarrow \mathbf{e}}\equiv\Bigg(\bigtimes_{k=1}^n \bigtimes_{ e\in E_k^{\updownarrow \mathbf{e}} } \Gamma\Bigg)\times \Gamma \,.
\end{align}    
\end{remark}

\begin{lemma}\label{LemmaTheta} If $\eta\in \mathcal{M}_{\Gamma}^{\diamond} $, then $\Theta^{ \updownarrow x}_{\eta}$ exists in the sense of Definition~\ref{DefThetaII} for every $x\in E$.
If the  component measures, $\{\eta^{e}\}_{ e\in \cup_{k=0}^{\infty}E_k }$, of $\eta$ are nonzero, then the restriction of $\Theta^{ \updownarrow x}_{\eta}$  to $\Gamma^{\updownarrow x}$  has the  decomposition 
\begin{align}\label{ThetaThingPre}
   \Theta^{ \updownarrow x}_{\eta}\big|_{\Gamma^{\updownarrow x}}\,=\ \prod_{k=1}^\infty \prod_{e\in E_k^{\updownarrow x} } \frac{1}{ \eta^{e}(\Gamma)  }\eta^{e} \hspace{.5cm}\text{under the identification}\hspace{.5cm}\Gamma^{\updownarrow x}\,\equiv\,\bigtimes_{k=1}^\infty \bigtimes_{ e\in E_k^{\updownarrow x}} \Gamma \,.  
   \end{align}
In the  case that $\eta^{e}=0$ for some $e\in \bigcup_{k=1}^{\infty} E_k^{\updownarrow x}$, the above holds with  $\eta^{e}$  replaced by $\mu$ for all $e$. 
\end{lemma}
\begin{proof} Let $\widetilde{\Theta}^{ \updownarrow x}_{\eta}$ denote the product measure  on $\widetilde{\Gamma}^{\updownarrow x}:=\bigtimes_{k=1}^\infty \bigtimes_{ e\in E_k^{\updownarrow x}} \Gamma $ in~(\ref{ThetaThingPre}), which exists by   Kolmogorov's extension theorem since the measurable space $\big(\Gamma,\mathcal{B}_{\Gamma}\big)$ is standard Borel.  Let $\mathcal{U} $ be the canonical injection from $ \widetilde{\Gamma}^{\updownarrow x}$ to  $\Gamma  $ whose range is equal  to $\Gamma^{\updownarrow x}$.  We need to show that the sequence of measures $\big(  \Theta^{ \updownarrow [x]_n}_{\eta}\big)_{n\in \mathbb{N}}$ converges weakly to $\widetilde{\Theta}^{ \updownarrow x}_{\eta}\circ  \mathcal{U}^{-1}$, i.e., the pushforward of $\widetilde{\Theta}^{ \updownarrow x}_{\eta}$ under $\mathcal{U}$.  By the same argument as in the proof of Theorem~\ref{ThmUniversality} (using Lemma~\ref{LemDense} and Corollary~\ref{CorollarySemiAlg}), it suffices to show that $\Theta^{ \updownarrow [x]_n}_{\eta}(\mathbf{p})$ converges to  $\big(\widetilde{\Theta}^{ \updownarrow x}_{\eta}\circ  \mathcal{U}^{-1}\big)(\mathbf{p})$ with large $n$ for each simple cylinder set $\mathbf{p}\in \mathcal{S}_{\Gamma}$. For $n\in \mathbb{N}$ and $x\in E$, let $\langle x \rangle_n\in E$  defined in Remark~\ref{RemarkCylinderE}.  If $\mathbf{e}:=[x]_n $, then the restriction of $\widetilde{\Theta}^{ \updownarrow x}_{\eta}\circ  \mathcal{U}^{ -1}$ to $\Gamma^{\updownarrow \mathbf{e}}$ can be written in the form
\begin{align}\label{TimbukIII}
 \widetilde{\Theta}^{ \updownarrow x}_{\eta}\circ  \mathcal{U}^{ -1}\big|_{\Gamma^{\updownarrow \mathbf{e}}}\,=\, \Bigg(\prod_{k=1}^n \prod_{e\in E_k^{\updownarrow \mathbf{e}} } \frac{1}{ \eta^{e}(\Gamma)  }\eta^{e}\Bigg)\times  \Big(\widetilde{\Theta}^{ \updownarrow \langle x\rangle_n}_{\eta^{\mathbf{e}}}\circ  \mathcal{U}^{ -1}\Big) 
\end{align}
under the identification of $\Gamma^{\updownarrow \mathbf{e}} $ with $ \big(\bigtimes_{k=1}^n \bigtimes_{ e\in E_k^{\updownarrow \mathbf{e}}} \Gamma\big)\times \Gamma $. Since $\frac{1}{ \eta^{\mathbf{e}}(\Gamma)  }\eta^{\mathbf{e}}$ and $\widetilde{\Theta}^{ \updownarrow \langle x\rangle_n}_{\eta^{\mathbf{e}}}\circ  \mathcal{U}^{ -1}$ are probability measures, the decompositions~(\ref{ThetaToN}) and~(\ref{TimbukIII}) imply that $\Theta^{ \updownarrow [x]_n}_{\eta}(\mathbf{p}) =\big(\widetilde{\Theta}^{ \updownarrow x}_{\eta}\circ  \mathcal{U}^{ -1}\big)(\mathbf{p})   $ for all $n\geq N$ when $\mathbf{p}\in \mathcal{S}_{\Gamma}^{(N)}$.  Hence the convergence of $\Theta^{ \updownarrow [x]_n}_{\eta}(\mathbf{p})$ to $\big(\widetilde{\Theta}^{ \updownarrow x}_{\eta}\circ  \mathcal{U}^{ -1}\big)(\mathbf{p})   $
holds trivially.  Therefore $\Theta^{ \updownarrow x}_{\eta}$ exists, and the restriction of  $\Theta^{ \updownarrow x}_{\eta}$ to $\Gamma^{\updownarrow x}$ has the decomposition~(\ref{ThetaThingPre}).
\end{proof}

The following  corollary of Lemma~\ref{LemmaTheta} states  the hierarchical symmetry~(\ref{TimbukIII}) in terms of $\Theta^{ \updownarrow x}_{\eta}$. 
\begin{corollary}\label{RemarkThetaHS}  Suppose that $\eta\in \mathcal{M}_{\Gamma}^{\diamond}$ has a nonzero 
family of component measures $\{\eta^{e}\}_{ e\in \cup_{k=0}^{\infty}E_k }$. If $\mathbf{e}\in E_n$ and $x\in \mathbf{\overline{e}}$, then the restriction of   $\Theta^{ \updownarrow x}_{\eta}$ to the set $\Gamma^{\updownarrow \mathbf{e}}$ can be decomposed as
\begin{align}\label{DisplayThetaHS}
   \Theta^{ \updownarrow x}_{\eta}\big|_{\Gamma^{\updownarrow \mathbf{e}}}\,= \Bigg(\prod_{k=1}^n \prod_{e\in E_k^{\updownarrow \mathbf{e}} } \frac{1}{ \eta^{e}(\Gamma)  }\eta^{e}\Bigg)\times  \Theta^{ \updownarrow \langle x\rangle_n}_{\eta^{\mathbf{e}}} \hspace{.4cm}\text{by identifying}\hspace{.4cm}\Gamma^{\updownarrow \mathbf{e}} \,\equiv \Bigg(\bigtimes_{k=1}^n \bigtimes_{ e\in E_k^{\updownarrow \mathbf{e}}} \Gamma\Bigg)\times \Gamma\,.
   \end{align}
\end{corollary}

Next we  address the continuity of 
$ \Theta^{\updownarrow x}_{\eta}$ as a function of $x\in E$ and $\eta\in \mathcal{M}_{\Gamma}^{\diamond}$.  We equip the space of continuous functions  $ C(E,\mathcal{M}_{\Gamma})$ with the uniform weak topology, i.e., if $\{F^{n}\}_{n\in \overline{\mathbb{N}}}$ is a family of elements in $ C(E,\mathcal{M}_{\Gamma})$, then $F^n\rightarrow F^{\infty}$ means that  for all $g\in C(\Gamma)$ 
$$\sup_{x\in E}\bigg|\int_{\Gamma}g(p)  F^n(x,dp)\,-\,\int_{\Gamma}g(p) F^{\infty}(x,dp)\bigg|\quad \stackrel{n\rightarrow \infty  }{\longrightarrow} \quad  0\,.$$

\begin{proposition}\label{PropThetaMeasure} Fix  $\eta \in \mathcal{M}_{\Gamma}^{\diamond}$ and $n\in \mathbb{N}$. The function $\Theta_{\eta}:E\rightarrow \mathcal{M}_{\Gamma}$ defined by  $x\mapsto \Theta^{ \updownarrow x}_{\eta}$ is continuous. Moreover, the function $\Theta:\mathcal{M}_{\Gamma}^{\diamond}\rightarrow  C(E,\mathcal{M}_{\Gamma}) $ defined by $\eta\mapsto \Theta_{\eta}$ is continuous at points  $\eta\in  \mathcal{M}_{\Gamma}^{\diamond}$ having nonzero component measures $\{\eta^{e}\}_{ e\in \cup_{k=0}^{\infty}E_k }$.
\end{proposition}

\begin{remark}
In later proofs and discussions, for any $\eta\in \mathcal{M}_{\Gamma}^{\diamond} $ we will assume that the component measures in the family $\{\eta^{e}\}_{ e\in \cup_{k=0}^{\infty}E_k }$ are all nonzero to avoid degenerate special cases.  Note that $\eta=\mathbf{M}_r$ a.s.\ satisfies this property  by (i) of Proposition~\ref{CorProp4} and (iii) of Proposition~\ref{PropProperties}.
\end{remark}

\begin{proof}
 For $\eta \in \mathcal{M}_{\Gamma}^{\diamond}$, let the function $F_{\eta}:E\rightarrow  \mathcal{M}_{\Gamma}$ be   defined by   $F_{\eta}(x):= \Theta_{\eta}^{\updownarrow x}  $.  We use the notations $F_{\eta}(x, A):=\Theta_{\eta}^{\updownarrow x}(A)$ and $F_{\eta}(x, g):=\int_{\Gamma}g(p)\Theta_{\eta}^{\updownarrow x}(dp)$ for $x\in E$, $A\in \mathcal{B}_{\Gamma}$, and $g\in C(\Gamma)$. Moreover, define the function $F_{\eta}^{n}:E\rightarrow  \mathcal{M}_{\Gamma}$ by  $F_{\eta}^{n}(x):= \Theta_{\eta}^{\updownarrow  [ x ]_n}  $.   We observe the following:
\begin{itemize}
\item $F_{\eta}^{n}$ is continuous because it is piecewise constant over cylinder sets in the collection $\{\mathbf{\overline{e}}\,|\,\mathbf{e}\in E_n\}  $, which are both open and closed in the topology on $E$ inherited from $D$.

\item $F_{\eta}^{n}(x,A)=F_{\eta}(x,A)$ for any $x\in E$ and  $A\in \mathcal{A}_{\Gamma}^{(n)}$ due to  the agreement of the decompositions~(\ref{ThetaToN})  and~(\ref{DisplayThetaHS}) up to generation $n$.

\end{itemize}
Recall that $\mathbf{C}(\Gamma)$ denotes the collection of simple functions on $\Gamma$ that are measurable with respect to the algebra $\mathcal{A}_{\Gamma}:=\bigcup_{n=1}^{\infty}\mathcal{A}_{\Gamma}^{(n)}$.  The second bullet point above implies that the functions $ F_{\eta}^{n}(\cdot ,\psi)$ and  $   F_{\eta}(\cdot,\psi)$   are equal for  any fixed  $\psi\in \mathbf{C}(\Gamma)$ when $n$ is large enough, and hence $F_{\eta}(\cdot,\psi)$ is continuous.  Since $\mathbf{C}(\Gamma)   $ is a dense subset of $C(\Gamma)   $ with respect to the uniform norm by Lemma~\ref{LemDense}, it follows that  for any $g\in C(\Gamma)$ the function $ F_{\eta}^{n}(\cdot,g)$ converges uniformly  with large $n$ to  $ F_{\eta}(\cdot,g)$.  Thus the function $F_{\eta}(\cdot,g)$ is continuous for every $g\in C(\Gamma)$.

The continuity claim about the function   $\eta\mapsto\Theta_{\eta}$ follows from an argument combining the above observations  with the fact that for any $A\in \mathcal{A}_{\Gamma}$ the map $\eta\mapsto \eta(A)$  
is  continuous because the cylinder set $A\subset \Gamma$ is both open and closed by Lemma~\ref{RemarkOpenClosed}.
\end{proof}

\subsection{An identity for infinitely decomposable path measures}\label{SubSecLast3}

The  proposition below provides an identity involving all of the measures in our current discussion and is key for analyzing the linear operator $\mathbf{T}_{\eta}$.   Recall that the measures $\gamma^{p,q}$, $\vartheta_{\eta}$, and $\Theta^{ \updownarrow x}_{\eta}$   are defined  in Corollary~\ref{CorollaryGamma},  Definition~\ref{DefVartheta}, and Definition~\ref{DefThetaII}, respectively. 
\begin{proposition}\label{PropDecomp} For $\eta \in \mathcal{M}_{\Gamma}^{\diamond}$, let the  measures $\Phi_{\eta}$ and $\Psi_{\eta}$ on the measurable space $\big(D\times \Gamma\times \Gamma,\,\mathcal{B}_{D}\otimes  \mathcal{B}_{\Gamma}\otimes \mathcal{B}_{\Gamma}    \big)$ be defined as follows: 
\begin{align*}
\Phi_{\eta}(dx,dp,dq)\,:= \,\gamma^{p,q}(dx)\eta(dp)\eta(dq) \hspace{.5cm} \text{and} \hspace{.5cm} 
\Psi_{\eta}(dx,dp,dq)\,:= \,\Theta^{ \updownarrow x}_{\eta}(dp)\Theta^{ \updownarrow x}_{\eta}(dq)\vartheta_{\eta}(dx)\,,
\end{align*}
wherein $x\in D$ and $p,q\in \Gamma$.  If   $\int_{\Gamma\times \Gamma}T(p,q)\eta(dp)\eta(dq)<\infty$, then $\Phi_{\eta}= \Psi_{\eta}$.
\end{proposition}

Before going to the proof, we will define some notation and state two trivial lemmas. 
\begin{definition} For $A\in \mathcal{B}_{D}$, let $\mathcal{B}_{A}$ denote the restriction of the $\sigma$-algebra $\mathcal{B}_{D}$ to $A$.  
\begin{itemize}

\item Define $\Upsilon:=E\times \Gamma\times \Gamma$ and the $\sigma$-algebra $\mathcal{B}_{\Upsilon}:=\mathcal{B}_{E}\otimes \mathcal{B}_{\Gamma}\otimes \mathcal{B}_{\Gamma}  $. 

\item Define $\mathcal{S}_{\Upsilon}:=\{\emptyset\}\cup \bigcup_{n=0}^{\infty} \mathcal{S}_{\Upsilon}^{(n)}  $ for  $\mathcal{S}_{\Upsilon}^{(n)}:=\big\{  \mathbf{\overline{e}}\times \mathbf{\overline{p}}\times\mathbf{\overline{q}} \,\big|\, \mathbf{e}\in E_n \text{ and } \mathbf{p},\mathbf{q}\in \Gamma_n  \big\}$.
\end{itemize}
\end{definition}

\begin{lemma}\label{LemAgree} If $\Phi$ and $\Psi $ are $\sigma$-finite measures on the measurable space $\big(D\times \Gamma\times \Gamma,\,\mathcal{B}_{D}\otimes  \mathcal{B}_{\Gamma}\otimes \mathcal{B}_{\Gamma}    \big)$ that agree on sets in $\mathcal{S}_{\Upsilon}$ and assign $V\times \Gamma\times \Gamma$ measure zero, then $\Phi =\Psi $.
\end{lemma}
\begin{proof} This follows from the uniqueness part of Carath\'eodory's extension theorem  since  $\mathcal{S}_{\Upsilon}$ is a semi-algebra on $\Upsilon$ that generates the $\sigma$-algebra $\mathcal{B}_{\Upsilon}$ on $\Upsilon$.
\end{proof}

The following lemma characterizes the hierarchical symmetry  of the measures $\vartheta_{\eta}$ from Definition~\ref{DefVartheta} when $\eta\in \mathcal{M}_{\Gamma}^{\diamond}$.  The proof is in Appendix~\ref{AppendixVartheta}.

\begin{lemma}\label{RemarkVarthetaSymm2} If $\eta\in \mathcal{M}_{\Gamma}^{\diamond}$, then the  following decomposition  holds for every $n\in \mathbb{N}$:
\begin{align}\label{GenDef}
\vartheta_{ \eta }\,=\,\frac{1}{b^{2n}}\bplus_{\mathbf{e}\in E_n}\Bigg( \prod_{k=1}^{n}\prod_{ e\in E_k^{\updownarrow \mathbf{e}} }  \eta^{e}( \Gamma)\Bigg)^2 \vartheta_{ \eta^{\mathbf{e}} }\hspace{.5cm} \text{under the identification} \hspace{.5cm} D\,\equiv \,\bigsqcup_{\mathbf{e}\in E_n}D\,.
\end{align}
\end{lemma}

\begin{proof}[Proof of Proposition~\ref{PropDecomp}] Recall that $\gamma^{p,q}$ is supported on the set of points in $D$ at which the paths $p$ and $q$ intersect,  and $\Theta^{ \updownarrow x}_{\eta}$ is supported on the set of 
paths passing through $x$.  It follows that the measures  $\Phi_{\eta}$ and $\Psi_{\eta}$  both assign full weight to the set of  triples $(x,p,q)\in D\times \Gamma\times \Gamma$ such that $x\in \textup{Range}(p)\cap  \textup{Range}(q)$.
Moreover,  the total masses of the measures $\Phi_{\eta}$ and $\Psi_{\eta}$ agree since $\Theta_{ \eta }^{\updownarrow x}$ is a probability measure and $\gamma^{p,q}$ has total mass $T(p,q)$:
\begin{align}\label{TotMass}
\Psi_{\eta}\big(D\times \Gamma\times \Gamma   \big)\,=\,\vartheta_{ \eta }(D)\,=\,\int_{\Gamma\times \Gamma}T(p,q)\eta (dp)\eta(dq) \,=\,\Phi_{\eta}\big(D\times \Gamma\times \Gamma   \big)\,,
\end{align}
where the second  equality holds by Remark~\ref{RemarkVarthetaMass}.   The above combined with our assumption on $\eta$ implies that the measures $\Psi_{\eta}$ and $\Phi_{\eta}$ are finite---and thus $\sigma$-finite. Our proof will  leverage~(\ref{TotMass}) using the hierarchical symmetry of the model. 

The measures $\Phi_{\eta}$ and $\Psi_{\eta}$ both  assign $V\times \Gamma\times \Gamma$ measure zero because  $\gamma^{p,q}(V)=0$ for all $p,q\in \Gamma$ and $\vartheta_{\eta}(V)=0$  by Remark~\ref{RemarkVarthetaNull}.   By Lemma~\ref{LemAgree}, it is sufficient for us to show that $\Phi_{\eta}$ and $\Psi_{\eta}$ agree on every cylinder set $\mathbf{\overline{e}}\times \mathbf{\overline{p}}\times\mathbf{\overline{q}}$ where $\mathbf{e}\in E_n$ and $\mathbf{p},\mathbf{q}\in \Gamma_n$  for some $n\in \mathbb{N}_0$.  When the edge  $\mathbf{e}$ does not lie at an intersection between the coarse-grained paths $\mathbf{p}$ and $\mathbf{q}$, then  $\mathbf{\overline{e}}\times \mathbf{\overline{p}}\times\mathbf{\overline{q}}$ is a null set for both $\Phi_{\eta}$ and $\Psi_{\eta}$ by the observation at the beginning of the proof. Hence we will focus on the case when $\mathbf{e}\in \textup{Range}(\mathbf{p})\cap \textup{Range}(\mathbf{q})$. Let  $\{\eta^e\}_{e\in \cup_{k=0}^{\infty} E_k}$ denote  the family of component measures for $\eta\in \mathcal{M}_{\Gamma}^{\diamond}$.  If  $\mathbf{p},\mathbf{q}\in \Gamma_n$ and $\mathbf{e}\in \textup{Range}(\mathbf{p})\cap \textup{Range}(\mathbf{q})$, then using Remark~\ref{RemarkProp44} we can write
\begin{align}\label{here}
\Phi_{\eta}(\mathbf{\overline{e}}\times \mathbf{\overline{p}}\times\mathbf{\overline{q}})\,=\,\frac{1}{|\Gamma_n|^2}
\Bigg(\prod_{\substack{\mathbf{f}\in \mathbf{p}\\ \mathbf{f}\neq \mathbf{e} }  }\eta^\mathbf{f}(\Gamma)\Bigg)\Bigg(\prod_{\substack{\mathbf{f} \in \mathbf{q}\\ \mathbf{f}\neq \mathbf{e} }  }\eta^\mathbf{f}(\Gamma)\Bigg)\underbrace{\int_{\Gamma\times \Gamma} T(p,q)   \eta^\mathbf{e}(dp)  \eta^\mathbf{e}(dp)}_{ \Phi_{\eta^{\mathbf{e}}}(D\times \Gamma\times\Gamma)  }  \,,
\end{align}
where, recall, $\mathbf{f}\in \mathbf{p}$ is shorthand for $\mathbf{f}\in \textup{Range}(\mathbf{p})$.  Next we will work towards arriving at the same expression for $\Psi_{\eta}(\mathbf{\overline{e}}\times \mathbf{\overline{p}}\times\mathbf{\overline{q}})$.

  For the map $\langle \cdot \rangle_n :E\rightarrow E$  from Remark~\ref{RemarkCylinderE}, define the set function $\pi_{\mathbf{e}}:\mathcal{B}_{\mathbf{\overline{e}}}\rightarrow \mathcal{B}_{E} $ by $\pi_{\mathbf{e}}(A):=\{\langle x\rangle_n\,|\, x\in A  \}$ for $A\in \mathcal{B}_{\mathbf{\overline{e}}}$. By Lemma~\ref{RemarkVarthetaSymm2}, we can write the restriction of $\vartheta_{ \eta }$ to $\mathbf{\overline{e}}$ in the form
\begin{align}\label{var}
\vartheta_{ \eta }\big|_{\mathbf{\overline{e}}  }\,=\,&\frac{1}{b^{2n}}\Bigg( \prod_{k=1}^{n}\prod_{ e\in E_k^{\updownarrow \mathbf{e}} }  \eta^{e}( \Gamma)\Bigg)^2 \vartheta_{ \eta^{\mathbf{e}} }\circ \pi_{\mathbf{e}}\, .
\end{align}
Moreover, by Corollary~\ref{RemarkThetaHS}, the measure $ \Theta_{ \eta }^{\updownarrow x}$ has the  hierarchical symmetry below  for $x\in \mathbf{\overline{e}}$.
\begin{align}\label{the}
 \Theta_{ \eta}^{\updownarrow x}(dp)\,=\,&\Bigg(\prod_{k=1}^{n}\prod_{e\in E_k^{\updownarrow \mathbf{e}} }\frac{1}{\eta^{e}(\Gamma)  }  \eta^{e}( d p_e)\Bigg)\Theta_{ \eta^{\mathbf{e}} }^{\updownarrow \langle x\rangle_{n}}\big(dp_{\mathbf{e}}\big)
\end{align}
 Plugging in the forms~(\ref{var}) and~(\ref{the}) results in a cancellation of the factors $\eta^{e}(\Gamma)    $, yielding the second equality below:
\begin{align}\label{Sishy}
\Psi_{\eta}&(\mathbf{\overline{e}}\times \mathbf{\overline{p}}\times\mathbf{\overline{q}})\nonumber  \\  \,= \,& \, \int_{\mathbf{\overline{e}}\times \mathbf{\overline{p}}\times\mathbf{\overline{q}}} \Theta_{ \eta }^{\updownarrow x}(dp)  \Theta_{ \eta }^{\updownarrow x}(dq)  \vartheta_{ \eta }(dx)  \nonumber  \\
\,=\,&\,\frac{1}{b^{2n}}
\Bigg(\prod_{k=1}^{n}\prod_{e\in E_k^{\updownarrow\mathbf{e}} }\eta^{e}( \mathbf{\overline{p}}_{e})\Bigg)\Bigg(\prod_{k=1}^{n}\prod_{e\in E_k^{\updownarrow\mathbf{e}} }\eta^{e}( \mathbf{\overline{q}}_{e})\Bigg) \underbracket{\int_{\mathbf{\overline{e}}\times \Gamma\times \Gamma} \Theta_{ \eta^{\mathbf{e}} }^{\updownarrow \langle x\rangle_n}(dp)  \Theta_{ \eta^{\mathbf{e}} }^{\updownarrow \langle x\rangle_n}(dq)  \vartheta_{ \eta^{\mathbf{e}} }\circ \pi_{\mathbf{e}}( dx) }\,,
\end{align}
where for $e\in E_k^{\updownarrow\mathbf{e}}$,   we define  $\mathbf{\overline{p}}_{e}\subset \Gamma$ as the cylinder set corresponding to $\mathbf{p}_{e}:=[p_e]_{n-k}$ for any  representative $p\in \mathbf{\overline{p}}$. The bracketed integral is equal to $\Psi_{\eta^{\mathbf{e}}}(D\times \Gamma\times\Gamma) $ since through the change of integration variables $y=\langle x\rangle_n$ it becomes
\begin{align*}
\int_{E\times \Gamma\times \Gamma} \Theta_{ \eta^{\mathbf{e}} }^{\updownarrow y}(dp)  \Theta_{ \eta^{\mathbf{e}} }^{\updownarrow y}(dq)  \vartheta_{ \eta^{\mathbf{e}} }( dy)  \,=\,\Psi_{\eta^{\mathbf{e}}}(E\times \Gamma\times\Gamma)\,=\, \Psi_{\eta^{\mathbf{e}}}(D\times \Gamma\times\Gamma)\,,
\end{align*}
where the second equality uses that $V\times \Gamma\times\Gamma$ is a null set for $\Psi_{\eta^{\mathbf{e}}}$.  Applying  Remark~\ref{RemarkProp44} to the terms $\eta^{e}( \mathbf{\overline{p}}_{e})$ and $\eta^{e}( \mathbf{\overline{q}}_{e})$ in~(\ref{Sishy}), we get $\eta^{e}( \mathbf{\overline{p}}_{e})=\frac{1}{|\Gamma_{n-k}|}\prod_{\substack{\mathbf{f}\in \mathbf{p}\\ \mathbf{f} \subset e  } }\eta^\mathbf{f}(\Gamma)$ and  $\eta^{e}( \mathbf{\overline{q}}_{e})=\frac{1}{|\Gamma_{n-k}|}\prod_{\substack{\mathbf{f}\in \mathbf{q}\\ \mathbf{f} \subset e  } }\eta^\mathbf{f}(\Gamma)$, so 
\begin{align*}
\Psi_{\eta}(\mathbf{\overline{e}}\times \mathbf{\overline{p}}\times\mathbf{\overline{q}})\,=\,&\,\frac{1}{b^{2n}}\Bigg(\prod_{k=1}^n \frac{1}{|\Gamma_{n-k}| }   \Bigg)^{2(b-1)}
\Bigg(\prod_{\substack{\mathbf{f} \in  \mathbf{p}\\ \mathbf{f}\neq \mathbf{e} }  }\eta^\mathbf{f}(\Gamma)\Bigg)\Bigg(\prod_{\substack{\mathbf{f} \in \mathbf{q}\\ \mathbf{f}\neq \mathbf{e} }  }\eta^\mathbf{f}(\Gamma)\Bigg)\Psi_{\eta^{\mathbf{e}}}(D\times \Gamma\times\Gamma)\\
\,=\,&\,\frac{1}{|\Gamma_n|^2}
\Bigg(\prod_{\substack{\mathbf{f} \in \mathbf{p} \\ \mathbf{f}\neq \mathbf{e} }  }\eta^\mathbf{f}(\Gamma)\Bigg)\Bigg(\prod_{\substack{\mathbf{f} \in \mathbf{q}\\ \mathbf{f}\neq \mathbf{e} }  }\eta^\mathbf{f}(\Gamma)\Bigg)\Psi_{\eta^{\mathbf{e}}}(D\times \Gamma\times\Gamma)\,.
\end{align*}
The second equality uses  the combinatorial identity $|\Gamma_n|=b^n \prod_{k=1}^{n}|\Gamma_{k-1}|^{b-1}  $, which follows from the formula $|\Gamma_k|=b^{\frac{b^k -1 }{b-1}}$. Hence the above agrees with~(\ref{here}) by the observation~(\ref{TotMass}). The proof is complete since we have shown that the measures $\Phi_{\eta}$ and $\Psi_{\eta}$ are equal on sets in  $\mathcal{S}_{\Upsilon}$. 
\end{proof}

\subsection{Proof of Theorem~\ref{ThmOperator}}\label{SubSecLast4}

Before going to the proof of  Theorem~\ref{ThmOperator}, we  will use Proposition~\ref{PropDecomp} to  prove the following proposition concerning formulas for the linear operator $Y_{\eta}$ and its adjoint. 
\begin{proposition}\label{PropOperatorRe} Suppose  $\eta\in \mathcal{M}_{\Gamma}^{\diamond}$ satisfies that $\int_{\Gamma\times \Gamma}\big(T(p,q)\big)^2\eta(dp)\eta(dq)<\infty$.  For $p\in \Gamma$ and $f\in L^2(D,\vartheta_{\eta})$, define $\big(Y_{\eta}f\big)(p):=\int_{D\times \Gamma}f(x)\gamma^{p,q}(dx)\eta(dq) $.  Then $Y_{\eta}$ defines a bounded linear map from $L^2(D,\vartheta_{\eta})$ to $L^2(\Gamma,\eta)$, and statements (i)-(iii) below hold. 
\begin{enumerate}[(i)]

\item For any $f\in L^2(D,\vartheta_{\eta})$, the signed measure $\Delta_f(dp):=\int_{D}f(x)\Theta^{\updownarrow x}_{ \eta }(dp) \vartheta_{\eta}(dx)$ is absolutely continuous with respect to $ \eta$ and $\big(Y_{\eta}f\big)(p)=\frac{  d\Delta_f }{  d\eta }(p)$ holds for $\eta$-a.e.\ $p\in \Gamma$.

\item  For any $g\in L^2(\Gamma,\eta)$,  the signed measure $\widehat{\Delta}_g(dx):=\int_{\Gamma\times \Gamma}g(p)\gamma^{p,q}(dx)\eta(dp)\eta(dq)$ is absolutely continuous with respect to  $\vartheta_{\eta}$ and $\big(Y_{\eta}^{*}g\big)(x)= \frac{ d\widehat{\Delta}_g  }{d\vartheta_{\eta}  }(x) $ holds for $\vartheta_{\eta}$-a.e.\ $x\in D$.

\item For any $g\in L^2(\Gamma,\eta)$, the equality $\big(Y_{\eta}^{*}g\big)(x)=\int_{\Gamma}g(p)\Theta^{\updownarrow x}(dp) $ holds for $\vartheta_{\eta}$-a.e.\ $x\in D$.

\end{enumerate}

\end{proposition}

\begin{proof} Let $g:\Gamma \rightarrow [0,\infty)$ and $f :D\rightarrow [0,\infty)$ be Borel measurable functions. Observe that  multiplying  the equality between  $\gamma^{p,q}(dx) \eta(dq)  \eta(dp)$ and $\Theta^{\updownarrow x}_{ \eta}(dq) \Theta^{\updownarrow x}_{ \eta}(dp) \vartheta_{\eta}(dx)$ from Proposition~\ref{PropDecomp} by $f(x)$ and integrating out $x$ and $q$ yields 
\begin{align}\label{ForY}
\bigg(\int_{D\times \Gamma}f(x)\gamma^{p,q}(dx)\eta(dq)\bigg)\eta(dp)\,=\, \int_{D}f(x)\Theta^{\updownarrow x}_{ \eta}(dp) \vartheta_{\eta}(dx)\,=:\, \Delta_f(dp) \,, 
\end{align}
where we have used that $\Theta^{\updownarrow x}_{ \eta}$ is a probability measure.  Similarly, if we multiply the   equality from Proposition~\ref{PropDecomp}  by $g(p)$ and integrate out $p$ and $q$, we get the second equality below
\begin{align}
\widehat{\Delta}_g(dx)\,:=\,\int_{\Gamma\times \Gamma}g(p)\gamma^{p,q}(dx)\eta(dp)\eta(dq)\,=\,\bigg( \int_{\Gamma}g(p)\Theta^{\updownarrow x}_{ \eta }(dp)\bigg) \vartheta_{\eta}(dx)   \,.\label{ForY*}
\end{align}
In particular,~(\ref{ForY}) implies that the measure $\Delta_f$ is absolutely continuous with respect to $\eta$, which gives us the following alternative representation for  $\hat{Y}_{\eta}f$:
\begin{align*}
\big(\hat{Y}_{\eta}f\big)(p)\,:=\, \int_{D\times \Gamma}f(x)\gamma^{p,q}(dx)\eta(dq) \,=\,\frac{  d\Delta_f }{  d\eta }(p)\,.
\end{align*}
Similarly, equation~(\ref{ForY*}) implies that the measure  $\widehat{\Delta}_g$  is absolutely continuous with respect to  $\vartheta_{\eta}$  and  $\frac{ d\widehat{\Delta}_g  }{d\vartheta_{\eta}  }(x)=\int_{\Gamma}g(p)\Theta^{\updownarrow x}_{ \eta }(dp)$. Note that $g\mapsto \frac{ d\widehat{\Delta}_g  }{d\vartheta_{\eta}  } $ defines a bounded linear map from $ L^2(\Gamma,\eta)$ to  $ L^2(D,\vartheta_{\eta})  $ since 
 \begin{align*}
  \int_{ D }\bigg|\frac{ d\widehat{\Delta}_g  }{d\vartheta_{\eta}  }(x)\bigg|^2 \vartheta_{\eta} (dx) \,= \, & \, \int_{\Gamma \times \Gamma \times D}g(p)g(q) \Theta^{\updownarrow x}_{ \eta }(dp)\Theta^{\updownarrow x}_{ \eta }(dq)\vartheta_{\eta}(dx)  \\
  \,= \, &  \, \int_{\Gamma \times \Gamma }g(p)g(q) T(p,q)\eta(dp)\eta(dq) \\
  \, \leq  \, & \, \|g\|^2_{L^2(\Gamma,\eta)}\bigg(\int_{\Gamma \times \Gamma } \big(T(p,q)\big)^2\eta(dp)\eta(dq)\bigg)^{1/2} \,,
  \end{align*}
where the second equality uses Proposition~\ref{PropDecomp} and that $\gamma^{p,q}$ has total mass $T(p,q)$ for $\eta\times \eta$-a.e.\ $(p,q)$, and  the  inequality applies Cauchy-Schwarz. The following  calculation  shows that $\big\langle  g \big|\, \hat{Y}_{\eta}f  \big\rangle_{L^2(\Gamma,\eta)  }$ is equal to $\big\langle \frac{ d\widehat{\Delta}_g  }{d\vartheta_{\eta}  }\,\big|\,f \big\rangle_{L^2(D,\vartheta_{\eta})  }$: 
\begin{align*}
\big\langle  g \big|\, \hat{Y}_{\eta}f \big\rangle_{L^2(\Gamma,\eta)  }\,:=\, &   \,\int_{\Gamma}g(p)\bigg(\int_{D\times \Gamma}f(x)\gamma^{p,q}(dx)\eta(dq)\bigg)\eta(dp) \nonumber  \\  \,=\,&\,\int_{D}f(x)\int_{\Gamma\times \Gamma}g(p)\gamma^{p,q}(dx) \eta(dp) \eta(dq) \nonumber  \\
\,=\,&\,\int_{D}f(x)\frac{ d\widehat{\Delta}_g  }{d\vartheta_{\eta}  }(x)\vartheta_{\eta}(dx) \,=\,\bigg\langle \frac{ d\widehat{\Delta}_g  }{d\vartheta_{\eta}  }\,\bigg|\,f  \bigg\rangle_{L^2(D,\vartheta_{\eta})  } 
\end{align*}
where we  invoked the  definitions of $\hat{Y}_{\eta}f$ and $\widehat{\Delta}_g $  in the first  and third equalities, respectively, and we permuted the integrations in the second equality. Since the above holds for all nonnegative $f\in L^2(D,\vartheta_{\eta})$ and $g\in  L^2(\Gamma,\eta)    $  and the linear map $g\rightarrow \frac{ d\widehat{\Delta}_g  }{d\vartheta_{\eta}  }   $ is bounded,  it follows  that $\hat{Y}_{\eta}$ defines a bounded linear map from  $ L^2(D,\vartheta_{\eta}) $ to $  L^2(\Gamma,\eta)  $ such that  $\hat{Y}_{\eta}^*g$ is $\vartheta_{\eta}$-a.e.\ equal to  $\frac{ d\widehat{\Delta}_g  }{d\vartheta_{\eta}  } $ and thus also equal to $\int_{\Gamma}g(p)\Theta^{\updownarrow (\cdot)}_{ \eta }(dp)$.
\end{proof} 

\begin{proof}[Proof of Theorem~\ref{ThmOperator}] We can deduce that the linear operator $\hat{Y}_{\eta}\hat{Y}_{\eta}^*$ on $L^2(\Gamma,\eta)$ is equal to $\mathbf{T}_{\eta  }$ by showing that  $\hat{Y}_{\eta}\hat{Y}_{\eta}^*$  has integral kernel $T(p,q)$.  Parts (i) and (iii) of Proposition~\ref{PropOperatorRe} give us the equalities below for any $g\in L^2(\Gamma,\eta)$.
\begin{align*}
\big(\hat{Y}_{\eta}\hat{Y}_{\eta}^*g\big)(p)\,=\,\frac{  \int_{D}\big(\hat{Y}_{\eta}^*g\big)(x)  \Theta^{\updownarrow x}_{\eta }(dp) \vartheta_{\eta}(dx) }{ \eta(dp) } \,=\,\frac{  \int_{D}\left(\int_{\Gamma}g(q)\Theta^{\updownarrow x}_{ \eta}(dq) \right)  \Theta^{\updownarrow x}_{\eta }(dp) \vartheta_{\eta}(dx) }{ \eta(dp) }
\end{align*}
Next using that the measure $ \Theta^{\updownarrow x}_{ \eta}(dq) \Theta^{\updownarrow x}_{ \eta}(dp) \vartheta_{\eta}(dx)$ is equal to $\gamma^{p,q}(dx) \eta(dq)  \eta(dp) $ by Proposition~\ref{PropDecomp}, we can rewrite the above as
\begin{align*}
 \big(\hat{Y}_{\eta}\hat{Y}_{\eta}^*g\big)(p) \,=\,\int_{D\times \Gamma}\gamma^{p,q}(dx)g(q)\eta(dq) \,=\,\int_{ \Gamma}T(p,q)g(q)\eta(dq)\,=:\big(\mathbf{T}_{\eta }g\big)(p)\,,
\end{align*}
where the second equality holds because $\gamma^{p,q}$ has total mass $T(p,q)$ for a.e.\ $(p,q)$.  Since $g\in L^2(\Gamma,\eta)$ is arbitrary, we have shown the desired result. 
\end{proof}

\subsection{Proof of Theorem~\ref{ThmVartheta}}\label{SubSecLast6}

 \begin{proof} Part (i): The symmetry of the model implies that the expectation of $\vartheta_{\mathbf{M}_r}$ must be a multiple $c>0$ of the uniform measure $\nu$.  Moreover, the expectation of the total mass of $\vartheta_{\mathbf{M}_r}$ is
 $$  \mathbb{E}\big[\vartheta_{\mathbf{M}_r}(D)\big]\,=\,\mathbb{E}\bigg[\int_{\Gamma\times \Gamma} T(p,q) \mathbf{M}_r(dp)  \mathbf{M}_r(dq)   \bigg]\,=\,\int_{\Gamma\times \Gamma} T(p,q)  \upsilon_r(dp,dq)\,=\,R'(r)\,,  $$
 where the second equality follows from (ii) of Theorem~\ref{ThmPathMeasure}, and the third equality holds by differentiating (iv) of Proposition~\ref{PropLemCorrelate} at $a=0$.  Therefore, $c=R'(r)$.\vspace{.3cm}

\noindent Part (ii):  Recall from Corollary~\ref{CorInfDec} that $\mathbf{M}_r $ is a.s.\ infinitely decomposable.  Let $\big\{\mathbf{M}^{e}\big\}_{e\in \cup_{k=1}^{\infty}E_k} $ be the family of component measures of  $\mathbf{M}_r$.  The measures $\mathbf{M}^{e}$ and $\vartheta_{\mathbf{M}^{e}}$ are a.s.\ nonzero for every $e\in  \cup_{k=1}^{\infty}E_k$ as a consequence of part (iv) of Theorem~\ref{ThmPathMeasure} and Remark~\ref{RemarkVarthetaMass}.  For any open set $A\subset D$, there exists an $\mathbf{e}\in E_n$ for large enough $n\in \mathbb{N}$ such that $ \mathbf{\overline{e}}\subset A $.     Using Lemma~\ref{RemarkVarthetaSymm2}, we can bound $\vartheta_{\mathbf{M}_r}(A)$ from below by $\vartheta_{\mathbf{M}_r}(\mathbf{\overline{e}})=\frac{1}{b^{2n}}\big( \prod_{k=1}^{n}\prod_{ e\in E_k^{\updownarrow \mathbf{e}} }  \mathbf{M}^{e}( \Gamma)\big)^2 \vartheta_{ \mathbf{M}^{\mathbf{e}} }(E)  $, which is positive by our previous observations since $E=D\backslash V$ is a full-measure set for $\vartheta_{ \mathbf{M}^{\mathbf{e}} }$ by Remark~\ref{RemarkVarthetaNull}.  Therefore, $\vartheta_{\mathbf{M}_r}$ a.s.\ assigns all open subsets of $D$ positive measure.  \vspace{.3cm}

\noindent Part (iii): Suppose to reach a contradiction that with positive probability   there exists an $A\in \mathcal{B}_{D}$ and an $\frak{h}\in [0,2)$ such that $\textup{dim}_{H}(A)=\frak{h}$ and $\vartheta_{\mathbf{M}_r}(A)>0$. If the aforementioned event occurs, then  the energy defined by
$$ \hat{\mathcal{Q}}_{\alpha}(\vartheta_{\mathbf{M}_r})\,=\,\int_{D\times D}  \frac{1}{\big(d_{D}(x,y)   \big)^{\alpha}}   \vartheta_{\mathbf{M}_r}(dx,dy)   $$
must be infinite for any  $\alpha\in (\frak{h},2)$.  This, however, contradicts part (v) below, which shows that the analogous energy is a.s.\ finite when the dimension function  $x^{\alpha}$  is replaced by the  generalized dimension function $h_\lambda(x):=x^2\big(\log(1/x)\big)^{\lambda}$ with $\lambda>9$, which vanishes faster as $x\searrow 0$ than $x^{\alpha}$ for any fixed $\alpha<2$.\vspace{.3cm}

\noindent Part (iv):   Since $\vartheta_{\eta}(V)=0$ by Remark~\ref{RemarkVarthetaNull}, we can treat the measures $\vartheta_{\eta}$ as acting on the measurable space $(E,\mathcal{B}_{E})$.  Recall from 
 Remark~\ref{RemarkCylinderE} that if $x\in \mathbf{\overline{e}}$ for some $\mathbf{e}\in E_n$, then $\langle x\rangle_n \in E$ denotes the position of  $x$ within the  subcopy of $D$ corresponding to  $\mathbf{e}$. As in the proof of Proposition~\ref{PropDecomp}, define $\pi_{\mathbf{e}}:\mathcal{B}_{ \mathbf{\overline{e}} }\rightarrow \mathcal{B}_E$ as the set map induced by the function from $\mathbf{\overline{e}}$ to $E$ defined by $x \mapsto \langle x\rangle_n$, i.e., such that $\pi_{\mathbf{e}}(A)=\{ \langle x\rangle_n\,|\,x\in A \} $ for $A\in \mathcal{B}_{ \mathbf{\overline{e}} }$. Lemma~\ref{RemarkVarthetaSymm2} is equivalent to stating that for any $\mathbf{e}\in E_n$ the restriction of $\vartheta_{\mathbf{M}_r}$ to the cylinder set $\mathbf{\overline{e}}$ has the form
\begin{align}
\vartheta_{\mathbf{M}_r}\big|_{\mathbf{\overline{e}}}\,=\,\frac{1}{b^{2n}}\Bigg(\prod_{k=1}^n\prod_{e\in E_k^{\updownarrow\mathbf{e}}}\mathbf{M}^{e}(\Gamma)   \Bigg)^2\vartheta_{\mathbf{M}^\mathbf{e}}\circ \pi_{\mathbf{e}}\,,
\end{align}
where the family of random measures $\{\mathbf{M}^{e}  \}_{e\in \cup_{k=0}^{\infty} E_k}$ are defined in relation to $\mathbf{M}_r$ as in Proposition~\ref{CorProp4}. Recall that  the random measure $\mathbf{M}^{e} $  is equal in distribution to $\mathbf{M}_{r-k} $ when $e\in E_k$, and  the random measures $\mathbf{M}^{e} $ and $\mathbf{M}^{f} $ are independent provided that $f\in  E_l$ satisfies $\overline{e}\cap \overline{f}=\emptyset$. 

 The set of points $(x,y)\in E\times E$  satisfying $\frak{g}_D(x,y)=n$ can be written as a union of  product cylinder sets $\mathbf{\overline{e}}\times \mathbf{\overline{f}}$ for distinct $\mathbf{e},\mathbf{f}\in E_n$.  Given $\mathbf{e},\mathbf{f}\in E_n$,    we write $\mathbf{e}\updownarrow \mathbf{f}$  if there is a path in $\Gamma_n$ passing through both $\mathbf{e}$ and $\mathbf{f}$, and we write $\mathbf{e}\diamond \mathbf{f}$ otherwise. \vspace{.2cm}

\noindent \textbf{Case $\mathbf{e\diamond f}$:}
If $\mathbf{e}\diamond \mathbf{f}$, then  the restriction of $\mathbb{E}\big[\vartheta_{\mathbf{M}_r}\times\vartheta_{\mathbf{M}_r} \big]$ to  $\mathbf{\overline{e}}\times\mathbf{\overline{f}} $ can be written in the form
\begin{align}
\mathbb{E}\big[\vartheta_{\mathbf{M}_r}\times\vartheta_{\mathbf{M}_r} \big]\big|_{ \mathbf{\overline{e}}\times\mathbf{\overline{f}}  }\nonumber \, &\,=\,\frac{1}{b^{4n}} \Bigg(\prod_{k=1}^{n-1}\prod_{e\in E_k^{\updownarrow\mathbf{e}}}\mathbb{E}\Big[ \big( \mathbf{M}^{e}(\Gamma)     \big)^4\Big]\Bigg)\mathbb{E}\big[ \vartheta_{\mathbf{M}^\mathbf{e}}\circ \pi_{\mathbf{e}}  \big]\times\mathbb{E}\big[ \vartheta_{\mathbf{M}^\mathbf{f}}\circ \pi_{\mathbf{f}} \big]\nonumber \\ & \,=\,\frac{1}{b^{4n}}\big( R'(r-n)\big)^2\Bigg(\prod_{k=1}^{n-1}\mathbb{E}\Big[ \big( \mathbf{M}_{r-k}(\Gamma)     \big)^4\Big]^{b-1}  \Bigg)(\nu\circ \pi_{\mathbf{e}})  \times (\nu\circ \pi_{\mathbf{f}})  \nonumber \\ & \,=\, \underbrace{\big( R'(r-n)\big)^2\Bigg(\prod_{k=1}^{n-1}\mathbb{E}\Big[ \big( \mathbf{M}_{r-k}(\Gamma)     \big)^4\Big]^{b-1}\Bigg)}_{=:\,C_{r,n}^{\diamond} }\nu \times \nu|_{ \mathbf{\overline{e}}\times\mathbf{\overline{f}}  } \label{Furr} \,.
\end{align}
The first equality  follows from the independence of $ \mathbf{M}^{e}$ and $\mathbf{M}^{f}$ when $\overline{e}\cap \overline{f}=\emptyset$ and that $\mathbb{E}\big[\mathbf{M}^e(\Gamma)\big]=1$. The second equality uses that the random measure $ \mathbf{M}^{e} $ is equal in distribution to $\mathbf{M}_{r-k}$ when $e\in E_k$ and that $\mathbb{E}\big[ \vartheta_{\mathbf{M}_t } \big]=R'(t)\nu$ by part (i).
The third equality holds because the dilation map $\pi_{\mathbf{e}}:\mathcal{B}_{ \mathbf{\overline{e}} }\rightarrow \mathcal{B}_{ E }$ satisfies $  \nu\circ \pi_{\mathbf{e}}=b^{2n}\nu|_{ \mathbf{\overline{e}}} $.

 We need to show that $C_{r,n}^{\diamond}$ grows in proportion to $n^8$ as $n\rightarrow \infty$.  Recall that we denote the moment  $\mathbb{E}\big[ \big( \mathbf{M}_{t}(\Gamma)     \big)^m\big]$ by $R^{(m)}(t)$. Through writing $ \mathbf{M}_{r-k}(\Gamma)=1+\big( \mathbf{M}_{r-k}(\Gamma)-1\big)$ and foiling inside the expectations in~(\ref{Furr}), we get 
\begin{align}
C_{r,n}^{\diamond}\,=\, & \,\big( R'(r-n)\big)^2 \prod_{k=1}^{n-1}\Big(1\,+\,6R(r-k)\,+\,4R^{(3)}(r-k)\,+\,  R^{(4)}(r-k)    \Big)^{b-1} \nonumber \\
 \, =\,&\, \underbrace{\big( R'(r-n)\big)^2}_{ \sim \,\frac{\kappa^4}{n^4}  } \textup{exp}\Bigg\{ \underbrace{(b-1)\sum_{k=1}^{n-1} \log\Big(1+6R(r-k)+4R^{(3)}(r-k)+R^{(4)}(r-k) \Big)}_{ =\, 12\log n \,+\,\mathit{O}(1)  }    \Bigg\} \,. \label{egret}
\end{align}
  The asymptotics for the  braced expressions above hold because for $-t\gg 1$ and  $\kappa^2:=\frac{2}{b-1}$ we have
\begin{align}\label{Rstuff} 
  R'(t)\,=\,\frac{\kappa^2}{t^2}\big(1+\mathit{o}(1)\big)\,,\hspace{.5cm} R(t)\,=\,\frac{ \kappa^2 }{ -t }+\mathit{O}\bigg(  \frac{ \log(-t) }{ t^2 }\bigg)\,,\hspace{.5cm} R^{(m)}(t)\,=\,\mathit{O}\bigg( \frac{1}{ |t|^{\lceil m/2\rceil} } \bigg)\,, 
  \end{align} 
where we have used Remark~\ref{RemarkDerR}, Lemma~\ref{LemVar}, and (III) of Theorem~\ref{ThmExistence}, and hence
$$  (b-1) \log\left(1+6R(t)+4R^{(3)}(t)+R^{(4)}(t) \right) \,\stackrel{ -t\gg 1 }{ = } \,\frac{12}{-t}\,+\,\mathit{O}\Big( \frac{1}{t^2} \Big)\,. $$
It follows that the difference between $12\sum_{k=1}^{n-1}\frac{1}{k}$ and the exponent on the second line of~(\ref{egret}) is a bounded sequence in $n$, which is how we obtain the asymptotic $12\log n +\mathit{O}(1)$ above. Hence $C_{r,n}^{\diamond}$ is asymptotic to a multiple  $\mathbf{c}>0$ of $n^8$ with large $n$.\vspace{.2cm}

\noindent \textbf{Case $\mathbf{e\updownarrow f}$:}    A similar argument as in~(\ref{Furr}) shows that when $\mathbf{e}\updownarrow\mathbf{f}$ the restriction of $\mathbb{E}\big[\vartheta_{\mathbf{M}_r}\times\vartheta_{\mathbf{M}_r} \big]$ to the product set $\mathbf{\overline{e}}\times\mathbf{\overline{f}} $ has the form
\begin{align}
\mathbb{E}\big[\vartheta_{\mathbf{M}_r}\times\vartheta_{\mathbf{M}_r}  \big]\big|_{ \mathbf{\overline{e}}\times\mathbf{\overline{f}}  } \,=\,&\,\Bigg(\mathbb{E}\Big[ \big( \mathbf{M}_{r-n}(\Gamma)     \big)^4\Big]^{b-2} \prod_{k=1}^{n-1}\mathbb{E}\Big[ \big( \mathbf{M}_{r-k}(\Gamma)     \big)^4\Big]^{b-1}\Bigg)\nonumber \\
&\,\,\cdot \underbracket{\mathbb{E}\Big[\big(\mathbf{M}_{r-n}(\Gamma)  \big)^2    \vartheta_{\mathbf{M}_{r-n}  }\Big]}\times \underbracket{\mathbb{E}\Big[ \big(\mathbf{M}_{r-n}(\Gamma)  \big)^2   \vartheta_{\mathbf{M}_{r-n}  }\Big]}\, \bigg|_{ \mathbf{\overline{e}}\times\mathbf{\overline{f}}  }\nonumber \\ \,=\,& \, C_{r,n}^{\updownarrow}\nu\times \nu \big|_{ \mathbf{\overline{e}}\times\mathbf{\overline{f}}  }  \,,\label{Furr2}
\end{align}
where the second equality holds for some $C_{r,n}^{\updownarrow}>0$ satisfying $C_{r,n}^{\updownarrow}\sim C_{r,n}^{\diamond}$ with large $n$ as a consequence of the discussion below. By essentially the same computation as below~(\ref{egret}), the parenthesized expression on the right side above is asymptotic to the exponential of the braced term in~(\ref{egret}), i.e., the same asymptotic multiple of $n^{12}$. Unlike the $x\diamond y$ case, we have to contend with the bracketed terms, which are a correlation of the square of the random variable $\mathbf{M}_{r-n}(\Gamma)$ with the random  measure $\vartheta_{\mathbf{M}_{r-n}  }$.  We will show that the bracketed measures have the form $B_{r-n}\nu$ for $B_t >0 $  asymptotic to $\frac{\kappa^2 }{t^{2}}$ as $t\rightarrow -\infty$, which implies that $C_{r,n}^{\updownarrow}\sim C_{r,n}^{\diamond}$ as $n\rightarrow \infty$.

By the spatial symmetry of the model, the expectation of the measures $ \mathbf{M}_t(\Gamma) \vartheta_{\mathbf{M}_t}  $ and $\big(\mathbf{M}_t(\Gamma)\big)^2 \vartheta_{\mathbf{M}_t} $ must be  constant multiplies $A_t,B_t>0$ of the uniform measure $\nu$, i.e.,
\begin{align}\label{DefAB}
\mathbb{E}\big[ \mathbf{M}_t(\Gamma) \vartheta_{\mathbf{M}_t}\big]   \,=\,A_t \nu\hspace{.5cm}\text{and}\hspace{.5cm}\mathbb{E}\Big[ \big(\mathbf{M}_t(\Gamma)\big)^2 \vartheta_{\mathbf{M}_t} \Big]   \,=\,B_t \nu\,.
\end{align}
We will first use the hierarchical symmetry to 
derive a closed expression for $A_{t}$.  With Proposition~\ref{CorProp4} and Lemma~\ref{RemarkVarthetaSymm2} with $n=1$, we get the second equality below 
\begin{align}
A_t \nu  \,=\,&\,\mathbb{E}\big[ \mathbf{M}_t(\Gamma) \vartheta_{\mathbf{M}_t} \big] \nonumber  \,\\
\,=\,&\,\mathbb{E}\Bigg[\Bigg(\frac{1}{b}\sum_{1\leq i\leq b} \prod_{1\leq j \leq b}\mathbf{M}_{t-1}^{(i,j)} (\Gamma)\Bigg)\Bigg(\frac{1}{b^2} \sum_{1\leq I,J \leq b}\Bigg( \prod_{\substack{ 1\leq \ell\leq b \\  \ell\neq J  }}\mathbf{M}_{t-1}^{(I,\ell)}(\Gamma)  \Bigg)^2\vartheta_{\mathbf{M}_{t-1}^{(I,J)}}\circ \pi_{I,J}\Bigg) \Bigg]\label{expand} \\
\,=\,&\, \frac{b-1}{b}\mathbb{E}\Big[\big( \mathbf{M}_{t-1}(\Gamma)  \big)^2  \Big]^{b-1}\mathbb{E}\big[\vartheta_{\mathbf{M}_{t-1}} \big] \,+\,\frac{1}{b}\mathbb{E}\Big[\big( \mathbf{M}_{t-1}(\Gamma)  \big)^3  \Big]^{b-1}\mathbb{E}\big[\mathbf{M}_{r-1}(\Gamma) \vartheta_{\mathbf{M}_{t-1}} \big] \nonumber \\
\,=\,&\, \frac{b-1}{b}\big(1+R(t-1)\big)^{b-1}R'(t-1)\nu \,+\,\frac{1}{b}\mathbb{E}\Big[\big( \mathbf{M}_{t-1}(\Gamma)  \big)^3  \Big]^{b-1}A_{t-1}\nu\,,\nonumber 
\end{align}
where $\pi_{I,J}:=\pi_{\mathbf{e}}$ for the edge $\mathbf{e}\in E_1$ corresponding to the $J^{th}$ segment along the $I^{th}$ branch of $D$.  The third equality above follows from foiling the sums, and the two terms on the third line correspond to products of terms in which $i\neq I$ and $i=I$,  respectively.  The fourth equality uses that $\mathbf{M}_{t}(\Gamma)$ has variance $R(t)$ and that $\mathbb{E}\big[\vartheta_{\mathbf{M}_{t}} \big]=R'(t)\nu  $ by part (i). 

  Thus $A_{t}$ can be expressed through the  series 
\begin{align*}
A_{t}\,=\,\frac{b-1}{b}\sum_{k=1}^\infty\big(1+R(t-k)\big)^{b-1}R'(t-k)\frac{1}{b^{k-1}}\prod_{\ell=1}^{k-1}\mathbb{E}\Big[\big( \mathbf{M}_{t-\ell}(\Gamma)  \big)^3  \Big]^{b-1} \,.\nonumber 
\end{align*}
Since the third moment of $ \mathbf{M}_{t-\ell}(\Gamma) $ can be written in terms of its centered moments as $1+3R(t-\ell)+R^{(3)}(t-\ell) $,  applying the asymptotics~(\ref{Rstuff}) when $-t\gg 1$ yields  
\begin{align*}
A_{t}
\,\sim\,\frac{b-1}{b}\sum_{k=1}^\infty \frac{\kappa^2}{t^2}\frac{1}{b^{k-1}}\,=\,\frac{\kappa^2}{t^2}\,.\nonumber 
\end{align*}

A similar analysis that begins by  expanding $\mathbb{E}\big[ \big(\mathbf{M}_t(\Gamma)\big)^2 \vartheta_{\mathbf{M}_t}\big]$  in analogy to~(\ref{expand}) leads to a  recursion relation in which $B_t$ is a function of  $B_{t-1}$ and  $A_{t-1}$.  The resulting series representation for $B_t$ again yields $B_{t}\sim \frac{\kappa^2}{t^2}$ for $-t\gg 1$.  Thus $C_{r,n}^{\updownarrow}$ is asymptotic to  $C_{r,n}^{\diamond}\sim \mathbf{c}n^8$ with large $n$.  It follows that  the  correlation function $C_r(x,y)$ is asymptotic to $\mathbf{c} \big(\frak{g}_D(x,y)\big)^{8} $ as $\frak{g}_D(x,y)\rightarrow \infty$.  \vspace{.3cm}

\noindent Part (v): By a similar argument as in the proof of Proposition~\ref{PropFrostman}, it suffices  to work with a modified version of the energy $\mathcal{Q}_\lambda $ having a form that fits with the hierarchical structure of the model:
 \begin{align*}
\widetilde{\mathcal{Q}}_\lambda \big(\vartheta_{\mathbf{M}_r}\big)\,:=\, \int_{D\times D} \frac{b^{2\frak{g}_D(x,y)}}{ \big( \frak{g}_D(x,y)  \big)^{\lambda}}\vartheta_{\mathbf{M}_r}(dx)\vartheta_{\mathbf{M}_r}(dy)\,,
\end{align*}
where   $\frak{g}_D(x,y)$ is defined as in part (iv).  Since $\vartheta_{\mathbf{M}_r}(V)=0$, we can replace the integration domain $D\times D$ by $E\times E$. For $x,y\in E$, we write $x\updownarrow  y $ if there is a path $p\in \Gamma$ passing through both $x$ and $y$ and $x\diamond  y $ otherwise.  Define $U_n^{\updownarrow    }$ as the set of $(x,y)\in E\times E$ such that  $\frak{g}(x,y)=n$ and  $x\updownarrow  y $, and define $U_n^{ \Diamond } $ analogously.  The expectation of $\widetilde{\mathcal{Q}}_\lambda(\vartheta_{\mathbf{M}_r})$ can be written as
  \begin{align*}
&\mathbb{E}\Big[\widetilde{\mathcal{Q}}_\lambda\big(\vartheta_{\mathbf{M}_r}\big)\Big]\\
& \,=\, \sum_{n=1}^\infty \left(\mathbb{E}\left[\int_{U_n^{\updownarrow    }} \frac{b^{2\frak{g}_D(x,y)}}{ \big( \frak{g}_D(x,y)  \big)^{\lambda}}\vartheta_{\mathbf{M}_r}(dx)\vartheta_{\mathbf{M}_r}(dy)\right]\,+\,\mathbb{E}\left[\int_{U_n^{\Diamond  }} \frac{b^{2\frak{g}_D(x,y)}}{ \big( \frak{g}_D(x,y)  \big)^{\lambda}}\vartheta_{\mathbf{M}_r}(dx)\vartheta_{\mathbf{M}_r}(dy)\right]\right)\\
& \,=\, \sum_{n=1}^\infty  \frac{b^{2n}}{ n^{\lambda}}\left(\mathbb{E}\left[  \vartheta_{\mathbf{M}_r}\times \vartheta_{\mathbf{M}_r}\big( U_n^{\updownarrow    }\big) \right]\,+\,\mathbb{E}\left[  \vartheta_{\mathbf{M}_r}\times \vartheta_{\mathbf{M}_r}\big( U_n^{\Diamond    }\big) \right]\right)\\
&  \,=\, \sum_{n=1}^\infty  \frac{b^{2n}}{ n^{\lambda}} \left(\int_{U_n^{\updownarrow    }}C_r(x,y)\nu(dx)\nu(dy)  \,+\,   \int_{U_n^{\Diamond    }}C_r(x,y)\nu(dx)\nu(dy)\right)\,,
\end{align*}
where, recall, $C_r$ is the Radon-Nikodym derivative of $\mathbb{E}\big[  \vartheta_{\mathbf{M}_r}\times \vartheta_{\mathbf{M}_r} \big]$ with respect to $\nu\times \nu$.  It follows from our discussion in the proof of (iv) that $C_r(x,y)=C_{r,n}^{\updownarrow}$  on $U_n^{\updownarrow    }$ and $C_r(x,y)=C_{r,n}^{\diamond}$ on $U_n^{\Diamond    }$. Thus we have 
\begin{align*}
\mathbb{E}\Big[\widetilde{Q}_{\lambda}\big(\vartheta_{\mathbf{M}_r}\big)\Big]\,=\,&\, \sum_{n=1}^\infty  \frac{b^{2n}}{ n^{\lambda}}C_{r,n}^{\updownarrow} \nu\times \nu\big( U_n^{\updownarrow    }\big) \,+\,\sum_{n=1}^\infty \frac{b^{2n}}{ n^{\lambda}}C_{r,n}^{\diamond} \nu\times \nu\big( U_n^{\diamond    }\big)\\ \,=\,&\,\sum_{n=1}^\infty  \bigg(\frac{b(b-1)}{ n^{\lambda}}C_{r,n}^{\updownarrow} \,+\, \frac{b-1}{ n^{\lambda}}C_{r,n}^{\diamond}   \bigg)\,,
\end{align*}
where the last equality holds because $\nu\times \nu\big( U_n^{\updownarrow    }\big)=\frac{b-1}{b^{2n-1}}   $  and  $\nu\times \nu\big( U_n^{\diamond    }\big)=\frac{b-1}{b^{2n}}   $.  The constants  $C_{r,n}^{\updownarrow}$ and $C_{r,n}^{\diamond}   $ are asymptotically proportional to $n^8$ for $n\gg 1$ by part (iv), and therefore the above series converge if and only if $\lambda>9$.
\end{proof}

\begin{appendix}

\section{Proof of Corollary~\ref{CorIntSet}}\label{AppendixHausdorff}

\begin{proof}
As a consequence of Proposition~\ref{PropFrostman}, for $\rho_r$-a.e.\ pair $(p,q)$ there is a nonzero measure $\tau^{p,q}$ that assigns full measure to $I^{p,q}$ and for which the energy  $Q^\frak{h}(\tau^{p,q})$ is finite for all $0\leq \frak{h}<1$.   For a fixed $\frak{h}\in [0,1)$, define $h(a):=1/\log^\frak{h} (\frac{1}{a})$ for $a\in (0,1)$. Notice that 
\begin{align*}
Q^\frak{h}\left(\tau^{p,q}\right)\,:=\,\int_{[0,1]}\bigg( \int_{[0,1]} \frac{1}{ h\big(|x-y|\big)}   \tau^{p,q}(dx) \bigg) \tau^{p,q}(dy)   \,\geq \,\int_{[0,1]} F_{h}^{p,q}(y)  \tau^{p,q}(dy)\,,
\end{align*}
where  $F^{p,q}_{h}(y):=\sup_{\delta >0} \big[\frac{1}{h(\delta)}  \tau^{p,q}(y-\delta,y+\delta) \big]$.  Given $M>0$, let $A^{p,q}_{M}$ denote the set of $y\in I^{p,q}$ such that  $F^{p,q}_{h}(y)\leq M  $.  Since $Q^\frak{h}(\tau^{p,q})<\infty$, there is an $M$ large enough so that $\tau^{p,q}\big( A^{p,q}_{M}\big)>\frac{1}{2}\tau^{p,q}([0,1])   $.

Next we focus on bounding $\inf_{ \mathcal{C}  }\sum_{ I\in  \mathcal{C}  }   h(|I|) $ from below, where the infimum is  over all countable coverings, $\mathcal{C}$, of $I^{p,q}$ by closed intervals $I$, where $|I|$ denotes the interval's diameter.  Given  such a collection  $\mathcal{C}$,   let  $\mathcal{C}_{M} $ be the subcollection of $\mathcal{C}$ consisting of intervals $ I$ such that $I\cap A^{p,q}_{M}\neq \emptyset  $. Of course, $\mathcal{C}_{M}$ forms a covering of $A^{p,q}_{M}$, and for  each interval $I\in \mathcal{C}_{M}$, we can pick a representative $y_I \in I\cap  A^{p,q}_{M}$.  Using that  $\mathcal{C}_{M}$ is a subcollection of $\mathcal{C}$ and the definition of $F^{p,q}_{h}$, we have the following inequalities:
\begin{align*}
\sum_{I\in \mathcal{C}} h(|I|) \,\geq \,\sum_{ I\in \mathcal{C}_{M}}  h\big(|I|\big)      \,\geq \,\sum_{I\in \mathcal{C}_{M} }\frac{   1}{ F^{p,q}_{h}(y_I)}\tau^{p,q}\big(y_I-|I|, y_I+|I|\big)\,.
\end{align*} 
Since $F^{p,q}_{h}(y_I)  \leq M$ when $y_I\in A^{p,q}_{M}$, the above is bounded from below by
\begin{align*} 
  \sum_{I\in \mathcal{C}_{M} }\frac{   1}{ M}\tau^{p,q}\big(y_I-|I|, y_I+|I|\big)     \,\geq \,&\,\frac{   1}{ M} \tau^{p,q}\big(A^{p,q}_{M}\big) \,\geq \,\frac{   1}{ 2M} \tau^{p,q}([0,1])>0 \,,
\end{align*}
 where the first inequality uses the subadditivity of the measure $ \tau^{p,q}$, and  the second inequality  follows from how we chose $M$. Thus we have shown that $\sum_{I\in \mathcal{C}} h(|I|)$ is uniformly bounded from below by $\frac{   1}{ 2M} \tau^{p,q}([0,1])$ for  all coverings $\mathcal{C}$ of $I^{p,q}$.  It follows that $H^{\textup{log}}_\frak{h}(I^{p,q})>0$, and so $H^{\textup{log}}_{ \frak{h}'}(I^{p,q})=\infty $ for all $0\leq \frak{h}' < \frak{h}$.   Since $\frak{h}$ is an arbitrary element in $  [0,1)$, we have shown that the log-Hausdorff exponent of $I^{p,q}$ is $\rho_r$-a.e. $\geq 1$.
\end{proof}

\section{Proof of Lemma~\ref{RemarkVarthetaSymm2}}\label{AppendixVartheta}

\begin{proof} For $p\in \Gamma$ and $e\in E_k$ with $\textup{Range}(p)\cap \overline{e}\neq \emptyset $, let the path $p_{e} \in \Gamma $  denote the dilation of the portion of the path $p$ crossing through the embedded copy of  $D$ corresponding to the edge $e$.  The hierarchical symmetry for the intersection-time measure $\tau^{p,q}$  in Remark~\ref{RemarkTau} translates to the following for the  pushforward measure $\gamma^{p,q}:= \tau^{p,q}\circ p^{-1}$ on $D$:
\begin{align}\label{GammaHierarchy}
\gamma^{p,q}\,=\,\bplus_{\mathbf{e}\in E_n} 1_{\mathbf{e}\in [p]_n}   1_{\mathbf{e}\in [q]_n}\gamma^{ p_\mathbf{e} , q_\mathbf{e} } \hspace{.7cm} \text{by identifying} \hspace{.7cm} D\,\equiv \,\bigsqcup_{\mathbf{e}\in E_n}D\,.
\end{align}
  Thus we can write $\vartheta_{ \eta }$ in the form 
\begin{align}\label{VVV}
\vartheta_{ \eta }\,:=\, \int_{\Gamma\times \Gamma}\gamma^{p,q} \eta(dp) \eta(dq)
\,=\, \bplus_{\mathbf{e}\in E_n} \int_{\Gamma\times \Gamma}1_{\mathbf{e}\in [p]_n}   1_{\mathbf{e}\in [q]_n}\gamma^{ p_\mathbf{e} , q_\mathbf{e} }  \eta(dp) \eta(dq) 
\end{align}
 under the identification $D\,\equiv \,\bigsqcup_{\mathbf{e}\in E_n}D$.  By Remark~\ref{RemarkDecompReII}, the  measure $\eta$ restricted to $\Gamma^{\updownarrow \mathbf{e}}$ is equal to  $\frac{1}{b^n}\big(\bigtimes_{k=1}^n\bigtimes_{ e\in E_k^{\updownarrow \mathbf{e}} }  \eta^{e} \big)\times \eta^{\mathbf{e}}  $  under the identification of $\Gamma^{\updownarrow \mathbf{e}}$ with $\big(\bigtimes_{k=1}^n \bigtimes_{ e\in E_k^{\updownarrow \mathbf{e}} } \Gamma\big)\times \Gamma  $, and thus a single term from the direct sum in~(\ref{VVV}) is equal to
\begin{align*}
&\int_{\Gamma\times \Gamma}1_{\mathbf{e}\in [p]_n}   1_{\mathbf{e}\in [q]_n}\gamma^{ p_\mathbf{e} , q_\mathbf{e} }  \eta(dp) \eta(dq)\\ &\,=\, \frac{1}{b^{2n}}\int_{\Gamma^{\updownarrow \mathbf{e}}\times \Gamma^{\updownarrow \mathbf{e}}}  \gamma^{ p_\mathbf{e} , q_\mathbf{e} } \Bigg(\prod_{k=1}^n\prod_{ e\in E_k^{\updownarrow \mathbf{e}} }  \eta^{e}( dp_e) \Bigg)\Bigg(\prod_{k=1}^n\prod_{ e\in E_k^{\updownarrow \mathbf{e}} }  \eta^{e}( dq_e) \Bigg)\eta^{\mathbf{e}}( dp_\mathbf{e})  \eta^{\mathbf{e}}( dq_\mathbf{e}) 
\\ &\,=\, \frac{1}{b^{2n}}\Bigg(\prod_{k=1}^n\prod_{ e\in E_k^{\updownarrow \mathbf{e}} }  \eta^{e}( \Gamma) \Bigg)^2\underbrace{\int_{\Gamma\times \Gamma }  \gamma^{ p_\mathbf{e} , q_\mathbf{e} } \eta^{\mathbf{e}}( dp_\mathbf{e})  \eta^{\mathbf{e}}( dq_\mathbf{e})}_{ \vartheta_{ \eta^{\mathbf{e}} } } \,,
\end{align*}
The above is equal to the right side of~(\ref{GenDef}). 
\end{proof}

\section{The operator from Theorem~\ref{ThmOperator}} \label{AppendixHier}

The next proposition characterizes the hierarchical symmetry of the linear operator $\hat{Y}_{\eta}$ appearing in  Theorem~\ref{ThmOperator}.

\begin{proposition}\label{PropHierarchy} Suppose that $\eta\in  \mathcal{M}_{\Gamma}^{\diamond}$ satisfies $\int_{\Gamma\times \Gamma}\big(T(p,q)\big)^2\eta(dp)\eta(dq)<\infty$ and  has component measures $ \{ \eta^{e}\}_{e\in \cup_{k=0}^{\infty}E_k }$. 
 Let the functions  $f_{\mathbf{e}}\in L^2\big(D,\vartheta_{ \eta^{\mathbf{e}} }\big)$ be the components of $f\in L^2(D,\vartheta_{ \eta})$ under the identification $D\equiv \bigsqcup_{\mathbf{e}\in E_n }D$.  Then $\hat{Y}_{\eta}$ decomposes in terms of the family  $\big\{\hat{Y}_{\eta^{\mathbf{e}}}\big\}_{\mathbf{e}\in E_n}$ as 
\begin{align}\label{Yhat}
\big(\hat{Y}_{\eta}f\big)(p)\,=\,\frac{1}{b^n}\sum_{\mathbf{e}\in [p]_n  }\Bigg(\prod_{k=1}^n\prod_{e\in E^{\updownarrow \mathbf{e} }_k}  \eta^{e}(\Gamma)   \Bigg)\big(\hat{Y}_{\eta^{\mathbf{e}}}f_{\mathbf{e}}\big) ( p_{\mathbf{e}})  \,,
\end{align}
where $ p_{\mathbf{e}}\in \Gamma$ denotes the dilation of the part of the path $p\in \Gamma$ in the generation-$n$ subcopy of $D$ corresponding to the edge $\mathbf{e}\in E_n$.
\end{proposition}

\begin{proof} 
The definition of $\hat{Y}_{\eta}$ and the hierarchical identity~(\ref{GammaHierarchy}) give us  the  equalities below.
\begin{align*}
\big(\hat{Y}_{\eta}f\big)(p)\,:=\, \int_{D\times \Gamma}f(x)\gamma^{p,q}(dx)\eta(dq)  
\,=\,&\,\sum_{\mathbf{e}\in [p]_n  } \int_{D\times \Gamma^{\updownarrow \mathbf{e}}}f_{\mathbf{e}}(x)\gamma^{p_\mathbf{e},q_\mathbf{e}}(dx)\eta(dq)
\end{align*}
By Remark~\ref{RemarkDecompRe}, the restriction of   $\eta$ to $\Gamma^{\updownarrow \mathbf{e}}$ is equal to the product measure  $\frac{1}{b^n} \prod_{k=1}^n\prod_{e\in E^{\updownarrow \mathbf{e} }_k}\eta^{e}$ under the identification of  $\Gamma $ with the product space $\big(\bigtimes_{k=1}^n \bigtimes_{ e\in E_k^{\updownarrow \mathbf{e}} } \Gamma\big)\times \Gamma$.  Hence the above is equal to 
\begin{align*}
\big(\hat{Y}_{\eta}f\big)(p)\,=\,&\,\frac{1}{b^n}\sum_{\mathbf{e}\in [p]_n  }\int_{D\times \big(\bigtimes_{k=1}^n \bigtimes_{ e\in E_k^{\updownarrow \mathbf{e}} } \Gamma\big)\times \Gamma}f_{\mathbf{e}}(x)\gamma^{p_\mathbf{e},q_\mathbf{e}}(dx)\Bigg(\prod_{k=1}^n\prod_{e\in E^{\updownarrow \mathbf{e} }_k} \eta^{e}(dq_e) \Bigg) \eta^{\mathbf{e}}(dq_\mathbf{e})
 \\ \,=\,&\,\frac{1}{b^n}\sum_{\mathbf{e}\in [p]_n  }\Bigg( \prod_{k=1}^n\prod_{e\in E^{\updownarrow \mathbf{e} }_k}  \eta^{e}(\Gamma)   \Bigg)\underbracket{\int_{D\times \Gamma}f_{\mathbf{e}}(x)\gamma^{p_\mathbf{e},q_\mathbf{e}}(dx)\eta^{\mathbf{e}}(dq_\mathbf{e})}\,.
\end{align*}
  Since the bracketed term is equal to $\big(\hat{Y}_{\eta^\mathbf{e}}f_{\mathbf{e}}\big)(p_\mathbf{e})$ by definition of $\hat{Y}_{\eta^\mathbf{e}}$, we have the desired form.
\end{proof}

\begin{remark} We can write the hierarchical symmetry~(\ref{Yhat}) in a slightly more decomposed form.  Let $\mathcal{U}_{ \eta }$ be the unitary operator from $\bigoplus_{\mathbf{e}\in E_n} L^2\big(D,\vartheta_{\eta^{\mathbf{e}}}\big)$  to  $L^2(D,\vartheta_{\eta})$ whose adjoint acts as
$$ \mathcal{U}_{ \eta }^*f\,=\,\bigoplus_{\mathbf{e}\in E_n}\frac{1}{b^n}\Bigg(\prod_{k=1}^n\prod_{e\in E^{\updownarrow \mathbf{e} }_k}  \eta^{e}(\Gamma)  \Bigg)    f_{\mathbf{e}}\,.$$
In other terms,  $\mathcal{U}_{ \eta }^*$ multiplies the component $f_{\mathbf{e}}$ of $f$   by the value  $ \frac{1}{b^n} \prod_{k=1}^n\prod_{e\in E^{\updownarrow \mathbf{e} }_k}  \eta^{e}(\Gamma)  $ and places the result into the direct sum. The unitarity of $ \mathcal{U}_{ \eta }^* $---and thus of $\mathcal{U}_{ \eta }$---follows from the  decomposition of $ \vartheta_{\eta}  $ in  Lemma~\ref{RemarkVarthetaSymm2}. For $\phi =\bigoplus_{\mathbf{e}\in E_n}\phi_{\mathbf{e}} $ in the space  $\bigoplus_{\mathbf{e}\in E_n} L^2\big(D,\vartheta_{\eta^{\mathbf{e}}}\big) $, we have the additive form
\begin{align*}
\big(\hat{Y}_{\eta}\mathcal{U}_{ \eta}\phi\big)(p)\,=\,\sum_{\mathbf{e}\in [p]_n  }\big(\hat{Y}_{\eta^{\mathbf{e}}}\phi_{\mathbf{e}}\big) (p_\mathbf{e})  \,.
\end{align*}
\end{remark}

The trivial lemma below presents a simple and natural approximation method for $\hat{Y}_{\eta}$ and $\mathbf{T}_{\eta}$  by finite-rank operators.  For the purpose of (III), we assume that $T(p,q)$ has finite exponential moments under $\eta\times \eta$, which a.s.\ holds when $\eta=\mathbf{M}_r$ by (ii) of Theorem~\ref{ThmPathMeasure}. 
\begin{lemma}\label{LemApprox} Suppose that $\eta\in \mathcal{M}_{\Gamma}^{\diamond}$ has  nonzero component measures $\{ \eta^{e} \}_{e\in \cup_{k=0}^{\infty}E_k }$ and that the intersection-time kernel  $T(p,q)$ has finite exponential moments under $\eta\times \eta$.  Let the linear operator  $\hat{Y}_{\eta}:L^2(D,\vartheta_{\eta})\rightarrow L^2(\Gamma,\eta)  $ be defined as in Theorem~\ref{ThmOperator}.
The sequence of finite-rank operators $\hat{Y}_{\eta}^{(n)}:L^2(D,\vartheta_{\eta})\rightarrow L^2(\Gamma,\eta)  $ defined through generation-$n$ coarse-graining as $\big(\hat{Y}_{\eta}^{(n)}f\big)(p):=\frac{1}{\eta([p]_n)  }\int_{[p]_n} \big(\hat{Y}_{\eta}f\big)(q) \eta(dq)$  has the following properties as $n\rightarrow \infty$:
\begin{enumerate}[(I)]
\item $\hat{Y}_{\eta}^{(n)}$ converges to $ \hat{Y}_{\eta}$ in operator norm,

\item the  kernels $\mathbf{T}_{\eta}^{(n)}(p,q)$ of $\mathbf{T}_{\eta}^{(n)}:=\hat{Y}_{\eta}^{(n)}\big(\hat{Y}_{\eta}^{(n)}\big)^*$ converge  $\eta\times \eta$-a.e.\ to $T(p,q)$, and

\item  the exponential moments of $\mathbf{T}_{\eta}^{(n)}(p,q)$ converge up to those of $T(p,q)$, i.e., for any $a\in \R$
$$   \int_{\Gamma\times \Gamma}e^{a\mathbf{T}_{\eta}^{(n)}(p,q)}\eta(dp) \eta(dq)\hspace{.5cm}\stackrel{n\rightarrow \infty}{\nearrow} \hspace{.5cm} \int_{\Gamma\times \Gamma}e^{aT(p,q)}\eta(dp)\eta(dq)\,<\, \infty\,. $$

\end{enumerate}

\end{lemma}

\begin{proof}    The operator $Y^{(n)}_{\eta  }$ can be written as $Y^{(n)}_{\eta  }=\mathbf{P}_{\eta }^{(n)}Y_{\eta  }$ for the orthogonal projection $\mathbf{P}_{\eta }^{(n)}$ on $L^2(\Gamma,\eta)$ defined by generation-$n$ coarse-graining: $ \big(\mathbf{P}_{\eta }^{(n)}g\big)(p)=\frac{1}{\eta([p]_n) } \int_{[p]_n}g(p')\eta(dp') $.  This is equivalent to defining $\mathbf{P}_{\eta }^{(n)}g=\mathbbmss{E}_{\eta}\big[g   \,\big|\,\mathcal{A}^{(n)}_{\Gamma} \big]$ for the $\sigma$-algebra $\mathcal{A}^{(n)}_{\Gamma}$ from Definition~\ref{DefAlgebra}.   When $g\in L^2(\Gamma,\eta)$, the sequence $\big(\mathbf{P}_{\eta }^{(n)}g\big)_{n\in \mathbb{N}}$ is an $\eta$-martingale with respect to  the filtration $\big( \mathcal{A}^{(n)}_{\Gamma} \big)_{n\in \mathbb{N}}$ that converges $\eta$-a.e.\ and in $L^2(\Gamma,\eta)$ to $g$ by the $L^p$ convergence theorem. 
In particular, $\mathbf{P}_{\eta }^{(n)}$ converges strongly to the identity operator on $L^2(\Gamma,\eta)$, and it follows that the linear operator $Y^{(n)}_{\eta  }=\mathbf{P}_{\eta }^{(n)}Y_{\eta  }$ converges in operator norm  to $Y_{\eta }$ with large $n$ because  $Y_{\eta  }$ is compact.  The kernel of $\mathbf{T}_{\eta}^{(n)} $ has the form 
\begin{align*}
\mathbf{T}_{\eta}^{(n)}(p,q)\,=\,&\,\frac{1}{\eta([p]_n)\eta([q]_n) }\int_{[p]_n\times [q]_n   } T(p',q')   \eta(dp')\eta(dq') \,=\,\mathbbmss{E}_{\eta\times \eta}\Big[T(p,q)   \,\Big|\,\mathcal{A}^{(n)}_{\Gamma\times \Gamma}\Big]\,.
\end{align*}
   Thus the sequence $\big( \mathbf{T}_{\eta}^{(n)}(p,q) \big)_{n\in \mathbb{N}}$ is a nonnegative $\eta\times \eta$-martingale with respect to the filtration $\big(\mathcal{A}^{(n)}_{\Gamma\times \Gamma}\big)_{n\in \mathbb{N}}$ that converges $\eta\times \eta$-a.e.\ to $T(p,q)$.   Moreover, for any $a>0$, the conditional version of Jensen's inequality implies that 
\begin{align}\label{Jen}
  \int_{\Gamma\times\Gamma} \textup{exp}\left\{ a  \mathbf{T}_{\eta}^{(n)}(p,q)\right\} \eta(dp)\eta(dq)\,\leq \,  \int_{\Gamma\times\Gamma} \textup{exp}\left\{ a  T(p,q)\right\} \eta(dp)\eta(dq) \,.
  \end{align}
 The convergence of the exponential moments holds by Fatou's lemma combined with~(\ref{Jen}).
\end{proof}

\end{appendix}

\end{document}